\theoremstyle{plain}
\newtheorem{theorem}{Theorem}[section]
\newtheorem{corollary}[theorem]{Corollary}
\newtheorem{proposition}[theorem]{Proposition}
\newtheorem{lemma}[theorem]{Lemma}
\newtheorem{question}[theorem]{Question}
\theoremstyle{definition}
\newtheorem{definition}[theorem]{Definition}
\newtheorem{remark}[theorem]{Remark}
\newtheorem{example}[theorem]{Example}
\numberwithin{equation}{section}
\let\c@theorem\c@table
\newcommand{\ba}{\mathbf{a}}
\newcommand{\bc}{\mathbf{c}}
\newcommand{\bm}{\mathbf{m}}
\newcommand{\bn}{\mathbf{n}}
\newcommand{\bu}{\mathbf{u}}
\newcommand{\bv}{\mathbf{v}}
\newcommand{\bx}{\mathbf{x}}
\newcommand{\bal}{\boldsymbol{\alpha}}
\newcommand\R{\mathbb{R}}
\newcommand\N{\mathbb{N}}
\newcommand\C{\mathbb{C}}
\newcommand\Z{\mathbb{Z}}
\newcommand\bp{\mathbb{P}}
\newcommand\nkneq{[n]^k_{\neq}}
\newcommand{\HTN}{HTN}
\begin{document}

\vspace*{-21mm}
\title[Sign patterns of positivity preservers in fixed dimension]{On the
sign patterns of entrywise\\ positivity preservers in fixed dimension}

\author{Apoorva Khare}
\address[A.~Khare]{Indian Institute of Science, Bangalore 560012, India;
and Analysis and Probability Research Group; Bangalore 560012, India}
\email{\tt khare@iisc.ac.in}

\author{Terence Tao}
\address[T.~Tao]{University of California at Los Angeles, CA 90095, USA}
\email{\tt tao@math.ucla.edu}


\subjclass[2010]{15B48 (primary);
05E05, 15A24, 15A45, 26C05 (secondary)}

\keywords{Positive semidefinite matrix,
entrywise function,
Hadamard product,
Schur polynomial,
Cauchy--Binet formula,
Harish-Chandra--Itzykson--Zuber formula,
Schur positivity,
weak majorization,
cube problem,
log-supermodularity}

\thanks{Manuscript received December 16, 2018.}

\thanks{Research of the first author supported in part by 
Ramanujan Fellowship grant SB/S2/RJN-121/2017,
MATRICS grant MTR/2017/000295,
and SwarnaJayanti Fellowship grants SB/SJF/2019-20/14 and
DST/SJF/MS/2019/3 from SERB and DST (Govt.~of India),
by grant F.510/25/CAS-II/2018(SAP-I) from UGC (Govt.~of India),
and by a Young Investigator Award from the Infosys Foundation;
research of the second author supported by
NSF grant DMS-1266164
and by a Simons Investigator Award.}

\thanks{\hspace*{-8mm}\textit{American Journal of Mathematics} 143
(2021), 1863-1929.\ \copyright 2021 by Johns Hopkins University Press.}

\begin{abstract}
Given a domain $I \subset \mathbb{C}$ and an integer $N>0$, a function
$f: I \to \mathbb{C}$ is said to be \emph{entrywise positivity
preserving} on positive semidefinite $N \times N$ matrices $A = (a_{jk})
\in I^{N \times N}$, if the entrywise application $f[A] = (f(a_{jk}))$ of
$f$ to $A$ is positive semidefinite for all such $A$. Such preservers in
all dimensions have been classified by Schoenberg and Rudin as being
absolutely monotonic [\textit{Duke Math.\ J.}\ 1942, 1959]. In fixed
dimension $N$, results akin to work of Horn and Loewner [\textit{Trans.\
Amer.\ Math.\ Soc.}\ 1969] show that the first $N$ non-zero Maclaurin
coefficients of any positivity preserver $f$ are positive; and the last
$N$ coefficients are also positive if $I$ is unbounded. However, very
little was known about the higher-order coefficients: the only examples
to date for unbounded domains $I$ were absolutely monotonic, hence work
in all dimensions; and for bounded $I$ examples of non-absolutely
monotonic preservers were very few (and recent).

In this paper, we provide a complete characterization of the sign
patterns of the higher-order Maclaurin coefficients of positivity
preservers in fixed dimension $N$, over bounded and unbounded domains $I
= (0,\rho)$. In particular, this shows that the above Horn--Loewner-type
conditions cannot be improved upon. As a further special case, this
provides the first examples of polynomials which preserve positivity on
positive semidefinite matrices in $I^{N \times N}$ but not in $I^{(N+1)
\times (N+1)}$. Our main tools in this regard are the Cauchy--Binet
formula and lower and upper bounds on Schur polynomials. We also obtain
analogous results for real exponents, using the
Harish-Chandra--Itzykson--Zuber formula in place of bounds on Schur
polynomials.

We then go from qualitative existence bounds -- which suffice to
understand all possible sign patterns -- to exact quantitative bounds.
This is achieved using a Schur positivity result due to Lam, Postnikov,
and Pylyavskyy [\textit{Amer.\ J.\ Math.}\ 2007], and in particular
provides a second proof of the existence of threshold bounds for tuples
of integer and real powers.
As an application, we extend our previous qualitative and quantitative
results to understand preservers of total non-negativity in fixed
dimension -- including their sign patterns.
We deduce several further applications, including extending a Schur
polynomial conjecture of Cuttler, Greene, and Skandera [\textit{Eur.\ J.\
Comb.}\ 2011] to obtain a novel characterization of weak majorization for
real tuples.
\end{abstract}

\maketitle

\settocdepth{section}
\tableofcontents

\section{Introduction and main results}

\subsection{Notation and prior results}

For any natural number $N$, let $\bp_N(\C)$ denote the convex cone of
positive semidefinite Hermitian $N \times N$ matrices; this defines a
partial ordering $\preceq$ on $N \times N$ matrices, with $A \preceq B$
if and only if $B-A$ is positive semidefinite.

Given a domain $I \subset \C$, let $\bp_N(I) \subset \bp_N(\C)$ denote
the set of matrices $A = (a_{jk})_{j,k=1,\dots,N} \in \bp_N(\C)$ with all
entries $a_{jk}$ in $I$, thus for instance $\bp_N(\R)$ is the cone of
positive semidefinite real symmetric matrices.  A key role in this
paper will also be played by the subset $\bp^1_N(I) \subset \bp_N(I)$ of
\emph{rank one} matrices $\bu \bu^*$ in $\bp_N(I)$.  We
will focus our attention in this paper almost entirely on the cases $I =
(0,\rho)$ for $0 < \rho \leqslant +\infty$, although we will also briefly
consider the case $I = (-\rho,\rho)$, as well as the complex disk $I =
D(0,\rho)$.

\begin{remark}
In this paper, all our vectors $\bu = (u_1,\dots,u_N)^T$ will be column
vectors, with the space of such vectors denoted as $(\C^N)^T$; row
vectors $(u_1,\dots,u_N)$ will be referred to instead as \emph{tuples},
and the space of such tuples denoted as $\C^N$.
A more complete list of notations used in this paper is provided in the
final section.
\end{remark}

Given a matrix $A = (a_{jk})_{j,k=1,\dots,N}$ in $\bp_N(I)$, a function
$f : I \to \C$ acts \textit{entrywise} on $A$ via the formula
\[
f[A] \coloneqq (f(a_{jk}))_{j,k=1,\dots,N}.
\]

\begin{remark}
Note that the entrywise application $f[A]$ of $f$ to $A$ should not be
confused with the more common functional calculus $f(A)$ of $f$ applied
to $A$; we will not use the latter in this paper.
\end{remark}

For instance, if $f$ is a monomial $f(x) = x^m$, then
$f[A] = A^{\circ m}$ is the Hadamard product of $m$ copies of $A$. We say
that the function $f: I \to \C$ is \emph{entrywise positivity preserving}
on $\bp_N(I)$ if $f[A] \in \bp_N(\C)$ for all $A \in \bp_N(I)$; similarly
if $\bp_N(I)$ is replaced with any subset of $\bp_N(I)$, such as
$\bp^1_N(I)$.

The Schur product theorem~\cite{Schur1911} asserts that if two matrices
$A,B$ lie in~$\bp_N(\C)$, then so does their Hadamard product $A \circ B$.
As observed in~1925 by P\'olya and Szeg\"o~\cite[Problem
37]{polya-szego}, this immediately implies that any function $f: I \to
\C$ which is \emph{absolutely monotonic}, in the sense that one has a
convergent power series representation $$ f(x) = \sum_{k \geqslant 0} c_k
x^k$$ on $I$ for some non-negative coefficients $c_k \geqslant 0$, will
be entrywise positivity preserving on $\bp_N(I)$ for any $N$.

It is then natural to ask which of the positivity conditions $c_k
\geqslant 0$ are in fact necessary.  More precisely, in this paper we
address the following question:

\begin{question}\label{Q1}
Fix a positive integer $N$ and a set $I \subset \C$, and consider a
convergent power series $f : I \to \C$ which is entrywise positivity
preserving on $\bp_N(I)$. Which coefficients of $f$ can be negative?
\end{question}  

In fact we completely resolve this question in the case $I = (0,\rho)$
for any $0 < \rho \leqslant +\infty$. Additionally, we
completely answer a variant of Question~\ref{Q1} for real powers; and
give some partial results in the cases $I = (-\rho,\rho)$ and $I =
D(0,\rho)$, where (as we explain) there is no ``uniform'' answer to the
question.

Question~\ref{Q1} arises out of a longstanding program in analysis over
the past century.  In \textit{loc.~cit.},
P\'olya and Szeg\"o asked if there are functions $f: \R \to \R$ besides
the convergent absolutely monotonic functions which were entrywise
positivity preserving on $\bp_N(I)$ for all $N$ and $I \subset \R$.  In
his celebrated work~\cite{Schoenberg42}, Schoenberg in~1942 proved this
was not possible for continuous~$f$ (even if one restricted to the case
$I = (-1,1)$), using positive definite functions on spheres (Gegenbauer
polynomials).  Schoenberg was interested in embedding positive definite
metrics into Hilbert space; see also~\cite{Bochner-pd,
vonNeumann-Schoenberg}. The continuity hypothesis in Schoenberg's theorem
was later removed by Rudin~(1959) in~\cite{Rudin59}, using analysis of
measures on the torus, and working in the broader context of studying
functions acting on Fourier--Stieltjes transforms, as explored with
Kahane and others in~\cite{HKKR,Kahane-Rudin}.  In fact, Rudin's result
only required positivity preservation of Toeplitz matrices in
$\bp_N((-1,1))$ of rank at most three, which correspond to measures on
the torus by Herglotz's theorem; a parallel result for Hankel matrices
(which correspond to measures on the real line) was shown
in~\cite{BGKP-hankel}.  In a sense, Schoenberg's result is the (far
harder) converse to that of his advisor, Schur.  For variants of
Schoenberg's theorem for other choices of $I$, see \cite{horn},
\cite{vasudeva79}, \cite{GKR-lowrank}.

Since these results of Schoenberg and Rudin, the question of classifying
the entrywise positivity preservers for a \emph{fixed} dimension $N$ has
been actively studied.  Necessary conditions for entrywise positivity
preservation were first established in the 1969 thesis of
Horn~\cite{horn}, who attributes the result to Loewner.  We summarize
these conditions, as well as some further necessary conditions of
Horn--Loewner type established by subsequent authors, as follows:

\begin{lemma}[Horn--Loewner-type necessary
conditions]\label{horn-type}
Let $N \geqslant 2$ and $0 < \rho \leqslant +\infty$.
\begin{itemize}
\item[(i)] (Horn and Loewner \cite{horn};
Guillot--Khare--Rajaratnam \cite{GKR-lowrank})
Suppose that $f: (0,\rho) \to \R$ is entrywise positivity preserving on
all matrices in $\bp_N((0,\rho))$ of the form $A = a {\bf 1}_{N \times N}
+ \bu \bu^T$, with $a \in [0,\rho), \bu \in [0,\sqrt{\rho-a})^N$. Then $f
\in C^{N-3}((0,\rho))$,
\[
f^{(k)}(x) \geqslant 0, \qquad \forall x \in (0,\rho),\ 0 \leqslant k
\leqslant N-3,
\]

\noindent and $f^{(N-3)}$ is a convex non-decreasing function on
$(0,\rho)$. In particular, if $f \in C^{N-1}((0,\rho))$, then $f^{(k)}(x)
\geqslant 0$ for all $x \in (0,\rho), 0 \leqslant k \leqslant N-1$.

\item[(ii)] (See \cite[Lemma 2.4]{BGKP-fixeddim})  If $f(x) =
\sum_{n=0}^\infty c_n x^n$ is a convergent power series on $(0,\rho)$
that is entrywise positivity preserving on $\bp^1_N((0,\rho))$, and
$c_{n_0} < 0$ for some $n_0$, then we have $c_n > 0$ for at least $N$
values of $n < n_0$.  (In particular, the first $N$ non-zero Taylor
coefficients of~$f$, if they exist, must be positive.)

\item[(iii)]  (See Section~\ref{unbounded-proof}) If $f(x) =
\sum_{n=0}^\infty c_n x^n$ is a convergent power series on $(0,+\infty)$
which is entrywise positivity preserving on $\bp^1_N((0,+\infty))$, and
$c_{n_0} < 0$ for some $n_0$, then we have $c_n > 0$ for at least $N$
values of $n < n_0$, and at least $N$ values of $n > n_0$. (In
particular, if $f$ is a polynomial, then the first $N$ non-zero
coefficients and the last $N$ non-zero coefficients of $f$, if they
exist, are all positive.)
\end{itemize}
\end{lemma}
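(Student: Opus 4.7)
Parts (i) and (ii) are quoted verbatim from the cited papers; the genuinely new content is part (iii), and within (iii) the \emph{first} $N$ clause is already contained in (ii). My plan therefore addresses only the assertion that at least $N$ of the indices $n > n_0$ carry a positive coefficient.

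I would argue by contradiction. Assume that at most $k\le N-1$ indices $n>n_0$ satisfy $c_n>0$, and label them $m_1<\cdots<m_k$. Fix $N$ distinct positive reals $u_1,\dots,u_N$ and put $\mathbf{w}_n:=(u_1^n,\dots,u_N^n)\in\R^N$. Because the $u_i$ are distinct and the $k+1\le N$ non-negative integers $m_1,\dots,m_k,n_0$ are distinct, a generalized Vandermonde argument makes $\mathbf{w}_{m_1},\dots,\mathbf{w}_{m_k},\mathbf{w}_{n_0}$ linearly independent, so I can select $\bv\in\R^N$ with $\bv^T\mathbf{w}_{m_j}=0$ for all $j$ yet $\bv^T\mathbf{w}_{n_0}\ne 0$.

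The plan is then to probe positivity along the scaled rank-one family $(t\bu)(t\bu)^T\in\bp^1_N((0,+\infty))$ and let $t\to\infty$. The hypothesis gives
\[
0\le\bv^T f[(t\bu)(t\bu)^T]\bv=\sum_{n=0}^\infty c_n(\bv^T\mathbf{w}_n)^2\,t^{2n}.
\]
By the construction of $\bv$, every $n>n_0$ term with $c_n>0$ vanishes; the remaining $n>n_0$ terms have $c_n\le 0$ and contribute non-positively; the $n=n_0$ term equals $c_{n_0}(\bv^T\mathbf{w}_{n_0})^2t^{2n_0}$, a strictly negative multiple of $t^{2n_0}$; and the finite sum over $n<n_0$ is of order $O(t^{2n_0-2})$. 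Consequently the right-hand side is at most $c_{n_0}(\bv^T\mathbf{w}_{n_0})^2t^{2n_0}+O(t^{2n_0-2})$, which becomes negative for $t$ large, contradicting the lower bound $0$.

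The main conceptual obstacle is the possibly infinite tail over $n>n_0$ when $f$ is not a polynomial: such a tail could \emph{a priori} contain positive cancellations rescuing the dominant negative term. What rescues the argument is that after annihilating the positive-coefficient exponents via $\bv$, every surviving contribution beyond $n_0$ has a non-positive sign and so only strengthens the upper bound; absolute convergence of the expansion for each fixed $t$ is automatic, since any power series convergent on $(0,+\infty)$ has infinite radius of convergence. The only other delicate step is the generalized Vandermonde nondegeneracy, which hinges precisely on the distinctness of the $u_i$ and the inequality $k+1\le N$ -- both built into the setup.
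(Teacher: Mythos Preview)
Your argument is correct, but it differs from the paper's. The paper proves (iii) by a duality trick: after adding the absolutely monotone function $-\sum_{n>n_0:\,c_n<0} c_n x^n$ (which preserves the positivity-preserving hypothesis), $f$ becomes a polynomial of some degree $d$; one then forms the reversed polynomial $\tilde f(x) \coloneqq x^d f(1/x)$, checks via the identity $\tilde f[\bu\bu^T] = \bu^{\circ d} \circ f[\bu^{\circ -1}(\bu^{\circ -1})^T]$ and the Schur product theorem that $\tilde f$ is also entrywise positivity preserving on $\bp^1_N((0,+\infty))$, and observes that $\tilde f$ now violates (ii) (its $x^{d-n_0}$ coefficient is negative with fewer than $N$ positive lower-degree coefficients). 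Your route instead works directly with the original series: you annihilate the at-most-$(N-1)$ troublesome positive high-degree terms by a linear functional $\bv$ chosen via generalized Vandermonde independence, so that every surviving contribution beyond $n_0$ is non-positive, and then read off a contradiction from the $t\to\infty$ asymptotics of $\bv^T f[(t\bu)(t\bu)^T]\bv$. The paper's argument is slicker in that it packages the ``last $N$'' clause as the $x\mapsto 1/x$ mirror of the ``first $N$'' clause (ii), highlighting the inversion symmetry of the unbounded domain; your argument is more self-contained (no reduction to polynomials, no appeal back to (ii)) and makes transparent why the infinite tail causes no difficulty---once the positive high-degree terms are killed by $\bv$, the remaining tail only helps.
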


We make two remarks here. First, the original result of Horn and Loewner
required $f$ to be continuous; this assumption was removed
in~\cite{GKR-lowrank}, in the spirit of Rudin's
strengthening~\cite{Rudin59} of Schoenberg's theorem~\cite{Schoenberg42}
alluded to above. Second, the proof of Lemma~\ref{horn-type}(ii) uses the
positivity property
\begin{equation}\label{Egantmacher}
 \det( u_i^{\alpha_j} )_{i,j=1,\dots,N} > 0
\end{equation}
of generalized Vandermonde determinants for any $0 < u_1 < \dots < u_N$
and $\alpha_1 < \dots < \alpha_N$; see e.g.~\cite[Chapter~XIII, \S8,
Example~1]{Gantmacher_Vol2}, or the bounds in~\eqref{vbal} below.
Variations of this positivity property will recur throughout this paper.

In a slightly different direction, it was shown by FitzGerald--Horn
in~\cite{FitzHorn} (solving a conjecture of Horn~\cite{horn}) that the
fractional monomials $x \mapsto x^\alpha$ are entrywise positivity
preservers on $\bp_N((0,+\infty))$ if and only if $\alpha$ is a
non-negative integer, or a real number greater than $N-2$.
(Note this shows that Lemma~\ref{horn-type}(i) is sharp.)  See the recent
survey~\cite{BGKP-survey1,BGKP-survey2} for further results and
references on entrywise positivity preservers, as well as for more on
Schoenberg and Rudin's motivations in proving their results mentioned
above.  However, in spite of significant subsequent interest and
activity, a complete characterization of the functions -- even for
polynomials -- that entrywise preserve positivity on $\bp_N((0,+\infty))$
remains unknown even for~$N=3$.
(For $N=2$ the problem was resolved by Vasudeva~\cite{vasudeva79}.)

In light of the above discussion, it is natural to ask if for real
analytic preservers $f$, the positive coefficient requirements in
Lemma~\ref{horn-type}(ii) and Lemma~\ref{horn-type}(iii) are sharp.
In~\cite{BGKP-fixeddim}, Schur polynomials were used to establish a
necessary and sufficient condition for entrywise positivity preservation
on $\bp_N((0,\rho))$, $0 < \rho < +\infty$ for polynomials of the form
\[
x \mapsto c_0 + c_1 x + \dots + c_{N-1} x^{N-1} + c_M x^M
\]
with $M \geqslant N$; in particular, it was shown that for any choice of
$M$, one could construct entrywise positivity preserving polynomials with
$c_M$ negative (of course, Lemma~\ref{horn-type}(ii) forces the remaining
coefficients $c_0,\dots,c_{N-1}$ to then be positive).
Via the Schur product theorem, this implies a similar result for
polynomials of the form
\[
x \mapsto c_0 x^h + c_1 x^{h+1} + \cdots + c_{N-1}
x^{h+N-1} + c_M x^{h+M}, \qquad h \in \Z^{\geqslant 0}.
\]
In the $N=2$ case, a similar analysis was also carried out in \cite[\S
3.4]{BGKP-fixeddim} for polynomials of the form 
\[
x \mapsto c_m x^m + c_n x^n + c_p x^p
\]
with $m < n < p$, where again it was shown that for any choice of
$m,n,p$, one could construct such a polynomial with $c_p$ negative but
which was still entrywise positivity preserving on 
$\bp_2((0,\rho))$.

However, aside from these few results (and linear combinations of them),
there were no examples previously known of entrywise positivity
preserving convergent power series with at least one negative
coefficient. In particular, with the exceptions discussed above, all
previously known entrywise positivity preservers on $\bp_N((0,\rho))$
were absolutely monotonic, hence in fact work for all dimensions. For the
unbounded domain $\rho = +\infty$, there was even less progress, with no
examples of preservers with negative coefficients known to date (nor if
such functions could even exist).

\subsection{New results 1: Qualitative bounds}

We begin with the simple observation that Question~\ref{Q1} can have a
``structured'' solution (in the flavor of Lemma~\ref{horn-type}) only for
$I \subset [0,+\infty)$, but not other domains $I = (-\rho,\rho)$ or
$D(0,\rho)$ in the complex plane. For example, the family of polynomials
\[
p_{k,t}(x) \coloneqq t(1 + x^2 + \cdots + x^{2k}) - x^{2k+1}, \qquad k
\geqslant 0, \ t > 0,
\]
can never preserve positivity on $\bp_2((-\rho,\rho))$, since setting
e.g.~$\bu \coloneqq (1,-1)^T$ and $A \coloneqq (\rho/2) \bu \bu^T \in
\bp_2((-\rho,\rho))$, one computes:
\begin{equation}
\bu^T p_{k,t}[A] \bu = -4 (\rho/2)^{2k+1} < 0,
\end{equation}
whence $p_{k,t}[A]$ is not positive semidefinite for any $k \geqslant 0$.
Similar examples with higher-order roots of unity (fail to) work in the
case of complex domains.

Thus the present work is primarily concerned with bounded and unbounded
domains $I \subset (0,+\infty)$. In the case of bounded intervals $I =
(0,\rho)$, we completely resolve Question~\ref{Q1} by showing that the
non-zero coefficients beyond the first $N$ of an entrywise positivity
preserver on $\bp_N((0,\rho))$ are allowed to be of arbitrary sign:

\begin{theorem}\label{main-1}
Let $N > 0$ and $0 \leqslant n_0 < n_1 < \cdots < n_{N-1}$ be integers,
and for each $M > n_{N-1}$, let $\epsilon_M \in \{-1,0,+1\}$ be a sign.
Let $0 < \rho < +\infty$, and let $c_{n_0},\dots,c_{n_{N-1}}$ be positive
reals.  Then there exists a convergent power series
\[
f(x) = c_{n_0} x^{n_0} + c_{n_1} x^{n_1} + \dots + c_{n_{N-1}}
x^{n_{N-1}} + \sum_{M > n_{N-1}} c_M x^M
\]
on $(0,\rho)$ that is an entrywise positivity preserver on
$\bp_N((0,\rho))$, such that for each $M > n_{N-1}$, $c_M$ has the sign
of $\epsilon_M$.
\end{theorem}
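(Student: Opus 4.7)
My strategy is to build $f$ as a countable positive linear combination of simpler entrywise positivity preservers, exploiting the fact that a (convergent) sum of positivity preservers is again one, since sums of positive semidefinite matrices are positive semidefinite. The building blocks will be three types of preservers: the pure monomials $\alpha_i x^{n_i}$ with $\alpha_i>0$ for the leading terms, the pure monomials $\beta_M x^M$ with $\beta_M>0$ for each $M > n_{N-1}$ with $\epsilon_M=+1$ (both trivially preservers by Schur's product theorem), and, crucially, \emph{Horn-type balanced polynomials}
\[
q_M(x) \coloneqq b_{M,0} x^{n_0} + b_{M,1} x^{n_1} + \cdots + b_{M,N-1} x^{n_{N-1}} - x^M, \qquad b_{M,i} \geqslant 0,
\]
that preserve positivity on $\bp_N((0,\rho))$, one for each $M > n_{N-1}$ with $\epsilon_M=-1$. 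Exponents $M$ with $\epsilon_M=0$ contribute nothing.

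\textbf{The key step, and the main obstacle,} is establishing the following sublemma: \emph{for every $M > n_{N-1}$ there exist non-negative reals $b_{M,0},\dots,b_{M,N-1}$ such that $q_M$ preserves positivity on $\bp_N((0,\rho))$.} In view of Lemma \ref{horn-type}(ii), the form of $q_M$ is essentially forced; the content is the mere existence of some finite threshold for the leading coefficients. I would obtain this via the Cauchy--Binet formula advertised in the abstract: writing $A = U U^T$ for $A\in\bp_N((0,\rho))$ and expanding $\bv^T A^{\circ m} \bv$ for $m \in \{n_0,\dots,n_{N-1},M\}$ in terms of column-indexed minors of $U$, one reduces positivity of $q_M[A]$ to a uniform upper bound on the ratio of generalized Vandermonde/Schur quantities associated with the exponent $M$ over those associated with $n_0,\dots,n_{N-1}$, in the spirit of \eqref{Egantmacher}. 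The reference case $(n_0,\dots,n_{N-1}) = (0,1,\dots,N-1)$ was handled in \cite{BGKP-fixeddim}; the chief technical work is verifying that the corresponding Schur-polynomial bound remains finite (and quantitatively controllable) for an arbitrary strictly increasing exponent tuple and for a \emph{bounded} domain $(0,\rho)$.

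\textbf{Assembly.} Partition $\{M > n_{N-1}\}$ into $S^-, S^0, S^+$ according to $\epsilon_M$. Using the sublemma, pick $q_M$ for each $M\in S^-$ and then choose positive weights $\eta_M$ (for $M\in S^-$) and $\beta_M$ (for $M\in S^+$) small enough that
\[
\sum_{M \in S^-} \eta_M b_{M,i} < c_{n_i} \quad (0\leqslant i \leqslant N-1), \qquad \eta_M,\ \beta_M \;\leqslant\; (2\rho)^{-M};
\]
such a choice is easily arranged by shrinking each $\eta_M$ further if necessary. Set $\alpha_i \coloneqq c_{n_i} - \sum_{M\in S^-} \eta_M b_{M,i} > 0$ and define
\[
f(x) \;\coloneqq\; \sum_{i=0}^{N-1} \alpha_i x^{n_i} \;+\; \sum_{M \in S^-} \eta_M q_M(x) \;+\; \sum_{M \in S^+} \beta_M x^M.
\]
The decay condition on $\eta_M,\beta_M$ guarantees that $f$ is a convergent power series on $(0,\rho)$; each summand is an entrywise positivity preserver on $\bp_N((0,\rho))$, so $f$ is too. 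Finally, a direct expansion shows that the coefficient of $x^{n_i}$ in $f$ is exactly $c_{n_i}$, while for $M > n_{N-1}$ the coefficient of $x^M$ is $-\eta_M < 0$, $\beta_M > 0$, or $0$ according as $\epsilon_M$ equals $-1$, $+1$, or $0$, realizing the prescribed sign pattern.
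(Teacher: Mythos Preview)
Your overall architecture is exactly the paper's: identify the key sublemma (your $q_M$ exists for each $M>n_{N-1}$, which is precisely Theorem \ref{Tmain}), and then assemble $f$ as a convergent positive combination of preservers. Your assembly is a harmless variant of the paper's, which instead sums $2^{n_{N-1}-M}\bigl(\sum_i c_{n_i}x^{n_i}+\epsilon_M\delta_M x^M\bigr)$ over all $M>n_{N-1}$ so that the geometric weights automatically recover the $c_{n_i}$; your three-block decomposition with the compensating $\alpha_i$ achieves the same thing.

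Where your sketch falls short is in the proof of the sublemma itself. Writing a general $A\in\bp_N((0,\rho))$ as $UU^T$ and attempting to control $q_M[A]$ directly by Cauchy--Binet on the columns of $U$ does not lead anywhere tractable: the entrywise power $A^{\circ m}$ is not a simple function of the Gram factorization, and Cauchy--Binet in the paper (Lemma \ref{detform}) is applied only to \emph{rank-one} matrices $\bu\bu^T$, where $h[\bu\bu^T]$ factors cleanly through the matrix $(\bu^{\circ n_0}|\cdots|\bu^{\circ n_{N-1}})$. The paper's route is: first prove the sublemma on $\bp^1_N((0,\rho))$ via Cauchy--Binet and upper/lower bounds on Schur polynomials (Proposition \ref{Pleading}, which is the genuinely new ingredient beyond \cite{BGKP-fixeddim}), and then pass from rank one to all ranks by an inductive extension principle (Theorem \ref{Tfitzhorn}) based on the FitzGerald--Horn integral identity \eqref{Etrick}. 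Your sketch omits this second step entirely; without it the argument is incomplete.
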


In particular, Theorem~\ref{main-1} shows that the Horn--Loewner-type
necessary criterion in Lemma~\ref{horn-type}(ii) cannot be improved upon.
Note from a limiting argument that we may replace $(0,\rho)$ here by
$[0,\rho]$, and hence by any subset of $[0,\rho]$, if desired.

Theorem~\ref{main-1} follows readily from the following special case:

\begin{theorem}\label{Tmain}
Let $N > 0$ and $0 \leqslant n_0 < \dots < n_{N-1} < M$ be integers.  Let
$0 < \rho < +\infty$, and let $c_{n_0},\dots,c_{n_{N-1}}$ be positive
reals.  Then there exists a negative number $c_M$ such that
\begin{equation}\label{cpoly}
 x \mapsto c_{n_0} x^{n_0} + c_{n_1} x^{n_1} + \dots + c_{n_{N-1}}
 x^{n_{N-1}} + c_M x^M
\end{equation}
entrywise preserves positivity on $\bp_N((0,\rho))$.
\end{theorem}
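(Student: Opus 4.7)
The plan is to find a constant $C = C(N, \mathbf{n}, M, \rho, c_{n_0},\ldots,c_{n_{N-1}}) > 0$ such that the PSD domination
\[
A^{\circ M} \preceq C \sum_{j=0}^{N-1} c_{n_j}\, A^{\circ n_j}
\]
holds uniformly for $A \in \bp_N([0,\rho])$, and then to set $c_M \coloneqq -1/C$, which immediately converts the bound into entrywise positivity preservation of the polynomial \eqref{cpoly}.

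First I would handle the rank-one case $A = \bu \bu^T$, $\bu \in [0,\sqrt{\rho}]^N$, where each Hadamard power factors as $A^{\circ n} = \bu^{\circ n}(\bu^{\circ n})^T$ and the inequality reduces to a comparison between rank-one matrices. When $\bu$ has distinct positive entries, the generalized Vandermonde positivity \eqref{Egantmacher} shows that $\bu^{\circ n_0},\ldots,\bu^{\circ n_{N-1}}$ form a basis of $\R^N$, so Cramer's rule yields the explicit expansion
\[
\bu^{\circ M} = \sum_{j=0}^{N-1} \alpha_j(\bu)\, \bu^{\circ n_j}, \qquad \alpha_j(\bu) = \frac{V(\bu, \mathbf{n}^{(j)})}{V(\bu, \mathbf{n})},
\]
where $\mathbf{n}^{(j)}$ is the tuple obtained from $\mathbf{n}$ by replacing $n_j$ with $M$. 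By the bialternant formula each $\alpha_j(\bu)$ equals a ratio of Schur polynomials, and the crucial analytic step is to show that these ratios are uniformly bounded on the compact set $[0,\sqrt{\rho}]^N$ (including the degenerate loci where entries coincide or vanish), using matched upper and lower bounds on Schur polynomials obtained from their nonnegative expansions over semistandard Young tableaux.

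To pass from rank one to general $A \in \bp_N([0,\rho])$, I would invoke a Cauchy--Binet style decomposition. Writing $A = BB^T$ for some $B \in \R^{N \times N}$, the diagonal bound $A_{ii} \leqslant \rho$ forces $|B_{ij}| \leqslant \sqrt{\rho}$ entrywise. Distributing Hadamard products over the outer-product decomposition $A = \sum_k \mathbf{B}_k \mathbf{B}_k^T$ into columns of $B$ then yields
\[
A^{\circ n} = \sum_{\mathbf{k} \in [N]^n} \mathbf{w}_\mathbf{k}\,\mathbf{w}_\mathbf{k}^T, \qquad \mathbf{w}_\mathbf{k} \coloneqq \mathbf{B}_{k_1} \circ \cdots \circ \mathbf{B}_{k_n}.
\]
Testing the desired PSD inequality against an arbitrary $\mathbf{v} \in \R^N$ reduces it to controlling $\sum_{|\mathbf{k}|=M}|\mathbf{v}^T \mathbf{w}_\mathbf{k}|^2$ by a suitable weighted sum of $\sum_{|\mathbf{k}|=n_j}|\mathbf{v}^T \mathbf{w}_\mathbf{k}|^2$, which can be attacked by the same generalized Vandermonde/Schur polynomial machinery applied multilinearly in the base vectors $\mathbf{B}_1,\ldots,\mathbf{B}_N$.

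The main obstacle I expect is the uniform Schur polynomial ratio bound in Step 2. On the open locus where $\bu$ has distinct positive entries the relevant ratio is smooth, but the denominator $s_\mu(\bu)$ can degenerate as entries coalesce or tend to zero, and establishing that the ratio nevertheless extends continuously (or at least remains bounded) across these loci requires a careful accounting of cancellations using the SSYT combinatorics of the numerator against those of the denominator. A secondary subtlety is the Cauchy--Binet lift: one must verify that the coefficients produced by the multilinear version of the rank-one argument aggregate into a single constant $C$ depending only on $\rho$, $N$, $\mathbf{n}$, $M$, and the $c_{n_j}$, rather than on the particular factorization $A = BB^T$ or the number of multi-indices $\mathbf{k}$ summed over.
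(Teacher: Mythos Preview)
Your rank-one step is essentially the paper's own argument: the paper also reduces to bounding the ratios $s_{\bn_j}(\bu)/s_\bn(\bu)$ and does so via the upper and lower bounds in Proposition~\ref{Pleading}, which is exactly the SSYT combinatorics you describe. Your stated worry about the denominator $s_\bn(\bu)$ degenerating is slightly misplaced: since $s_\bn$ is a sum of monomials with nonnegative integer coefficients, one of which is $\bu^{\bn-\bn_{\min}}$, it is strictly positive on $((0,\infty)^N)^T$, and the ratio extends continuously to the closure simply because the leading-monomial bounds in \eqref{Eleading} yield a uniform pointwise bound by $\frac{V(\bn_j)^2}{V(\bn_{\min})^2}\bu^{2(\bn_j-\bn)}$, which is bounded on $[0,\sqrt\rho]^N$. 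No careful limit analysis across coalescing loci is needed.

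The genuine gap is in your extension from rank one to general rank. The Cauchy--Binet expansion $A^{\circ n}=\sum_{\mathbf{k}\in[N]^n}\mathbf{w}_\mathbf{k}\mathbf{w}_\mathbf{k}^T$ is correct, and the reformulation as $\mathbf{v}^T A^{\circ n}\mathbf{v}=\|\sum_i v_i\,\mathbf{b}_i^{\otimes n}\|^2$ (with $\mathbf{b}_i$ the $i$th row of $B$) is valid, but this does not reduce to the rank-one case. The rank-one argument works because $\bu^{\circ n_0},\ldots,\bu^{\circ n_{N-1}}$ are vectors in a common space $\R^N$ forming a basis into which $\bu^{\circ M}$ can be expanded via Cramer's rule. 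In your multilinear setting the objects $\sum_i v_i\,\mathbf{b}_i^{\otimes n_j}$ live in \emph{different} tensor spaces $(\R^N)^{\otimes n_j}$ for different $j$, so there is no Cramer-type expansion of the $M$th-power term in terms of the lower ones. Moreover, the columns $\mathbf{B}_k$ of $B$ need not have nonnegative entries even when $A\in\bp_N([0,\rho])$, so the Schur-polynomial positivity that drives the rank-one bound is unavailable. Your phrase ``the same generalized Vandermonde/Schur polynomial machinery applied multilinearly'' does not correspond to any concrete identity, and I do not see how to make it one.

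The paper's route is entirely different here: it uses the FitzGerald--Horn integration trick (Theorem~\ref{Tfitzhorn}). One writes $A=\zeta\zeta^T+(A-\zeta\zeta^T)$ with $\zeta$ the last column of $A$ normalized, so that $A-\zeta\zeta^T$ is positive semidefinite with vanishing last row and column; then the fundamental theorem of calculus gives $h[A]=h[\zeta\zeta^T]+\int_0^1(A-\zeta\zeta^T)\circ h'[\lambda A+(1-\lambda)\zeta\zeta^T]\,d\lambda$, reducing to the rank-one case plus an inductive hypothesis on $h'$ in dimension $N-1$. The induction requires checking that the explicit threshold $\mathcal{K}$ in \eqref{k1def} satisfies $\mathcal{K}\geqslant M\tilde{\mathcal{K}}$ for the derivative's threshold $\tilde{\mathcal{K}}$, which is a direct computation. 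This is the step you are missing.
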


Indeed, to derive Theorem~\ref{main-1} from Theorem~\ref{Tmain}, we see
(since the space of entrywise positivity preserving functions forms a
cone, and because any monomial is entrywise positivity preserving thanks
to the Schur product theorem) that for any $M > n_{N-1}$, there exists
$\delta_M > 0$ such that the polynomial~\eqref{cpoly} is entrywise
positivity preserving whenever $|c_M| \leqslant \delta_M$; by shrinking
$\delta_M$ if necessary, we may assume that $\delta_M \leqslant
\frac{1}{M!}$ (say) for all $M$.  Multiplying~\eqref{cpoly} (with $c_M$
replaced by $\epsilon_M \delta_M$) by $2^{n_{N-1}-M}$ and summing over
all $M > n_{N-1}$, we obtain Theorem~\ref{main-1} with $c_M \coloneqq
2^{n_{N-1}-M} \epsilon_M \delta_M$.

Theorem~\ref{Tmain} can be reformulated as a matrix inequality: for any
$0 \leqslant n_0 < \dots < n_{N-1} < M$, $0 < \rho < +\infty$, and
$c_{n_0},\dots,c_{n_{N-1}} > 0$, there exists a finite threshold
${\mathcal K}$ (depending on $n_0,\dots,n_{N-1}, \rho, c_{n_0}, \dots,
c_{n_{N-1}}, M$) such that
\begin{equation}\label{am}
 A^{\circ M} \preceq {\mathcal K} \sum_{j=0}^{N-1} c_{n_j} A^{\circ n_j} 
\end{equation}
for any $A \in \bp_N((0,\rho))$.  The quantity ${\mathcal K}$ provided by
the argument will be explicit (see~\eqref{k1def}) but not completely
optimal; the optimal threshold is given in Theorems~\ref{Treal-rank1},
\ref{Treal-rank2} below, and established in Section~\ref{S8}.

The bounds in Theorem~\ref{Tmain} will be sufficiently strong that we can
replace the monomials $x^M$ in~\eqref{am} with arbitrary convergent power
series:

\begin{corollary}[Analytic functions]\label{Tanalytic}
Fix integers $N>0$ and $0 \leqslant n_0 < \cdots <
n_{N-1}$, and a polynomial $c_{n_0} x^{n_0} + \cdots +
c_{n_{N-1}} x^{n_{N-1}}$, with $c_{n_j} > 0\ \forall j$.
Let $0 < \rho < +\infty$.
Given a power series $g(x) = \sum_{M > n_{N-1}} g_M x^M$ which is
convergent at $\rho$, there exists a finite threshold $\mathcal{K} =
\mathcal{K}(n_0,\dots,n_{N-1}, \rho, c_{n_0}, \dots, c_{n_{N-1}}, g)$
such that the function
\[
x \mapsto \mathcal{K} \sum_{j=0}^{N-1} c_{n_j} x^{n_j} - g(x)
\]
is entrywise positivity preserving on $\bp_N((0,\rho))$.  Equivalently,
one has
\begin{equation}\label{amp}
 g[A] \preceq {\mathcal K} \sum_{j=0}^{N-1} c_{n_j}
 A^{\circ n_j}
\end{equation}
for all $A \in \bp_N((0,\rho))$.
\end{corollary}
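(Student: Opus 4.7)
The plan is to reduce Corollary \ref{Tanalytic} to the monomial case already handled by Theorem \ref{Tmain}, then sum the resulting inequalities termwise over the power series expansion of $g$. The new ingredient, relative to just applying Theorem \ref{Tmain} infinitely many times, is quantitative control on the growth of the monomial thresholds in $M$ so that the resulting series for $\mathcal{K}$ converges.

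For the first step, fix $N$, the exponents $n_0 < \dots < n_{N-1}$, the positive coefficients $c_{n_0}, \dots, c_{n_{N-1}}$, and $0 < \rho < \infty$. I would apply Theorem \ref{Tmain} (in the equivalent matrix form \eqref{am}) to each integer $M > n_{N-1}$ to obtain a finite threshold $\mathcal{K}_M \geqslant 0$ such that
\[
A^{\circ M} \preceq \mathcal{K}_M \sum_{j=0}^{N-1} c_{n_j} A^{\circ n_j}
\qquad \forall A \in \bp_N((0,\rho)).
\]
Since Theorem \ref{Tmain} will supply the \emph{explicit} threshold formula \eqref{k1def}, I would retain this formula (rather than just the existence of $\mathcal{K}_M$) for use in the convergence step below.

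Next, I would decompose $g$ according to the sign of its Maclaurin coefficients: write $g = g^+ - g^-$ where $g^\pm(x) = \sum g^\pm_M x^M$ both have non-negative coefficients. Because each Hadamard power $A^{\circ M}$ lies in $\bp_N(\C)$ by the Schur product theorem, we have $g^-[A] \succeq 0$, and therefore $g[A] \preceq g^+[A]$. Bounding termwise using Step 1,
\[
g[A] \preceq g^+[A] = \sum_{M : g_M > 0} g_M A^{\circ M}
\preceq \Bigl( \sum_{M : g_M > 0} g_M \mathcal{K}_M \Bigr) \sum_{j=0}^{N-1} c_{n_j} A^{\circ n_j},
\]
so the required constant is $\mathcal{K} \coloneqq \sum_{M : g_M > 0} g_M \mathcal{K}_M$, provided this sum is finite.

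The main obstacle, and the only nontrivial step, is verifying this last finiteness. Here the statement of Theorem \ref{Tmain} by itself is insufficient; one must inspect the explicit construction of $\mathcal{K}_M$ (formula \eqref{k1def}) and verify that it grows at most like a polynomial in $M$ times $\rho^{M - n_{N-1}}$, matching the natural scaling $\|A^{\circ M}\|_{\mathrm{op}} \lesssim N\rho^M$ for $A \in \bp_N((0,\rho))$. Granting such a bound $\mathcal{K}_M \leqslant P(M)\, \rho^M$ for some polynomial $P$, the hypothesis that $g(x) = \sum g_M x^M$ converges at $\rho$ (so that $|g_M| \rho^M$ decays faster than any polynomial in $M$ can blow up it up --- or, if we interpret ``convergent at $\rho$'' as absolute convergence on a neighborhood of $[0,\rho]$, the conclusion is immediate) yields absolute summability of $g_M \mathcal{K}_M$, and the proof is complete. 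The delicate point is thus not the structural argument but the matching of the $M$-dependence of the threshold from Theorem \ref{Tmain} with the decay rate of the coefficients of $g$.
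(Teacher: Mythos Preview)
Your approach is essentially the paper's: apply Theorem \ref{Tmain} with the explicit constant \eqref{k1def} to each $M$, dominate $g[A]$ by the absolutely-convergent series $\sum_M |g_M| A^{\circ M}$ (your $g^+/g^-$ split is equivalent to the paper's appeal to the triangle inequality), and then sum the resulting bounds to obtain $\mathcal{K} = \sum_M |g_M|\,\mathcal{K}_M$.

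The one genuine gap is in your convergence step. Your parenthetical claim that convergence of $\sum_M g_M \rho^M$ forces $|g_M|\rho^M$ to decay faster than any polynomial is false: take $g_M = \rho^{-M}/M^2$, for which $\sum g_M \rho^M$ converges but $\sum g_M\, P(M)\rho^M$ diverges once $\deg P \geqslant 1$. Since \eqref{k1def} gives $\mathcal{K}_M$ of size $P(M)\rho^M$ with $\deg P = 2(N-1)$, this is exactly the regime you must handle. The paper closes this gap via your second alternative, made rigorous: one first replaces $\rho$ by $\rho-\eps$ and argues that it suffices to prove the result there (taking limits afterward). On $\bp_N((0,\rho-\eps))$, the series $g$---which by hypothesis converges at $\rho$---now converges strictly beyond the working radius, so $|g_M| \leqslant C\rho^{-M}(1+\eps')^{-M}$ for some $\eps' > 0$, and this exponential decay beats the polynomial growth of $\mathcal{K}_M$. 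So you had identified the right fix; what was missing was the reduction that lets you assume it without strengthening the hypothesis.
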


We establish this result in Section~\ref{Tan-sec}.  It should be possible
to relax the requirement that $g$ be a convergent power series to the
hypothesis that $g$ is in the regularity class $C^M([0,\rho])$ for some
sufficiently large $M$, but we will not attempt to do so here.

\begin{remark}  
If one specializes~\eqref{am} to the rank one matrix $A = \bu \bu^T$ with
$\bu = (u_1,\dots,u_N)^T$ and $0 < u_1 < \cdots < u_N$, we conclude in
particular that the vectors $(u_1^{n_j}, \dots, u_N^{n_j})^T$ for
$j=1,\dots,N$ are linearly independent, which is
essentially~\eqref{Egantmacher} (in the case of non-negative integer
exponents).  One may thus view Theorem~\ref{Tmain} as a ``robust''
variant of~\eqref{Egantmacher}.
\end{remark}

Coming to the unbounded domain case $I = (0,+\infty)$, we
once again completely resolve Question~\ref{Q1}. Just as
Theorem~\ref{main-1} demonstrates the sharpness of
Lemma~\ref{horn-type}(ii), our second main result demonstrates the
sharpness of Lemma~\ref{horn-type}(iii):

\begin{theorem}\label{main-2}
Let $N > 0$ and $0 \leqslant n_0 < \dots < n_{N-1}$ be integers, and for
each $M > n_{N-1}$, let $\epsilon_M \in \{-1,0,+1\}$ be a sign. Suppose
that whenever $\epsilon_{M_0} = -1$ for some $M_0 > n_{N-1}$, one has
$\epsilon_M = +1$ for at least $N$ choices of $M > M_0$. Let
$c_{n_0},\dots,c_{n_{N-1}}$ be positive reals.  Then there exists a
convergent power series
\[
f(x) = c_{n_0} x^{n_0} + c_{n_1} x^{n_1} + \dots + c_{n_{N-1}}
x^{n_{N-1}} + \sum_{M > n_{N-1}} c_M x^M
\]
on $(0,+\infty)$ that is an entrywise positivity preserver on
$\bp_N((0,+\infty))$, such that for each $M > n_{N-1}$, $c_M$ has the
sign of $\epsilon_M$.
\end{theorem}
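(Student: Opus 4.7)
The plan is to reduce Theorem \ref{main-2} to an unbounded-domain analogue of Theorem \ref{Tmain}: namely, the assertion that for any natural numbers $0 \leqslant n_0 < \cdots < n_{N-1} < M_0 < p_1 < \cdots < p_N$ and any positive reals $c_{n_0}, \ldots, c_{n_{N-1}}, c_{p_1}, \ldots, c_{p_N}$, there exists a negative $c_{M_0}$ such that the $(2N+1)$-term polynomial
\[
\sum_{j=0}^{N-1} c_{n_j} x^{n_j} + c_{M_0} x^{M_0} + \sum_{i=1}^{N} c_{p_i} x^{p_i}
\]
entrywise preserves positivity on $\bp_N((0,+\infty))$. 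This single-negative-coefficient statement plays the role on the unbounded domain that Theorem \ref{Tmain} played for bounded intervals, and the matching Horn-type lower bound of $N$ positive coefficients following any negative one (Lemma \ref{horn-type}(iii)) explains why $2N+1$ is the correct number of terms to isolate.

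Granting this proposition, Theorem \ref{main-2} follows by a weighted-summation argument parallel to the derivation of Theorem \ref{main-1} from Theorem \ref{Tmain}. For each $M_0$ with $\epsilon_{M_0}=-1$ I would let $p_1(M_0) < \cdots < p_N(M_0)$ be the first $N$ integers exceeding $M_0$ at which $\epsilon=+1$ (which exist by hypothesis), apply the proposition to produce a preserver $P_{M_0}$ whose support is $\{n_0,\dots,n_{N-1},M_0,p_1(M_0),\dots,p_N(M_0)\}$ with the correct signs, and then form $f \coloneqq \sum_{M_0} w_{M_0} P_{M_0}$ with weights $w_{M_0}>0$ small enough for convergence on $(0,+\infty)$; pure positive monomials $\delta_M x^M$ are adjoined at every $M$ with $\epsilon_M=+1$ that is missed by all the $p_i(M_0)$'s, and a final positive top-up at the $n_j$ matches the prescribed leading coefficients. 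The key sign-tracking point is that both the degrees $M_0$ and the degrees $p_i(M_0)$ only ever fall where $\epsilon\neq 0$, so zeros of $\epsilon$ are automatically preserved; no negative contribution is ever produced at a nonnegative target.

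For the proposition itself, the strategy is to adapt the Cauchy--Binet and Schur-polynomial machinery that underlies Theorem \ref{Tmain}. Applying Cauchy--Binet to the Gram factorisation of $A \in \bp_N((0,+\infty))$, entrywise positivity of the polynomial reduces to a pointwise inequality bounding a single generalized-Vandermonde quantity $V_{M_0}(\bu)^2$ by a positive combination of the $V_{n_j}(\bu)^2$ and the $V_{p_i}(\bu)^2$, uniformly in $\bu \in (0,+\infty)^N$. For tuples $\bu$ confined to a compact subset the $N$ leading Vandermondes already dominate $V_{M_0}(\bu)^2$ (this is the bounded-domain mechanism of Theorem \ref{Tmain}); once $\bu$ has a very large entry the $N$ trailing Vandermondes take over; the two regimes can be glued using the standard upper and lower bounds on Schur polynomials, with the gluing threshold depending only on $n_0,\dots,n_{N-1},M_0,p_1,\dots,p_N$ and the prescribed positive coefficients.

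The main obstacle is precisely this uniform two-sided control. In the bounded case one only needs upper bounds on Schur polynomials evaluated on a compact set of tuples, whereas on $(0,+\infty)$ both small and large scales of $\bu$ must be handled simultaneously, and the threshold where the leading regime hands over to the trailing regime must be tracked carefully to extract a single universal constant $|c_{M_0}|$. Sharpness of the $N$-trailing-terms hypothesis is manifested directly in the argument: with fewer than $N$ positive $p_i$'s a sequence of tuples $\bu$ with one entry tending to infinity would force the required ratio to diverge, recovering the necessary condition of Lemma \ref{horn-type}(iii).
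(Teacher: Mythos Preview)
Your overall architecture is correct and matches the paper: one first isolates the single-negative-coefficient assertion (this is exactly Theorem~\ref{Tunbdd}), and then derives Theorem~\ref{main-2} by a weighted superposition of such polynomials together with pure positive monomials, essentially as you describe.

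The gap is in your sketch of the proposition itself. The Cauchy--Binet identity (Lemma~\ref{detform}) applies only to rank-one matrices $A=\bu\bu^T$; for a general $A\in\bp_N((0,+\infty))$ there is no ``Gram factorisation'' of $h[A]$ to which Cauchy--Binet applies, and entrywise positivity does \emph{not} reduce to a pointwise inequality in a single vector $\bu$. The paper proceeds in two stages: first the rank-one case via Cauchy--Binet and the Schur-polynomial bounds of Proposition~\ref{Pleading}, and then the extension to all of $\bp_N((0,+\infty))$ via the FitzGerald--Horn extension principle (Theorem~\ref{Tfitzhorn}), by induction on $N$ using that the derivative of the $(2N+1)$-term polynomial again has the required shape in dimension $N-1$. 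This second step is entirely absent from your proposal. There is also a smaller imprecision in the rank-one analysis: after Cauchy--Binet the negative part of the determinant is not a single quantity ``$V_{M_0}(\bu)^2$'' but a sum indexed by all $(N-1)$-subsets $C$ of the $2N$ positive exponents, each adjoined with $M_0$. The paper bounds each such term uniformly by observing that any such $C$ must omit some $n_-<M_0$ and some $n_+>M_0$, whence $\bu^{C\sqcup\{M_0\}}\leqslant \bu^{C\sqcup\{n_-\}}+\bu^{C\sqcup\{n_+\}}$ directly (splitting on whether the coordinate of $\bu$ paired with $M_0$ is at most $1$); this avoids any two-regime gluing.
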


Unlike the setting of bounded $I$, this is also the first existence
result for power series preservers of $\bp_N(I)$ with negative
coefficients.

Like the setting of bounded $I$, Theorem~\ref{main-2} is a consequence of
the following special case:

\begin{theorem}\label{Tunbdd}
Let $N > 0$ and $0 \leqslant n_0 < \dots < n_{N-1} < M < n_N < \dots <
n_{2N-1}$ be integers, and let $c_{n_0},\dots,c_{n_{2N-1}}$ be positive
reals.  Then there exists a negative number $c_M$ such that
\begin{equation}\label{cpoly2}
 x \mapsto c_{n_0} x^{n_0} + c_{n_1} x^{n_1} + \dots + c_{n_{N-1}}
 x^{n_{N-1}} + c_M x^M + c_{n_N} x^{n_N} + \dots + c_{n_{2N-1}}
 x^{n_{2N-1}}
\end{equation}
entrywise preserves positivity on $\bp_N((0,+\infty))$.
\end{theorem}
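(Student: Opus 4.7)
The plan is to derive Theorem \ref{Tunbdd} from a matrix inequality of the form
\begin{equation}\label{Etsi}
A^{\circ M} \preceq K_1 \sum_{j=0}^{N-1} c_{n_j} A^{\circ n_j} + K_2 \sum_{j=N}^{2N-1} c_{n_j} A^{\circ n_j} \qquad \text{for all } A \in \bp_N((0,+\infty)),
\end{equation}
where $K_1, K_2$ are finite constants depending on the data. Given \eqref{Etsi}, the choice $c_M \coloneqq -\min\{1/K_1, 1/K_2\}$ renders the polynomial \eqref{cpoly2} entrywise positivity preserving on $\bp_N((0,+\infty))$, since then
\[
\sum_{j=0}^{2N-1} c_{n_j} A^{\circ n_j} + c_M A^{\circ M} \succeq (1 - |c_M| K_1) \sum_{j<N} c_{n_j} A^{\circ n_j} + (1 - |c_M| K_2) \sum_{j \geq N} c_{n_j} A^{\circ n_j} \succeq 0.
\]

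To prove \eqref{Etsi}, I would split the analysis by the scale of $A$. In the low-scale regime $A \in \bp_N((0,1])$, Theorem \ref{Tmain} (applied with $\rho = 2$ to the exponents $n_0 < \dots < n_{N-1} < M$ and coefficients $c_{n_0}, \dots, c_{n_{N-1}}$) directly furnishes a finite $K_1$ with $A^{\circ M} \preceq K_1 \sum_{j<N} c_{n_j} A^{\circ n_j}$, establishing \eqref{Etsi} with $K_2 = 0$. For the complementary high-scale regime, I would derive the analogous bound using the high powers via a reciprocal substitution. Concretely, for rank-one $A = \mathbf{u}\mathbf{u}^T$ with $u_i \geq 1$ for all $i$, setting $\tilde u_i \coloneqq 1/u_i \in (0,1]$ and reparameterizing the test vector as $w_i \coloneqq v_i \tilde u_i^{-n_{2N-1}}$ transforms the rank-one form
\[
\Big(\sum_i v_i u_i^M\Big)^2 \leq K_2 \sum_{j=N}^{2N-1} c_{n_j} \Big(\sum_i v_i u_i^{n_j}\Big)^2
\]
into the equivalent bounded-domain inequality
\[
\Big(\sum_i w_i \tilde u_i^{\tilde M}\Big)^2 \leq K_2 \sum_{j=0}^{N-1} c_{n_{2N-1-j}} \Big(\sum_i w_i \tilde u_i^{\tilde n_j}\Big)^2,
\]
where $\tilde n_j \coloneqq n_{2N-1} - n_{2N-1-j}$ and $\tilde M \coloneqq n_{2N-1} - M$ satisfy $0 = \tilde n_0 < \tilde n_1 < \dots < \tilde n_{N-1} < \tilde M$ (the last inequality uses $M < n_N$). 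This is precisely the rank-one case of Theorem \ref{Tmain} applied to $\tilde{\mathbf{u}} \in (0,1]^N$, yielding a finite $K_2$. The lift from rank-one to general PSD $A$ would then proceed via the Cauchy--Binet based reduction already developed in the proof of Theorem \ref{Tmain}.

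The main obstacle is the mixed-scale case: matrices $A$ whose entries span a wide range of magnitudes, for which neither of the above rank-one sub-regimes directly applies. I expect this to be overcome either by (i) a continuous interpolation on the rank-one side between the low-power expansion $\mathbf{u}^{\circ M} = \sum_{j<N} \alpha_j(\mathbf{u}) \mathbf{u}^{\circ n_j}$ and the high-power expansion $\mathbf{u}^{\circ M} = \sum_{j \geq N} \beta_j(\mathbf{u}) \mathbf{u}^{\circ n_j}$ with weights tuned to the spread of $\mathbf{u}$; or, preferably, (ii) by applying the Cauchy--Binet expansion of $A^{\circ M}$ directly in terms of Schur polynomials and invoking sharp two-sided Schur polynomial bounds of the kind referenced in the abstract, which should yield \eqref{Etsi} uniformly without any case split. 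The latter route parallels the strategy used for Theorem \ref{Tmain} and is likely to be the cleaner approach.
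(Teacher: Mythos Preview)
Your proposal is incomplete: you correctly flag the mixed-scale regime as the main obstacle, but neither of your suggested resolutions (i) or (ii) is actually carried out, and the scale-splitting strategy you begin with does not close this gap on its own. The difficulty is real: for rank-one $A = \mathbf{u}\mathbf{u}^T$ with some coordinates of $\mathbf{u}$ large and others small, neither the low-power bound nor the reciprocal high-power bound controls $A^{\circ M}$ with a uniform constant, and there is no evident interpolation between the two expansions of $\mathbf{u}^{\circ M}$ that yields one. (A smaller point: the lift from rank one to general $A$ in the paper is the FitzGerald--Horn integration trick of Theorem~\ref{Tfitzhorn}, not a Cauchy--Binet reduction as you suggest.)

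The paper's proof avoids the scale split entirely. It treats all $2N$ exponents as a single set $S = \{n_0,\dots,n_{2N-1}\}$, sets $h(x) = \sum_{j} x^{n_j}$ (after normalizing $c_{n_j}=1$), and applies Lemma~\ref{detform} to $\det\bigl(t\,h[\mathbf{u}\mathbf{u}^T] - \mathbf{u}^{\circ M}(\mathbf{u}^{\circ M})^T\bigr)$. The leading term is $t^N \sum_{B \in S^N_<} s_B(\mathbf{u})^2$ and the subleading term is $t^{N-1}\sum_{C \in S^{N-1}_<} s_{C \sqcup \{M\}}(\mathbf{u})^2$; after Proposition~\ref{Pleading} reduces to monomials, the decisive observation is combinatorial: since $|C|=N-1$ while $S$ has $N$ elements below $M$ and $N$ above, every $C$ omits some $n_- < M$ and some $n_+ > M$, giving
\[
\mathbf{u}^{C \sqcup \{M\}} \leqslant \mathbf{u}^{C \sqcup \{n_-\}} + \mathbf{u}^{C \sqcup \{n_+\}}
\]
uniformly in $\mathbf{u}$. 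This pigeonhole step is exactly what your split obscures: by committing in advance to only the low block or only the high block, you discard the mixed $N$-subsets $B \subset S$ that dominate in the mixed-scale regime. Your route (ii) points in the right direction, but the missing idea is to run Cauchy--Binet on the full $(2N{+}1)$-term polynomial and then exploit this combinatorial surplus.
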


Indeed, if $N, n_0,\dots,n_{N-1}, (\epsilon_M)_{M>n_{N-1}}$ are as in
Theorem~\ref{main-2}, then from Theorem~\ref{Tunbdd}, one may find for
each $M > n_{n-1}$ with $\epsilon_M = -1$, a real number $0 < \delta_M
\leqslant \frac{1}{M!}$ such that
\[
f_M : x \mapsto c_{n_0} x^{n_0} + c_{n_1} x^{n_1} + \dots
+ c_{n_{N-1}} x^{n_{N-1}} - \delta_M x^M + \sum_{n > M: \epsilon_n = +1}
\frac{1}{n!} x^n
\]
entrywise preserves positivity on $\bp_N((0,+\infty))$.  
For all other powers $M > n_{N-1}$ with $\epsilon_M \neq -1$, define
$f_M(x) \coloneqq \sum_{j=0}^{N-1} c_{n_j} x^{n_j} +
\frac{\epsilon_M}{M!} x^M$.
Now Theorem~\ref{main-2} follows by considering $f(x) \coloneqq \sum_{M >
n_{N-1}} 2^{n_{N-1} - M} f_M(x)$; note here that $|f(x)| \leqslant
\sum_{j=0}^{N-1} c_{n_j} x^{n_j} + (x+1) e^x$ for $x>0$.

We prove Theorem~\ref{Tunbdd} in Section~\ref{unbounded-proof}.  As one
corollary of this theorem (and Lemma~\ref{horn-type}(iii)), we see that
for any $N$, there exist analytic functions that entrywise preserve
positivity on $\bp_N((0,+\infty))$ but not on $\bp_{N+1}((0,+\infty))$.

We are also able to establish analogues of the above theorems in which
the exponents $n_j, M$ are real numbers rather than natural numbers; see
Section~\ref{real-sec}. This allows us to answer Question~\ref{Q1} for
real powers, thus  replacing power series by countable sums of powers,
including but not restricted to Hahn and Puiseux series.  Similarly, we
obtain an analogue of Corollary~\ref{Tanalytic} in which the analytic
function $g$ is replaced by a Laplace transform of more general real
measures with support in $(n_{N-1},\infty)$.

On the other hand, if one replaces the domain $(0,\rho)$ with a two-sided
domain $(-\rho,\rho)$ or with a complex disk $D(0,\rho)$, then the
results largely break down for all tuples $\bn \coloneqq (n_0, \dots,
n_{N-1})$ that do not equal shifts by $h \in \Z^{\geqslant 0}$ of the
``minimal'' tuple $(0,\dots, N-1)$; see Sections~\ref{twoside},
\ref{complex}. As the results for tuples of the form $\bn =
(h,h+1,\dots,h+N-1)$ were uniformly valid over $I = D(0,\rho)$
(see~\cite{BGKP-fixeddim}), it follows that the problem for every other
$\bn$ is more challenging, and new techniques are required to resolve
Question~\ref{Q1}.

Our proof strategy is as follows.  We first focus on establishing
entrywise positivity preservation for rank one matrices $\bu \bu^T$.  In
this case, one can use the Cauchy--Binet formula to obtain an explicit
criterion for positive definiteness, in terms of generalized Vandermonde
determinants.  In the case of natural number exponents, these
determinants can be factored as the product of the ordinary Vandermonde
determinant and a Schur polynomial.  One can then use the totally
positive nature of Schur polynomials to obtain satisfactory upper and
lower bounds on these polynomials (relying crucially on the fact that we
are restricting the entries of the rank one matrix to be non-negative).
The main novelty in our arguments, compared to previous work, is the use
of \emph{lower} bounds on Schur polynomials, which are needed due to the
presence of such polynomials in the denominators of the formulae for
various thresholds whenever $\bn \neq (h,h+1,\dots,h+N-1)$ for $h \in
\Z^{\geqslant 0}$.

Once entrywise positivity preservation is shown for rank one matrices, we
induct on $N$ using an argument of FitzGerald and Horn~\cite{FitzHorn},
relying on the observation that any positive definite matrix can be
viewed as the sum of a rank one matrix and a matrix with vanishing final
row and column, allowing one to derive entrywise positivity preservation
for general positive definite matrices from the rank one case and the
induction hypothesis using the fundamental theorem of calculus.

In the case of real exponents, the same argument as above is used to
extend the threshold from rank-one matrices to all matrices. To produce a
threshold in the rank-one case, Schur polynomials are no longer available
to control generalized Vandermonde determinants, but we can use the
famous Harish-Chandra--Itzykson--Zuber formula~\cite{hc,iz} as a
substitute for obtaining the corresponding upper bound.
For the lower bound, we refine this analysis using Gelfand--Tsetlin
polytopes.
These effective lower and upper bounds allow us to
answer Question~\ref{Q1} for real powers, and also to
extend Corollary~\ref{Tanalytic} to Laplace transforms.
The bounds are also applied later in the paper, to prove a new
characterization of weak majorization (see Theorem~\ref{Tcgs}).
It is remarkable that not only Schur polynomials, but also the
Harish-Chandra--Itzykson--Zuber unitary integral,
Gelfand--Tsetlin patterns, and Schur positivity (below) --
all of which play a central role in our proofs -- arise naturally in type
$A$ representation theory.

\subsection{New results 2: Exact quantitative bounds and
applications}\label{Squant}

We now produce sharper bounds. As our chief purpose in the previously
stated results was to solve Question~\ref{Q1}, it sufficed to use lower
and upper bounds on Schur polynomials to obtain threshold bounds. We
will show the following exact result for rank-one matrices.

\begin{theorem}\label{Treal-rank1}
Fix an integer $N>0$ and real powers $n_0 < \cdots < n_{N-1} < M$. Also
fix real scalars $\rho > 0$ and $c_{n_0}, \dots, c_{n_{N-1}}, c'$, and
define
\begin{equation}
f(x) \coloneqq \sum_{j=0}^{N-1} c_{n_j} x^{n_j} + c' x^M.
\end{equation}

\noindent Then the following are equivalent:
\begin{enumerate}
\item The entrywise map $f[-]$ preserves \thinspace
positivity \thinspace on \thinspace rank-one \thinspace matrices
\thinspace in $\bp_N((0,\rho))$.

\item Either all $c_{n_j}, c' \geqslant 0$; or $c_{n_j} > 0\ \forall j$
and $c' \geqslant -\mathcal{C}^{-1}$, where
\begin{equation}\label{Esharp}
\mathcal{C} = \sum_{j=0}^{N-1} \frac{V(\bn_j)^2}{V(\bn)^2}
\frac{\rho^{M - n_j}}{c_{n_j}}.
\end{equation}

\noindent Here $\bn \coloneqq (n_0, \dots, n_{N-1})$, $\bn_j \coloneqq
(n_0, \dots, n_{j-1}, n_{j+1}, \dots, n_{N-1}, M)$, and given a tuple
$(t_0, \dots, t_{k-1})$ or a vector ${\bf t} = (t_0, \dots, t_{k-1})^T$,
we define its ``Vandermonde determinant'' to be
\[
V(t_0, \dots, t_{k-1}) = V({\bf t}) \coloneqq \prod_{0 \leqslant i < j
\leqslant k-1} (t_j - t_i).
\]
\end{enumerate}
\end{theorem}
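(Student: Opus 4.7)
My plan is to reduce the rank-one positivity question to a single scalar inequality per $\bu$, and then identify $\mathcal{C}$ as a supremum over $\bu$ that is attained in a degenerate limit.

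\smallskip
\textbf{Reduction to a scalar criterion.} For $A=\bu\bu^T$ with $\bu\in(0,\sqrt{\rho})^N$ of distinct coordinates, writing $\bv_\alpha:=(u_i^\alpha)_{i=1}^N$ one has $f[A]=\sum_{j=0}^{N-1}c_{n_j}\bv_{n_j}\bv_{n_j}^T+c'\bv_M\bv_M^T$. If all coefficients are non-negative then $f[A]\succeq 0$ is immediate; and if $c'<0$ while some $c_{n_j}\le 0$, Lemma~\ref{horn-type}(ii) rules out preservation (at most $N-1$ strictly positive coefficients would precede $c_M$), so I may henceforth assume all $c_{n_j}>0$. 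By \eqref{Egantmacher} the family $\{\bv_{n_j}\}_{j=0}^{N-1}$ is a basis of $\R^N$, so I may write uniquely $\bv_M=\sum_j\lambda_j(\bu)\bv_{n_j}$. Setting $V_0:=[\bv_{n_0}\mid\cdots\mid\bv_{n_{N-1}}]\in GL_N(\R)$, $D_0:=\mathrm{diag}(c_{n_j})$, $\bl:=(\lambda_j(\bu))_{j=0}^{N-1}$, one has $f[A]=V_0(D_0+c'\bl\bl^T)V_0^T$, so $f[A]\succeq 0$ iff $D_0+c'\bl\bl^T\succeq 0$; since $D_0\succ 0$, the matrix determinant lemma makes this equivalent to $1+c'\sum_{j=0}^{N-1}\lambda_j(\bu)^2/c_{n_j}\ge 0$. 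Cramer's rule gives $\lambda_j(\bu)^2=V_{\bn_j}(\bu)^2/V_\bn(\bu)^2$, where $V_\bal(\bu):=\det(u_i^{\alpha_k})_{i,k=1}^N$ is the generalized Vandermonde. Thus the sharp condition on $c'$ is $c'\ge -\mathcal{C}_*^{-1}$ with $\mathcal{C}_*:=\sup_{\bu}\sum_{j=0}^{N-1}V_{\bn_j}(\bu)^2/(V_\bn(\bu)^2\,c_{n_j})$, and the theorem reduces to showing $\mathcal{C}_*=\mathcal{C}$.

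\smallskip
\textbf{Lower bound $\mathcal{C}_*\ge\mathcal{C}$.} I parametrize $u_i=\sqrt{\rho}\,e^{-\eps w_i}$ for distinct $w_i>0$ held fixed while $\eps\to 0^+$. Taylor expanding $u_i^{\alpha_k}=\rho^{\alpha_k/2}e^{-\eps\alpha_k w_i}$ inside the Leibniz expansion of $V_\bal(\bu)$ yields
\[
V_\bal(\bu)=(-\eps)^{\binom{N}{2}}\rho^{|\bal|/2}\,\frac{V(\bal)\,V(w_1,\dots,w_N)}{\prod_{k=0}^{N-1}k!}+O(\eps^{\binom{N}{2}+1}).
\]
The $w$-dependence cancels in the ratio, giving $V_{\bn_j}(\bu)/V_\bn(\bu)\to\rho^{(M-n_j)/2}V(\bn_j)/V(\bn)$. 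Summing over $j$ yields $\sum_j\lambda_j(\bu)^2/c_{n_j}\to\mathcal{C}$, whence $\mathcal{C}_*\ge\mathcal{C}$.

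\smallskip
\textbf{Matching upper bound---the main obstacle.} One must prove, for each distinct $\bu\in(0,\sqrt\rho)^N$ and each $j$, the pointwise bound
\[
V_{\bn_j}(\bu)^2\,V(\bn)^2\le V_\bn(\bu)^2\,V(\bn_j)^2\,\rho^{M-n_j};
\]
summing then delivers $\mathcal{C}_*\le\mathcal{C}$. After substituting $t_i:=u_i/\sqrt\rho\in(0,1)$ this reduces to the ratio bound $|V_{\bn_j}(t_1,\dots,t_N)|/V(\bn_j)\le|V_\bn(t_1,\dots,t_N)|/V(\bn)$ on $(0,1)^N$. When the $n_k$ are integers, $V_\bal(t)=V(t)\,s_{\lambda(\bal)}(t)$ and $V(\bal)/\prod_k k!=s_{\lambda(\bal)}(\mathbf{1})$ (Weyl dimension formula), so the claim becomes the normalized-Schur comparison $s_{\lambda(\bn_j)}(t)/s_{\lambda(\bn_j)}(\mathbf{1})\le s_{\lambda(\bn)}(t)/s_{\lambda(\bn)}(\mathbf{1})$ on $(0,1)^N$. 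I plan to deduce this from the Lam--Postnikov--Pylyavskyy Schur positivity theorem, since the shapes $\lambda(\bn_j),\lambda(\bn)$ differ by the single substitution $n_j\mapsto M$ with $M>n_{N-1}$, placing them in the correct dominance comparison. For general real exponents, the same inequality follows from the Harish-Chandra--Itzykson--Zuber integral representation of $V_\bal$, refined by the Gelfand--Tsetlin polytope estimates developed earlier in the paper. Combined with $\mathcal{C}_*\ge\mathcal{C}$, this forces $\mathcal{C}_*=\mathcal{C}$ and completes the proof.
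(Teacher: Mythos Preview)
Your approach is essentially the paper's. The scalar reduction via the matrix determinant lemma is exactly the route acknowledged in Remark~\ref{Rrayleigh} (the paper's Proposition~\ref{ab} reaches the same formula via Cauchy--Binet); your lower bound $\mathcal{C}_*\geqslant\mathcal{C}$ by degenerating $u_i\to\sqrt\rho$ matches the paper's use of the principal specialization \eqref{Eprincipal}; and for integer exponents you correctly pinpoint Lam--Postnikov--Pylyavskyy, which is precisely how the paper proves Proposition~\ref{Pschur-ratio} and hence the sharp upper bound.

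The one imprecise step is the real-exponent upper bound. The Gelfand--Tsetlin estimates ``developed earlier in the paper'' (Proposition~\ref{plead-3}) yield only $V_{\bn_j}(\bu)/V_\bn(\bu)\leqslant \frac{V(\bn_j)}{V(\bn_{\min})}\,\bu^{\bn_j-\bn}$, with $V(\bn_{\min})$ rather than $V(\bn)$ in the denominator; this is strictly weaker than what you need and produces only the non-sharp threshold \eqref{k2def}, not $\mathcal{C}$. The paper instead passes from integers to rationals by the substitution $\mathbf y=\bu^{\circ 1/L}$ and then to reals by a two-sided limiting argument (this is also how Corollary~\ref{Creal} is obtained). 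Alternatively, HCIZ by itself---without Gelfand--Tsetlin---does give your sharp pointwise inequality directly: from \eqref{hciz} one has $|V_\bal(t)|/V(\bal)=\frac{|V(\log t)|}{V(\bn_{\min})}\int_{U(N)}\exp\bigl(\sum_k\alpha_k\,(U\,\mathrm{diag}(\log t)\,U^*)_{kk}\bigr)\,dU$, and for $t\in(0,1)^N$ each diagonal entry $(U\,\mathrm{diag}(\log t)\,U^*)_{kk}$ is a convex combination of the negative numbers $\log t_i$, so the integrand is coordinatewise non-increasing in $\bal$; since $\bn_j\geqslant\bn$ coordinatewise this yields $|V_{\bn_j}(t)|/V(\bn_j)\leqslant|V_\bn(t)|/V(\bn)$ at once. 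Either route is fine, but you should commit to one and drop the appeal to Gelfand--Tsetlin.
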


Notice in this case that the powers $n_j, M$ are allowed to be negative
as well.

Our next result proves that the sharp threshold~\eqref{Esharp} works for
matrices of all ranks. There is a small subtlety about which powers $n_j$
are allowed if the rank of the matrices is greater than one; see the
remarks after Theorem~\ref{Treal-1} below.

\begin{theorem}\label{Treal-rank2}
With notation as in Theorem~\ref{Treal-rank1}, if we further assume that
$n_j \in \Z^{\geqslant 0} \cup [N-2,\infty)$ for all $j$, then the two
conditions (1), (2) are further equivalent to:
\begin{enumerate}
\setcounter{enumi}{2}
\item The entrywise map $f[-]$ preserves positivity on $\bp_N([0,\rho])$.
\end{enumerate}
\end{theorem}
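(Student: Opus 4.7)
Since $(3) \Rightarrow (1)$ is immediate and $(1) \Leftrightarrow (2)$ is Theorem \ref{Treal-rank1}, the only content is $(2) \Rightarrow (3)$, which I would prove by induction on $N$. The base case $N = 1$ is the elementary scalar statement that $c_{n_0} x^{n_0} + c' x^M \geqslant 0$ for all $x \in [0,\rho]$ precisely when $c' \geqslant -c_{n_0}\rho^{n_0 - M}$. For the inductive step, fix $A \in \bp_N([0,\rho])$; by continuity of $f[\cdot]$ and a standard density argument I reduce to the case where $A$ is positive definite with all entries in $(0,\rho)$, so $a_{NN}>0$.

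The plan is the FitzGerald--Horn integration trick. Set $\bu \coloneqq (a_{1N},\dots,a_{NN})^T/\sqrt{a_{NN}}$; then $B \coloneqq A - \bu\bu^T$ is PSD with vanishing last row and column, and its top-left $(N-1) \times (N-1)$ block $\tilde B$ (the Schur complement of $a_{NN}$ in $A$) is PSD. For $t \in [0,1]$, let $C(t) \coloneqq (1-t)\bu\bu^T + tA \in \bp_N([0,\rho])$. Differentiating $F(t) \coloneqq f[C(t)]$ entrywise yields
\[
 f[A] \; = \; f[\bu\bu^T] \; + \; \int_0^1 f'[C(t)] \circ B \, dt.
\]
The first summand is PSD by Theorem \ref{Treal-rank1}; since $B$ is supported in the top $(N-1) \times (N-1)$ block with $\tilde B \succeq 0$, the Schur product theorem reduces the remaining task to proving $f'[\tilde C(t)] \succeq 0$ for each $t$, where $\tilde C(t) \in \bp_{N-1}((0,\rho))$ is the top block of $C(t)$.

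To exploit the inductive hypothesis I split $f'(x) = c_{n_0} n_0 x^{n_0-1} + \hat f'(x)$, with $\hat f'(x) \coloneqq \sum_{j=1}^{N-1} c_{n_j} n_j x^{n_j-1} + c' M x^{M-1}$. The first summand contributes $c_{n_0} n_0 \tilde C(t)^{\circ(n_0-1)}$, which is PSD: if $n_0 = 0$ its coefficient vanishes, while otherwise the hypothesis on $n_0$ forces $n_0 - 1 \in \Z^{\geqslant 0} \cup [N-3,\infty)$, so $x^{n_0-1}$ preserves positivity on $\bp_{N-1}((0,\infty))$ by FitzGerald--Horn \cite{FitzHorn}. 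The remainder $\hat f'$ has exactly $N-1$ positive coefficients plus one extra, with positive exponents $n_j - 1$ again lying in $\Z^{\geqslant 0} \cup [N-3,\infty)$, so the inductive hypothesis applies in dimension $N-1$ provided its sharp rank-one threshold $\hat{\mathcal{C}}$ satisfies $c' M \geqslant -\hat{\mathcal{C}}^{-1}$.

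The main obstacle is to verify that our hypothesis $c' \geqslant -\mathcal{C}^{-1}$ implies this, i.e., that $\mathcal{C} \geqslant M \hat{\mathcal{C}}$. Using $V(n_0,n_1,\dots,n_{N-1}) = \prod_{i=1}^{N-1}(n_i-n_0) \cdot V(n_1,\dots,n_{N-1})$ together with the translation invariance $V(n_1-1,\dots,n_{N-1}-1) = V(n_1,\dots,n_{N-1})$, the $j$-th summands of $\mathcal{C}$ and $M\hat{\mathcal{C}}$ (for $1 \leqslant j \leqslant N-1$) compare as
\[
 \frac{\mathcal{C}_j}{(M\hat{\mathcal{C}})_j} \; = \; \frac{(M-n_0)^2\, n_j}{(n_j - n_0)^2\, M},
\]
which is $\geqslant 1$ because $x \mapsto (x-n_0)^2/x$ is strictly increasing on $(n_0,\infty)$ and $n_0 < n_j < M$. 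Adding the nonnegative $j=0$ contribution to $\mathcal{C}$ then yields $\mathcal{C} \geqslant M \hat{\mathcal{C}}$, closing the induction.
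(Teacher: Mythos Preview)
Your proof is correct and is essentially the same as the paper's: both invoke the FitzGerald--Horn extension principle (Theorem~\ref{Tfitzhorn}), dispose of the lowest-exponent term via the FitzGerald--Horn result on entrywise powers, and reduce the induction step to the termwise inequality $\mathcal{C}_j / (M\hat{\mathcal{C}})_j = (M-n_0)^2 n_j / ((n_j-n_0)^2 M) \geqslant 1$, which is exactly the paper's inequality~\eqref{Einequality}. The only cosmetic difference is that you split off $c_{n_0} n_0 x^{n_0-1}$ after differentiating, whereas the paper (following the proof of Theorem~\ref{Tmain}) factors out $x^{n_0}$ before differentiating; both routes rely on $n_0 \in \Z^{\geqslant 0} \cup [N-2,\infty)$ in the same way.
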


The proof of these theorems involves refining the approach to prove the
aforementioned results. The key additional tool is a Schur positivity
result by Lam--Postnikov--Pylyavskyy~\cite{LPP}, which implies the
following monotonicity property for ratios of Schur polynomials
$s_\bm(\bu) / s_\bn(\bu)$:

\begin{proposition}
Fix tuples of non-negative integers $0 \leqslant n_0 < \cdots < n_{N-1}$
and $0 \leqslant m_0 < \cdots < m_{N-1}$, such that $n_j \leqslant m_j\
\forall j$. Then the function
\[
f : ((0,+\infty)^N)^T \to \R, \qquad
f(\bu) \coloneqq \frac{s_\bm(\bu)}{s_\bn(\bu)}
\]
is non-decreasing in each coordinate.
\end{proposition}

While $\bm, \bn$ are integer tuples in this result, it
helps prove the above theorems for all real powers. We provide
below in the paper several application of this analysis;
we mention here two of them. First, we extend all of the
previous results on positivity preservers to preservers of \textit{total
non-negativity} on Hankel matrices of a fixed dimension.
(Recall that a possibly non-square real matrix is totally non-negative --
sometimes termed totally positive -- if it has all non-negative real
minors~\cite{karlin}.)
Note that the constraint of having non-negative entries is natural also
in total non-negativity, as in the above results.

Second, we show that a conjecture by
Cuttler--Greene--Skandera~\cite{CGS}, recently proved by Sra~\cite{Sra}
and Ait-Haddou and Mazure~\cite{Blossom2}, can be extended to obtain a
characterization of weak majorization for all non-negative real tuples,
which involves Schur polynomials/generalized Vandermonde determinants,
and which we believe is new:

\begin{theorem}\label{Tcgs}
Fix $N$-tuples $\bm, \bn$ of pairwise distinct real powers.
Then the following are equivalent.
\begin{enumerate}
\item For all vectors $\bu \in ([1,+\infty)^N)^T$, we have:
\begin{equation}\label{Ecgssra}
\frac{|\det (\bu^{\circ m_0} | \dots | \bu^{\circ m_{N-1}})|}{|V(\bm)|}
\geqslant
\frac{|\det (\bu^{\circ n_0} | \dots | \bu^{\circ n_{N-1}})|}{|V(\bn)|}.
\end{equation}

\item The tuple $\bm$ weakly majorizes $\bn$.
\end{enumerate}
\end{theorem}

As we will show, the assertion~(2) is implied by~(1) holding for vectors
$\bu$ in the smaller set $((I_\infty)^N)^T$ -- in fact a certain
countable subset of this suffices -- where $I_\infty$ is any non-empty
deleted neighborhood in $[1,+\infty)$ of $+\infty$.

In this vein, we further strengthen in two ways the aforementioned recent
works by Cuttler--Greene--Skandera, Sra, and Ait-Haddou--Mazure, which
characterizes majorization. Namely, these authors showed -- for integer
powers $\bm, \bn$ -- that~\eqref{Ecgssra} holds now for all $\bu \in
([0,+\infty)^N)^T$, if and only if the integer tuple $\bm$ majorizes
$\bn$. (In fact, these authors related this inequality at all $\bu \in
([0,+\infty))^N$ to $\bm - \bn_{\min}$ majorizing $\bn - \bn_{\min}$; but
after arranging both $\bm, \bn$ in increasing order, note that such a
(weak) majorization is indeed equivalent to $\bm$ (weakly)
majorizing~$\bn$.)

In the Cuttler--Greene--Skandera result, by continuity it suffices to
assume~\eqref{Ecgssra} holds on the positive open orthant
$((0,+\infty)^N)^T$ instead of its closure. Our first strengthening is
that the aforementioned characterization of majorization also holds for
real powers, not just integer ones.

\begin{theorem}\label{Tcgs-maj1}
Fix $N$-tuples $\bm, \bn$ of pairwise distinct real powers.
Then the following are equivalent.
\begin{enumerate}
\item The inequality~\eqref{Ecgssra} holds, now for all vectors $\bu \in
((0,+\infty)^N)^T$.

\item The tuple $\bm$ majorizes $\bn$.
\end{enumerate}
\end{theorem}

Second, even in the original case of non-negative integer tuples $\bm,
\bn$ (but also in full generality -- for arbitrary tuples $\bm,\bn$ of
pairwise distinct real powers), we show that one does not require the
above inequality at all points in the orthant, but only on the open unit
cube and on the subset $((1,+\infty)^N)^T$ in our ``weak majorization''
result above:

\begin{theorem}\label{Tcgs-maj2}
Notation as in Theorem~\ref{Tcgs-maj1}. Then the two assertions are
further equivalent to:
\begin{enumerate}
\setcounter{enumi}{2}
\item The inequality~\eqref{Ecgssra} holds for the ``restricted'' set of
vectors $\bu \in ((0,1)^N)^T \cup ((1,+\infty)^N)^T$.
\end{enumerate}
In fact, here one only needs to work with a certain countable set of
vectors $\bu$ in $((I_0)^N)^T \cup ((I_\infty)^N)^T$, where
$I_\infty$ was defined immediately after Theorem~\ref{Tcgs},
and $I_0 \subset (0,1]$ similarly denotes an arbitrary non-empty deleted
neighborhood in $(0,1]$ of $0$.
\end{theorem}

In the final section, we explain how to further extend (a part of)
Theorem~\ref{Tcgs}, as well as the ``positivity'' part of the result of
Lam--Postnikov--Pylyavskyy, to ``continuous'' versions of Schur
polynomials -- i.e., generalized Vandermonde determinants. This follows
from a more general log-supermodularity phenomenon for strictly totally
positive matrices, which follows from the work of
Skandera~\cite{skandera}.

\medskip

\textit{Acknowledgment.}
We thank the referee(s) for a careful reading of the manuscript.


\section{Preliminaries on Schur polynomials}\label{Sschur}

As the proofs of the main results crucially involve Schur polynomials, in
this section we present some preliminaries on them.   

Fix an integer $N>0$, and define $\bn_{\min}$ to be the tuple
$(0,1,\dots,N-1)$. Given a tuple of strictly increasing non-negative
integers $\bn = (n_0, \dots, n_{N-1})$, we will define the corresponding
Schur polynomial $s_\bn: \R^N \to \R$ in variables $(u_1, \dots, u_N)$ or
in the vector $\bu = (u_1, \dots, u_N)^T$ by the formula
\begin{equation}\label{sdef}
s_\bn(u_1, \dots, u_N) = s_\bn(\bu) \coloneqq \sum_T \bu^{|T|}.
\end{equation}

\noindent Here $T$ ranges over the column-strict Young
tableaux of shape given by the reversal $\overline{\bn-\bn_{\min}} =
(n_{N-1}-N+1, \dots, n_0)$ of $\bn - \bn_{\min} = (n_0, \dots,
n_{N-1}-N+1)$ and cell entries $1, \dots, N$, $|T|$ is the tuple
$|T| \coloneqq (a_1,\dots,a_N)$ where $a_i$ is the number of
occurrences of $i$ in the tableau $T$, and we use the multinomial
notation
\[
\bu^{|T|} = (u_1,\dots,u_N)^{(a_1,\dots,a_N)}
\coloneqq \prod_{j=1}^N u_j^{a_j}.
\]
In particular, $s_\bn$ is a homogeneous polynomial, with total degree
$\sum_{j=0}^{N-1} (n_j-j)$ and positive integer coefficients. Each Schur
polynomial $s_\bn$ may be interpreted as the character of an irreducible
polynomial representation of the Lie group $GL_N(\C)$, although we will
not need this interpretation here.

\begin{example}
Suppose $N=3$ and $\bn = (0,2,4)$, then we consider Young tableaux of
shape $(2,1,0)$ where the entries in each row (resp.~column) weakly
decrease (resp.~strictly decrease); and the entries can only be $1,2,3$.
Thus, all possible tableaux are:
\[
\begin{tabular}{|c|c|}
\hline
3 & 3 \\
\cline{1-2}
2 \\
\cline{1-1}
\end{tabular}\ , \hspace*{3mm}
\begin{tabular}{|c|c|}
\hline
3 & 3 \\
\cline{1-2}
1 \\
\cline{1-1}
\end{tabular}\ , \hspace*{3mm}
\begin{tabular}{|c|c|}
\hline
3 & 2 \\
\cline{1-2}
2 \\
\cline{1-1}
\end{tabular}\ , \hspace*{3mm}
\begin{tabular}{|c|c|}
\hline
3 & 2 \\
\cline{1-2}
1 \\
\cline{1-1}
\end{tabular}\ , \hspace*{3mm}
\begin{tabular}{|c|c|}
\hline
3 & 1 \\
\cline{1-2}
2 \\
\cline{1-1}
\end{tabular}\ , \hspace*{3mm}
\begin{tabular}{|c|c|}
\hline
3 & 1 \\
\cline{1-2}
1 \\
\cline{1-1}
\end{tabular}\ , \hspace*{3mm}
\begin{tabular}{|c|c|}
\hline
2 & 2 \\
\cline{1-2}
1 \\
\cline{1-1}
\end{tabular}\ , \hspace*{3mm}
\begin{tabular}{|c|c|}
\hline
2 & 1 \\
\cline{1-2}
1 \\
\cline{1-1}
\end{tabular}
\]

\noindent which correspond to the individual monomials in the polynomial
\begin{align*}
&\ s_\bn(u_1, u_2, u_3)\\
= &\ u_3^2 u_2 + u_3^2 u_1 + u_3 u_2^2 + u_3 u_2 u_1 + u_3 u_1 u_2 + u_3
u_1^2 + u_2^2 u_1 + u_2 u_1^2 \\
= &\ (u_1 + u_2)(u_2 + u_3)(u_3 + u_1).
\end{align*}

\noindent One may interpret $s_\bn$ as the character of the adjoint
representation of $GL_3(\C)$ on $\mathfrak{sl}_3$.
\end{example}

We will need two basic facts about Schur polynomials: see for instance
\cite[Chapter I]{Macdonald} for proofs and more details.

\begin{proposition}\label{Pschur}
Fix $N \in \mathbb{N}$ and an integer tuple $\bn = (n_0, \dots, n_{N-1})$
with $0 \leqslant n_0 < \cdots < n_{N-1}$.  Then we have the formula
\begin{equation}\label{detfactor}
 \det( \bu^{\circ n_0} |  \dots | \bu^{\circ n_{N-1}} ) = \det
 (u_j^{n_{k-1}})_{j,k = 1,\dots,N} = V(\bu) s_\bn(\bu)
\end{equation}
relating generalized Vandermonde determinants to Schur polynomials for
all $\bu \in \C^n$, where $V(\bu)$ is the Vandermonde determinant
\begin{align*}
V(\bu) \coloneqq &\ \det( \bu^{\circ 0} |  \dots |\bu^{\circ (N-1)}) \\
=&\ \det (u_j^{k-1})_{j,k = 1,\dots,N} \\
=&\ \prod_{1 \leqslant j < k \leqslant N} (u_k - u_j).
\end{align*}
In particular, the polynomial $s_\bn$ is symmetric.
Furthermore, we have the \emph{Weyl dimension formula}
\begin{equation}\label{Ewdf}
s_\bn(1,\dots,1) = \frac{V( \bn )}{V( \bn_{\min} )}.
\end{equation}
\end{proposition}

One can interpret the quantity in~\eqref{Ewdf} as the dimension of the
representation associated to the Schur polynomial $s_\bn$, although we
will not use this interpretation here.  (For instance, the adjoint
representation of $GL_3(\C)$ on $\mathfrak{sl}_3$ has dimension $8$.)
Note that~\eqref{detfactor} immediately establishes~\eqref{Egantmacher}
in the case when the exponents $\alpha_j$ are natural numbers.

The relevance of Schur polynomials to our problem comes from the
following application of Proposition~\ref{Pschur} and the Cauchy--Binet
formula.

\begin{lemma}[Determinant formula]\label{detform}
Let $S$ be a set of non-negative integers of cardinality at least $N$,
and let $h$ be a polynomial of the form
\[
h(x) = \sum_{n \in S} c_n x^n
\]
for some real coefficients $c_n$.  Then for any vector $\bu \in
(\C^n)^T$, we have
\[
\det h[ \bu \bu^* ] = \sum_{\bn \in S^N_{<}} |s_\bn( \bu )|^2
|V(\bu)|^2 \prod_{n \in \bn} c_n
\]
where $S^N_<$ denotes the set of all $N$-tuples $(n_0, \dots,
n_{N-1})$ of elements of $S$, sorted in increasing order $n_0 < \dots <
n_{N-1}$.
\end{lemma}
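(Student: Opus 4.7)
The plan is to recognize $h[\bu \bu^*]$ as a triple matrix product $U D U^*$ whose determinant can be computed by the Cauchy--Binet formula, and then to invoke Proposition \ref{Pschur} to factor each $N \times N$ minor that arises.

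First I would unpack the definition of the entrywise action: the $(j,k)$ entry of $h[\bu \bu^*]$ equals
\[
h(u_j \overline{u_k}) = \sum_{n \in S} c_n\, u_j^n \, \overline{u_k}^n,
\]
which identifies $h[\bu \bu^*] = U D U^*$, where $U$ is the $N \times |S|$ matrix whose columns are the vectors $\bu^{\circ n}$ as $n$ ranges over $S$, and $D$ is the diagonal $|S| \times |S|$ matrix with diagonal entries $(c_n)_{n \in S}$. This turns an entrywise (Hadamard) computation on the rank-one matrix $\bu \bu^*$ into a genuine matrix product, which is the crucial conceptual step.

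Next I would apply the Cauchy--Binet formula to the product of the two rectangular factors $UD$ (of size $N \times |S|$) and $U^*$ (of size $|S| \times N$). Because $D$ is diagonal, selecting an $N$-subset $\bn = (n_0 < \cdots < n_{N-1})$ of $S$ from the columns of $UD$ simply multiplies the corresponding columns of $U$ by $\prod_{n \in \bn} c_n$, while the same index set applied to the rows of $U^*$ extracts the conjugate transpose of the corresponding $N \times N$ block of $U$. Cauchy--Binet therefore gives
\[
\det h[\bu \bu^*] = \sum_{\bn \in S^N_{<}} \left( \prod_{n \in \bn} c_n \right) \bigl| \det( \bu^{\circ n_0} \mid \cdots \mid \bu^{\circ n_{N-1}} ) \bigr|^2.
\]
Finally, the factorization \eqref{detfactor} from Proposition \ref{Pschur} rewrites each generalized Vandermonde determinant as $V(\bu)\, s_\bn(\bu)$, so its squared modulus is $|V(\bu)|^2 \, |s_\bn(\bu)|^2$, yielding the claimed identity.

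No step is particularly deep; the only genuinely substantive move is the factorization $h[\bu \bu^*] = U D U^*$, which is what permits Cauchy--Binet to be applied at all. A minor bookkeeping issue is that the $c_n$ may be negative, but since they are real and are absorbed asymmetrically into one rectangular factor \emph{before} forming the squared modulus, no sign or conjugation subtlety intervenes; in particular one does not need $D$ to have a real square root.
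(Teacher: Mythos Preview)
Your proposal is correct and follows essentially the same approach as the paper: factor $h[\bu \bu^*] = U D U^*$ with $U = (\bu^{\circ n})_{n \in S}$ and $D = \mathrm{diag}(c_n)$, apply Cauchy--Binet, and then invoke \eqref{detfactor} from Proposition~\ref{Pschur}. Your remark about absorbing the possibly negative $c_n$ into one rectangular factor (so that no square root of $D$ is needed) is a helpful clarification of a point the paper leaves implicit.
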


\begin{proof}
Write $S = \{n_1,\dots,n_M\}$ with $n_1 < \dots < n_M$.
We may factor
\begin{align*}
h[ \bu \bu^* ] &= \sum_{j=1}^M c_{n_j} \bu^{\circ n_j} (\bu^{\circ
n_j})^* \\
&= ( \bu^{\circ n_1} |  \dots | \bu^{\circ n_M} ) \mathrm{diag}(c_{n_1},
\dots, c_{n_M} ) ( \bu^{\circ n_1} | \dots | \bu^{\circ n_M} )^*.
\end{align*}
Applying the Cauchy--Binet formula, we may thus expand $\det h[\bu
\bu^*]$ as
\[
\sum_{1 \leqslant j_1 < \dots < j_N \leqslant M} |\det( \bu^{\circ
n_{j_1}} | \dots |\bu^{\circ n_{j_N}} )|^2 c_{n_{j_1}} \dots
c_{n_{j_M}}
\]
and the claim then follows from Proposition~\ref{Pschur}.
\end{proof}

\begin{remark}\label{Ralgebra}
The argument used to prove Lemma~\ref{detform} in fact gives the more
general algebraic identity
\[
\det h[ \bu \bv^T ] = \sum_{\bn \in S^N_<} s_\bn( \bu ) s_\bn(
\bv) V(\bu) V(\bv) \prod_{n \in \bn} c_n
\]
for arbitrary fields $\mathbb{F}$ and vectors $\bu, \bv \in
(\mathbb{F}^N)^T$, where $h(x) = \sum_{n \in S} c_n x^n \in
\mathbb{F}[x]$, $s_\bn(\cdot)$ is the specialization to
$\mathbb{F}[\cdot]$ of the polynomial $s_\bn(\bu) \in \Z[\bu]$, and
$V(\bu)$, $s_\bn(\bu) V(\bu)$ are (generalized) Vandermonde determinants.
\end{remark}

\section{Bounded domains: the leading term of a Schur
polynomial}\label{bounded-proof}

In this section we prove Theorem~\ref{Tmain}.  Our strategy is to first
establish the result for rank one matrices $A = \bu \bu^T$, in which one
can exploit Lemma~\ref{detform}, and then apply the fundamental theorem
of calculus to extend the entrywise positivity preservation property to
more general positive semidefinite matrices.

\subsection{The case of rank-one matrices}

We begin with the simple but crucial observation that a Schur polynomial
$s_\bn$ is comparable in size to its leading monomial, when applied to
non-negative arguments.

\begin{proposition}[Comparing Schur polynomials with a
monomial]\label{Pleading}
Fix integers $N>0$ and $0 \leqslant n_0 < \cdots < n_{N-1}$, and scalars
$0 \leqslant u_1 \leqslant \cdots \leqslant u_N$. Set $\bn \coloneqq
(n_0, \dots, n_{N-1})$ and $\bu \coloneqq (u_1, \dots, u_N)$. Then we
have
\begin{equation}\label{Eleading}
1 \times \bu^{\bn - \bn_{\min}} \leqslant s_\bn(\bu) \leqslant
\frac{V(\bn)}{V(\bn_{\min})}
\times \bu^{\bn - \bn_{\min}},
\end{equation}
where $\bn_{\min} = (0, \dots, N-1)$.
Furthermore, the constants $1$ and $\frac{V(\bn)}{V(\bn_{\min})}$ on both
sides of~\eqref{Eleading} cannot be improved.
\end{proposition}

\begin{proof}
By Proposition~\ref{Pschur}, $s_\bn(\bu)$ is the sum of exactly
$\frac{V(\bn)}{V(\bn_{\min})}$ monomials (not necessarily distinct), one
of which is equal to $\bu^{\bn - \bn_{\min}}$ (arising from the Young
tableau in which the $i^{th}$ row is entirely occupied by the number
$N+1-i$). All other monomials are of the form $\bu^{\ba}$
for some tuple $\ba = (a_1,\dots,a_N) \neq \bn - \bn_{\min}$ of 
non-negative integers summing to $\sum_{j=0}^{N-1} n_j - j$, and obeying
the majorization condition
\[
\sum_{j=0}^J a_{j+1} \geqslant \sum_{j=0}^J n_j - j
\]
for all $J=0,\dots,N-1$.  From this and the hypothesis $0\leqslant u_1
\leqslant \cdots \leqslant u_N$ we have
\begin{equation}\label{ajt}
0 \leqslant \bu^{\ba} \leqslant \bu^{\bn - \bn_{\min}}
\end{equation}
and the claim~\eqref{Eleading} follows.

Setting $u_j = 1$ for all $j$ and using~\eqref{Ewdf}, we see that the
second inequality in~\eqref{Eleading} is sharp.  For the first
inequality, we set $u_i = A^i$ for some large $A>1$ and observe that we
can now improve~\eqref{ajt} to
\[
0 \leqslant \bu^{\ba} \leqslant \frac{1}{A} \bu^{\bn - \bn_{\min}}
\]
for any monomial \thinspace appearing in $s_\bn$ other than \thinspace
the single dominant monomial $\bu^{\bn - \bn_{\min}}$.  Sending $A \to
\infty$, we obtain the claim.
\end{proof}

Next, we give the precise threshold for positive semidefiniteness of a
polynomial with $N+1$ terms applied to a  generic rank one
matrix.

\begin{proposition}\label{ab}
Let $0 \leqslant n_0 < \dots < n_{N-1} < M$ be non-negative integers, let
$c_{n_0}, \dots, c_{n_{N-1}}$ be positive reals, let $\bu =
(u_1,\dots,u_N)^T$ have distinct positive coordinates, and let $t > 0$ be
real.  Let $p_t$ denote the polynomial
\[
p_t(x) \coloneqq t \sum_{j=0}^{N-1} c_{n_j} x^{n_j} - x^M.
\]
Then $p_t[\bu \bu^T]$ is positive semidefinite if and only if
\begin{equation}\label{tal}
 t \geqslant \sum_{j=0}^{N-1} \frac{s_{\bn_j}(\bu)^2}{c_{n_j}
 s_{\bn}(\bu)^2}, 
\end{equation}
where $\bn = (n_0, \dots, n_{N-1})$ as above, and
the tuples $\bn_j$ are defined as
\[
\bn_j \coloneqq  (n_0, \dots, n_{j-1}, n_{j+1}, \dots, n_{N-1}, M),
\qquad  \forall j = 0, \dots, N-1.
\]
\end{proposition}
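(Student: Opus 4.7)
The plan is to diagonalize the problem by viewing $p_t[\bu\bu^T]$ as a conjugation of an $N\times N$ matrix whose positive semidefiniteness is governed by a classical rank-one update criterion.

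First I would factor
\[
p_t[\bu\bu^T] \;=\; t \sum_{j=0}^{N-1} c_{n_j}\, \bu^{\circ n_j}(\bu^{\circ n_j})^T \;-\; \bu^{\circ M}(\bu^{\circ M})^T \;=\; V_{\bn}\, D_t\, V_{\bn}^T \;-\; \bu^{\circ M}(\bu^{\circ M})^T,
\]
where $V_{\bn}\coloneqq (\bu^{\circ n_0}\mid\cdots\mid \bu^{\circ n_{N-1}})$ and $D_t\coloneqq t\,\mathrm{diag}(c_{n_0},\dots,c_{n_{N-1}})$. Since $\bu$ has $N$ distinct positive coordinates and $n_0<\cdots<n_{N-1}$, Proposition \ref{Pschur} gives $\det V_{\bn} = V(\bu)\,s_{\bn}(\bu) \neq 0$, so $V_{\bn}$ is invertible and $\bu^{\circ M}$ can be written uniquely as $V_{\bn}\,\bal$ for some $\bal=(\alpha_0,\dots,\alpha_{N-1})^T\in\R^N$.

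Next, I would compute $\bal$ by Cramer's rule. Replacing the $j$-th column of $V_{\bn}$ by $\bu^{\circ M}$ and then moving that column to the last position (introducing a sign $(-1)^{N-1-j}$) yields the matrix with columns indexed by $\bn_j$. Applying Proposition \ref{Pschur} once more:
\[
\alpha_j \;=\; (-1)^{N-1-j}\,\frac{\det(\bu^{\circ n_0}\mid\cdots\mid\widehat{\bu^{\circ n_j}}\mid\cdots\mid\bu^{\circ n_{N-1}}\mid\bu^{\circ M})}{\det V_{\bn}} \;=\; (-1)^{N-1-j}\,\frac{s_{\bn_j}(\bu)}{s_{\bn}(\bu)}.
\]
This gives the crucial identity $\alpha_j^2 = s_{\bn_j}(\bu)^2/s_{\bn}(\bu)^2$.

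Substituting $\bu^{\circ M}=V_{\bn}\bal$ into the earlier factorization,
\[
p_t[\bu\bu^T] \;=\; V_{\bn}\bigl(D_t - \bal\bal^T\bigr)V_{\bn}^T,
\]
so by invertibility of $V_{\bn}$, the matrix $p_t[\bu\bu^T]$ is positive semidefinite if and only if $D_t \succeq \bal\bal^T$. Since $D_t$ is positive definite, the standard characterization of a rank-one perturbation (equivalently, a Schur complement computation) shows this holds if and only if $\bal^T D_t^{-1}\bal \leqslant 1$, that is,
\[
\frac{1}{t}\sum_{j=0}^{N-1} \frac{\alpha_j^2}{c_{n_j}} \;\leqslant\; 1 \quad\Longleftrightarrow\quad t \;\geqslant\; \sum_{j=0}^{N-1} \frac{s_{\bn_j}(\bu)^2}{c_{n_j}\,s_{\bn}(\bu)^2},
\]
which is precisely \eqref{tal}. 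There is no substantive obstacle here: the only thing that requires care is bookkeeping the sign $(-1)^{N-1-j}$ in Cramer's rule, which is harmless since only $\alpha_j^2$ enters the final criterion.
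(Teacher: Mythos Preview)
Your proof is correct and takes a genuinely different route from the paper's own argument. The paper proves Proposition~\ref{ab} by applying the Cauchy--Binet formula (Lemma~\ref{detform}) to compute $\det p_t[\bu\bu^T]$ explicitly as a polynomial in $t$ (equation~\eqref{dhb}), reads off that the determinant is non-negative exactly when \eqref{tal} holds, and then closes the gap between ``non-negative determinant'' and ``positive semidefinite'' via a continuity argument on the eigenvalues as $t$ varies. You instead conjugate by the invertible generalized Vandermonde matrix $V_{\bn}$ to reduce the question to whether a diagonal matrix dominates a rank-one matrix, and then invoke the standard Schur-complement criterion $\bal^T D_t^{-1}\bal\leqslant 1$.

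Your approach is arguably cleaner here: it yields both implications at once with no continuity argument, and the Cramer's-rule computation of $\alpha_j$ makes the appearance of the ratios $s_{\bn_j}(\bu)/s_{\bn}(\bu)$ completely transparent. The paper in fact acknowledges this line of reasoning as an alternative (see Remark~\ref{Rrayleigh} and the Rayleigh-quotient computation \eqref{Erankone} in Section~\ref{Srayleigh}, which is your identity $\bal^T D_t^{-1}\bal$ in disguise). What the paper's determinant-based approach buys is uniformity with the rest of the argument: Lemma~\ref{detform} is the workhorse used again for the unbounded-domain case (Theorem~\ref{Tunbdd}), where one must control determinants over more than $N+1$ monomials and your rank-one factorization does not directly apply.
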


\begin{proof}
From Lemma~\ref{detform} we have
\begin{align}\label{dhb}
 &\ \det p_t[\bu \bu^T]\\
 = &\ t^N \left( \prod_{j=0}^{N-1} c_{n_j} \right) V(\bu)^2 s_\bn(\bu)^2
 - t^{N-1} \sum_{j=0}^{N-1} \left(\prod_{0 \leqslant k \leqslant N-1: k
 \neq j} c_{n_k} \right) V(\bu)^2 s_{\bn_j}(\bu)^2 \nonumber
\end{align}

\noindent from which we conclude that $\det p_t[\bu \bu^T]$ is
non-negative precisely when~\eqref{tal} holds.
We also see that the matrix $\sum_{j=0}^{N-1} c_{n_j}
\bu^{\circ n_j} (\bu^{\circ n_j})^T$ has determinant
\[
\left( \prod_{j=0}^{N-1} c_{n_j} \right) V(\bu)^2 s_\bn(\bu)^2,
\]
which is positive, and hence this matrix is not
just positive semidefinite but is in fact positive definite.  In
particular, $p_t[\bu \bu^T]$ is positive definite for sufficiently large
$t$.  Since the determinant function is non-negative on $\bp_N$ and
vanishes on the boundary of $\bp_N$, the claim now follows from
the continuity of the eigenvalues of $p_t[\bu \bu^T]$.
\end{proof}

\begin{remark}\label{Rrayleigh}
In the special case $\bn = \bn_{\min}$ studied in~\cite{BGKP-fixeddim},
one implication in Proposition~\ref{ab} was shown using a Rayleigh
quotient argument. That argument can be extended to work for general
$\bn$; see Section~\ref{Srayleigh}. It is also possible to obtain this
implication using the matrix determinant lemma (see e.g.~\cite{DZ}), but
we will not do so here.
\end{remark}

Now we can obtain Theorem~\ref{Tmain} (with an explicit threshold
${\mathcal K}$) in the special case of rank one matrices.

\begin{proposition}\label{Crank1}
Fix integers $N>0$ and $0 \leqslant n_0 < \cdots < n_{N-1} < M$, and
scalars $c_{n_0}, \dots, c_{n_{N-1}}$ $> 0$. Let $I \subset [0,+\infty)$
be a bounded domain, and write $\rho \coloneqq  \sup I$.  If we define
\begin{equation}\label{k1def}
 \mathcal{K} \coloneqq \sum_{j=0}^{N-1}
 \frac{V(\bn_j)^2}{V(\bn_{\min})^2} \frac{\rho^{M - n_j}}{c_{n_j}}
\end{equation}
then the polynomial
\[
x \mapsto \mathcal{K} (c_{n_0} x^{n_0} + \cdots + c_{n_{N-1}}
x^{n_{N-1}}) - x^M
\]
is entrywise positivity preserving on $\bp^1_N(I)$.
\end{proposition}

\begin{proof}
A matrix in $\bp^1_N(I)$ can be written in the form $\bu \bu^T$, where
the coordinates $u_1,\dots,u_N$ of the vector $\bu$ lie in $[0,
\sqrt{\rho}]$.  By a limiting argument, and permutation symmetry, we may
assume without loss of generality that the $u_i$ are distinct, positive,
and strictly increasing.
By Proposition~\ref{ab}, it will suffice to show that
\[
\sum_{j=0}^{N-1} \frac{s_{\bn_j}(\bu)^2}{c_{n_j} s_{\bn}(\bu)^2}
\leqslant \mathcal{K}.
\]
But by the upper and lower bounds in~\eqref{Eleading}, we have
\begin{align*}
\sum_{j=0}^{N-1} \frac{s_{\bn_j}(\bu)^2}{c_{n_j} s_{\bn}(\bu)^2}
\leqslant & \sum_{j=0}^{N-1} \frac{\left(\frac{V(\bn_j)}{V(\bn_{\min})}
\bu^{\bn_j - \bn_{\min}}\right)^2}{c_{n_j} (\bu^{\bn - \bn_{\min}})^2}\\
= & \sum_{j=0}^{N-1} \frac{V(\bn_j)^2}{V(\bn_{\min})^2 c_{n_j}}
(u_N^{M-n_{N-1}} u_{N-1}^{n_{N-1}-n_{N-2}} \cdots
u_{j+1}^{n_{j+1}-n_j})^2 \\
\leqslant &\sum_{j=0}^{N-1} \frac{V(\bn_j)^2}{V(\bn_{\min})^2}
\frac{\rho^{M - n_j}}{c_{n_j}}.
\end{align*}
The claim now follows from~\eqref{k1def}.
\end{proof}

\subsection{From rank-one matrices to all matrices}

Given the threshold $\mathcal{K}$ from Proposition~\ref{Crank1}, we now
prove the existence of a threshold for all matrices in $\bp_N([0,\rho])$,
i.e.~Theorem~\ref{Tmain}. This will follow from the following more
general ``extension principle'':

\begin{theorem}[Extension principle]\label{Tfitzhorn}
Let $0 < \rho \leqslant +\infty$. Fix an integer $N>1$, and let $h:
(0,\rho) \to \R$ be continuously differentiable.  If $h$ is entrywise
positivity preserving on $\bp^1_N((0,\rho))$, and the derivative $h'$ is
entrywise positivity preserving on $\bp_{N-1}((0,\rho))$, then $h$ is
entrywise positivity preserving on $\bp_N((0,\rho))$.  Similarly with
$(0,\rho)$ replaced by $(-\rho,\rho)$ throughout.
\end{theorem}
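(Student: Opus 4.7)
The plan is to follow the FitzGerald--Horn deformation argument: represent an arbitrary $A \in \bp_N((0,\rho))$ as a one-parameter path inside $\bp_N((0,\rho))$ starting from a rank-one matrix, and use the fundamental theorem of calculus to pass positivity from the rank-one endpoint to $A$. Concretely, fix $A = (a_{ij}) \in \bp_N((0,\rho))$. Since $a_{NN} > 0$, define
\[
\bu \coloneqq (a_{1N}/\sqrt{a_{NN}}, \dots, a_{(N-1)N}/\sqrt{a_{NN}}, \sqrt{a_{NN}})^T,
\]
so that $\bu \bu^T$ agrees with $A$ along the last row and column. Writing $A = \bu\bu^T + C$, the matrix $C$ has vanishing final row and column, and its leading $(N-1)\times(N-1)$ block $C'$ equals the Schur complement of $a_{NN}$ in $A$, hence is positive semidefinite. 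Interpolate via
\[
A_\lambda \coloneqq \bu\bu^T + \lambda C, \qquad \lambda \in [0,1],
\]
so $A_0 = \bu\bu^T$ and $A_1 = A$; each $A_\lambda$ is positive semidefinite. Its final row and column coincide with those of $A$, so lie in $(0,\rho)$; its top-left entries are convex combinations $(1-\lambda)u_i u_j + \lambda a_{ij}$ of entries of $\bu\bu^T$ and $A$, and the Cauchy--Schwarz bound $|u_i u_j| = |a_{iN} a_{jN}|/a_{NN} \leqslant \sqrt{a_{ii} a_{jj}} < \rho$ (valid because the $2\times 2$ principal minors of $A$ are nonnegative) shows these lie in $(0,\rho)$ as well. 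So $A_\lambda \in \bp_N((0,\rho))$ for every $\lambda \in [0,1]$.

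Next, I would set $F(\lambda) \coloneqq h[A_\lambda]$ and differentiate entrywise. Since $h$ is continuously differentiable, the chain rule yields
\[
F'(\lambda) = h'[A_\lambda] \circ C,
\]
where $\circ$ denotes the Hadamard product. The final row and column of $C$ vanish, hence so do those of $F'(\lambda)$, and the top-left $(N-1)\times(N-1)$ block of $F'(\lambda)$ equals $h'[B_\lambda] \circ C'$, where $B_\lambda$ is the top-left $(N-1)\times(N-1)$ block of $A_\lambda$. By the previous paragraph $B_\lambda \in \bp_{N-1}((0,\rho))$, so the hypothesis on $h'$ makes $h'[B_\lambda]$ positive semidefinite; Schur's product theorem, combined with the positive semidefiniteness of $C'$, makes $h'[B_\lambda] \circ C'$ positive semidefinite, and padding with zero rows/columns preserves this. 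Hence $F'(\lambda)$ is positive semidefinite for every $\lambda \in [0,1]$.

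Finally, the fundamental theorem of calculus applied entrywise gives
\[
h[A] = F(1) = h[\bu\bu^T] + \int_0^1 F'(\lambda)\, d\lambda,
\]
where $h[\bu\bu^T]$ is positive semidefinite by the rank-one hypothesis and the integrand is positive semidefinite by the previous step, so $h[A]$ is positive semidefinite. The argument for $I = (-\rho,\rho)$ is identical: the bound $|u_i u_j| \leqslant \sqrt{a_{ii} a_{jj}} < \rho$ still forces the intermediate entries into $(-\rho,\rho)$, so the deformation remains in $\bp_N((-\rho,\rho))$. I do not anticipate a real obstacle; the only point requiring any care is the verification that $A_\lambda$ stays inside the prescribed entrywise domain as $\lambda$ varies, and this is a direct consequence of the Cauchy--Schwarz inequalities built into the positive semidefiniteness of $A$.
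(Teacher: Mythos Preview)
Your proposal is correct and essentially identical to the paper's proof: your $\bu$ is the paper's $\zeta$, your interpolation $A_\lambda = \bu\bu^T + \lambda C = (1-\lambda)\bu\bu^T + \lambda A$ is exactly the paper's $\lambda A + (1-\lambda)\zeta\zeta^T$, and your integral formula $h[A] = h[\bu\bu^T] + \int_0^1 h'[A_\lambda]\circ C\,d\lambda$ is the paper's identity \eqref{Etrick}. One small addendum: in the $(-\rho,\rho)$ case you silently assume $a_{NN} > 0$ in order to form $\bu$; the paper handles the possibility $a_{NN} = 0$ by a one-line limiting argument, and you should do the same.
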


\begin{proof}
We use the approach in \cite[Section 3]{BGKP-fixeddim}. Suppose $A =
(a_{jk})_{j,k=1,\dots,N}$ is a matrix in $\bp_N((0,\rho))$.  Define
$\zeta$ to be the last column of $A$ divided by $\sqrt{a_{NN}}$; then $A
- \zeta \zeta^T$ has last row and column zero and is positive
semidefinite, and $\zeta \zeta^T \in \bp_N^1((0,\rho))$. We now use an
integration trick of FitzGerald and Horn \cite[Equation (2.1)]{FitzHorn}.
For any $x,y \in I$, we see from the fundamental theorem of calculus (and
a change of variables $t = \lambda x + (1-\lambda)y$) that
\[
h(x) - h(y) = \int_x^y h'(t)\ dt = \int_0^1 (x-y) h'(\lambda x +
(1-\lambda)y)\ d \lambda.
\]

\noindent Applying this entrywise with $x,y$ replaced by the entries of
$A$ and $\zeta \zeta^T$ respectively, we obtain the identity
\begin{equation}\label{Etrick}
h[A] = h[\zeta \zeta^T] + \int_0^1 ( A - \zeta \zeta^T ) \circ h'[\lambda
A + ( 1 - \lambda ) \zeta \zeta^T]\ d \lambda.
\end{equation}

As $h$ is entrywise positivity preserving on $\bp^1_N((0,\rho))$,
$h[\zeta \zeta^T]$ is positive semidefinite.  Now since $A - \zeta
\zeta^T$ is positive semidefinite and has last row and column zero, we
see from the Schur product theorem that the integrand is positive
semidefinite if the leading principal $(N-1) \times (N-1)$ submatrix of
$h' [ \lambda A + (1 - \lambda) \zeta \zeta^T]$ is.  Since these
principal submatrices of $A$ and $\zeta \zeta^T$ both lie in the convex
set $\bp_{N-1}((0,\rho))$, by assumption on $h'$ we conclude that the
integrand is everywhere positive semidefinite, whence so is $h[A]$
by~\eqref{Etrick}.
This gives the claim for $(0,\rho)$.  A similar argument works if one
replaces $(0,\rho)$ with $(-\rho,\rho)$, noting that one can easily
reduce by a limiting argument to the case where $a_{NN}$ is strictly
positive.
\end{proof}

Using Theorem~\ref{Tfitzhorn}, we now show our first main result.

\begin{proof}[Proof of Theorem~\ref{Tmain}]
Let $\mathcal{K}$ be the quantity defined in~\eqref{k1def}.  It will
suffice to show that for every $N \geqslant 1$, the polynomial
\[
h(x) \coloneqq \mathcal{K} (c_{n_0} x^{n_0} + \cdots + c_{n_{N-1}}
x^{n_{N-1}}) - x^{M}
\]
is entrywise positivity preserving on $\bp_N((0,\rho))$.

We induct on $N$.  For $N=1$ the claim follows from
Proposition~\ref{Crank1}, so suppose that $N > 1$ and that the claim has
already been proven for $N-1$.  First observe that $h$ is equal to
$x^{n_0}$ times another polynomial $\tilde h$, formed by reducing all the
exponents $n_0,\dots,n_{N-1},M$ by $n_0$; note from~\eqref{k1def} that
such a shift would not affect the quantity $\mathcal{K}$.  Also, from the
Schur product theorem we know that if $\tilde h$ is entrywise positivity
preserving on $\bp_N((0,\rho))$, then $h$ will be also.  As a
consequence, we may assume without loss of generality that $n_0=0$.

By Proposition~\ref{Crank1} and Theorem~\ref{Tfitzhorn}, it suffices to
show that $h'$ is entrywise positivity preserving on
$\bp_{N-1}((0,\rho))$.  Since
\[
h'(x) = \mathcal{K} (c_{n_1} n_1 x^{n_1-1} + \dots + c_{n_{N-1}} n_{N-1}
x^{n_{N-1}-1}) - M x^{M-1},
\]
we will be done using the induction hypothesis provided
that we can establish the inequality
\[
\mathcal{K} \geqslant M \, \tilde{\mathcal{K}},
\]
where $\tilde{\mathcal{K}}$ is defined like $\mathcal{K}$ but with
$N$ replaced by $N-1$,
$M$ replaced by $M-1$,
$n_0,\dots,n_{N-1}$ replaced by $n_1-1,\dots,n_{N-1}-1$,
and
$c_{n_0},\dots,c_{n_{N-1}}$ replaced by $n_1 c_{n_1}, \dots, n_{N-1}
c_{n_{N-1}}$ respectively.

Writing $\bn_j = (m_{j,0}, \dots, m_{j,N-1})$ for $j = 0,
\dots, N-1$, and recalling that $n_0 = 0$, we have $m_{j,0}=0$ for
$j=1,\dots,N-1$.  We may therefore verify using \eqref{k1def},
\eqref{Ewdf} that
\begin{align*}
\mathcal{K} = &\ \sum_{j=0}^{N-1} \frac{\rho^{M - n_j}}{c_{n_j}
V(\bn_{\min})^2 } \prod_{0 \leqslant a < b \leqslant N-1} (m_{j,b} -
m_{j,a})^2\\
\geqslant &\ \sum_{j=1}^{N-1} \frac{\rho^{M - n_j}}{c_{n_j}
V(\bn_{\min})^2 } \prod_{0 \leqslant a < b \leqslant N-1} (m_{j,b} -
m_{j,a})^2\\
= &\ \sum_{j=1}^{N-1} \frac{\rho^{M - n_j}}{c_{n_j} V(\bn'_{\min})^2 }
\prod_{0 < a < b \leqslant N-1}(m_{j,b} - m_{j,a})^2 \cdot
\frac{M^2}{(N-1)!^2} \prod_{b \neq 0, j} n_b^2\\
= &\ M \sum_{j=1}^{N-1} \frac{\rho^{M - n_j}}{n_j c_{n_j}
V(\bn'_{\min})^2} \prod_{0 < a < b \leqslant N-1} (m_{j,b} - m_{j,a})^2
\cdot \frac{n_j M}{(N-1)!^2} \prod_{b \neq 0,j} n_b^2\\
\geqslant &\ M \sum_{j=1}^{N-1} \frac{\rho^{M - n_j}}{n_j c_{n_j}
V(\bn'_{\min})^2} \prod_{0 < a < b \leqslant N-1} (m_{j,b} - m_{j,a})^2
\cdot \left(\frac{\prod_{b=1}^{N-1} n_b}{(N-1)!}\right)^2\\
\geqslant &\ M \sum_{j=1}^{N-1} \frac{\rho^{M - n_j}}{n_j c_{n_j}
V(\bn'_{\min})^2} \prod_{0 < a < b \leqslant N-1} (m_{j,b} - m_{j,a})^2\\
= &\ M \, \tilde{\mathcal{K}}
\end{align*}
as required, where $\bn'_{\min} \coloneqq (0, 1, \dots, N-2)$.
\end{proof}

\begin{remark}
In the case $\bn = \bn_{\min}$, this result (with the same value of
the threshold $\mathcal{K}$) was established in~\cite{BGKP-fixeddim}.
This special case is simpler due to the fact that the denominator
$s_\bn(\bu)$ is now equal to $1$.
\end{remark}

\subsection{Threshold bounds for arbitrary analytic
functions}\label{Tan-sec}

We next \thinspace prove Corollary \ref{Tanalytic}.  By replacing $\rho$
with $\rho-\epsilon$ and taking limits, we may assume without loss of
generality that the power series of interest $g(x) = \sum_{M > n_{N-1}}
g_M x^M$ in fact converges in some neighborhood of $\rho$, and hence we
have
\[
|g_M| \leqslant C \rho^{-M} (1+\epsilon)^{-M}
\]
for some $C, \epsilon > 0$ and all $M > n_{N-1}$.

By Theorem~\ref{Tmain} (with the explicit bound~\eqref{k1def}) and the
triangle inequality, it will now suffice to show that
\[
\sum_{M > n_{N-1}} \rho^{-M} (1+\epsilon)^{-M} {\mathcal K}_M < \infty,
\]
where ${\mathcal K}_M$ is the quantity~\eqref{k1def} for the specified
value of $M$.  If we write
\[
\bn'_j \coloneqq (n_0, \dots, n_{j-1}, n_{j+1}, \dots n_{N-1})
\]
for $j=1,\dots,N-1$ (so that $\bn_j = (\bn'_j, M)$), then we can use
Tonelli's theorem to compute
\begin{align*}
&\ \sum_{M > n_{N-1}} \rho^{-M} (1+\epsilon)^{-M} \mathcal{K}_{M}\\
= &\ \sum_{j=0}^{N-1} \sum_{M > n_{N-1}}  \rho^{-M}
(1+\epsilon)^{-M} \frac{V(\bn'_j)^2 \prod_{k \neq j} (M -
n_k)^2}{V(\bn_{\min})^2} \frac{\rho^{M - n_j}}{c_{n_j}}\\
= &\ \sum_{j=0}^{N-1} \frac{V(\bn'_j)^2}{V(\bn_{\min})^2 
c_{n_j} \rho^{n_j}} \sum_{M > n_{N-1}}  (1+\epsilon)^{-M} \prod_{k \neq j}
(M - n_k)^2.
\end{align*}
But the inner summand is exponentially decaying in $M$, and so this sum
is finite as required.

\section{The case of unbounded domain}\label{unbounded-proof}

We now explore the unbounded case: namely, when $\rho = +\infty$.  We
begin by proving Lemma~\ref{horn-type}(iii).  Suppose for contradiction
that this claim failed.  Applying Lemma~\ref{horn-type}(ii),
it follows that there are fewer than $N$ values of $n>n_0$
with $c_n > 0$.  By adding the  absolutely monotone
function $-\sum_{n>n_0: c_n < 0} c_n x^n$ to $f$, we may assume without
loss of generality that there are no values of $n > n_0$ with $c_n < 0$.
In particular, $f$ is now a polynomial of some degree $d \geqslant n_0$,
with fewer than $N$ terms of higher degree than $n_0$.  Now introduce the
polynomial
\[
\tilde f(x) \coloneqq x^d f(1/x).
\]
Observe that if $\bu = (u_1,\dots,u_N)^T$ is a vector with entries in
$(0,+\infty)$, then one has the identity
\[
\tilde f[\bu \bu^T] = \bu^{\circ d} \circ f[ \bu^{\circ -1} (\bu^{\circ
-1})^T ]
\]
where $\bu^{\circ \alpha} \coloneqq (u_1^\alpha,\dots,u_N^\alpha)^T$
denotes the entrywise power of $\bu$ by $\alpha$, and $\circ$ denotes the
Hadamard product.  Since $f$ is entrywise positivity preserving on
$\bp^1_N((0,+\infty))$, it follows from the Schur product
theorem that $\tilde f$ does also.  On the other hand, from construction,
$\tilde f$ has a negative $x^{d-n_0}$ coefficient, but has fewer than $N$
positive coefficients of lower degree.  This contradicts
Lemma~\ref{horn-type}(ii), as required.

Now we prove our main result in this setting.

\begin{proof}[Proof of Theorem~\ref{Tunbdd}]
By replacing $c_{n_0},\dots,c_{n_{2N-1}}$ by their minimal value and then
rescaling, we may assume without loss of generality that $c_{n_j} =1$ for
all $0 \leqslant j \leqslant 2N-1$. Setting $h$ to be the polynomial
\[
h(x) \coloneqq \sum_{j=0}^{2N-1} x^{n_j},
\]
it suffices to show that the polynomial $x \mapsto t h(x)
- x^M$ entrywise preserves positivity on $\bp_N((0,+\infty))$ for $t$
large enough.

We first establish the rank one case, that is to say that for
sufficiently large $t$ and for all $\bu = (u_1,\dots,u_N)^T$ with entries
$u_1,\dots,u_N$ in $(0,+\infty)$, we show that the matrix $t \, h[\bu
\bu^T] - \bu^{\circ M} (\bu^{\circ M})^T$ is positive semidefinite.  By a
limiting argument and symmetry we may assume that $0 < u_1 < \dots <
u_N$.  Lemma~\ref{detform} assures us that $h[\bu \bu^T]$ has positive
determinant, and is thus positive definite as opposed to merely positive
semidefinite. Thus, for each fixed $\bu$, $t \, h[\bu \bu^T] - \bu^{\circ
M} (\bu^{\circ M})^T$ is positive definite for sufficiently large $t$
(where the threshold for $t$ may possibly vary with $\bu$).  Using a
continuity argument as in the proof of Proposition~\ref{ab}, it now
suffices to show that for all sufficiently large $t$, one has
\[
\det( t \, h[\bu \bu^T] - \bu^{\circ M} (\bu^{\circ M})^T ) > 0
\]
uniformly in $\bu$.
Applying Lemma~\ref{detform}, we may write this determinant as
\[
t^N \sum_{B \in S^N_<} s_B(\bu)^2 - t^{N-1} \sum_{C \in
S^{N-1}_<} s_{C \sqcup \{M\}}(\bu)^2
\]
where $S \coloneqq \{ n_0, \dots, n_{2N-1} \}$, and $C \sqcup \{M\}$
denotes the union of the $N-1$-tuple $C$ and $\{M\}$, sorted to be in
increasing order.  It thus suffices to show that for each $C \in
S^{N-1}_<$, the ratio
\[
\frac{ s_{C \sqcup \{M\}}(\bu)^2 }{\sum_{B \in S^N_<} s_B(\bu)^2 }
\]
is uniformly bounded in $\bu$.

Fix $C$, and permute $\bu$ to have non-decreasing coordinates.
Applying Proposition~\ref{Pleading}, it suffices to show that
\[
\frac{ (\bu^{C \sqcup \{M\}})^2 }{ \sum_{B \in S^N_<} (\bu^{B})^2 }
\]
is uniformly bounded in all such $\bu$.  But as $C$ only has cardinality
$N-1$, and there are $N$ elements of $S$ that are less than $M$ and $N$
elements that are greater than $M$, there exist exponents $n_- < M < n_+$
such that $n_-, n_+ \in S \backslash C$.
(Note, this argument also shows the need for the necessary
condition in Lemma~\ref{horn-type}(iii).) This implies that
\[
(\bu^{C \sqcup \{M\}})^2  \leqslant (\bu^{C \sqcup \{n_-\}})^2 +
(\bu^{C \sqcup \{n_+\}})^2
\]

\noindent (breaking into cases depending on whether the component of
$\bu$ that will be paired with $M$ is less than $1$ or not), and hence
the above ratio is uniformly bounded by one, giving the claim.

To remove the restriction to rank one matrices, we induct on $N$ as in
the previous section.  For $N=1$ the claim is already proven, so suppose
that $N > 1$ and that the claim has already been proven for $N-1$.  By
the induction hypothesis (and discarding some manifestly entrywise
positivity preserving terms), the derivative of $t h(x) - x^M$ will
entrywise preserve positivity on $\bp_{N-1}((0,+\infty))$ for $t$ large
enough.  We have already shown that $th(x) - x^M$ also entrywise
preserves positivity $\bp^1_{N}((0,+\infty))$ for $t$ large enough.
Applying Theorem~\ref{Tfitzhorn}, we conclude that $t h(x) - x^M$
entrywise preserves positivity on all matrices in $\bp_N((0,+\infty))$
for $t$ large enough, as required.
\end{proof}

\section{Real exponents: the Harish-Chandra--Itzykson--Zuber
formula}\label{real-sec}

We now explore extensions of the above arguments to answer
Question~\ref{Q1} in the case when the exponents $n_0,\dots,n_{N-1},M$
are only assumed to be real rather than natural numbers.\ We begin by
observing that the parts (ii), (iii) of Lemma~\ref{horn-type} hold for
real powers
as well:

\begin{lemma}[Horn--Loewner-type necessary conditions for real
powers]\label{Lhorn-real}
Fix an integer $N \geqslant 2$ and a scalar $0 < \rho \leqslant +\infty$.
Further fix scalars $c_{n_i} \in \R$ and distinct real powers $n_i$ for
$i \geqslant 0$, and suppose $f(x) \coloneqq \sum_{i=0}^\infty c_{n_i}
x^{n_i}$ is a convergent sum of powers on $(0,\rho)$.
\begin{enumerate}
\item[(ii)] If $f$ is entrywise positivity preserving on
$\bp^1_N((0,\rho))$, and $c_{n_{i_0}} < 0$ for some $i_0
\geqslant 0$, then we have $c_{n_i} > 0$ for at least $N$
values of $i$ for which $n_i < n_{i_0}$.

\item[(iii)] If $f$ is entrywise positivity preserving on
$\bp^1_N((0,+\infty))$, and $c_{n_{i_0}}  < 0$ for some $i_0 \geqslant
0$, then in addition to (ii) we also have $c_{n_i} > 0$ for at least $N$
values of $i$ such that $n_i > n_{i_0}$.
\end{enumerate}
\end{lemma}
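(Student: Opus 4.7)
The plan is to reduce both parts to the already-proved integer-exponent case by mimicking the corresponding arguments and invoking the generalized Vandermonde positivity \eqref{Egantmacher}, which is valid for arbitrary strictly increasing real exponents.

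For part (ii), I would argue by contradiction, assuming that $c_{n_{i_0}} < 0$ but the set $P \coloneqq \{i : n_i < n_{i_0},\ c_{n_i} > 0\}$ satisfies $|P| \leqslant N-1$. Fix any $\bv = (v_1, \dots, v_N)^T$ with $0 < v_1 < \dots < v_N$ and $v_N^2 < \rho$. By \eqref{Egantmacher}, the at-most-$N$ vectors $\{\bv^{\circ n_i} : i \in P \cup \{i_0\}\}$ are linearly independent, so there exists a nonzero $\bx \in \R^N$ orthogonal to every $\bv^{\circ n_i}$ with $i \in P$ but with $\bx \cdot \bv^{\circ n_{i_0}} \neq 0$. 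Then for $\epsilon \in (0,1]$ the matrix $\epsilon^2 \bv \bv^T$ lies in $\bp^1_N((0,\rho))$, and the quadratic form
\[
\bx^T f[\epsilon^2 \bv \bv^T] \bx = \sum_i c_{n_i}\, \epsilon^{2 n_i}\, (\bx \cdot \bv^{\circ n_i})^2
\]
decomposes into: $i \in P$ terms that vanish; remaining $n_i < n_{i_0}$ terms with $c_{n_i} \leqslant 0$ that are non-positive; a strictly negative $i = i_0$ contribution of order $\epsilon^{2 n_{i_0}}$; and a tail over $n_i > n_{i_0}$ to be shown to be $o(\epsilon^{2 n_{i_0}})$. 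Taking $\epsilon$ small then contradicts the positivity of $f[\epsilon^2 \bv \bv^T]$.

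For part (iii), I would mirror the polynomial argument in Section \ref{unbounded-proof}. Assume for contradiction that $c_{n_{i_0}} < 0$ but fewer than $N$ of the $c_{n_i}$ with $n_i > n_{i_0}$ are positive. Set $g(x) \coloneqq -\sum_{i:\ n_i > n_{i_0},\ c_{n_i} < 0} c_{n_i} x^{n_i}$, a countable sum of positive monomials which converges on $(0,+\infty)$ (by absolute convergence of $f$) and is a preserver by the Schur product theorem. Then $h \coloneqq f + g$ is a preserver whose coefficients $c'_i$ at $n_i > n_{i_0}$ are all non-negative, with at most $N-1$ strictly positive; so the support of $h$ is bounded above by some $n_{\max} < \infty$. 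Pick any $d > \max(n_{\max}, n_{i_0})$ and define $\tilde h(x) \coloneqq x^d h(1/x) = \sum_i c'_i x^{d - n_i}$, a convergent sum of strictly positive real powers on $(0,+\infty)$. From the identity $\tilde h[\bu \bu^T] = [\bu^{\circ d}(\bu^{\circ d})^T] \circ h[\bu^{\circ -1}(\bu^{\circ -1})^T]$ and the Schur product theorem, $\tilde h$ preserves positivity on $\bp^1_N((0,+\infty))$. But $\tilde h$ has the strictly negative coefficient $c_{n_{i_0}}$ at the positive exponent $d - n_{i_0}$, and fewer than $N$ positive coefficients at smaller exponents (namely the $c'_i$ with $n_i > n_{i_0}$), contradicting part (ii) applied to $\tilde h$.

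The main obstacle is the tail analysis in part (ii): for a genuine power series a crude geometric estimate suffices, but here the exponents $n_i$ may accumulate at $n_{i_0}$ from above. My plan is to bound $(\bx \cdot \bv^{\circ n_i})^2 \leqslant \|\bx\|^2 \sum_{j=1}^N v_j^{2n_i}$ by Cauchy--Schwarz, reducing the question to showing $\sum_{n_i > n_{i_0}} |c_{n_i}| (\epsilon^2 v_j^2)^{n_i} = o(\epsilon^{2 n_{i_0}})$ for each $j$. Fixing any $y_0 \in (\epsilon^2 v_N^2, \rho)$, the series $\sum_i |c_{n_i}| y_0^{n_i}$ converges absolutely; for $y \in (0, y_0)$ it dominates $\sum_{n_i > n_{i_0}} |c_{n_i}| y^{n_i - n_{i_0}}$ term-by-term, and since each term of the latter tends to $0$ as $y \to 0^+$ (using $n_i - n_{i_0} > 0$), dominated convergence delivers the desired decay.
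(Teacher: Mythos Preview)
Your proposal is correct and follows essentially the same route the paper indicates (``minor modifications'' of the integer-exponent proofs): part (ii) via generalized Vandermonde positivity \eqref{Egantmacher} and a scaling argument, and part (iii) via the inversion trick $\tilde h(x) = x^d h(1/x)$ from Section~\ref{unbounded-proof}. The one genuine subtlety beyond the integer case---that the exponents $n_i > n_{i_0}$ may accumulate at $n_{i_0}$, so the tail is not automatically $O(\epsilon^{2(n_{i_0}+\delta)})$ for fixed $\delta>0$---you have correctly identified and handled by dominated convergence (your phrasing ``$\sum_i |c_{n_i}| y_0^{n_i}$ dominates $\sum_{n_i>n_{i_0}} |c_{n_i}| y^{n_i-n_{i_0}}$'' should be read as: the termwise bound $|c_{n_i}| y^{n_i-n_{i_0}} \leqslant |c_{n_i}| y_0^{n_i-n_{i_0}}$ holds for $0<y<y_0$, and the right-hand side is summable since $\sum_i |c_{n_i}| y_0^{n_i}<\infty$).
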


The proofs are minor modifications of those of Lemma~\ref{horn-type}(ii),
(iii) respectively.

The objectives of the remainder of this section are to show that (in
analogy to the integer exponent case) the necessary conditions in
Lemma~\ref{Lhorn-real} are once again completely sharp, and that one can
obtain threshold bounds for all Puiseux or Hahn series, in analogy to
Corollary~\ref{Tanalytic}, with quantitative bounds.
For non-negative rational power exponents, one can achieve the
first two objectives by the simple change of variables $y_j \coloneqq
u_j^{1/L}$, where $L>0$ is a common denominator for the rationals
$n_0,\dots,n_{N-1},M$.  However, the quantitative bounds obtained by
doing so depend on $L$ in an unfavorable manner, and so this approach
does not seem to easily extend to the general real exponent case.  Hence
we shall adopt a different approach in the arguments below.

\subsection{Sign patterns of sums of powers}

In this subsection we resolve Question~\ref{Q1} for the more involved
case of real powers (which we take to be non-negative because negative
powers cannot entrywise preserve positivity on matrices of rank $2$ or
higher).
As the theory of Schur polynomials crucially requires integer powers
(or rational powers via the above workaround),
in place of it we now rely on the Harish-Chandra--Itzykson--Zuber
identity
\begin{align}\label{hciz}
&\ \det( e^{\alpha_i x_j} )_{1 \leqslant i,j \leqslant N}\\
= &\ \frac{V(\bal) V(\bx)}{V(\bn_{\min})} \int_{U(N)} \exp \mathrm{tr}
\left( \mathrm{diag}(\alpha_1,\dots,\alpha_N) U
\mathrm{diag}(x_1,\dots,x_N) U^* \right) \ dU, \nonumber
\end{align}

\noindent which is valid for any tuples $\bal =
(\alpha_1,\dots,\alpha_N)$, $\bx = (x_1,\dots,x_N)$ of real numbers, and
where $dU$ denotes Haar probability measure on the unitary group~$U(N)$;
see e.g.,~\cite[(3.4)]{iz}. 
(We thank Ryan O'Donnell for drawing our attention to this identity.)
If $\alpha_1 \leqslant \dots \leqslant \alpha_N$ and
$x_1 \leqslant \dots \leqslant x_N$, then by the Schur--Horn theorem
\cite{sh,ahorn}, the diagonal entries of $U\mathrm{diag}(x_1,\dots,x_N)
U^*$ are majorized by $(x_1,\dots,x_N)$, and hence the trace in the above
expression ranges between $\sum_{j=1}^N \alpha_j x_{N+1-j}$ and
$\sum_{j=1}^N \alpha_j x_j$.  As all Vandermonde determinants appearing
here are non-negative, we conclude the (somewhat crude) inequalities
\[
\frac{V(\bal) V(\bx)}{V(\bn_{\min})}
\exp \sum_{j=1}^N \alpha_j x_{N+1-j}
\leqslant \det( e^{\alpha_i x_j} )_{1 \leqslant i,j \leqslant N}
\leqslant \frac{V(\bal) V(\bx)}{V(\bn_{\min})}
\exp \sum_{j=1}^N \alpha_j x_j.
\]

\noindent Writing $u_j = \exp(x_j)$ and $\overline{\bal} =
(\alpha_N, \dots, \alpha_1)$, we thus have
\begin{equation}\label{vbal}
 \frac{V(\bal) V(\log[\bu])}{ V(\bn_{\min})}
 \bu^{\overline{\bal}} \leqslant \det( \bu^{\circ \alpha_1} | \dots
 |\bu^{\circ \alpha_N} ) \leqslant \frac{V(\bal)
 V(\log[\bu])}{ V(\bn_{\min})} \bu^{\bal}.
\end{equation}

\noindent In particular,~\eqref{vbal} implies the following upper and
lower bounds for this determinant in the case that $\bu$ ranges in a
compact set:

\begin{lemma}
Let $I \subset (0,+\infty)$ be a compact interval, and let $K$ be a
compact subset of the cone
\[
\{ (\alpha_1,\dots,\alpha_N) \in \R^N: \alpha_1 < \alpha_2 < \cdots <
\alpha_N \}.
\]
Then there exist constants $C, c>0$ such that
\[
c |V(\bu)| \leqslant |\det( \bu^{\circ \alpha_1} | \dots |\bu^{\circ
\alpha_N} )| \leqslant C |V(\bu)|
\]
for all $\bu \in I^N$ and all $\bal = (\alpha_1,\dots,\alpha_N) \in K$.
\end{lemma}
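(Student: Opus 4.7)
The plan is to deduce the lemma directly from the sandwich inequality \eqref{vbal}, after combining it with two elementary observations: the symmetry/antisymmetry of both sides under permuting $\bu$, and compactness of $I$ and $K$.

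First I would symmetrize. Both $|\det(\bu^{\circ \alpha_1}|\dots|\bu^{\circ \alpha_N})|$ and $|V(\bu)|$ are invariant under any permutation of the entries of $\bu$ (a permutation multiplies each by the same sign of the permutation), and both vanish whenever two entries coincide. Since both sides are continuous in $\bu$, it therefore suffices to prove the inequality on the open subset $\{u_1 < \cdots < u_N\} \subset I^N$, where \eqref{vbal} applies with every quantity strictly positive and the absolute values may be dropped.

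On that subset, since $\bal \in K$ is also strictly sorted, \eqref{vbal} reads
\[
\frac{V(\bal)\, V(\log[\bu])}{V(\bn_{\min})}\, \bu^{\overline{\bal}} \;\leq\; \det(\bu^{\circ \alpha_1}|\dots|\bu^{\circ \alpha_N}) \;\leq\; \frac{V(\bal)\, V(\log[\bu])}{V(\bn_{\min})}\, \bu^{\bal}.
\]
It then remains to compare $V(\log[\bu])$ with $V(\bu)$, and to bound the remaining prefactors uniformly. Writing $I = [a,b] \subset (0,\infty)$, the mean value theorem gives $\log u_k - \log u_j = (u_k - u_j)/\xi$ for some $\xi \in [a,b]$, so each factor of $V(\log[\bu])$ differs from the corresponding factor of $V(\bu)$ by a multiplicative constant in $[1/b, 1/a]$; multiplying over the $\binom{N}{2}$ pairs shows that $V(\log[\bu])$ and $V(\bu)$ are comparable up to positive constants depending only on $I$. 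Meanwhile $V(\bal)$ is continuous and strictly positive on the compact set $K$, and $\bu^{\bal}$ and $\bu^{\overline{\bal}}$ are continuous and strictly positive on the compact set $I^N \times K$, so each of these three quantities is pinched between two positive constants depending only on $I$ and $K$. Chaining these comparisons with the displayed sandwich yields $c, C > 0$ such that $c\, V(\bu) \leq \det(\bu^{\circ \alpha_1}|\dots|\bu^{\circ \alpha_N}) \leq C\, V(\bu)$ on the sorted regime, and the general $|V(\bu)|$-estimate then follows from the symmetry reduction.

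The argument presents no real mathematical obstacle once \eqref{vbal} is in hand; the only care required is bookkeeping of signs and absolute values in the symmetry reduction, and checking that the boundary case $\alpha_1 = 0$ (permitted by the hypothesis $\alpha_1 \geq 0$) causes no trouble --- which it does not, since the entries of $\bu$ are uniformly bounded away from $0$ and $\infty$, so $\bu^{\bal}$ and $\bu^{\overline{\bal}}$ remain continuous and strictly positive on the compact parameter set even when some $\alpha_j$ equal zero.
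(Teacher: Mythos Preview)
Your proof is correct and follows essentially the same approach as the paper: both arguments apply the sandwich inequality \eqref{vbal}, use the mean value theorem to compare $V(\log[\bu])$ with $V(\bu)$ factor by factor, and invoke compactness to bound $V(\bal)$, $\bu^{\bal}$, and $\bu^{\overline{\bal}}$. You are slightly more explicit than the paper about the symmetry reduction to the sorted regime (which the paper leaves implicit) and about the harmless boundary case $\alpha_1 = 0$, but the mathematical content is identical.
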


\begin{proof}
For $\bal \in K$ and $\bu \in I^N$, $V(\bal)$ is bounded above and below
by constants depending only on $I, K, N$, as are $\bu^{\bal}$ and
$\bu^{\overline{\bal}}$.  Furthermore, for each $1 \leqslant i < j
\leqslant N$, $|\log(u_i) - \log(u_j)|$ is comparable to $|u_i-u_j|$
thanks to the mean value theorem. The claim now follows
from~\eqref{vbal}.
\end{proof}

Now we extend the above lemma to obtain estimates when the
arguments~$\bu$ are not restricted to a compact set.

\begin{lemma}\label{plead-2}  Let $K$ be a compact subset of the cone
\[
\{ (n_0,\dots,n_{N-1}) \in \R^N: n_0 < n_1 < \cdots < n_{N-1} \}.
\]
Then there exist constants $C, c>0$ such that
\[
c V(\bu) \bu^{\bn - \bn_{\min}} \leqslant \det( \bu^{\circ n_0} |
\dots | \bu^{\circ n_{N-1}} ) \leqslant C V(\bu) \bu^{\bn -
\bn_{\min}}
\]
for all $\bu = (u_1,\dots,u_N)^T \in ((0,+\infty)^N)^T$
with $u_1 \leqslant \dots \leqslant u_N$ and all $\bn =
(n_0,\dots,n_{N-1}) \in K$.
\end{lemma}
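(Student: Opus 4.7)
The plan is to establish uniform upper and lower bounds on the scale-invariant ratio
\[
g(\bu, \bn) \coloneqq \frac{\det(\bu^{\circ n_0} | \cdots | \bu^{\circ n_{N-1}})}{V(\bu)\, \bu^{\bn - \bn_{\min}}}
\]
by exhibiting it as a continuous positive function on a compact set. Both numerator and denominator scale as $\lambda^{n_0+\cdots+n_{N-1}}$ under $\bu \mapsto \lambda \bu$, so one may normalize $u_N = 1$, confining $\bu$ to the compact region $\Delta \coloneqq \{(u_1,\dots,u_N) : 0 \leqslant u_1 \leqslant \cdots \leqslant u_{N-1} \leqslant u_N = 1\}$. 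Once $g$ is shown to extend continuously and positively to $\Delta \times K$, compactness yields $c \coloneqq \min_{\Delta \times K} g > 0$ and $C \coloneqq \max_{\Delta \times K} g < \infty$, which are exactly the constants claimed.

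For $\bu \in \Delta$ with $u_1 > 0$ (allowing coincidences among the $u_i$), I would use the Harish-Chandra--Itzykson--Zuber formula \eqref{hciz}. Setting $x_i \coloneqq \log u_i$, $S \coloneqq \sum_{i=1}^N n_{i-1} x_i$, and $\tau(U) \coloneqq \mathrm{tr}(\mathrm{diag}(n_0,\dots,n_{N-1})\, U\, \mathrm{diag}(x_1,\dots,x_N)\, U^*)$, a direct computation gives $V(\bu)\, \bu^{\bn - \bn_{\min}} = e^{S} \prod_{i<j}(1 - e^{-(x_j - x_i)})$; combining with \eqref{hciz} yields
\[
g(\bu, \bn) = \frac{V(\bn)}{V(\bn_{\min})} \cdot \prod_{1 \leqslant i < j \leqslant N} \phi(x_j - x_i) \cdot e^{-S} \int_{U(N)} e^{\tau(U)}\, dU,
\]
where $\phi(z) \coloneqq z/(1 - e^{-z})$ extends continuously by $\phi(0) \coloneqq 1$. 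Each factor is manifestly continuous and positive in $\bx \in \R^N$ and $\bn \in K$, so $g$ extends continuously and positively throughout the portion of $\Delta \times K$ with $u_1 > 0$, in particular across the loci where some $u_i = u_{i+1}$.

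For the remaining boundary $u_1 = 0$, I would induct on $N$. The case $N = 1$ is trivial. For $N \geqslant 2$, expanding the determinant along its first row yields
\[
\det(\bu^{\circ n_0}|\cdots|\bu^{\circ n_{N-1}}) = u_1^{n_0} \cdot \det((u_2,\dots,u_N)^{\circ n_1}|\cdots|(u_2,\dots,u_N)^{\circ n_{N-1}}) + O(u_1^{n_1}),
\]
while a parallel expansion of the denominator gives
\[
V(\bu)\, \bu^{\bn - \bn_{\min}} = u_1^{n_0} \cdot V(u_2,\dots,u_N) \cdot (u_2,\dots,u_N)^{\bn' - \bn'_{\min}} \cdot (1 + O(u_1)),
\]
where $\bn' \coloneqq (n_1, \dots, n_{N-1})$ and $\bn'_{\min} \coloneqq (0,1,\dots,N-2)$. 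Since $K$ is compact in the open cone, $n_1 - n_0 \geqslant \eps_K > 0$ uniformly, so the error term $O(u_1^{n_1 - n_0})$ in the ratio vanishes uniformly as $u_1 \to 0^+$, and hence $g$ converges to the analogous dimension-$(N{-}1)$ ratio $g'(u_2,\dots,u_N;\bn')$. By the induction hypothesis, $g'$ extends continuously and positively to its corresponding compact region, so $g$ extends continuously and positively across the locus $u_1 = 0$.

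The principal obstacle is managing lower-dimensional boundary strata where several degeneracies occur simultaneously (for example $u_1 = 0$ together with some $u_i = u_{i+1}$, or $u_1 = u_2 = 0$); these are handled by iterating between the HCIZ-based continuity for the reduced problem and the induction on $N$. Throughout, the uniform gap $n_{i+1} - n_i \geqslant \eps_K > 0$ provided by compactness of $K$ yields the quantitative control that makes all error estimates uniform in $\bn$.
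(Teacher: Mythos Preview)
Your strategy has a genuine gap: the ratio $g(\bu,\bn)$ does \emph{not} in general extend continuously to the corner strata of $\Delta$ where several $u_i$ vanish simultaneously, so the compactness argument cannot close. For a concrete counterexample take $N=3$ and $\bn=(0,2,4)$; by \eqref{detfactor} and the example in Section~\ref{Sschur} one has
\[
g(\bu,\bn) = \frac{s_\bn(\bu)}{\bu^{\bn-\bn_{\min}}} = \frac{(u_1+u_2)(u_2+u_3)(u_3+u_1)}{u_2 u_3^2}.
\]
With $u_3=1$, approaching $(0,0,1)$ along $u_1=0$ gives $g\to 1$, whereas along $u_1=u_2$ one finds $g\to 2$: the limit is direction-dependent. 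The flaw is in your denominator estimate. Writing $\prod_{j\geqslant 2}(u_j-u_1)=\bigl(\prod_{j\geqslant 2}u_j\bigr)\prod_{j\geqslant 2}(1-u_1/u_j)$ yields a factor that is uniformly $1+O(u_1)$ only when $u_2$ is bounded away from $0$; when $u_2$ is comparable to $u_1$, the factor $1-u_1/u_2$ is not close to $1$, and your claimed convergence $g\to g'$ breaks down. The ``iterating between HCIZ and induction'' you allude to cannot repair this, since at such corners there is no single limiting value for $g$ to converge to.

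The paper's proof avoids a one-shot compactification of $\Delta$ altogether. It fixes a large threshold $A>2$ and argues by dichotomy. If $u_{i+1}/u_i<A$ for all $i$, one normalizes $u_1=1$ so that $\bu$ lies in a genuinely compact subset of $(0,\infty)^N$, where the HCIZ-based bound (your first step) applies directly. If instead $u_{i+1}/u_i\geqslant A$ for some $i$, one splits $\bu=(\bu',\bu'')$ at that gap, Laplace-expands the determinant into $\binom{N}{i}$ products of smaller generalized Vandermonde determinants, applies the induction hypothesis to each factor, and shows that for $A$ large the single term with $\bn'=(n_0,\dots,n_{i-1})$, $\bn''=(n_i,\dots,n_{N-1})$ dominates all others by a factor of $A$. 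This ratio-based splitting is effectively a blow-up of the corners of $\Delta$, and it is exactly what is needed to handle the direction-dependence that your argument runs into.
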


If $n_0,\dots,n_{N-1}$ were restricted to be integers, then this claim
would follow directly from Proposition~\ref{Pschur} and
Proposition~\ref{Pleading}.  One can view this lemma as a substitute for
these propositions in the non-integer setting.

\begin{proof}
The claim is easy when $N=1$, so we suppose inductively that $N>1$ and
that the claim has already been proven for all smaller values of $N$.  By
a limiting argument we may assume that $0 < u_1 < \dots < u_N$.

Let $A>2$ be a large constant to be chosen later.  We first consider the
non-separated case in which $u_{i+1}/u_i < A$ for all $i=1,\dots,N-1$.
By dividing all the $u_j$ by (say) $u_1$, we may normalize $u_1=1$
without loss of generality, and now the $u_1,\dots,u_N$ are all confined
to a compact subset of $(0,+\infty)$, in which case the claim follows
from the previous lemma.  

Now suppose that one has $u_{i+1}/u_i \geqslant A$ for some $1 \leqslant
i < N$.  We can split $\bu$ into the two smaller vectors $\bu'
\coloneqq (u_1,\dots,u_i)^T$ and $\bu'' \coloneqq (u_{i+1},\dots,u_N)^T$.
By cofactor expansion, we may then express $\det( \bu^{\circ n_0} | \dots
| \bu^{\circ n_{N-1}} )$ as the alternating sum of $\binom{N}{i}$
products of the form
\[
\det( (\bu')^{\circ n'_1} | \dots | (\bu')^{\circ n'_i} )
\cdot \det( (\bu'')^{\circ n''_1} | \dots | (\bu'')^{\circ n''_{N-i}} )
\]
where the $n'_1,\dots,n'_i, n''_1,\dots,n''_{N-i}$ are a permutation of
$n_0,\dots,n_{N-1}$ with $n'_1 < \dots < n'_i$ and $n''_1 < \dots <
n''_{N-i}$.  By the induction hypothesis, each such product is comparable
to \[
V(\bu') (\bu')^{\bn' - (0,\dots,i-1)} V(\bu'') (\bu'')^{\bn'' -
(0,\dots,N-i-1)}
\]
where $\bn' \coloneqq (n'_1,\dots,n'_i)$ and $\bn'' \coloneqq
(n''_1,\dots,n''_{N-i})$.

If $1 \leqslant j \leqslant i < k \leqslant N$, then $u_k \geqslant
A u_i \geqslant 2 u_j$, and hence $u_k - u_j$ is
comparable to $u_k$.  From this we conclude that $V(\bu)$ is comparable
to $V(\bu') \cdot V(\bu'') \cdot (\bu'')^{(i,\dots,i)}$, and hence
the preceding expression is comparable to
\[
V(\bu) \bu^{(\bn',\bn'') - (0,\dots,N-1)}.
\]
As $(\bn', \bn'')$ is a rearrangement of $\bn$, one has
\[
\bu^{(\bn',\bn'') - (0,\dots,N-1)} \leqslant \bu^{\bn - \bn_{\min}}
\]
and furthermore (because all the entries of $\bu''$ are at least $A$
times larger than those of $\bu'$) one has the refinement
\[
\bu^{(\bn',\bn'') - (0,\dots,N-1)} \leqslant \frac{1}{A} \bu^{\bn -
\bn_{\min}}
\]
unless $\bn' = (0,\dots,i-1)$ and $\bn'' = (i,\dots,N-1)$.  For $A$ large
enough, this (together with~\eqref{Egantmacher}) proves the desired lower
bound; and the upper bound also follows, using the triangle inequality.
\end{proof}

Repeating the proof of Proposition~\ref{Crank1}, using
Lemma~\ref{plead-2} as a replacement for Lemma~\ref{Pleading}, we
conclude

\begin{proposition}\label{Crank2}
Let $n_0 < \cdots < n_{N-1} < M$ and scalars $c_{n_0}, \dots,
c_{n_{N-1}}$ $> 0$ be real numbers. Let $I \subset (0,+\infty)$ be a
bounded domain.  Then for sufficiently large $t$, the function
\[
x \mapsto t (c_{n_0} x^{n_0} + \cdots + c_{n_{N-1}} x^{n_{N-1}}) - x^{M}
\]
is entrywise positivity preserving on $\bp^1_N(I)$.
\end{proposition}

Using Theorem~\ref{Tfitzhorn}, we may remove the rank one restriction
assuming that the $n_0,\dots,n_{N-1}$ are either natural numbers or not
too small, giving a version of Theorem~\ref{Tmain} for real exponents:

\begin{theorem}\label{Treal-1}
Let $0 \leqslant n_0 < \cdots < n_{N-1} < M$ and scalars $c_{n_0}, \dots,
c_{n_{N-1}}$ $> 0$ be real numbers.   Assume that each $n_i$ is either a
non-negative integer, or is greater than $N-2$ (or both).  Let $I \subset
[0,+\infty)$ be a bounded domain.  Then for sufficiently large $t$, the
function
\begin{equation}\label{tpol}
 x \mapsto t (c_{n_0} x^{n_0} + \cdots + c_{n_{N-1}} x^{n_{N-1}}) -
x^{M}
\end{equation}
is entrywise positivity preserving on $\bp_N(I)$.
\end{theorem}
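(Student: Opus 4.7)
The plan is to induct on $N$, applying the extension principle (Theorem \ref{Tfitzhorn}) in the inductive step and using Proposition \ref{Crank2} to deliver the rank-one input. The base case $N = 1$ is immediate since $\bp_1(I) = \bp_1^1(I)$ and Proposition \ref{Crank2} applies directly.

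For the inductive step from $N-1$ to $N$ with $N \geqslant 3$, Proposition \ref{Crank2} gives the rank-one hypothesis of Theorem \ref{Tfitzhorn}, so the task is to show that
\[
h'(x) = t \sum_{j=0}^{N-1} c_{n_j} n_j x^{n_j - 1} - M x^{M-1}
\]
entrywise preserves positivity on $\bp_{N-1}(I)$ for $t$ large. Compared with the hypothesis at dimension $N-1$, $h'$ carries one extra positive term $t c_{n_0} n_0 x^{n_0 - 1}$ (vanishing when $n_0 = 0$). The key observation is that this extra term is itself entrywise positivity preserving on $\bp_{N-1}(I)$: the hypothesis forces either $n_0 \in \N$ with $n_0 \geqslant 1$ (so $n_0 - 1 \in \N$) or $n_0 > N-2$ (so $n_0 - 1 > N - 3 = (N-1) - 2$), and the FitzGerald--Horn theorem identifies $x^{n_0 - 1}$ as a preserver on $\bp_{N-1}((0,\infty))$ in either case. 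Since positivity preservers form a convex cone, I may subtract this term off and reduce to proving the same property for
\[
g(x) = t \sum_{j=1}^{N-1} c_{n_j} n_j x^{n_j - 1} - M x^{M-1},
\]
which, after rescaling by $1/M$, fits the hypothesis of Theorem \ref{Treal-1} at dimension $N-1$: the exponents $n_j - 1$ for $j \geqslant 1$ are non-negative (since $n_j \in \N$ implies $n_j \geqslant 1$, while $n_j > N-2 \geqslant 1$ also gives $n_j > 1$), and each is either natural or exceeds $(N-1) - 2$ by the same dichotomy. The inductive hypothesis then produces the required threshold for $t/M$, and combining with the rank-one bound via Theorem \ref{Tfitzhorn} closes the step.

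The main subtlety is the boundary case $N = 2$, where the hypothesis of Theorem \ref{Treal-1} is vacuous but the residual exponent $n_1 - 1$ may be negative (when $0 < n_1 < 1$ is non-integer), obstructing a clean invocation of the inductive framework at dimension $1$. Fortunately, at $N = 2$ the $h'$-condition required by Theorem \ref{Tfitzhorn} reduces to pointwise non-negativity of $h'$ on $(0, \rho)$, and the elementary bound $x^{M-1} \leqslant \rho^{M - n_1} x^{n_1 - 1}$ on $(0, \rho)$ (valid regardless of the sign of $n_1 - 1$, since $M > n_1$) makes this immediate for $t$ large enough. So I handle $N = 2$ as an auxiliary direct verification and run the subtract-and-induct reduction only for $N \geqslant 3$, where the slack $N - 3 \geqslant 0$ keeps all residual exponents non-negative and the inductive hypothesis transfers cleanly.
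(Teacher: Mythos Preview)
Your proof is correct and follows the same inductive skeleton as the paper: Proposition~\ref{Crank2} supplies the rank-one input and Theorem~\ref{Tfitzhorn} supplies the extension, with the derivative checked against the $(N-1)$-dimensional hypothesis. The only substantive difference is in the $N=2$ base case. The paper first factors out $x^{n_0}$ via the Schur product theorem to reduce to $n_0=0$, then composes with $x \mapsto x^{n_1}$ (itself a preserver on $\bp_2((0,\infty))$ by FitzGerald--Horn) to further reduce to $n_1=1$, and only then checks the derivative. Your direct verification that $h' \geqslant 0$ pointwise on $(0,\rho)$ via the monomial bound $x^{M-1} \leqslant \rho^{M-n_1} x^{n_1-1}$ is more elementary and avoids both reductions. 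For $N \geqslant 3$ the two arguments are effectively identical; you are simply more explicit than the paper about why the extra $j=0$ term in $h'$ can be peeled off (the paper says only that the derivative ``is of the form required for the inductive hypothesis'' without spelling out what happens when $n_0>0$ leaves $N$ positive terms rather than $N-1$).
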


The condition that each $n_i$ is either a non-negative integer
or greater than $N-2$ is natural in view of the results
in~\cite{FitzHorn}, in which it is shown that these conditions are
necessary and sufficient to ensure that $x \mapsto x^{n_i}$ is entrywise
positivity preserving on $\bp_N((0,+\infty))$.

\begin{proof}
The claim is trivial for $N=1$.  Now we consider the $N=2$ case.  In this
case it follows from the results in~\cite{FitzHorn} that the map $x
\mapsto x^{n_0}$ is entrywise positivity preserving on
$\bp_N((0,+\infty))$.  By the Schur product theorem, we may thus factor
out $x^{n_0}$ and assume without loss of generality that $n_0=0$.
Similarly, the map $x \mapsto x^{n_1}$ is entrywise positivity preserving
on $\bp_N((0,+\infty))$, and so by composing with this map we may assume
that $n_1=1$.  For $t$ large enough, the derivative of the function $t
(c_0 + c_1 x) - x^M$ is then entrywise positivity preserving on
$\bp_1(I)$, and the claim now follows from Theorem~\ref{Tfitzhorn} and
Proposition~\ref{Crank2}.

Now suppose inductively that $N > 2$, and that the claim has already been
proven for $N-1$.  Observe that the derivative of the
polynomial~\eqref{tpol} is of the form required for the inductive
hypothesis (all the surviving monomials have exponents that are either
non-negative integers, or greater than $N-3$, or both).  Thus the
derivative will be entrywise positivity preserving on~$\bp_{N-1}(I)$ for
$t$ large enough, and the claim again follows from
Theorem~\ref{Tfitzhorn} and Proposition~\ref{Crank2}.
\end{proof}

We can now give the complete solution to Question~\ref{Q1} for
real powers, which shows Lemma~\ref{Lhorn-real}(i) is sharp.

\begin{theorem}\label{Tpowerseries}
Let $N \geqslant 2$, and let $\{ n_i : i \geqslant 0 \} \subset
\Z^{\geqslant 0} \cup [N-2,\infty)$ be a set of pairwise distinct real
numbers. For each $i$, let $\epsilon_i \in \{-1,0,+1\}$ be
a sign such that whenever $\epsilon_{i_0} = -1$, one has $\epsilon_i =
+1$ for at least $N$ choices of $i$ satisfying: $n_i < n_{i_0}$.
Let $0 < \rho < +\infty$. Then there exists a convergent series with real
coefficients
\[
f(x) = \sum_{i=0}^\infty c_{n_i} x^{n_i}
\]
on $(0,\rho)$ that is an entrywise positivity preserver on
$\bp_N((0,\rho))$, such that~$c_{n_i}$ has the sign of
$\epsilon_i$ for all $i \geqslant 0$.
\end{theorem}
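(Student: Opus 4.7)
The plan is to deduce Theorem \ref{Tpowerseries} from Theorem \ref{Treal-1} in the same spirit as the derivation of Theorem \ref{main-1} from Theorem \ref{Tmain}. The key observation is that for every index $i_0$ with $\epsilon_{i_0} = -1$, the hypothesis furnishes at least $N$ indices $j \in S_+ \coloneqq \{i : \epsilon_i = +1\}$ with $n_j < n_{i_0}$, so Theorem \ref{Treal-1} supplies a ``building-block'' polynomial supported on these $N$ helpers plus $x^{n_{i_0}}$, with a small negative coefficient at $x^{n_{i_0}}$, that entrywise preserves positivity on $\bp_N((0,\rho))$. Adding together small positive multiples of all such building blocks, together with small positive multiples of each pure monomial $x^{n_i}$ for $i \in S_+$, should produce the desired $f$.

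More precisely, set $S_{\pm} \coloneqq \{i : \epsilon_i = \pm 1\}$ and $S_0 \coloneqq \{i : \epsilon_i = 0\}$.  For each $i_0 \in S_-$, select $N$ indices $j_1^{(i_0)} < \dots < j_N^{(i_0)}$ in $S_+$ with $n_{j_k^{(i_0)}} < n_{i_0}$. Since all exponents lie in $\Z^{\geqslant 0} \cup [N-2,\infty)$, the hypothesis of Theorem \ref{Treal-1} is satisfied for the tuple $(n_{j_1^{(i_0)}}, \dots, n_{j_N^{(i_0)}}, n_{i_0})$ with all positive coefficients set to $1$; applying that theorem and rescaling, we obtain $\delta_{i_0} \in (0,1]$ such that
\[
g_{i_0}(x) \coloneqq x^{n_{j_1^{(i_0)}}} + \cdots + x^{n_{j_N^{(i_0)}}} - \delta_{i_0} x^{n_{i_0}}
\]
is entrywise positivity preserving on $\bp_N((0,\rho))$. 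Now enumerate $S_+ \sqcup S_-$ as $\{i_1, i_2, \dots\}$ and, writing $k(i)$ for the position of $i$ in the enumeration, choose strictly positive weights $\mu_i\ (i \in S_+)$ and $\nu_i\ (i \in S_-)$ satisfying
\[
\mu_i \max(1,\rho^{n_i}) \leqslant 2^{-k(i)}, \qquad \nu_i \|g_i\|_{L^\infty([0,\rho])} \leqslant 2^{-k(i)}.
\]
Finally, define $f(x) \coloneqq \sum_{i \in S_+} \mu_i x^{n_i} + \sum_{i \in S_-} \nu_i g_i(x)$.

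The series converges absolutely and uniformly on $[0,\rho]$; its partial sums are positive combinations of entrywise positivity preservers, so by closedness of $\bp_N(\C)$ the full sum $f$ is also entrywise positivity preserving on $\bp_N((0,\rho))$. Expanding and regrouping gives a convergent representation $f(x) = \sum_i c_{n_i} x^{n_i}$ in which the coefficient of $x^{n_i}$ for $i \in S_+$ equals $\mu_i + \sum_{i_0 \in S_- :\ i \in \{j_1^{(i_0)},\dots,j_N^{(i_0)}\}} \nu_{i_0} > 0$; for $i \in S_-$ equals $-\nu_i \delta_i < 0$; and for $i \in S_0$ vanishes, since $x^{n_i}$ never appears in the construction (helpers are always drawn from $S_+$). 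None of the remaining steps is genuinely hard: the main substantive ingredient is the quantitative threshold already supplied by Theorem \ref{Treal-1}, and what remains is combinatorial bookkeeping paralleling the reduction of Theorem \ref{main-1} to Theorem \ref{Tmain}, with the regularity constraint $n_i \in \Z^{\geqslant 0} \cup [N-2,\infty)$ ensuring applicability of Theorem \ref{Treal-1} to each building block.
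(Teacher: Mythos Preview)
Your proposal is correct and follows essentially the same approach as the paper: both arguments invoke Theorem~\ref{Treal-1} to produce, for each index $i_0$ with $\epsilon_{i_0}=-1$, a positivity-preserving ``building block'' with a negative coefficient at $x^{n_{i_0}}$ and positive coefficients at $N$ helper powers drawn from $S_+$, and then sum these blocks (together with monomials for the remaining $i\in S_+$) with sufficiently rapidly decaying positive weights. The only differences are cosmetic: the paper uses weights of the form $1/(j!\,\lceil n_i\rceil!)$ and verifies convergence via the explicit estimate \eqref{Erealsum}, whereas you use $2^{-k(i)}$-type weights normalised by $\|g_i\|_{L^\infty([0,\rho])}$; and the paper adds pure monomials only for those $i\in S_+$ not already used as helpers, while you add them for all $i\in S_+$, which is harmless.
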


We remark that a difference between power series (from previous sections)
and countable sums of real powers is that the latter can include an
infinite decreasing set of powers.

\begin{proof}
The proof uses the following computation that is also useful below: given
any set $\{ n_i : i \geqslant 0 \}$ of (pairwise distinct)
non-negative powers, we claim that
\begin{equation}\label{Erealsum}
\sum_{i \geqslant 0} \frac{x^{n_i}}{i! \lceil n_i \rceil!} < \infty,
\qquad \forall x > 0.
\end{equation}
Indeed, partition the non-negative integers as
\[
\Z^{\geqslant 0} = \sqcup_{j \geqslant 0} I_j, \quad \text{where} \quad
I_j \coloneqq \{ i \geqslant 0 : n_i \in (j-1,j] \}.
\]
Using Tonelli's theorem, we crudely estimate:
\begin{align*}
\sum_{i \geqslant 0} \frac{x^{n_i}}{i! \lceil n_i \rceil!} =
\sum_{j \geqslant 0} \frac{1}{j!} \sum_{i \in I_j} \frac{x^{n_i}}{i!}
\leqslant &\ e + \sum_{j \geqslant 1} \frac{1}{j!} \sum_{i \in I_j}
\frac{x^j + x^{j-1}}{i!}\\
< &\ e + e(e^x + x^{-1} e^x) < \infty.
\end{align*}

Now to prove the result, let $J \subset \Z^{\geqslant 0}$ denote the
subset $\{ i : \epsilon_i = -1 \}$. For each $j \in J$ we have $i_1(j),
\dots, i_N(j)$ such that $\epsilon_{i_k(j)} = 1$ and $n_{i_k(j)} < n_j$,
for $k=1,\dots,N$. We define
\[
f_j(x) \coloneqq \sum_{k=1}^N \frac{x^{n_{i_k(j)}} }{\lceil n_{i_k(j)}
\rceil!} - \delta_j \frac{x^{n_j}}{\lceil n_j \rceil!},
\]
where $\delta_j \in (0,1)$ is chosen such that $f_j[-]$ preserves
positivity on $\bp_N((0,\rho))$ by Theorem~\ref{Treal-1}.
Let $J'$ denote the set of all $i \geqslant 0$ for which $\epsilon_i =
+1$ but $i \neq i_k(j)$ for any $j \in J,\ k \in [1,N]$. Finally, define
\[
f(x) \coloneqq \sum_{j \in J} \frac{f_j(x)}{j!} + \sum_{i \in J'}
\frac{x^{n_i}}{i! \lceil n_i \rceil!}, \qquad x > 0.
\]
By repeating the above computation~\eqref{Erealsum}, one verifies $f$ is
absolutely convergent on $(0,+\infty)$ and hence on $(0,\rho)$. By the
Schur product theorem and the above hypotheses, it follows that $f[-]$
preserves positivity on $\bp_N((0,\rho))$. 
\end{proof}

In a similar vein to the bounded case, for the unbounded domain
$(0,+\infty)$ we may adapt the proof of Theorem~\ref{Tunbdd} to real
exponents, using Lemma~\ref{plead-2} as a replacement for
Lemma~\ref{Pleading}, to obtain

\begin{theorem}\label{Tunbdd-2}
Let $N > 0$ and $0 \leqslant n_0 < \dots < n_{N-1} < M < n_N < \dots <
n_{2N-1}$ be real numbers, such that each of the $n_0,\dots,n_{N-2}$ are
either non-negative integers, greater than $N-2$, or both.
Let $c_{n_0},\dots,c_{n_{2N-1}}$ be positive reals.  Then there exists a
negative number $c_M$ such that
\[
x \mapsto c_{n_0} x^{n_0} + c_{n_1} x^{n_1} + \dots + c_{n_{N-1}}
x^{n_{N-1}} + c_M x^M + c_{n_N} x^{n_N} + \dots + c_{n_{2N-1}}
x^{n_{2N-1}}
\]
entrywise preserves positivity on $\bp_N((0,+\infty))$.
\end{theorem}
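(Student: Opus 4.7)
The plan is to mimic the proof of Theorem \ref{Tunbdd}, substituting Lemma \ref{plead-2} for Proposition \ref{Pleading} throughout. By replacing each $c_{n_j}$ with their common minimum and rescaling, I may assume $c_{n_j} = 1$ for all $0 \leqslant j \leqslant 2N-1$. It then suffices to show that for $h(x) := \sum_{j=0}^{2N-1} x^{n_j}$, the polynomial $t h(x) - x^M$ is entrywise positivity preserving on $\bp_N((0,+\infty))$ for all sufficiently large $t$.

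First I would handle the rank-one case. For $\bu = (u_1,\dots,u_N)^T$ with $0 < u_1 < \cdots < u_N$, the Cauchy--Binet identity (extended to real powers by the multilinearity observation in Remark \ref{Ralgebra}) yields
\[
\det\bigl(t h[\bu \bu^T] - \bu^{\circ M}(\bu^{\circ M})^T\bigr) = t^N \sum_{B \in S^N_<} D_B(\bu)^2 - t^{N-1} \sum_{C \in S^{N-1}_<} D_{C \sqcup \{M\}}(\bu)^2,
\]
where $S := \{n_0,\dots,n_{2N-1}\}$ and $D_B(\bu) := \det(\bu^{\circ b_0}|\cdots|\bu^{\circ b_{N-1}})$ is the generalized Vandermonde determinant. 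Lemma \ref{plead-2}, applied with $K$ a compact set containing the (finitely many) strictly sorted tuples drawn from $S \cup \{M\}$, gives $D_B(\bu) \asymp V(\bu)\,\bu^{B - \bn_{\min}}$ uniformly in $\bu$. Positivity of the determinant for large $t$ then reduces to uniform-in-$\bu$ boundedness of the ratios $\bu^{C \sqcup \{M\} - \bn_{\min}} / \sum_{B \in S^N_<} \bu^{B - \bn_{\min}}$. Since $|S| = 2N$ has $N$ elements on each side of $M$ while $|C| = N-1$, the complement $S \setminus C$ must contain some $n_- < M$ and some $n_+ > M$. Splitting into cases on whether the coordinate of $\bu$ that pairs with $M$ in the sorted tuple $C \sqcup \{M\}$ is at most $1$ or exceeds $1$, one obtains
\[
\bu^{C \sqcup \{M\} - \bn_{\min}} \leqslant \bu^{C \sqcup \{n_-\} - \bn_{\min}} + \bu^{C \sqcup \{n_+\} - \bn_{\min}},
\]
both summands of which appear in the denominator. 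A continuity argument as in Proposition \ref{ab} then yields rank-one positivity for $t$ large.

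To upgrade to all ranks, I induct on $N$ using Theorem \ref{Tfitzhorn}: the $N=1$ case is the rank-one case, and for $N \geqslant 2$ the derivative $t h'(x) - M x^{M-1}$ has positive-coefficient exponents $\{n_j - 1 : n_j > 0\}$ together with the negative exponent $M-1$. The hypothesis $n_0,\dots,n_{N-2} \in \Z^{\geqslant 0} \cup (N-2,\infty)$ ensures, after the shift by $-1$, that the smallest $N-2$ of the resulting exponents lie in $\Z^{\geqslant 0} \cup (N-3,\infty)$, which is exactly the condition required to invoke the inductive hypothesis at dimension $N-1$. I apply it to a subpolynomial with exactly $2(N-1)$ positive-coefficient terms --- retaining $n_0-1,\dots,n_{N-2}-1$ below $M-1$ together with $N-1$ of the above-$M-1$ exponents --- and then fold any leftover positive-coefficient monomials into the result using the Schur product theorem, which is legitimate provided those leftover exponents are themselves FitzGerald--Horn positivity preservers on $\bp_{N-1}((0,+\infty))$. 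Theorem \ref{Tfitzhorn} then lifts preservation from $\bp^1_N$ to $\bp_N$.

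The principal novelty relative to the bounded case (Theorem \ref{Tmain}) is the uniform-in-$\bu$ ratio bound in the rank-one step, which hinges on the two-sided Horn-type condition (both $N$ exponents below and $N$ above $M$) to guarantee the existence of $n_-, n_+ \in S \setminus C$; without this there is no way to control $\bu^{C \sqcup \{M\} - \bn_{\min}}$ as a coordinate of $\bu$ tends to $0$ or $\infty$. The subtler obstacle is bookkeeping in the inductive step: the selection of which $2(N-1)$ positive-coefficient monomials to pass to the inductive hypothesis, and which to re-incorporate via Schur product, is constrained by the FitzGerald--Horn restriction at dimension $N-1$, and the hypothesis on $n_0,\dots,n_{N-2}$ is precisely calibrated to make a viable selection exist.
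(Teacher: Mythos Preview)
Your proposal is correct and follows essentially the same route the paper takes: the paper simply states that one adapts the proof of Theorem~\ref{Tunbdd} to real exponents, using Lemma~\ref{plead-2} in place of Proposition~\ref{Pleading}, and your writeup fleshes out exactly that adaptation. A couple of cosmetic points: the Cauchy--Binet factorization works directly for real exponents without any appeal to Remark~\ref{Ralgebra} (which is stated for polynomial $h$); and when you ``fold in'' leftover monomials you are using FitzGerald--Horn and additivity of the positive-semidefinite cone, not the Schur product theorem per se---though you immediately say as much. Your case analysis in the inductive step (handling whether $n_0=0$ and checking that discarded exponents such as $n_{N-1}-1$ lie in $\Z^{\geqslant 0}\cup(N-3,\infty)$) is the right bookkeeping, and the hypothesis on $n_0,\dots,n_{N-2}$ indeed forces $n_{N-1}>N-2$, so every discarded monomial is a FitzGerald--Horn preserver on $\bp_{N-1}$.
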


Using this theorem, we can show that Lemma~\ref{Lhorn-real}(ii) is sharp,
vis-a-vis Question~\ref{Q1}:

\begin{theorem}\label{Tpowerseries2}
Let $N \geqslant 2$, and let $\{ n_i : i \geqslant 0 \} \subset
\Z^{\geqslant 0} \cup [N-2,\infty)$ be a set of pairwise distinct real
numbers. For each $i$, let $\epsilon_i \in \{-1,0,+1\}$ be a sign such
that whenever $\epsilon_{i_0} = -1$, one has $\epsilon_i = +1$ for at
least $N$ choices of $i$ satisfying: $n_i < n_{i_0}$, and at least $N$
choices of $i$ satisfying: $n_i > n_{i_0}$.
Then there exists a convergent series with real coefficients
\[
f(x) = \sum_{i=0}^\infty c_{n_i} x^{n_i}
\]
on $(0,+\infty)$ that is an entrywise positivity preserver on
$\bp_N((0,+\infty))$, such that $c_{n_i}$ has the sign of
$\epsilon_i$ for all $i \geqslant 0$.
\end{theorem}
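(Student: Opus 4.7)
The plan is to mirror the proof of Theorem \ref{Tpowerseries} line-for-line, substituting Theorem \ref{Tunbdd-2} for Theorem \ref{Treal-1}. The two-sided Horn-type condition on the signs $\epsilon_i$ in the hypothesis is precisely what Theorem \ref{Tunbdd-2} demands in order to produce a positivity-preserving polynomial on $\bp_N((0,+\infty))$ with a negative middle coefficient, so the formal structure of the previous argument carries over.

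Concretely, let $J \coloneqq \{i : \epsilon_i = -1\}$. For each $j \in J$, the hypothesis supplies indices $i_1^-(j), \dots, i_N^-(j)$ with $\epsilon_{i_k^-(j)} = +1$ and $n_{i_k^-(j)} < n_j$, and $i_1^+(j), \dots, i_N^+(j)$ with $\epsilon_{i_k^+(j)} = +1$ and $n_{i_k^+(j)} > n_j$. Since $\{n_i\} \subset \Z^{\geqslant 0} \cup [N-2,\infty)$, the regularity assumption of Theorem \ref{Tunbdd-2} is satisfied, so that theorem yields a constant $\delta_j \in (0,1)$ for which
\[
f_j(x) \coloneqq \sum_{k=1}^N \frac{x^{n_{i_k^-(j)}}}{\lceil n_{i_k^-(j)} \rceil!} - \delta_j \frac{x^{n_j}}{\lceil n_j \rceil!} + \sum_{k=1}^N \frac{x^{n_{i_k^+(j)}}}{\lceil n_{i_k^+(j)} \rceil!}
\]
entrywise preserves positivity on $\bp_N((0,+\infty))$. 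Letting $J'$ denote the positive-sign positions that are not helpers for any $j$, I would then assemble
\[
f(x) \coloneqq \sum_{j \in J} \frac{f_j(x)}{j!} + \sum_{i \in J'} \frac{x^{n_i}}{i! \lceil n_i \rceil!}.
\]

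Once convergence of $f$ on $(0,+\infty)$ is verified, $f[-]$ preserves positivity on $\bp_N((0,+\infty))$ by the Schur product theorem. The sign pattern is correct: helpers always carry $\epsilon = +1$, so no $j \in J$ is ever a helper, and the coefficient of $x^{n_j}$ in $f$ equals exactly $-\delta_j/(j! \lceil n_j \rceil!) < 0$, while each positive-sign index picks up a strictly positive contribution either from the $J'$-sum or from its role as a helper. I expect convergence on the whole half-line to be the chief obstacle: unlike in the bounded case, summability is required for every $x > 0$, and the upper helpers $i_k^+(j)$ can introduce arbitrarily large exponents into $f_j$. This is handled by a Tonelli-style computation mirroring the proof of \eqref{Erealsum}, exploiting that each $f_j$ has only $2N+1$ terms and that $\sum_{j \in J} 1/j! \leqslant e$; should the $1/j!$ damping prove insufficient, one may replace $f_j/j!$ by $\alpha_j f_j$ for a sufficiently rapidly decaying positive sequence $\alpha_j$, which by the Schur product theorem remains positivity-preserving and does not affect the sign pattern.
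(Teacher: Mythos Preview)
Your proposal is correct and is precisely the approach the paper intends: the paper itself states only that ``the proof is similar to that of Theorem~\ref{Tpowerseries} and is left to the interested reader,'' and your construction---replacing Theorem~\ref{Treal-1} by Theorem~\ref{Tunbdd-2} and equipping each $j\in J$ with $N$ lower and $N$ upper helpers---is exactly that substitution. Your convergence concern is not an obstacle: since $\frac{x^{n}}{\lceil n\rceil!}\leqslant 2e^{x}$ uniformly in $n\geqslant 0$ for each fixed $x>0$, one has $|f_j(x)|\leqslant (2N+1)\cdot 2e^{x}$, and $\sum_{j\in J}|f_j(x)|/j!$ converges; the fallback to a faster-decaying $\alpha_j$ is unnecessary.
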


The proof is similar to that of Theorem~\ref{Tpowerseries} and is left to
the interested reader.

\subsection{Bounds for Laplace transforms}

Our final result in this section obtains a similar assertion to
Corollary~\ref{Tanalytic} for real powers. In this setting we begin with
real powers $0 \leqslant n_0 < \cdots < n_{N-1}$, and replace the
analytic function $g(x) = \sum_{M > n_{N-1}} g_M x^M$ from
Corollary~\ref{Tanalytic} by Laplace transforms against more general
measures,
\begin{equation}\label{Elaplace}
g_\mu(x) \coloneqq \int_{n_{N-1}}^\infty x^t\ d\mu(t),
\end{equation}

\noindent which we assume to be absolutely convergent at $\rho$. We now
prove:

\begin{theorem}\label{Tlaplace}
Fix an integer $N>0$ and suppose $n_0,\dots, n_{N-1}$ are strictly
increasing real numbers in the set $\Z^{\geqslant 0} \cup [N-2,\infty)$.
Also fix positive real scalars $\rho, c_{n_0}, \dots, c_{n_{N-1}} > 0$.
Given $\varepsilon > 0$ and a real measure $\mu$ supported on
$[n_{N-1} + \varepsilon,\infty)$ whose ``Laplace transform'' $g_\mu$
(defined in~\eqref{Elaplace}) is absolutely convergent at $\rho$,
there exists a finite threshold
\[
\mathcal{K} = \mathcal{K}(n_0,\dots,n_{N-1}, \rho, c_{n_0}, \dots,
c_{n_{N-1}}, g_\mu) > 0
\]
such that the function
\[
x \mapsto \mathcal{K} \sum_{j=0}^{N-1} c_{n_j} x^{n_j} -
g_\mu(x)
\]
is entrywise positivity preserving on $\bp_N((0,\rho))$.  Equivalently,
one has
\begin{equation}
 g_\mu[A] \preceq {\mathcal K} \sum_{j=0}^{N-1} c_{n_j} A^{\circ n_j}
\end{equation}
for all $A \in \bp_N((0,\rho))$.
\end{theorem}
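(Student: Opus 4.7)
The plan is to adapt the proof of Corollary \ref{Tanalytic}, using Theorem \ref{Treal-1} in place of Theorem \ref{Tmain} and replacing the sum over discrete powers $M > n_{N-1}$ by integration against the measure $\mu$.

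By the same shrinking reduction as in Corollary \ref{Tanalytic} -- replacing $\rho$ with $\rho - \eta$ and sending $\eta \downarrow 0$ at the end -- I may assume the extra decay $\int t^k \rho^t \, d|\mu|(t) < \infty$ for each $k \geq 0$. The gained factor $((\rho-\eta)/\rho)^t < 1$ is strictly exponentially decaying in $t$ and hence absorbs any polynomial weight against $\int \rho^t\, d|\mu|(t)$, which is finite by hypothesis.

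Next, for each real $t \geq n_{N-1} + \varepsilon$, Theorem \ref{Treal-1} applied to the $(N+1)$-tuple $(n_0,\dots,n_{N-1},t)$ produces a threshold $\mathcal{K}(t)$ such that $\mathcal{K}(t) \sum_{j=0}^{N-1} c_{n_j} x^{n_j} - x^t$ is entrywise positivity preserving on $\bp_N([0,\rho])$. The central technical step -- and the main obstacle -- is to make $\mathcal{K}(t)$ sufficiently explicit to verify a bound $\mathcal{K}(t) \leq Q(t) \rho^t$ with $Q(t)$ polynomial in $t$ (depending only on $N$, the tuple $\bn$, the coefficients $c_{n_j}$, and $\varepsilon$). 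I would achieve this by tracing through Proposition \ref{Crank2} (the rank-one case, based on the HCIZ-derived Lemma \ref{plead-2}) and the FitzGerald--Horn extension principle (Theorem \ref{Tfitzhorn}), substituting the explicit bounds \eqref{vbal} in place of the ad hoc constants of Lemma \ref{plead-2}: the Vandermonde factor $V(\bn_j)$ (whose last entry is $t$) contributes only polynomial growth in $t$, the geometric term $\rho^{t-n_j}$ supplies the dominant $\rho^t$ scaling, and each induction step of Theorem \ref{Tfitzhorn} loses at most a further polynomial factor.

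Given this quantitative per-$t$ bound, set $\mathcal{K} \coloneqq \int_{n_{N-1}+\varepsilon}^\infty \mathcal{K}(t)\, d|\mu|(t)$, which is finite by the first reduction. Taking the Jordan decomposition $\mu = \mu^+ - \mu^-$ and rearranging, the candidate $\mathcal{K} \sum_{j=0}^{N-1} c_{n_j} x^{n_j} - g_\mu(x)$ splits as an integral against $\mu^+$ of integrands of the form $\mathcal{K}(t) \sum_{j=0}^{N-1} c_{n_j} x^{n_j} - x^t$, plus an integral against $\mu^-$ of integrands of the form $\mathcal{K}(t) \sum_{j=0}^{N-1} c_{n_j} x^{n_j} + x^t$. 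The first integrand is an entrywise positivity preserver on $\bp_N((0,\rho))$ by the construction of $\mathcal{K}(t)$; the second is a non-negative combination of the positivity preservers $x^{n_j}$ and $x^t$, the latter being a positivity preserver because a short counting argument using the hypothesis $n_i \in \Z^{\geq 0} \cup [N-2,\infty)$ forces $n_{N-1} \geq N-2$ and hence $t > N-2$, so that FitzGerald--Horn applies. Since positive semidefiniteness is closed under integration against non-negative Borel measures (a standard limit-of-Riemann-sums argument), applying both sides entrywise to any $A \in \bp_N((0,\rho))$ yields a positive semidefinite matrix, and sending $\eta \downarrow 0$ completes the proof.
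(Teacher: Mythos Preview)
Your high-level strategy is exactly the paper's: reduce to a per-exponent threshold $\mathcal{K}(t)$, show it grows at most like a polynomial times $\rho^t$, and integrate against $|\mu|$ after the $\rho \mapsto \rho - \eta$ shrinking. The handling of $\mu^-$ via FitzGerald--Horn is also fine.

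The weak point is your plan for the ``central technical step.'' You propose to make the constants in Lemma~\ref{plead-2} explicit by substituting the crude HCIZ bounds \eqref{vbal}. This does not work as stated: the lower bound in \eqref{vbal} carries the monomial $\bu^{\overline{\bn}}$ (and the factor $V(\log[\bu])$), not $V(\bu)\,\bu^{\bn-\bn_{\min}}$. When you form the ratio needed in Proposition~\ref{ab}, the resulting bound on $(\det \bu^{\circ \bn_j})^2 / (\det \bu^{\circ \bn})^2$ contains factors like $u_1^{2(n_0-n_{N-1})}$ (or equivalently $V(\log[\bu])/V(\bu)$), which blow up as some $u_i \to 0^+$. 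So \eqref{vbal} alone does not produce a uniform rank-one threshold with polynomial growth in $t$.

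The paper circumvents this by first proving the sharp two-sided estimate of Proposition~\ref{plead-3} (whose lower bound requires the Gelfand--Tsetlin integral \eqref{eo1}, not just Schur--Horn), then feeding it into Proposition~\ref{Crank3} and the subsequent all-ranks theorem to obtain the explicit threshold $\mathcal{K}_{\bn,\bc,M}$ of \eqref{k3def}. The proof of Theorem~\ref{Tlaplace} then simply quotes \eqref{k3def} and checks that $\sup_{M \geq n_{N-1}+\varepsilon} \mathcal{K}_{\bn,\bc,M}\,(1+\varepsilon)^{-M}\rho^{-M} < \infty$, which is immediate since $\delta_{\bn,M}$ is bounded below by $\min(\varepsilon, n_1-n_0,\dots,n_{N-1}-n_{N-2})$ and $V(\bn_j)$ is polynomial in $M$. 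You should cite \eqref{k3def} directly rather than attempt to rederive it from \eqref{vbal}.
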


The remainder of this section is devoted to proving
Theorem~\ref{Tlaplace}. The key improvement over the previous subsection
that is required in the proof is a sharper version of
Lemma~\ref{plead-2}:

\begin{proposition}[Leading term of generalized Vandermonde
determinants]\label{plead-3}
Let $n_0 < \dots < n_{N-1}$ be $1$-separated, in the sense that $n_{i+1}
- n_i \geqslant 1$ for all $0 \leqslant i < N-1$.  Then for all $\bu =
(u_1,\dots,u_N)^T \in ((0,+\infty)^N)^T$ with $u_1 \leqslant \dots
\leqslant u_N$, we have
\begin{equation}\label{pup-real}
 1 \times V(\bu) \bu^{\bn - \bn_{\min}} \leqslant \det(
 \bu^{\circ n_0} | \dots | \bu^{\circ n_{N-1}} ) \leqslant 
 \frac{V(\bn)}{V(\bn_{\min})} \times V(\bu) \bu^{\bn - \bn_{\min}},
\end{equation}
where $\bn_{\min} \coloneqq (n_0,\dots,n_{N-1})$. Moreover, the lower and
upper bounds of $1, \frac{V(\bn)}{V(\bn_{\min})}$ cannot be improved.
\end{proposition}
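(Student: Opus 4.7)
My plan is to handle the two inequalities by rather different methods: the upper bound via the Harish-Chandra--Itzykson--Zuber (HCIZ) formula \eqref{hciz} together with a rearrangement argument, and the lower bound by induction on $N$ (or equivalently, by a Gelfand--Tsetlin polytope computation). Set $\bx \coloneqq \log \bu = (\log u_1, \dots, \log u_N)$. Applying HCIZ both to $\bn$ and to $\bn_{\min}$ (where the latter collapses to the classical Vandermonde identity $V(\bu) = V(\bx) \cdot I(\bn_{\min},\bx)$, with $I(\bal,\bx) \coloneqq \int_{U(N)} e^{\operatorname{tr}(D_\bal U D_\bx U^*)}\, dU$) yields
\[
\frac{\det(\bu^{\circ n_0}|\cdots|\bu^{\circ n_{N-1}})}{V(\bu)}
= \frac{V(\bn)}{V(\bn_{\min})}\cdot \mathbb{E}_\mu\bigl[ e^{\langle \bn - \bn_{\min},\, y\rangle} \bigr],
\]
where $\mu$ is the probability measure on the permutohedron of $\bx$ obtained by tilting the Duistermaat--Heckman pushforward of Haar measure by the density $e^{\langle \bn_{\min}, y\rangle}/I(\bn_{\min},\bx)$.

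For the upper bound, the 1-separation hypothesis $n_{j+1} - n_j \geqslant 1$ is equivalent to $\bn - \bn_{\min}$ being weakly increasing with non-negative entries. By the Schur--Horn theorem, every $y$ in the support of $\mu$ is majorized by $\bx$, and by the rearrangement inequality applied to the similarly-sorted tuples $\bn - \bn_{\min}$ and $\bx$, the linear functional $\langle \bn - \bn_{\min}, y\rangle$ attains its maximum on this support at $y = \bx$, with value $\log \bu^{\bn - \bn_{\min}}$. Substituting into the expectation immediately gives the desired upper bound.

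For the lower bound I would induct on $N$. The case $N=2$ reduces, after the substitution $r = u_1/u_2 \in (0,1]$ and $t = n_1 - n_0 \geqslant 1$, to the elementary inequality $(1-r^t)/(1-r) \geqslant 1$, which is an instance of the mean value theorem and uses 1-separation crucially. For $N \geqslant 3$, after factoring $\bu^{\circ n_0}$ from each row to normalize $n_0 = 0$, I would combine cofactor expansion with the iterated integral identity $u^{n_j} - u^{n_{j-1}} = \int_{n_{j-1}}^{n_j} u^s \log u\, ds$ to express $\det/V(\bu)$ as an iterated integral of size-$(N-1)$ generalized Vandermonde determinants, on each of which the inductive hypothesis supplies the required lower bound; closing the argument then requires only a one-variable estimate analogous to the $N=2$ case. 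Alternatively, one may use a continuous Gelfand--Tsetlin polytope representation of $\det/V(\bu)$ and identify an explicit sub-polytope of unit mass on which the integrand is bounded below by $\bu^{\bn - \bn_{\min}}$. Sharpness of both constants is verified at two extreme configurations: $\bu = (1,\dots,1)$ (taken as a limit) saturates the upper bound via the continuous extension of the Weyl dimension formula \eqref{Ewdf}, while $\bu = (1, A, A^2, \dots, A^{N-1})$ with $A \to \infty$ saturates the lower bound, the dominant term in the permutation expansion of $\det$ forcing the ratio $\det/(V(\bu) \bu^{\bn-\bn_{\min}})$ to $1$. The main obstacle is the lower bound: in the integer setting (Proposition \ref{Pleading}), $\bu^{\bn - \bn_{\min}}$ appears explicitly as a single monomial of unit coefficient in $s_\bn$, making the bound immediate; this structure disappears for real $\bn$, and one must invoke the 1-separation of $\bn$ at every step of the induction (or inside the Gelfand--Tsetlin integral) to recover the constant $1$.
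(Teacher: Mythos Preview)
Your proposal is correct and aligns closely with the paper's own proof. The upper bound via HCIZ plus Schur--Horn/rearrangement is exactly what the paper does (the paper phrases it as a pointwise supremum bound rather than a tilted expectation, but the content is identical), and your sharpness arguments at $\bu \to (1,\dots,1)$ and $\bu = (1,A,\dots,A^{N-1})$, $A\to\infty$, match the paper's use of the principal specialization \eqref{Eprincipal}.

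For the lower bound, the paper uses precisely your ``alternative'': it invokes a Gelfand--Tsetlin integral representation (via the Shatashvili formula) and then performs an explicit affine change of variables $m_i^j \mapsto \beta_i^j$ that shears the GT polytope so that (i) the integrand becomes independent of $\bn$, and (ii) the $1$-separation hypothesis translates directly into the polytope containment $\widetilde{GT}(\bn_{\min}) \subset \widetilde{GT}(\bn)$, immediately giving the constant $1$. Your inductive sketch via cofactor expansion and the one-variable integral $u^{n_j}-u^{n_{j-1}} = \int_{n_{j-1}}^{n_j} u^s \log u\, ds$ is morally the same object unrolled one layer at a time, but as written it is too vague to close: after one cofactor expansion the resulting $(N-1)$-column tuples need not remain $1$-separated in the right way, and the ``one-variable estimate analogous to the $N=2$ case'' you allude to is not specified. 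The paper's shearing trick is what makes the polytope inclusion (and hence the sharp constant) transparent in one stroke, and you would likely end up rediscovering it if you pushed the induction through carefully.
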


Note that~\eqref{pup-real} matches the bounds in
Proposition~\ref{Pleading}, and hence extends that result to all real
powers.

To prove Proposition~\ref{plead-3}, we require the following
generalization of a symmetric function identity to real powers (which is
also required later; and which we show for completeness).

\begin{proposition}[Principal specialization of generalized Vandermonde
determinants]\label{Pprincipal}
Fix an integer $N>0$ and real powers $n_0 < \cdots < n_{N-1}$. Also
define
\begin{equation}\label{Evector}
\bu(\epsilon) \coloneqq (1, \epsilon, \dots, \epsilon^{N-1})^T, \qquad
\epsilon > 0.
\end{equation}
Defining the matrix $\bu(\epsilon)^{\circ \bn} \coloneqq
(\bu(\epsilon)^{\circ n_0} | \dots | \bu(\epsilon)^{\circ n_{N-1}})$, we
have:
\begin{equation}\label{Eprincipal}
\frac{\det \bu(\epsilon)^{\circ \bn}}{V(\bu(\epsilon))} = \prod_{0
\leqslant i < j \leqslant N-1} \frac{\epsilon^{n_j} -
\epsilon^{n_i}}{\epsilon^j - \epsilon^i}, \qquad \forall \epsilon > 0, \
\epsilon \neq 1.
\end{equation}
\end{proposition}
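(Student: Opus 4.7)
My plan is to observe that the matrix $\bu(\epsilon)^{\circ \bn}$ is itself a classical Vandermonde matrix in disguise, so both the numerator and denominator of the left-hand side can be evaluated exactly in closed form, and the identity will drop out by inspection.

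Concretely, the first step is to write out the matrix entries. The vector $\bu(\epsilon) = (1,\epsilon,\dots,\epsilon^{N-1})^T$ has $i$-th coordinate $\epsilon^{i-1}$, so the $i$-th entry of $\bu(\epsilon)^{\circ n_j}$ equals $\epsilon^{(i-1)n_j}$. Setting $v_j \coloneqq \epsilon^{n_{j-1}}$ for $j=1,\dots,N$, this means the $(i,j)$ entry of $\bu(\epsilon)^{\circ \bn}$ is $v_j^{i-1}$. That is the standard Vandermonde matrix in the variables $v_1,\dots,v_N$, so its determinant is
\[
\det \bu(\epsilon)^{\circ \bn} = \prod_{1 \leqslant i < j \leqslant N} (v_j - v_i) = \prod_{0 \leqslant i < j \leqslant N-1} (\epsilon^{n_j} - \epsilon^{n_i}).
\]

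The second step is to evaluate the denominator. By the Vandermonde formula recalled in Proposition \ref{Pschur}, applied to $\bu(\epsilon)$,
\[
V(\bu(\epsilon)) = \prod_{0 \leqslant i < j \leqslant N-1} (\epsilon^j - \epsilon^i),
\]
and under the hypothesis $\epsilon > 0$, $\epsilon \neq 1$ none of the factors $\epsilon^j - \epsilon^i$ with $i < j$ vanish, so the ratio is well-defined. Dividing the two displays yields the stated formula \eqref{Eprincipal}.

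There is no real obstacle here: the only thing to check is the bookkeeping between the two different indexings (the ``rows of $\bu(\epsilon)$'' Vandermonde, which produces the denominator $\prod (\epsilon^j - \epsilon^i)$, versus the ``Vandermonde in $v_j = \epsilon^{n_{j-1}}$'', which produces the numerator $\prod (\epsilon^{n_j} - \epsilon^{n_i})$). Once one recognizes that the generalized Vandermonde matrix at the principal specialization $\bu(\epsilon)$ literally \emph{is} an ordinary Vandermonde matrix in the powers $\epsilon^{n_j}$, the identity is immediate, and no use of Schur polynomial combinatorics or of the integrality of the $n_j$ is required -- which is exactly the flexibility needed for the subsequent real-exponent arguments.
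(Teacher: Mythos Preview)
Your proof is correct and essentially identical to the paper's own argument: the paper observes that the transpose of $\bu(\epsilon)^{\circ \bn}$ is the ordinary Vandermonde matrix in the variables $\bv = (\epsilon^{n_0},\dots,\epsilon^{n_{N-1}})^T$, computes its determinant as $V(\bv)$, and divides by $V(\bu(\epsilon))$. Your version recognizes the same Vandermonde structure directly (without taking the transpose, which is immaterial for the determinant), so the two arguments coincide.
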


Notice that unlike Proposition~\ref{plead-3}, we do not require the
entries of $\bu(\epsilon)$ to be non-decreasing, whence $\epsilon > 0$
can be arbitrary.

\begin{proof}
The transpose of $\bu(\epsilon)^{\circ \bn}$ is the Vandermonde
matrix $(\bv^{\circ 0}|\dots|\bv^{\circ (N-1)})$ with $\bv \coloneqq
(\epsilon^{n_0}, \dots, \epsilon^{n_{N-1}})^T$.  In particular, the
determinant of this matrix is $V(\bv) = \prod_{1 \leqslant i < j
\leqslant N-1} (\epsilon^{n_j} - \epsilon^{n_i})$, and the claim follows.
\end{proof}

\begin{remark}
One can view this identity as the real exponent version of the principal
specialization of the Weyl Character Formula in type $A$ (see
e.g.\ \cite[Chapter I.3]{Macdonald}), which says that for integers $0
\leqslant n_0 < \cdots < n_{N-1}$, and a variable $q$,
\[
s_\bn(1,q,\dots,q^{N-1}) = \prod_{0 \leqslant i < j \leqslant N-1}
\frac{q^{n_j} - q^{n_i}}{q^j - q^i}
\]
over any ground field. 
\end{remark}

We now prove the aforementioned tight bounds on generalized Vandermonde
determinants.

\begin{proof}[Proof of Proposition~\ref{plead-3}]
Note that if $n_0 < 0$, then by multiplying all terms in the
inequality~\eqref{pup-real} by $(u_1 \cdots u_N)^{-n_0}$, one can reduce
to the case of non-negative powers $n_j - n_0$. Thus, we suppose
henceforth that $n_0 \geqslant 0$. By a limiting argument, we may assume
without loss of generality that $0 < u_1 < \dots < u_N$.
From~\eqref{hciz} we have
\begin{align}\label{eo}
&\ \det( \bu^{\circ n_0} | \dots | \bu^{\circ n_{N-1}} )\\
= &\ \frac{V(\bn) V(\log[\bu])}{V(\bn_{\min})} \int_{U(N)} \exp
\mathrm{tr} \left( \mathrm{diag}(\bn) U \mathrm{diag}(\log[\bu]) U^*
\right) \ dU;\nonumber
\end{align}

\noindent replacing $\bn$ by $\bn_{\min}$, we also see that
\[
V(\bu) = V(\log [\bu]) \int_{U(N)} \exp \mathrm{tr} \left(
\mathrm{diag}(\bn_{\min}) U \mathrm{diag}(\log[\bu]) U^* \right) \ dU
\]
and hence
\begin{align*}
&\ \det( \bu^{\circ n_0} | \dots | \bu^{\circ n_{N-1}} )\\
\leqslant &\ \frac{V(\bn) V(\bu)}{V(\bn_{\min})} \sup_{U \in U(N)} \exp
\mathrm{tr} \left( \mathrm{diag}(\bn - \bn_{\min}) U
\mathrm{diag}(\log[\bu]) U^* \right).
\end{align*}

By \thinspace the \thinspace Schur--Horn \thinspace theorem
\cite{sh, ahorn}, \thinspace the \thinspace diagonal \thinspace entries
\thinspace of \thinspace the \thinspace
matrix $U\mathrm{diag}(\log[\bu]) U^*$ are majorized by $\log[\bu]$.  By
hypothesis, the vectors $\bn - \bn_{\min}$ and $\log[\bu]$ have
non-decreasing entries, and hence
\[
\exp \mathrm{tr} \left( \mathrm{diag}(\bn - \bn_{\min}) U
\mathrm{diag}(\log[\bu]) U^* \right) \leqslant \bu^{\bn - \bn_{\min}}.
\]
Putting all this together, we obtain the upper bound in~\eqref{pup-real}.


Now we turn to the lower bound.  Using the integration formula in
\cite[(3.2)]{shat} (we thank Abdelmalek Abdesselam for this reference),
we may write the right-hand side of~\eqref{eo} as
\begin{equation}\label{eo1}
 V(\log [\bu]) \int_{GT(\bn)} \exp\left(\sum_{k=1}^N \left(
 \sum_{i=1}^{N-k+1} m_i^{k-1} - \sum_{i=1}^{N-k} m_i^k\right) \log u_k
 \right)
\end{equation}
where the \emph{Gelfand--Tsetlin polytope} $GT(\bn)$ is the collection of
all tuples
\[
(m_i^k)_{1 \leqslant k \leqslant N-1; 1 \leqslant i \leqslant
N-k} \in \R^{N(N-1)/2}
\]
of real numbers $m_i^k$ obeying the interlacing relations
\[
m_i^k > m_i^{k+1} > m_{i+1}^k, \qquad 0 \leqslant k < N-1, \ 1 \leqslant
i < N-k,
\]
with the convention that $m_i^0 = n_{i-1}$ for $i=1,\dots,N$, and the
integration is with respect to Lebesgue measure on this polytope.

\begin{remark}
The reader may wish to first run the argument here in the simple case
$N=2$, $n_0=0$, $u_1=1$, in which case the formula~\eqref{eo1} simplifies
to $u_2^{n_1} - 1 = \log(u_2) \int_0^{u_2} \exp( m_1^1 \log u_2 )\
dm_1^1$, while the derivation~\eqref{eo-2} below becomes $u_2^{n_1} - 1 =
\log(u_2) u_2^{n_2} \int_0^{u_2} \exp( - \beta_1^1 \log u_2)\
d\beta_1^1$.  The formula~\eqref{eo1} can also be thought of as a
continuous or ``classical'' version of~\eqref{detfactor}, or
equivalently~\eqref{detfactor} may be thought of as a discrete or
``quantized'' version of~\eqref{eo1}.
\end{remark}

Returning to the general case, if we now
write $\log u_k = \sum_{j=1}^k \alpha_j$ for some reals $\alpha_j = \log
u_j - \log u_{j-1}$ (with the convention $\log u_0 = 0$), we can
telescope the expression
\[
\sum_{k=1}^N \left(\sum_{i=1}^{N-k+1} m_i^{k-1} - \sum_{i=1}^{N-k}
m_i^k\right) \log u_k
\]
appearing in the above formula as
\[
\sum_{j=1}^N \alpha_j \sum_{i=0}^{N-j+1} m_i^{j-1};
\]
making the linear change of variables
\[
m_i^j = n_{i+j-1} - \beta^1_{i+j-1} - \beta^2_{i+j-2} - \dots - \beta^i_j
\]
for a tuple $(\beta_i^k)_{1 \leqslant k \leqslant N-1; 1 \leqslant i
\leqslant N-k} \in \R^{N(N-1)/2}$ in a sheared version
$\widetilde{GT}(\bn)$ of the Gelfand--Tsetlin polytope, this expression
can be telescoped again as
\[
\sum_{j=1}^N \alpha_j \sum_{i=0}^{N-j+1} n_{i+j} - \sum_{k=1}^{N-1}
\sum_{i=1}^{N-k} \beta_i^k (\log u_{i+k-1}-\log u_{i-1}).
\]
Since
\[
\exp \left( \sum_{j=1}^N \alpha_j \sum_{i=0}^{N-j+1} n_{i+j} \right) =
\bu^\bn
\]
we thus have the identity
\begin{align}\label{eo-2}
&\ \det( \bu^{\circ n_0} | \dots | \bu^{\circ n_{N-1}} )\\
= &\ V(\log [\bu]) \bu^\bn \int_{\widetilde{GT}(\bn)} \exp\left( -
\sum_{k=1}^{N-1} \sum_{i=1}^{N-k} \beta_i^k (\log u_{i+k-1}-\log u_{i-1})
\right).\nonumber
\end{align}

\noindent Replacing $\bn$ by $\bn_{\min}$, we also have
\[
V(\bu) = V(\log [\bu]) \bu^{\bn_{\min}} \int_{\widetilde{GT}(\bn_{\min})}
\exp \left( - \sum_{k=1}^{N-1} \sum_{i=1}^{N-k} \beta_i^k (\log
u_{i+k-1}-\log u_{i-1}) \right).
\]
Observe that the polytope $\widetilde{GT}(\bn)$ is cut out by the
inequalities $\beta_i^k > 0$, as well as
\[
\beta^1_{i+k} + \dots + \beta^{k+1}_i - \beta^1_{i+k-1} - \dots -
\beta^k_i \leqslant n_{i+k} - n_{i+k-1}
\]
for $0 \leqslant k < N$ and $1 \leqslant i < N-k$.  In particular, as
$\bn$ is $1$-separated, we have the inclusion
\[
\widetilde{GT}(\bn_{\min}) \subset \widetilde{GT}(\bn)
\]
and the lower bound in~\eqref{pup-real} follows.

Finally, we prove sharpness of the lower and upper bounds
in~\eqref{pup-real}, \thinspace using the principal specialization
formula~\eqref{Eprincipal} with $\epsilon > 1$. Indeed, if $\bu_\epsilon
\coloneqq \rho \bu(\epsilon) =$ $\rho (1, \epsilon, \dots,
\epsilon^{N-1})^T$, then
\begin{align}\label{Egvm}
\begin{aligned}
\frac{\det \bu_\epsilon^{\circ \bn}}{V(\bu_\epsilon)
\bu_\epsilon^{\bn - \bn^{\min}} } = &\ \prod_{j=0}^{N-1} \epsilon^{j(j -
n_j)} \prod_{0 \leqslant i < j \leqslant N-1} \frac{\epsilon^{n_j} -
\epsilon^{n_i}}{\epsilon^j - \epsilon^i}\\
= &\ \prod_{0 \leqslant i < j \leqslant N-1} \frac{1 - \epsilon^{n_i -
n_j}}{1 - \epsilon^{i - j}}.
\end{aligned}
\end{align}
Now the sharpness of the upper bound in~\eqref{pup-real} follows by
taking $\epsilon \to 1^+$, while that of the lower bound follows by
taking $\epsilon \to \infty$.
\end{proof}

Using the tight bounds in Proposition~\ref{plead-3}, we now sharpen
Proposition~\ref{Crank2} to obtain an explicit bound for rank-one
matrices, with arbitrary tuples of real powers.

\begin{proposition}\label{Crank3}
Let $n_0 < \cdots < n_{N-1} < M$ and scalars
$\rho,c_{n_0}, \dots, c_{n_{N-1}}$ $> 0$ be real numbers. Define
$ \delta_{\bn,M} \coloneqq \min (n_1 - n_0, \dots, n_{N-1} -
n_{N-2}, M - n_{N-1})$.
Then the function
\[
x \mapsto \mathcal{K} (c_{n_0} x^{n_0} + \cdots + c_{n_{N-1}}
x^{n_{N-1}}) - x^{M}
\]
is entrywise positivity preserving on $\bp^1_N((0,\rho))$, where
\begin{equation}\label{k2def}
 \mathcal{K} \coloneqq \delta_{\bn,M}^{-N(N-1)}
 \sum_{j=0}^{N-1} \frac{V(\bn_j)^2}{V(\bn_{\min})^2} \frac{\rho^{M -
 n_j}}{c_{n_j}}.
\end{equation}
\end{proposition}

\begin{proof}
Notice that the proof of Proposition~\ref{ab} applies on the nose to real
powers $n_0, \dots, n_{N-1}, M$, replacing $V(\bu) s_\bn(\bu)$ by $\det
(\bu^{\circ n_0} | \dots | \bu^{\circ n_{N-1}})$ and similarly with
$\bn_j$ instead of $\bn$. Now define
\[
{\bf m} \coloneqq \frac{1}{\delta} {\bf n}, \ \
{\bf m}_j \coloneqq \frac{1}{\delta} \bn_j, \ \
\bv \coloneqq (u_1^\delta, \dots, u_N^\delta)^T \in
(0,\rho^{\delta/2})^N, \ \ \text{where } \delta =  \delta_{\bn,M};
\]
note that ${\bf m}$ and ${\bf m}_j$ are all $1$-separated. Now we repeat
the proof of Proposition~\ref{Crank1} using Proposition~\ref{plead-3} and
assuming by a continuity argument that the coordinates of $\bu \in
((0,\sqrt{\rho})^N)^T$ are strictly increasing (and slightly abusing
notation for generalized Vandermonde determinants):
\begin{align*}
\sum_{j=0}^{N-1} \frac{(\det \bu^{\circ \bn_j})^2}{c_{n_j}(\det
\bu^{\circ \bn})^2} = \sum_{j=0}^{N-1} \frac{(\det \bv^{\circ
\bm_j})^2}{c_{n_j}(\det \bv^{\circ \bm})^2} \leqslant &\ \sum_{j=0}^{N-1}
\frac{\left(\frac{V(\bm_j)}{V(\bn_{\min})} \bv^{\bm_j -
\bn_{\min}}\right)^2}{c_{n_j} (\bv^{\bm - \bn_{\min}})^2}\\
\leqslant &\ \delta_{\bn,M}^{-N(N-1)} \sum_{j=0}^{N-1}
\frac{V(\bn_j)^2}{V(\bn_{\min})^2} \frac{\rho^{M - n_j}}{c_{n_j}},
\end{align*}
and this is precisely $\mathcal{K}$ by~\eqref{k2def}.
\end{proof}

As in the case of integer powers, we now (mildly modify the above
threshold to) extend Proposition~\ref{Crank3} to all matrices in
$\bp_N((0,\rho))$.

\begin{theorem}
Let \thinspace the \thinspace notation \thinspace be \thinspace as
\thinspace in \thinspace Proposition~\ref{Crank3}. Define \ $\bc
\coloneqq (c_{n_0}, \dots, c_{n_{N-1}})$ and
\begin{equation}\label{k3def}
 \mathcal{K}_{\bn,\bc,M} \coloneqq
  \max(1,\delta_{\bn,M}^{-N(N-1)})
 \sum_{j=0}^{N-1} \prod_{\alpha=0}^{j-1}
 \max(1,g(\bn,\alpha)) \cdot \frac{V(\bn_j)^2}{V(\bn_{\min})^2}
 \frac{\rho^{M - n_j}}{c_{n_j}}, 
\end{equation}
where the empty product in the $j=0$ summand is taken to be
$1$, and
\begin{equation}
g(\bn,\alpha) \coloneqq
\frac{(N-1-\alpha)!^2}{\prod_{k = \alpha+1}^{N-1} (n_k - n_\alpha)^2}.
\end{equation}
Suppose further that $n_j \in \Z^{\geqslant 0} \cup [N-2,\infty)$ for all
$j$. Then the function
\[
x \mapsto \mathcal{K}_{\bn,\bc,M} (c_{n_0} x^{n_0} + \cdots + c_{n_{N-1}}
x^{n_{N-1}}) - x^{M}
\]
is entrywise positivity preserving on $\bp_N((0,\rho))$.
\end{theorem}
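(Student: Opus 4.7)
My plan is to adapt the induction-on-$N$ strategy for Theorem \ref{Tmain} to the real-exponent setting, using Proposition \ref{Crank3} (instead of Proposition \ref{Crank1}) for the rank-one case and Theorem \ref{Tfitzhorn} as the vehicle to pass from rank-one to general matrices. Writing $h(x) \coloneqq \mathcal{K}_{\bn,\bc,M}(c_{n_0} x^{n_0} + \cdots + c_{n_{N-1}} x^{n_{N-1}}) - x^M$, my first observation is that, since each factor $\max(1, g(\bn,\alpha))$ is at least $1$ and $\max(1, \delta_{\bn,M}^{-N(N-1)}) \geqslant \delta_{\bn,M}^{-N(N-1)}$, the threshold $\mathcal{K}_{\bn,\bc,M}$ in \eqref{k3def} dominates the rank-one threshold \eqref{k2def}. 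So Proposition \ref{Crank3} immediately yields that $h$ preserves positivity on $\bp^1_N((0,\rho))$. The base case $N=1$ is direct: for $N=1$ the formula \eqref{k3def} collapses to $\mathcal{K}_{\bn,\bc,M} = \rho^{M-n_0}/c_{n_0}$, which is exactly what is needed for the scalar inequality $h(x) \geqslant 0$ on $(0,\rho)$.

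For $N \geqslant 2$, I would invoke the FitzGerald--Horn result \cite{FitzHorn} to see that $x \mapsto x^{n_0}$ preserves positivity on $\bp_N((0,\infty))$ under the hypothesis $n_0 \in \Z^{\geqslant 0} \cup [N-2,\infty)$, and note that \eqref{k3def} is invariant under the uniform shift $(\bn, M) \mapsto (\bn - n_0 \mathbf{1}, M - n_0)$, since $\delta_{\bn,M}$, $g(\bn,\alpha)$, $V(\bn_j)$, $V(\bn_{\min})$, and $\rho^{M-n_j}$ all depend only on differences of the exponents. Multiplying by $x^{n_0}$ and applying the Schur product theorem therefore reduces the problem to $n_0 = 0$. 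The derivative then takes the form
\[
h'(x) = M\left(\frac{\mathcal{K}_{\bn,\bc,M}}{M} \sum_{j=1}^{N-1} n_j c_{n_j} x^{n_j-1} - x^{M-1}\right),
\]
and the surviving exponents $n_j - 1$ (for $j \geqslant 1$) together with $M-1$ lie in $\Z^{\geqslant 0} \cup [N-3,\infty)$, matching the hypothesis at dimension $N-1$. By Theorem \ref{Tfitzhorn}, the proof then reduces to verifying the single master inequality
\[
\mathcal{K}_{\bn,\bc,M} \geqslant M \cdot \mathcal{K}_{\bn',\bc',M-1}, \qquad \bn' \coloneqq (n_1-1,\ldots,n_{N-1}-1), \quad \bc' \coloneqq (n_1 c_{n_1}, \ldots, n_{N-1} c_{n_{N-1}}),
\]
which lets the induction hypothesis apply to the bracketed expression.

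The main obstacle is this master inequality; it is the real-power analogue of the explicit telescoping computation at the end of the proof of Theorem \ref{Tmain}. The integer argument discards the $j=0$ summand and bounds the surviving product $\prod_{b \neq 0, j} n_b$ from below using $n_b \geqslant b$, yielding $\prod_b n_b \geqslant (N-1)!$; this inequality is no longer available for general real $n_b$, which need neither be integers nor exceed their index. The role of $\prod_{\alpha=0}^{j-1} \max(1, g(\bn,\alpha)) = \prod_{\alpha=0}^{j-1} \max\bigl(1, (N-1-\alpha)!^2 / \prod_{k>\alpha}(n_k - n_\alpha)^2\bigr)$ in \eqref{k3def} is precisely to absorb this loss, by controlling the reciprocal of each gap $(n_k - n_\alpha)$ that appears after the change of normalization $V(\bn_{\min})^2 \mapsto V(\bn'_{\min})^2$. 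Similarly, the outer $\max(1, \delta_{\bn,M}^{-N(N-1)})$ soaks up the discrepancy between $\delta_{\bn,M}^{-N(N-1)}$ and $\delta_{\bn',M-1}^{-(N-1)(N-2)}$ created by differentiation (noting that removing the first gap $n_1-n_0$ only increases the minimum gap). Once these estimates are inserted into the term-by-term comparison mirroring the integer case, the master inequality follows and the induction closes.
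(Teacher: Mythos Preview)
Your overall strategy---induct on $N$, use Proposition \ref{Crank3} for the rank-one case, apply Theorem \ref{Tfitzhorn}, and reduce to the master inequality $\mathcal{K}_{\bn,\bc,M} \geqslant M\,\mathcal{K}_{\bn',\bc',M-1}$---matches the paper's proof exactly, and your identification of how the factors $\max(1,g(\bn,\alpha))$ and $\max(1,\delta_{\bn,M}^{-N(N-1)})$ replace the integer-case bound $\prod_b n_b \geqslant (N-1)!$ is on target.

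There is, however, a genuine gap in your reduction to $n_0=0$. While the constant $\mathcal{K}_{\bn,\bc,M}$ is indeed shift-invariant, the \emph{hypothesis} $n_j \in \Z^{\geqslant 0}\cup[N-2,\infty)$ is not. For instance, with $N=4$, $n_0=1$, $n_1=2.5$, the shifted exponent $\tilde n_1 = 1.5$ lies in neither $\Z^{\geqslant 0}$ nor $[2,\infty)$; after differentiating, $\tilde n_1 - 1 = 0.5 \notin \Z^{\geqslant 0}\cup[1,\infty)$, so the induction hypothesis at level $N-1=3$ does not apply to $\tilde h'$. Thus your claim that ``the surviving exponents $n_j-1$ \dots lie in $\Z^{\geqslant 0}\cup[N-3,\infty)$'' fails once you have shifted.

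The paper avoids this by \emph{not} shifting: it differentiates $h$ directly, obtaining $h'(x)=\mathcal K\sum_{j=0}^{N-1} n_j c_{n_j} x^{n_j-1}-Mx^{M-1}$, and then discards the $j=0$ term as a positivity preserver in its own right (since $n_0-1\in\Z^{\geqslant 0}\cup[N-3,\infty)$ by the original hypothesis, \cite{FitzHorn} applies). The remaining $N-1$ exponents $n_1-1,\dots,n_{N-1}-1$ automatically lie in $\Z^{\geqslant 0}\cup[N-3,\infty)$, and the induction hypothesis now applies cleanly. The master inequality is then verified termwise via the elementary bound $\frac{(M-n_0)^2}{(n_j-n_0)^2}>\frac{M}{n_j}$ (equation \eqref{Einequality}), which for general $n_0$ replaces your $M/n_j>1$; your term-by-term mechanism is otherwise correct. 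In short: drop the shift, peel off the $j=0$ term of $h'$ via \cite{FitzHorn}, and the rest of your argument goes through.
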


Notice that the constant $\mathcal{K}_{\bn, \bc, M}$
specializes to the one in Theorem~\ref{Tmain} (i.e.,~\eqref{k1def}) when
the $n_j$ and $M$ are integers.

\begin{proof}
The proof is by induction on $N$, with the $N=1$ case a consequence of
Proposition~\ref{Crank3}. For the induction step, we apply
Theorem~\ref{Tfitzhorn} with $h(x) = \mathcal{K}_{\bn,\bc,M}
\sum_{j=0}^{N-1} c_{n_j} x^{n_j} - x^M$. Akin to the proof of
Theorem~\ref{Tmain}, define
\[
\bn' \coloneqq (n_1 - 1, \dots, n_{N-1} - 1), \qquad
\bc' \coloneqq (n_1 c_{n_1}, \dots, n_{N-1} c_{n_{N-1}}).
\]
Now note that since $\mathcal{K}_{\bn',\bc',M-1} \sum_{j=1}^{N-1} n_j
c_{n_j} x^{n_j-1} - x^{M-1}$ preserves positivity on
$\bp_{N-1}((0,\rho))$, hence so does $h'(x)$ in view of~\cite{FitzHorn},
if we can show (akin to Theorem~\ref{Tmain}) that
\[
\mathcal{K}_{\bn,\bc,M} \geqslant M \, \mathcal{K}_{\bn',\bc',M-1}.
\]

\noindent To verify this, noting that  $0 \leqslant
\delta_{\bn,M} \leqslant \delta_{\bn',M-1}$, we compute:
\begin{align*}
&\ \mathcal{K}_{\bn,\bc,M}\\
\geqslant &\ \max(1,\delta_{\bn',M-1}^{-N(N-1)})
\sum_{j=0}^{N-1} \prod_{\alpha=0}^{j-1} \max(1,g(\bn,\alpha))
\frac{V(\bn_j)^2}{V(\bn_{\min})^2} \frac{\rho^{M - n_j}}{c_{n_j}}\\
\geqslant &\ \max(1,\delta_{\bn',M-1}^{-(N-1)(N-2)})
\sum_{j=1}^{N-1} g(\bn,0) \prod_{\alpha=1}^{j-1} \max(1,g(\bn,\alpha))
\frac{V(\bn_j)^2}{V(\bn_{\min})^2} \frac{\rho^{M - n_j}}{c_{n_j}}\\
> &\ \max(1,\delta_{\bn',M-1}^{-(N-1)(N-2)})
\sum_{j=1}^{N-1} \prod_{\alpha=1}^{j-1} \max(1,g(\bn,\alpha))
\frac{V(\bn'_j)^2}{V(\bn'_{\min})^2} \frac{M \rho^{M - n_j}}{n_j
c_{n_j}}\\
= &\ M \, \mathcal{K}_{\bn',\bc',M-1},
\end{align*}

\noindent as desired, where the final inequality follows from the fact
that
\begin{equation}\label{Einequality}
\frac{(M - n_0)^2}{(n_j - n_0)^2} > \frac{M}{n_j}, \qquad \forall 0
\leqslant n_0 < n_j < M.
\end{equation}

Finally, that $h[-]$ preserves positivity on $\bp_N^1((0,\rho))$ follows
from Proposition~\ref{Crank3}, since $\mathcal{K}_{\bn, \bc, M}$
dominates the bound in~\eqref{k2def}. Therefore we are done by
Theorem~\ref{Tfitzhorn}.
\end{proof}

As mentioned above, a pleasing consequence of the preceding result is to
obtain explicit threshold bounds on Laplace transforms of real measures.
We thus conclude the section by showing

\begin{proof}[Proof of Theorem~\ref{Tlaplace}]
Akin to Corollary~\ref{Tanalytic}, by the preceding result it suffices by
Fubini's theorem (and discarding the negative components of the measure)
to show the finiteness of the expression
\[
\int_{n_{N-1} + \varepsilon}^\infty \mathcal{K}_{\bn,\bc,M}\ d \mu_+(M),
\]
where $\mu_+$ is the positive part of $\mu$.  Also, by a limiting
argument and adjusting $\rho$ and $\varepsilon$ as necessary, we may
assume that
\[
\int_{n_{N-1} + \varepsilon}^\infty \rho^M (1+\varepsilon)^M\ d \mu_+(M)
< \infty.
\]
By~\eqref{k3def}, it thus suffices to show the finiteness of 
\[
\sup_{M \geqslant n_{N-1} + \varepsilon}
\frac{\max(1,\delta_{\bn,M}^{-N(N-1)})}{(1 + \varepsilon)^M}
\sum_{j=0}^{N-1} \prod_{\alpha=0}^{j-1} \max(1,g(\bn,\alpha)) \cdot
\frac{V(\bn_j)^2}{V(\bn_{\min})^2} \frac{\rho^{- n_j}}{c_{n_j}}.
\]
But as $M$ varies, $\delta_{\bn,M}$ is bounded away from zero, $V(\bn_j)$
grows polynomially in $M$, and $V(\bn_{\min})$, 
$g(\bn,\alpha)$, $\rho^{-n_j}$, and $c_{n_j}$ do not depend on $M$.  The
claim follows.
\end{proof}

\section{Two-sided domains: complete homogeneous symmetric
polynomials}\label{twoside}

We now address the extent to which the above results continue to hold if
we work with a two-sided domain, i.e.,
$\bp_N((-\rho,\rho))$ instead of $\bp_N((0,\rho))$.  For this we must
return to the case of natural number exponents, since exponentiation to
fractional powers is problematic when the base is negative.

On the one hand, we have the trivial observation (using the Schur product
theorem) that if $f: [0,\rho^2) \to \R$ is entrywise positivity
preserving on $\bp_N([0,\rho^2))$, then the map $x \mapsto f(x^2)$ is
entrywise positivity preserving on $\bp_N((-\rho,\rho))$.  By combining
this with the results of the preceding sections, we can obtain a number
of polynomials or power series with some negative coefficients that are
entrywise positivity preserving on $\bp_N((-\rho,\rho))$ or even on all
of $\bp_N$.

On the other hand, we have a new necessary condition:

\begin{lemma}
Let $0 \leqslant n_0 < n_1 < \dots < n_{N-1} < M$ be integers, and let
$0 < \rho \leqslant +\infty$.  Suppose that there exists a polynomial
\[
x \mapsto c_{n_0} x^{n_0} + \dots + c_{n_{N-1}} x^{n_{N-1}} + c_M x^M
\]
with $c_{n_0},\dots,c_{n_{N-1}}$ positive and $c_M$ negative, which is
entrywise positivity preserving on $\bp_N^1((-\rho,\rho))$.  Then
whenever $\bu \in (\R^N)^T$ is a root of $s_\bn$, it is also a root of
$s_{\bn_j}$ for every $j=0,\dots,N-1$, where $\bn \coloneqq
(n_0,\dots,n_{N-1})$ and $\bn_j \coloneqq
(n_0,\dots,n_{j-1},n_{j+1},\dots,n_{N-1},M)$.
\end{lemma}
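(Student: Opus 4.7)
My plan is to apply the determinant formula (Lemma \ref{detform}) to the polynomial $p(x) \coloneqq \sum_{j=0}^{N-1} c_{n_j} x^{n_j} + c_M x^M$ with $S \coloneqq \{n_0,\dots,n_{N-1},M\}$. Since $|S|=N+1$, the set $S^N_<$ has exactly $N+1$ elements: the tuple $\bn$ (obtained by omitting $M$ from $S$) and the tuples $\bn_j$ for $j = 0,\dots,N-1$ (obtained by omitting $n_j$ from $S$). This gives, for every $\bu \in (\R^N)^T$,
\begin{equation}\label{Eplan}
\det p[\bu \bu^T] = V(\bu)^2 \left( s_\bn(\bu)^2 \prod_{k=0}^{N-1} c_{n_k} + c_M \sum_{j=0}^{N-1} s_{\bn_j}(\bu)^2 \prod_{k \neq j} c_{n_k} \right).
\end{equation}

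Next, I note that whenever $\bu \in \R^N$ satisfies $\|\bu\|_\infty < \sqrt{\rho}$, the rank-one matrix $\bu \bu^T$ has all entries in $(-\rho,\rho)$ and therefore lies in $\bp^1_N((-\rho,\rho))$; the hypothesis forces $p[\bu \bu^T] \succeq 0$, and in particular $\det p[\bu \bu^T] \geqslant 0$. I then fix any $\bu_0 \in (\R^N)^T$ with $s_\bn(\bu_0) = 0$ and $V(\bu_0) \neq 0$, and substitute $\bu \coloneqq \lambda \bu_0$ into \eqref{Eplan} for $\lambda > 0$ small enough that $\|\lambda \bu_0\|_\infty < \sqrt{\rho}$. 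By homogeneity of $s_\bn$ the first summand in the parentheses vanishes identically, and the remaining inequality becomes
\[
V(\lambda \bu_0)^2 \cdot c_M \sum_{j=0}^{N-1} s_{\bn_j}(\lambda \bu_0)^2 \prod_{k \neq j} c_{n_k} \geqslant 0.
\]
Since $V(\lambda \bu_0) \neq 0$, $c_M < 0$, and each $\prod_{k \neq j} c_{n_k} > 0$, a sum of non-negative terms multiplied by a strictly negative scalar can only be non-negative if every term vanishes; by homogeneity of each $s_{\bn_j}$, this yields $s_{\bn_j}(\bu_0) = 0$ for all $j$.

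The main obstacle is to upgrade this conclusion from the generic locus $\{V \neq 0\} \cap \{s_\bn = 0\}$ to the full zero set $Z(\bn) \coloneqq \{\bu \in \R^N : s_\bn(\bu) = 0\}$. I resolve this by a density argument: the Weyl dimension formula \eqref{Ewdf} gives $s_\bn(1,\dots,1) = V(\bn)/V(\bn_{\min}) \neq 0$, so $s_\bn$ does not vanish identically along any hyperplane $\{u_i = u_j\}$, and hence is coprime to each irreducible factor $(u_j - u_i)$ of $V$. Consequently $Z(\bn) \cap \{V = 0\}$ is a proper Zariski-closed subset of $Z(\bn)$, so $Z(\bn) \cap \{V \neq 0\}$ is Zariski-dense in $Z(\bn)$; since each $s_{\bn_j}$ is a polynomial that vanishes on this dense subset by the preceding paragraph, it vanishes on all of $Z(\bn)$ by continuity, completing the proof.
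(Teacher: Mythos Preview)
Your approach is the same as the paper's: apply Lemma~\ref{detform}, scale $\bu$ into $(-\sqrt{\rho},\sqrt{\rho})^N$, and read off a sign contradiction from the determinant when $s_\bn(\bu)=0$ but some $s_{\bn_j}(\bu)\neq 0$. The paper's proof is in fact terser---it does not explicitly separate out the case $V(\bu)=0$ at all---so your write-up is if anything more careful.

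That said, your density argument for the case $V(\bu)=0$ has a subtle gap. Coprimality of $s_\bn$ with each factor $(u_j-u_i)$ is correctly established, but over $\R$ this does \emph{not} imply that $Z(\bn)\cap\{V\neq 0\}$ is dense (Zariski or Euclidean) in $Z(\bn)$: the real zero locus of a polynomial can lie entirely inside a hypersurface to which it is coprime (e.g.\ $f=x^2+y^2(y-1)^2$ is coprime to $x$, yet its real zero set $\{(0,0),(0,1)\}$ is contained in $\{x=0\}$). So the step ``proper Zariski-closed subset, hence Zariski-dense complement'' is not justified as stated. A cleaner fix bypasses $Z(\bn)$ entirely: for every $\bu$ with $V(\bu)\neq 0$ and $\|\bu\|_\infty<\sqrt{\rho}$, dividing \eqref{Eplan} by $V(\bu)^2>0$ yields the polynomial inequality
\[
s_\bn(\bu)^2\prod_{k} c_{n_k} \;\geqslant\; |c_M|\sum_{j} s_{\bn_j}(\bu)^2\prod_{k\neq j}c_{n_k}.
\]
Since $\{V\neq 0\}$ is Euclidean-dense in $\R^N$, this inequality extends to all $\bu$ in the ball by continuity, and then to all of $\R^N$ by homogeneity. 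Now $s_\bn(\bu_0)=0$ forces each $s_{\bn_j}(\bu_0)=0$ directly, with no case distinction on $V(\bu_0)$.
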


\begin{proof}
Suppose for contradiction that there existed $\bu$ such that
$s_\bn(\bu)=0$ but $s_{\bn_j}(\bu) \neq 0$ for some $0 \leqslant j
\leqslant N-1$.  By multiplying $\bu$ by a small constant, we may assume
that $\bu$ has coordinates in $(-\sqrt{\rho}, \sqrt{\rho})$, so that $\bu
\bu^T \in \bp_N^1((-\rho,\rho))$.  But from Lemma~\ref{detform} or
equation~\eqref{dhb}, we see that $\det h[\bu \bu^T]$ is negative, giving
the required contradiction.
\end{proof}

Thus, for instance, when $N=2$, no polynomial of the form
\[
x \mapsto c_0 + c_2 x^2 + c_3 x^3
\]
with $c_0,c_2$ positive and $c_3$ negative \thinspace can be entrywise
positivity preserving on $\bp^1_2((-\rho,\rho))$ for any $\rho>0$, since
$s_{(0,2)}(\bu) = u_1 + u_2$ vanishes when $u_2 = -u_1 \neq 0$, but
$s_{(0,3)}(\bu) = u_1^2 + u_1 u_2 + u_2^2$ does not.  This is closely
related to the failure of~\eqref{Egantmacher} when the bases $u_i$ are
allowed to be negative; the point here is that $(u_1^3, u_2^3)$ will not
lie in the span of $(u_1^0, u_2^0)$ and $(u_1^2, u_2^2)$ if $u_2 = -u_1
\neq 0$.

This lemma already rules out analogues of Theorem~\ref{Tmain} on
$\bp_N((-\rho,\rho))$ for most choices of exponents $n_0,\dots,n_{N-1}$,
since typically $s_\bn$ will have a non-trivial zero set which will not
be covered by the zero sets of other $s_{\bn_j}$.  However, there are a
small number of exponents $n_0,\dots,n_{N-1}$ for which a version of this
theorem may be salvaged:

\begin{proposition}\label{Ponlyzero}
Let $\bn = (n_0,\dots,n_{N-1})$ be a tuple of integers $0 \leqslant n_0 <
n_1 < \dots < n_{N-1}$, with the property that the Schur polynomial
$s_\bn$ does not vanish on $\R^N$ except at the origin.  Then for any
integers $h \geqslant 0$ and $M > n_{N-1}$, and any positive constants
$\rho, \ c_{n_0},\dots,c_{n_{N-1}}$, the polynomial
\begin{equation}\label{xt}
 x \mapsto t(  c_{n_0} x^{h+n_0} + \dots + c_{n_{N-1}} 
 x^{h+n_{N-1}} ) - x^{h+M} 
\end{equation}
is entrywise positivity preserving on $\bp_N((-\rho,\rho))$ for $t$
sufficiently large.
\end{proposition}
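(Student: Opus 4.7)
The plan is to adapt the strategy of Theorem~\ref{Tmain}, replacing the monomial-domination bounds of Proposition~\ref{Pleading} by the non-vanishing hypothesis on $s_\bn$. As a preliminary reduction, the Schur product theorem implies that $A^{\circ h}$ is positive semidefinite for any $A \in \bp_N((-\rho,\rho))$ and any $h \geqslant 0$, so it suffices to treat $h=0$; denote the resulting polynomial by
\[
f_t(x) \coloneqq t \sum_{j=0}^{N-1} c_{n_j} x^{n_j} - x^M.
\]

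I would first establish the rank-one case on $\bp_N^1((-\rho,\rho))$. For $\bu = (u_1,\dots,u_N)^T$ with distinct nonzero coordinates in $(-\sqrt{\rho},\sqrt{\rho})$, Lemma~\ref{detform} yields
\[
\det f_t[\bu \bu^T] = V(\bu)^2 \Bigl( t^N \prod_{j=0}^{N-1} c_{n_j}\, s_\bn(\bu)^2 - t^{N-1} \sum_{j=0}^{N-1} \Bigl( \prod_{k \neq j} c_{n_k} \Bigr) s_{\bn_j}(\bu)^2 \Bigr),
\]
which is non-negative precisely when $t \geqslant \sum_j s_{\bn_j}(\bu)^2/(c_{n_j} s_\bn(\bu)^2)$. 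The hypothesis on $\bn$ ensures $s_\bn$ is bounded away from zero on the unit sphere in $\R^N$, and the ratio $s_{\bn_j}/s_\bn$ is homogeneous of strictly positive degree $M - n_j$. Together these give a uniform upper bound on the right-hand side as $\bu$ ranges over the compact cube $(-\sqrt{\rho},\sqrt{\rho})^N$. A continuity-of-eigenvalues argument analogous to Proposition~\ref{ab}, applied also to leading principal submatrices (via the corresponding sub-tuples of $\bn$, whose Schur polynomials are controlled by $s_\bn$), then upgrades this determinant bound to positive semidefiniteness of $f_t[\bu \bu^T]$ for $t$ above the threshold. Degenerate cases in which $\bu$ has repeated or vanishing coordinates are handled by limiting arguments and reductions to smaller $N$.

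To pass from rank-one to all of $\bp_N((-\rho,\rho))$, I would induct on $N$ using Theorem~\ref{Tfitzhorn}. The base case $N=1$ reduces to verifying that $f_t$ is non-negative on $[0,\rho)$, which holds for $t$ sufficiently large. The inductive step requires $f_t'$ to preserve positivity on $\bp_{N-1}((-\rho,\rho))$, and this is the main obstacle: the derivative is again a polynomial with positive coefficients at the shifted exponents $\{n_j-1 : n_j \geqslant 1\}$ and one negative term at $M-1$, but invoking the inductive hypothesis requires selecting an $(N-1)$-subtuple $\bn'$ whose Schur polynomial is likewise non-vanishing on $\R^{N-1} \setminus \{0\}$. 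This inheritance of admissibility is not automatic from the hypothesis on $\bn$, though it does hold in the natural examples (e.g., $\bn = (0,3)$ descends to $\bn' = (2)$ for $N=2\to 1$, and $\bn = (0,1,4)$ descends to $\bn' = (0,3)$ for $N=3\to 2$). A fully robust proof would either verify inheritance case by case for the admissible tuples in question, or strengthen the proposition by imposing a chain-admissibility hypothesis ensuring that the FitzGerald--Horn induction closes at every step.
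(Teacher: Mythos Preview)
Your overall strategy matches the paper's, and the rank-one step is essentially correct (though you need not track principal submatrices separately: once $\sum_j c_{n_j} \bu^{\circ n_j}(\bu^{\circ n_j})^T$ is positive semidefinite with positive determinant $V(\bu)^2 s_\bn(\bu)^2 \prod_j c_{n_j}$, it is positive definite, and the continuity argument of Proposition~\ref{ab} applies verbatim).

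The gap you flag in the inductive step is real as stated, but the hypothesis on $\bn$ closes it automatically, and this is the one idea you are missing. First, the hypothesis forces $n_0 = 0$: if $n_0 \geqslant 1$ then every monomial in \eqref{sdef} contains every $u_i$ to a positive power, so $s_\bn(\bu)$ vanishes whenever any $u_i = 0$, contradicting non-vanishing away from the origin. With $n_0 = 0$, differentiation kills the $x^{n_0}$ term and $f_t'$ has exactly the $N-1$ positive-coefficient exponents $n_1 - 1, \dots, n_{N-1} - 1$ together with the negative term at $M-1$. Second, from \eqref{detfactor} (or directly from the tableau description) one has the specialization identity
\[
s_\bn(u_1,\dots,u_{N-1},0) = (-1)^{N-1}\, s_{(n_1-1,\dots,n_{N-1}-1)}(u_1,\dots,u_{N-1}).
\]
Since $s_\bn$ is by hypothesis non-vanishing on the hyperplane $\{u_N = 0\} \setminus \{0\}$, the right-hand side is non-vanishing on $\R^{N-1} \setminus \{0\}$. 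This is precisely the inheritance you were seeking, so the induction via Theorem~\ref{Tfitzhorn} goes through with no case analysis and no strengthening of hypotheses. Your examples $(0,3)\to(2)$ and $(0,1,4)\to(0,3)$ are instances of exactly this mechanism.
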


\begin{proof}
We may assume that $N \geqslant 2$, as the case $N=1$ is trivial;
and by the Schur product theorem we may assume without loss of generality
that $h=0$.
We first verify entrywise positivity preservation on the rank one
matrices $\bp_N^1((-\rho,\rho))$; such matrices take the form $\bu \bu^T$
where $\bu$ has coordinates in~$(-\sqrt{\rho}, \sqrt{\rho})$.  From
Lemma~\ref{detform} and a continuity argument, it suffices to show that
for $t$ sufficiently large, one has
\[
t^N |s_\bn(\bu)|^2 - \sum_{j=0}^{N-1} \frac{t^{N-1}}{c_{n_j}}
|s_{\bn_j}(\bu)|^2 \geqslant 0
\]
for all such $\bu$.  This will follow if we can establish a bound of the
form
\[
|s_{\bn_j}(\bu)| \leqslant C |s_\bn(\bu)|
\]
uniformly for all $\bu \in ([-\sqrt{\rho}, \sqrt{\rho}]^N)^T$, and for
some finite $C$.  The left-hand side has a higher order of homogeneity
than the right-hand side, so it suffices to verify this on the boundary
$\partial [-\sqrt{\rho}, \sqrt{\rho}]^N$.  This is a compact set on
which~$s_\bn(\bu)$ is non-zero by hypothesis, so the claim now follows
from continuity.

To remove the restriction to rank $1$ matrices, we would like to use
Theorem~\ref{Tfitzhorn}.  We first observe that $n_0$ must vanish, since
otherwise $s_\bn(\bu)$ will contain a factor of $\bu^{(1,\dots,1)}$ and
thus vanishes outside of the origin.  From~\eqref{detfactor} (or from the
Young tableaux definition of $s_\bn$) we then observe the identity
\[
s_\bn(u_1,\dots,u_{N-1},0) = (-1)^{N-1}
s_{(n_1-1,\dots,n_{N-1}-1)}(u_1,\dots,u_{N-1}),
\]
and hence $s_{(n_1-1,\dots,n_{N-1}-1)}$ is also non-vanishing on
$\R^{N-1}$ except at the origin.  By the induction hypothesis, we now
conclude that the derivative of~\eqref{xt} is entrywise positivity
preserving on $\bp_{N-1}((-\rho,\rho))$ for $t$ sufficiently large, and
the claim now follows from Theorem~\ref{Tfitzhorn}.
\end{proof}

In fact it is possible to identify the integer tuples $\bn$
satisfying the hypothesis in Proposition~\ref{Ponlyzero}, and these yield
a well-known family of symmetric functions:

\begin{proposition}\label{Phunter}
Given integers $N \geqslant 1$ and $0 \leqslant n_0 < \cdots < n_{N-1}$,
the following are equivalent:
\begin{enumerate}
\item The Schur polynomial $s_\bn$ does not vanish on $\R^N$ except at
the origin.

\item We have: $n_0 = 0, \dots, n_{N-2} = N-2$, and $n_{N-1} - (N-1)$ is
even, say $2r$ for $r \in \Z^{\geqslant 0}$. In other
words, $s_\bn(\bu)$ is precisely the complete homogeneous symmetric
polynomial (of even degree)
\[
h_{2r}(\bu) \coloneqq \sum_{1 \leqslant i_1 \leqslant \cdots \leqslant
i_{2r} \leqslant N} u_{i_1} \cdots u_{i_{2r}}.
\]
\end{enumerate}
\end{proposition}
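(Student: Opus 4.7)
The plan is to prove the two implications separately, using an integral representation for the hard direction (2) $\Rightarrow$ (1) and induction on $N$ for its converse.

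For (2) $\Rightarrow$ (1): First I observe that for $\bn = (0, 1, \ldots, N-2, N-1+2r)$ one has $\bn - \bn_{\min} = (0, \ldots, 0, 2r)$, so the reversed shape is a single row of length $2r$. Column-strict fillings of such a shape with entries in $\{1, \ldots, N\}$ are exactly weakly decreasing $2r$-tuples in $\{1, \ldots, N\}$, so by \eqref{sdef} we have $s_\bn(\bu) = h_{2r}(\bu)$. To prove $h_{2r}(\bu) > 0$ on $\R^N \setminus \{0\}$ I will invoke the classical integral representation
\[
h_{2r}(u_1, \ldots, u_N) = \frac{1}{(2r)!} \int_{(0,\infty)^N} (u_1 t_1 + \cdots + u_N t_N)^{2r} e^{-(t_1 + \cdots + t_N)}\, dt_1 \cdots dt_N,
\]
which follows from the multinomial expansion of the integrand together with $\int_0^\infty t^a e^{-t}\,dt = a!$ applied termwise. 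Since the exponent $2r$ is even, the integrand is everywhere $\geqslant 0$, and it vanishes identically only when $\bu \cdot \bt \equiv 0$ on a positive-measure set, which forces $\bu = 0$.

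For (1) $\Rightarrow$ (2) I induct on $N$. The crucial preliminary observation is that $n_0 = 0$: if instead $n_0 \geqslant 1$, then the last entry of $\overline{\bn - \bn_{\min}}$ equals $n_0 \geqslant 1$, meaning the shape has all $N$ rows non-empty; every length-$N$ column of a column-strict tableau is then forced to be $(N, N-1, \ldots, 1)^T$, so a factor $(u_1 u_2 \cdots u_N)^{n_0}$ divides $s_\bn$, making it vanish on every coordinate hyperplane and contradicting (1).

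For the base case $N = 2$, given $n_0 = 0$ the bialternant formula gives $s_{(0, n_1)}(u_1, u_2) = h_{n_1 - 1}(u_1, u_2) = u_1^{n_1-1} + u_1^{n_1-2} u_2 + \cdots + u_2^{n_1-1}$, and evaluating at $u_2 = -u_1$ produces an alternating sum equal to $u_1^{n_1-1}$ when $n_1 - 1$ is even and to $0$ when $n_1 - 1$ is odd, forcing $n_1 - 1 = 2r$. For the inductive step with $N \geqslant 3$, I specialize $u_N = 0$ in \eqref{detfactor}: since $n_0 = 0$, the bottom row of the matrix $(u_j^{n_{k-1}})_{j,k=1,\ldots,N}$ becomes $(1, 0, \ldots, 0)$. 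Laplace expansion along this row, combined with $V(u_1, \ldots, u_N)|_{u_N = 0} = (-1)^{N-1} (u_1 \cdots u_{N-1}) V(u_1, \ldots, u_{N-1})$, yields
\[
s_\bn(u_1, \ldots, u_{N-1}, 0) = s_{\bn'}(u_1, \ldots, u_{N-1}), \qquad \bn' \coloneqq (n_1 - 1, n_2 - 1, \ldots, n_{N-1} - 1).
\]
Non-vanishing of $s_\bn$ on $\R^N \setminus \{0\}$ transfers to non-vanishing of $s_{\bn'}$ on $\R^{N-1} \setminus \{0\}$, so the inductive hypothesis in $N-1 \geqslant 2$ variables gives $\bn' = (0, 1, \ldots, N-3, N-2 + 2r)$, equivalently $\bn = (0, 1, \ldots, N-2, N-1 + 2r)$.

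The main obstacle is the forward direction: proving positive definiteness of the even-degree complete homogeneous symmetric polynomial. With the Gamma-integral representation above (essentially Hunter's 1977 trick) it is immediate; a purely combinatorial proof would require substantially more work. The converse direction is by comparison routine bookkeeping with the determinantal formula for $s_\bn$ and a single substitution $u_N = 0$ that peels off one variable at a time.
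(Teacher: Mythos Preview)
Your proof is correct and follows essentially the same route as the paper: for $(1)\Rightarrow(2)$ both arguments force $n_0=0$ from the factor $(u_1\cdots u_N)^{n_0}$, then specialize $u_N=0$ to reduce to $s_{(n_1-1,\dots,n_{N-1}-1)}$ in $N-1$ variables and induct down to the $N=2$ base case. For $(2)\Rightarrow(1)$ the paper cites Hunter's inequality $h_{2r}(\bu)\geqslant\|\bu\|^{2r}/(2^r r!)$, while you use the Gamma-integral (exponential-moment) representation---which is precisely the alternate proof the paper itself records in the remark immediately following its argument.
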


\begin{proof}
If (1) holds, then the argument in the above proof of
Proposition~\ref{Ponlyzero} shows $n_0 = 0$ and $(n_1 - 1, \dots, n_{N-1}
- 1)$ satisfies the same property for real $(N-1)$-tuples. This reduces
the problem to $N=2$, in which case the assertion is easily verified.
(Alternatively, one can evaluate $s_\bn$ at the basis vector
$(1,0,\dots,0)$ and observe using~\eqref{sdef} that this vanishes unless
$n_j=j$ for $j=0,\dots,N-2$.) Conversely, that $s_\bn(\bu) = h_{2r}(\bu)$
follows from definition; now the proof is completed using the inequality
\[
h_{2r}(\bu) \geqslant \frac{\| \bu \|^{2r}}{2^r r!}
\]
proven by Hunter~\cite{hunter} for all integers $r \geqslant 0$.
\end{proof}

\begin{remark} 
We \ were \ made \ aware \ of \thinspace the \thinspace following
\thinspace alternate \thinspace proof \thinspace of $(2) \implies (1)$
using \thinspace the \thinspace method \thinspace of \thinspace moments;
\thinspace see \thinspace an \thinspace anonymous \thinspace comment
\thinspace on \
\href{https://terrytao.wordpress.com/2017/08/06/schur-convexity-and-positive-definiteness-of-the-even-degree-complete-homogeneous-symmetric-polynomials/}{\tt
https://terrytao.wordpress.com/2017/08/06}, or \cite[Lemma
3.1]{Barvinok}. Namely, given i.i.d.~exponential$(1)$ random variables
$Z_1, \dots, Z_N$, we have
\begin{equation}\label{Eprob}
k! \, h_k(u_1, \dots, u_N) = \mathbb{E} \left[ ( u_1 Z_1 + \cdots + u_N Z_N
)^k \right] \qquad \forall u_1, \dots, u_N \in \R
\end{equation}
for any $k \geqslant 0$; whence $h_{2r}(u_1,\dots,u_N) \geqslant 0$ for
any integer $r \geqslant 0$ and any $u_1,\dots,u_N$, and equality holds
if and only if $u_1 = \cdots = u_N = 0$.

An \textit{a priori} different proof is to obtain a sum-of-squares
decomposition of $h_{2r}$. For low values of $r$, we have:
\begin{align*}
h_0(\bu) = &\ 1,\\
h_2(\bu) = &\ \frac{1}{2} (h_1(\bu)^2 + p_2(\bu)),\\
h_4(\bu) = &\ \frac{1}{72} \left( 3 h_1(\bu)^4 + 2 \sum_i u_i^2(2 u_i +
3 h_1(\bu))^2 + 9 p_2(\bu)^2 + 10 p_4(\bu) \right),
\end{align*}

\noindent where $p_r(\bu) \coloneqq \sum_i u_i^r$ are the power-sum
symmetric polynomials in the tuple $\bu^T = (u_1, \dots, u_N)$.
As pointed out to us by David Speyer, one way to similarly obtain a
sum-of-squares decomposition for every even $r \geqslant 0$ is to
use~\eqref{Eprob}, replacing the exponential random variable $Z$ by a
discrete one $Y$, which matches moments with $Z$ up to order $2r$. Notice
that the existence of such a discrete variable $Y$ follows from
Caratheodory's theorem.
\end{remark}

\begin{remark}
The proofs of Propositions~\ref{Ponlyzero} and~\ref{Phunter} lead to an
explicit bound on the threshold $t$ in~\eqref{xt}:
\[
t \geqslant \mathcal{K} \coloneqq 2^r r! \sum_{j=0}^{N-1} V(\bn_j)^2
\frac{(N \rho)^{M - n_j}}{c_{n_j}},
\]
where the $\bn_j$ are as in the proof of Proposition~\ref{Ponlyzero}, and
$r$ is as in Proposition~\ref{Phunter}.
\end{remark}

\section{Complex domains}\label{complex}

We next briefly study matrices in $\bp_N$ with complex entries.
In~\cite{BGKP-fixeddim} it was shown that for every $M \geqslant N$,
positive coefficients $c_0,\dots,c_{N-1}$, and $0 < \rho < +\infty$, the
polynomial
\[
z \mapsto t ( c_0 + c_1 z + \dots + c_{N-1} z^{N-1} ) - z^M
\]
is entrywise positivity preserving on $\bp_N(D(0,\rho))$ for $t$
sufficiently large, where $D(0,\rho)$ denotes the complex disk $\{ z \in
\C: |z| < \rho \}$.  From the Schur product theorem, the same assertion
holds for
\[
z \mapsto t ( c_0 z^h + c_1 z^{h+1} + \dots + c_{N-1} z^{h+N-1} ) -
z^{h+M}
\]
for any integer $h \geqslant 0$. However, such a result cannot hold for
any other set of exponents, at least if one allows $M$ to vary:

\begin{proposition}\label{compl}
Let $N \geqslant 2$ and $0 \leqslant n_0 < \dots < n_{N-1}$ be \thinspace
integers \thinspace with $(n_0,\dots,n_{N-1}) \neq (h,h+1,\dots,h+N-1)$
for any integer $h \geqslant 0$.  Then there exists $M > n_{N-1}$, such
that any polynomial of the form
\begin{equation}\label{Ecomplex}
z \mapsto c_{n_0} z^{n_0} + \dots + c_{n_{N-1}} z^{n_{N-1}} + c_M z^M,
\end{equation}
with $c_{n_0},\dots,c_{n_{N-1}}$ positive and $c_M$ negative, cannot be
entrywise positivity preserving on $\bp^1_N(D(0,\rho))$ for any $\rho >
0$.
\end{proposition}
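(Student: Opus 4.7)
The plan is to adapt the Cauchy--Binet argument of the Lemma preceding Proposition \ref{Ponlyzero} to the complex disk by producing a rank-one matrix $\bu\bu^* \in \bp_N^1(D(0,\rho))$ at which $\det p[\bu\bu^*]$ is strictly negative. Applying Lemma \ref{detform} in the form of Remark \ref{Ralgebra} (with $\bv = \bar\bu$), for any $\bu \in \C^N$ one has
\[
\det p[\bu\bu^*] = |V(\bu)|^2 \Bigl[\Bigl(\prod_j c_{n_j}\Bigr)\,|s_\bn(\bu)|^2 + c_M \sum_{j=0}^{N-1} \Bigl(\prod_{k \neq j} c_{n_k}\Bigr)\,|s_{\bn_j}(\bu)|^2\Bigr].
\]
Since $c_M < 0$, it suffices to exhibit $\bu \in (\C^*)^N$ with distinct coordinates, together with some $M > n_{N-1}$, such that $V(\bu) \neq 0$, $s_\bn(\bu) = 0$, and $s_{\bn_j}(\bu) \neq 0$ for at least one $j$: homogeneity of the Schur polynomials then ensures that rescaling $\bu \mapsto \lambda\bu$ for small $|\lambda|$ preserves these (non)vanishings and places $\bu\bu^*$ in $\bp_N^1(D(0,\rho))$ for any prescribed $\rho > 0$.

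To construct $\bu$, first reduce to the case $n_0 = 0$ via the factorization $s_\bn(\bu) = (u_1\cdots u_N)^{n_0}\,s_{\bn'}(\bu)$ with $\bn' = \bn - n_0(1,\dots,1)$; the hypothesis $\bn \neq (h,h+1,\dots,h+N-1)$ passes to $\bn' \neq \bn_{\min}$. Viewing $s_\bn(u_1,\dots,u_{N-1},t)$ as a univariate polynomial in $t$, its degree equals $n_{N-1} - (N-1) \geq 1$, while its $t$-constant term -- obtained from the determinant formula in Proposition \ref{Pschur} by setting $u_N = 0$ and expanding along the resulting row $(1,0,\dots,0)$ -- equals $s_{(n_1-1,\dots,n_{N-1}-1)}(u_1,\dots,u_{N-1})$, a Schur polynomial that is not identically zero. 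Fix $u_1,\dots,u_{N-1} \in \C^*$ distinct and generic; then this polynomial in $t$ has a nonzero complex root distinct from each $u_i$, yielding $\bu = (u_1,\dots,u_{N-1},t) \in (\C^*)^N$ with distinct entries and $s_\bn(\bu) = 0$. Moreover, the $(N-1)\times(N-1)$ minor of $A(\bu) \coloneqq (\bu^{\circ n_0}|\cdots|\bu^{\circ n_{N-1}})$ obtained by deleting the last row and the last column equals $V(u_1,\dots,u_{N-1}) \cdot s_{(n_0,\dots,n_{N-2})}(u_1,\dots,u_{N-1})$ by Proposition \ref{Pschur}, and this is nonzero for generic choice of $u_i$. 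Combined with $\det A(\bu) = V(\bu)\,s_\bn(\bu) = 0$, this pins down $\operatorname{rank} A(\bu) = N-1$ exactly.

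To select $M$, set $W \coloneqq \operatorname{im} A(\bu)$, a subspace of $\C^N$ of dimension $N-1$. Some value $M \in \{n_{N-1}+1,\dots,n_{N-1}+N\}$ must satisfy $\bu^{\circ M} \notin W$; otherwise the $N$ vectors $\bu^{\circ(n_{N-1}+1)},\dots,\bu^{\circ(n_{N-1}+N)}$ would all lie in the $(N-1)$-dimensional $W$ and hence be linearly dependent, forcing the determinant $\det(\bu^{\circ(n_{N-1}+1)}|\cdots|\bu^{\circ(n_{N-1}+N)})$ to vanish. But the tuple $\bm = (n_{N-1}+1,\dots,n_{N-1}+N)$ has $\bm - \bn_{\min}$ constant, so Proposition \ref{Pschur} identifies this determinant with $V(\bu)\cdot(u_1\cdots u_N)^{n_{N-1}+1}$, which is nonzero by our choice of $\bu$. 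For such an $M$, writing the unique (up to scalar) linear relation among the columns of $A(\bu)$ as $\sum_k \alpha_k\bu^{\circ n_k} = 0$ and choosing any $j$ with $\alpha_j \neq 0$, the remaining $N-1$ columns of $A(\bu)$ span $W$, and appending $\bu^{\circ M} \notin W$ produces a full-rank $N \times N$ matrix; by Proposition \ref{Pschur} its determinant equals $V(\bu)\,s_{\bn_j}(\bu)$, so $s_{\bn_j}(\bu) \neq 0$ as required.

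The main obstacle is this rank analysis: locating a zero of $s_\bn$ on the open set $\{V(\bu) \neq 0,\ u_1\cdots u_N \neq 0\}$ at which $A(\bu)$ does not collapse below rank $N-1$. The ``solve for the last coordinate'' construction is engineered to simultaneously force $s_\bn(\bu) = 0$ and preserve an identifiable $(N-1)\times(N-1)$ minor, and the rectangular-shape Schur identity invoked at the end is exactly what prevents $W$ from absorbing all high powers of $\bu$ -- which is precisely where the hypothesis $\bn \neq (h,h+1,\dots,h+N-1)$ is indispensable.
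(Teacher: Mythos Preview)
Your proof is correct and follows essentially the same route as the paper: use the Cauchy--Binet expansion of $\det p[\bu\bu^*]$ to reduce to finding $\bu$ with distinct nonzero entries, $s_\bn(\bu)=0$, and $s_{\bn_j}(\bu)\neq 0$ for some $j$ and some $M$; then pick $M$ among $n_{N-1}+1,\dots,n_{N-1}+N$ by a Vandermonde pigeonhole.

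The only real difference is how $\bu$ is produced. The paper simply specializes the first $N-1$ coordinates to $1,2,\dots,N-1$ and observes that $z\mapsto s_\bn(1,2,\dots,N-1,z)$ has positive coefficients (from the tableau expansion) and is not a monomial under the hypothesis on $\bn$; hence any root $z_0$ lies in $\C\setminus[0,\infty)$, which automatically forces $z_0\neq 0$ and $z_0\notin\{1,\dots,N-1\}$. Your generic-coordinate construction reaches the same conclusion but leans on several nonvanishing conditions (constant term, leading term, $s_\bn(u_1,\dots,u_{N-1},u_i)\not\equiv 0$, the $(N-1)\times(N-1)$ minor) that you should at least flag as simultaneously satisfiable. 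On the other hand, you are more careful than the paper about one point: you make explicit that $\operatorname{rank} A(\bu)=N-1$ is exactly what is needed to pass from $\bu^{\circ M}\notin W$ to some $s_{\bn_j}(\bu)\neq 0$, which the paper leaves implicit.
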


\begin{proof}  
By Lemma~\ref{detform}, it suffices to show that there exists $M >
n_{N-1}$ and vectors $\bu \in (\C^N)^T$ of arbitrarily small norm such
that
\[
\frac{1}{c_M} |s_\bn(\bu)|^2 + \sum_{j=0}^{N-1} \frac{1}{
c_{n_j}} |s_{\bn_j}(\bu)|^2 < 0.
\]
We see from the definition~\eqref{sdef} that the specialization
\[
s_\bn( 1, 2, \dots, n-1, z) \in \C[z]
\]
is a polynomial that is positive on the positive real axis; as the Young
tableaux appearing in~\eqref{sdef} can have as few as $n_0$ and as many
as $n_{N-1}-N+1$ entries equal to $N$, and $\bn$ is not of the form
$(h,h+1,\dots,h+N-1)$, this polynomial is not a monomial.  From the
fundamental theorem of algebra, we conclude that there exists
$z_0 \in \C \setminus [0,+\infty)$ such that
\[
s_\bn( 1, 2, \dots, n-1, z_0) = 0.
\]
Rescaling, we see that there exist arbitrarily small $\bu \in (\C^N)^T$,
with all coordinates non-zero and distinct, such that $s_\bn(\bu) = 0$;
thus the vectors $\bu^{\circ n_0}, \dots, \bu^{\circ n_{N-1}}$ are
linearly dependent in $(\C^N)^T$.  On the other hand, from Vandermonde
determinants we see that $\bu^{\circ (n_{N-1}+1)},\dots, \bu^{\circ
(n_{N-1}+N)}$ are linearly independent in $(\C^N)^T$.
Thus there must exist some $M$ between $n_{N-1}+1$ and $n_{N-1}+N$ for
which $\bu^{\circ M}$ lies outside the span of $\bu^{\circ n_0}, \dots,
\bu^{\circ n_{N-1}}$, which implies from~\eqref{detfactor} that
$s_{\bn_j}(\bu)$ is non-zero for some~$j$.  The claim follows.
\end{proof}

\begin{remark}
In fact the above proof shows the infinitude of such ``rigid'' powers
$M$; more precisely, for any $\bn$ that is not a shift by
$h \in \Z^{\geqslant 0}$ of $\bn_{\min}$, among any $N$ consecutive
integers in $(n_{N-1}, \infty)$ there is some $M$ such that every
entrywise positivity preserver on $\bp_N(D(0,\rho))$ of the
form~\eqref{Ecomplex} must be absolutely monotonic.
\end{remark}


\section{Quantitative bounds, via Schur positivity}\label{S8}

Having answered Question~\ref{Q1} for integer and real powers, we now
present stronger versions of the above results, as well as several
applications. We begin by proving the quantitative results in
Section~\ref{Squant}. The improvement over estimates in previous sections
comes from understanding the behavior of the functions $s_{\bn_j}(\bu) /
s_\bn(\bu)$ for $\bu \in ((0,\sqrt{\rho})^N)^T$ and \textit{integer}
powers $n_0, \dots, n_{N-1}, M$ (and $j=0,\dots,N-1$). We now show the
following result in a slightly more general setting.

\begin{proposition}\label{Pschur-ratio}
Fix tuples of non-negative integers $0 \leqslant n_0 < \cdots < n_{N-1}$
and $0 \leqslant m_0 < \cdots < m_{N-1}$, such that $n_j \leqslant m_j\
\forall j$. Then the function
\[
f : ((0,+\infty)^N)^T \to \R, \qquad
f(\bu) \coloneqq \frac{s_\bm(\bu)}{s_\bn(\bu)}
\]
is non-decreasing in each coordinate.
\end{proposition}

In fact this result is the analytical shadow of a deeper algebraic
Schur-positivity phenomenon; see Theorem~\ref{Tschur-ratio}.
Proposition~\ref{Pschur-ratio} was also independently observed by Rachid
Ait-Haddou; see the remarks following Corollary~\ref{Creal} for more
details.

It turns out that Proposition~\ref{Pschur-ratio} for integer powers
suffices to derive exact thresholds for negative coefficients in the case
of integer exponents. In fact, we also show below that it easily implies
exact thresholds for all non-negative real exponents as well.

\subsection{Exact thresholds over bounded domains}

Before proceeding to the proof of Proposition~\ref{Pschur-ratio}, in this
subsection we use it to prove another main result above --
Theorem~\ref{Treal-rank1}, which obtains a sharp value for the threshold
for rank-one matrices, and all non-negative real powers.

\begin{proof}[Proof of Theorem~\ref{Treal-rank1}]
First notice that by taking the entrywise product with $(\bu
\bu^T)^{\circ \pm n_0}$ for $\bu \in ((0,\sqrt{\rho})^N)^T$, it suffices
to show the theorem when the powers $n_j, M \geqslant 0$, by the Schur
product theorem.

Now the proof for non-negative powers is in steps, starting with proving
the result for integer powers $n_0, \dots, n_{N-1}, M \geqslant 0$. In
this case, define $p_t(x) \coloneqq t \sum_{j=0}^{N-1} c_{n_j} x^{n_j} -
x^M$, and
\begin{equation}
(0,\sqrt{\rho})^N_{\neq} \coloneqq \{ (u_1, \dots, u_N) \in
(0,\sqrt{\rho})^N : V(\bu) \neq 0 \},
\end{equation}
where the non-vanishing of $V(\bu)$ simply means that the tuple
$(u_1, \dots, u_N)$ has pairwise distinct coordinates.

That $(1) \implies (2)$ is shown under the weaker assumption
that $\det p_t[ \bu \bu^T ]$ $\geqslant 0$ for $\bu \in
((0,\sqrt{\rho})^N_{\neq})^T$. Under this assumption, we see
from~\eqref{tal} that
\[
 t \geqslant \sup_{\bu \in ((0,\sqrt{\rho})^N_{\neq})^T }
 \sum_{j=0}^{N-1} \frac{s_{\bn_j}(\bu)^2}{c_{n_j} s_{\bn}(\bu)^2}.
\]
But now by Proposition~\ref{Pschur-ratio}, this supremum is attained as
all $u_j \to \sqrt{\rho}{}^-$, and equals precisely the expression
in~\eqref{Esharp}. In fact, note that since the highest power in $\bn_j$
is strictly larger than the highest power in $\bn$ (namely, $M >
n_{N-1}$), it follows by Theorem~\ref{Tschur-ratio} below that the
supremum is uniquely attained as $u_j \to \sqrt{\rho}{}^-$. This is
because the leading $u_1$-term of
\[
s_\bn \cdot \partial_{u_1}(s_{\bn_j}) - 
s_{\bn_j} \cdot \partial_{u_1}(s_\bn)
\]
is nonzero, so that the ratio $\frac{s_{\bn_j}(\bu)}{s_\bn(\bu)}$ is
strictly increasing along each coordinate.

That $(2) \implies (1)$ is immediate if $c_{n_j}, c' \geqslant 0$, while
if $c' < 0 < c_{n_j}$ and $M > n_j$ then we repeat the proof of
Proposition~\ref{ab}. This concludes the proof for non-negative integer
powers.

We next show the result when $M, n_j$ are all rational and non-negative.
Choose a large integer $L \in \N$ such that $LM, L n_j \in \Z$. Repeating
the proof of Theorem~\ref{Treal-rank1} for $p_t[{\bf y} {\bf y}^T]$,
where $p_t(y) \coloneqq t \sum_{j=0}^{N-1} c_{n_j} y^{L n_j} - y^{LM}$
and ${\bf y} \coloneqq \bu^{\circ 1/L} \in ((0, \sqrt[2L]{\rho})^N)^T$,
we obtain the threshold
\[
\mathcal{C} = \sum_{j=0}^{N-1} \frac{V(L \bn_j)^2}{V(L \bn)^2}
\frac{\rho^{M - n_j}}{c_{n_j}} = \sum_{j=0}^{N-1}
\frac{V(\bn_j)^2}{V(\bn)^2} \frac{\rho^{M - n_j}}{c_{n_j}},
\]
as desired.

We now claim the equivalence of the two statements in
Theorem~\ref{Treal-rank1} holds for all non-negative real powers $n_j,
M$. In other words, if we define $p_t(x) \coloneqq t \sum_{j=0}^{N-1}
c_{n_j} x^{n_j} - x^M$, then
\begin{equation}\label{Erank1}
p_t[ \bu \bu^T ] \in \bp_N, \qquad \forall \bu \in ((0,\sqrt{\rho})^N)^T,
\end{equation}

\noindent if and only if $M > n_j\ \forall j$ and $\displaystyle t
\geqslant \sum_{j=0}^{N-1} \frac{V(\bn_j)^2}{V(\bn)^2} \frac{\rho^{M -
n_j}}{c_{n_j}}$.

To prove one implication, suppose
\[
M > n_j\ \forall j \qquad \text{and} \qquad t > \sum_{j=0}^{N-1}
\frac{V(\bn_j)^2}{V(\bn)^2} \frac{\rho^{M - n_j}}{c_{n_j}}.
\]
By continuity, choose strictly decreasing rational sequences $n_{j,k} \to
n_j$ and $M_k \to M$, such that $0 \leqslant n_{0,k} < \cdots < n_{N-1,k}
< M_k$ for all $k$, and
\[
\sum_{j=0}^{N-1} \frac{V(\bn_{j,k})^2}{V(\bn'_k)^2} \frac{\rho^{M_k -
n_{j,k}} }{c_{n_j}} < t,
\]
where
\[
\bn'_k \coloneqq (n_{0,k}, \dots, n_{N-1,k}), \quad \bn_{j,k} \coloneqq
(n_{0,k}, \dots, n_{j-1,k}, n_{j+1,k}, \cdots, n_{N-1,k}, M_k).
\]

Now define $p_{t,k}(x) \coloneqq t \sum_{j=0}^{N-1} c_{n_j} x^{n_{j,k}} -
x^{M_k}$. By the result for rational powers, it follows that $p_{t,k}[\bu
\bu^T] \in \bp_N$ for all $\bu \in ((0,\sqrt{\rho})^N)^T$ and all $k$.
Taking the limit as $k \to \infty$ proves one implication for the real
powers $n_j, M$ (by continuity in $t$).

Finally, we prove the converse implication. Suppose~\eqref{Erank1} holds;
given $\rho>0$ and $\epsilon \in (0,1)$, define the vector
\begin{equation}
\bu_\epsilon \coloneqq \sqrt{\rho \epsilon} \cdot \bu(\epsilon) \coloneqq
\sqrt{\rho \epsilon} (1, \epsilon, \dots, \epsilon^{N-1})^T.
\end{equation}

\noindent Consider the threshold function (for single rank-one matrices,
as in Propositions~\ref{ab} and~\ref{Crank3}):
\begin{equation}\label{Efunction}
t = t(\epsilon, \bn, M) \coloneqq \sum_{j=0}^{N-1} \frac{(\det
\bu_\epsilon^{\circ \bn_j})^2}{c_{n_j} (\det \bu_\epsilon^{\circ
\bn})^2}.
\end{equation}

Suppose $p_t(x) \coloneqq t \sum_{j=0}^{N-1} c_{n_j} x^{n_j} - x^M$ for
real powers $0 \leqslant n_0 < \cdots < n_{N-1} < M$, and $p_t[\bu \bu^T]
\in \bp_N$ for $\bu \in ((0,\sqrt{\rho})^N)^T$.
Following the proof of Proposition~\ref{Crank3} for the matrices
$\bu_\epsilon \bu_\epsilon^T$, we obtain:
\[
t \geqslant \lim_{\epsilon \to 1^-} t(\epsilon, \bn, M),
\]
and hence it suffices to prove that
\begin{equation}\label{Etoprove}
\lim_{\epsilon \to 1^-} t(\epsilon, \bn, M) =
\sum_{j=0}^{N-1} \frac{V(\bn_j)^2}{V(\bn)^2} \frac{\rho^{M -
n_j}}{c_{n_j}}.
\end{equation}
For this, we compute using Proposition~\ref{Pprincipal} that
\[
\frac{(\det \bu_\epsilon^{\circ \bn_j})^2}{c_{n_j} (\det
\bu_\epsilon^{\circ \bn})^2} = \frac{(\rho \epsilon)^{M - n_j}}{c_{n_j}}
\prod_{1 \leqslant a < b \leqslant N} \frac{(\epsilon^{\bn_j(b)} -
\epsilon^{\bn_j(a)})^2}{(\epsilon^{\bn(b)} - \epsilon^{\bn(a)})^2},
\]
where $\bn(a)$ denotes the $a$th coordinate of $\bn$. Now summing over
$j$ and taking $\epsilon \to 1^-$ yields~\eqref{Etoprove}, as desired.
\end{proof}

Akin to previous sections, Theorem~\ref{Treal-rank1} for rank-one
matrices extends to Theorem~\ref{Treal-rank2} for all matrices in
$\bp_N((0,\rho))$, with the same threshold:

\begin{proof}[Proof of Theorem~\ref{Treal-rank2}]
Clearly $(3) \implies (1)$. Conversely, assuming (2), to show (3) we use
the extension principle from Theorem~\ref{Tfitzhorn}. Repeating the proof
of Theorem~\ref{Tmain}, it suffices to show that
\[
\mathcal{C} \geqslant M \, \tilde{\mathcal{C}},
\]
where $\mathcal{C}$ is as in~\eqref{Esharp}, and $\tilde{\mathcal{C}}$ is
defined like $\mathcal{C}$ but with $N$ replaced by $N-1$,
$n_0,\dots,n_{N-1}$ replaced by $n_1-1,\dots,n_{N-1}-1$, $M$ replaced by
$M-1$, and $c_{n_0},\dots,c_{n_{N-1}}$ replaced by $n_1 c_{n_1}, \dots,
n_{N-1} c_{n_{N-1}}$ respectively. Setting
$\bn' \coloneqq (n_1 - 1, \dots, n_{N-1}-1)$ and 
\[
\bn'_j \coloneqq (n_1 - 1, \dots, n_{j-1}-1, n_{j+1}-1, \dots, n_{N-1}-1,
M-1),\ \quad \forall 0 < j < N,
\]
a straightforward computation as in~\eqref{Einequality}
shows that
\begin{equation}
\frac{V(\bn_j)}{V(\bn)} \geqslant
\frac{V(\bn'_j)}{V(\bn')} \cdot \frac{M - n_0}{n_j - n_0} >
\frac{V(\bn'_j)}{V(\bn')} \cdot \frac{\sqrt{M}}{\sqrt{n_j}}, \qquad
\forall 0 < j < N.
\end{equation}
Using this, it follows that 
\[
\mathcal{C} = \sum_{j=0}^{N-1} \frac{V(\bn_j)^2}{V(\bn)^2} \frac{\rho^{M
- n_j}}{c_{n_j}} > \sum_{j=1}^{N-1} \frac{V(\bn'_j)^2}{V(\bn')^2}
\frac{M}{n_j} \cdot \frac{\rho^{M - n_j}}{c_{n_j}} = M \,
\tilde{\mathcal{C}},
\]
and the proof is complete.
\end{proof}

\begin{remark}
Notice that the sharp quantitative bound in Theorem~\ref{Treal-rank2}
also implies Theorems~\ref{Tmain} and~\ref{Treal-1} in previous sections
(which showed that the Horn--Loewner-type necessary conditions were sharp
for integer or real powers, over bounded domains $I = (0,\rho)$).
However, the tight lower and upper bounds on Schur polynomials and
generalized Vandermonde determinants in Propositions~\ref{Pleading}
and~\ref{plead-3} are not implied.
\end{remark}

\begin{remark}
That the constant $\mathcal{K}_{\bn, \bc, M}$ in~\eqref{k3def} dominates
over the sharp bound \eqref{Esharp} can be directly verified, since
\[
\max(1, \delta_{\bn,M}^{-N(N-1)}) \prod_{\alpha=0}^{j-1}
\max(1,g(\bn,\alpha)) \geqslant \prod_{\alpha=0}^{j-1} g(\bn,\alpha),
\quad \forall j.
\]
\end{remark}

\begin{remark}
Notice that Theorems~\ref{Treal-rank1} and~\ref{Treal-rank2} extend the
main result in previous work~\cite{BGKP-fixeddim} from $(n_0, \dots,
n_{N-1}) = (0, \dots, N-1)$ and integers $M > n_{N-1}$ to all real powers
$n_0, \dots, n_{N-1}, M$. Once again, as discussed above, the methods
in~\cite{BGKP-fixeddim} necessarily fail to work for any tuple $\bn$
other than an integer translate of $\bn_{\min}$ (or even for $\bn =
\bn_{\min}$ and $M \not\in \N$), since the analysis
in~\cite{BGKP-fixeddim} also works for matrices with complex entries.
\end{remark}

\begin{remark}\label{Rhankel}
Suppose all the $c_{n_j}$ are strictly positive, and let $\bu_n$ be a
sequence of vectors in $((0,\sqrt{\rho})^N)^T$ with distinct coordinates
that converges to $(\sqrt{\rho},\dots,\sqrt{\rho})^T$.  Then \thinspace
by \thinspace Proposition~\ref{ab}, the \thinspace condition \thinspace
in \thinspace Theorem~\ref{Treal-rank2}(2) is \thinspace equivalent
\thinspace to $\det f[ \bu_n \bu_n^T] \geqslant 0$ for all $n$.  In a
similar spirit, if $\epsilon_n$ is a sequence of positive numbers going
to zero, and $\det f[ \bu(\epsilon_n) \bu(\epsilon_n)^T] \geqslant 0$ for
all $n$, where $\bu(\epsilon)$ is defined in~\eqref{Evector}, then by the
discussion in \cite[Remark~4.9]{BGKP-hankel}, one can adapt the proof of
Lemma~\ref{Lhorn-real}(i) to conclude that the $c_{n_j}$ are all
non-negative, and either $c$ is non-negative or the $c_{n_j}$ are all
strictly positive.  Combining these observations, we conclude that to
obtain Theorem~\ref{Treal-rank2}(2), one does not need to test positive
semidefiniteness of $f[A]$ for all $A \in \bp_N((0,\rho))$; it suffices
to do so for the two sequences $A = \bu_n \bu_n^T$ and $\bu(\epsilon_n)
\bu(\epsilon_n)^T$.  We do not know if one can simplify this test to
involve only a finite number of matrices, rather than two infinite
sequences of matrices.
\end{remark}

\subsection{Proof of Proposition~\ref{Pschur-ratio} and extension to real
powers}

It remains to prove Proposition~\ref{Pschur-ratio}. Our proof once again
combines analysis with symmetric function theory, via type $A$
representation theory. Namely, to prove the proposition, it suffices via
the quotient rule to show by symmetry that for any fixed $j \in [1,N]$,
the polynomial
\begin{equation}\label{Equotient}
s_\bn \cdot \partial_{u_j}(s_\bm) - 
s_\bm \cdot \partial_{u_j}(s_\bn)
\end{equation}
is non-negatively valued on $(0,+\infty)^N$. This is guaranteed if the
expression~\eqref{Equotient} is a linear combination of monomials with
positive (integer) coefficients, i.e.~monomial-positive. We will show a
stronger statement. Notice that expanding $s_\bn, s_\bm$ as polynomials
in $u_1$ (by symmetry), the coefficients of each power $u_1^k$ are
skew-Schur functions $s_{\bn / (k)}(u_2,\dots,u_N)$ for $0 \leqslant k
\leqslant n_{N-1} - N + 1$. (See \cite[Chapter I.5]{Macdonald} for more
details on these symmetric polynomials.) Now we claim:

\begin{theorem}\label{Tschur-ratio}
When written as a polynomial in $u_1$, every coefficient in the
expression~\eqref{Equotient} is {\em Schur-positive}, i.e., a
non-negative integer linear combination of Schur polynomials in $u_2,
\dots, u_N$.
\end{theorem}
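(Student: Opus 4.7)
The strategy is to expand $s_\bn$ and $s_\bm$ as polynomials in $u_1$ via the Schur branching identity, reduce the problem to a Schur-positivity inequality between products of skew Schur polynomials, and then invoke the Lam--Postnikov--Pylyavskyy Schur-positivity theorem cited above.

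Writing $\lambda(\bn) := \overline{\bn - \bn_{\min}}$ and $\lambda(\bm) := \overline{\bm - \bn_{\min}}$ for the partitions attached to $\bn, \bm$, I would use the standard first-row branching identity
\begin{equation*}
s_\lambda(u_1, u_2, \dots, u_N) = \sum_{k \geqslant 0} u_1^k\, s_{\lambda/(k)}(u_2, \dots, u_N),
\end{equation*}
where the skew Schur polynomial $s_{\lambda/(k)}$ equals $\sum_{\mu} s_\mu(u_2,\dots,u_N)$, the sum running over subpartitions $\mu \subseteq \lambda$ for which $\lambda/\mu$ is a horizontal strip of size $k$ (this is immediate from the Pieri rule, or directly from the tableau definition in Section~\ref{Sschur} by grouping SSYT by the number of entries equal to~$1$). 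Setting $A_k := s_{\lambda(\bn)/(k)}(u_2,\dots,u_N)$ and $B_l := s_{\lambda(\bm)/(l)}(u_2,\dots,u_N)$, each $A_k$ and $B_l$ is manifestly Schur-positive in $u_2, \dots, u_N$. Substituting into \eqref{Equotient} gives
\begin{equation*}
s_\bn \cdot \partial_{u_1}(s_\bm) - s_\bm \cdot \partial_{u_1}(s_\bn) = \sum_{k, l \geqslant 0} (l-k)\, u_1^{k+l-1} A_k B_l,
\end{equation*}
and relabeling $(k,l) \leftrightarrow (l,k)$ turns the coefficient of $u_1^K$ into
\begin{equation*}
\sum_{\substack{0 \leqslant k < l \\ k+l = K+1}} (l-k)\,\bigl(A_k B_l - A_l B_k\bigr).
\end{equation*}
Thus the task reduces to showing that $A_k B_l - A_l B_k$ is itself Schur-positive whenever $0 \leqslant k < l$.

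Since $n_j \leqslant m_j$ for all $j$, the associated partitions satisfy $\lambda(\bn) \subseteq \lambda(\bm)$ entrywise, and the required inequality
\begin{equation*}
s_{\lambda(\bn)/(k)}\, s_{\lambda(\bm)/(l)} \geqslant s_{\lambda(\bn)/(l)}\, s_{\lambda(\bm)/(k)} \qquad\text{(Schur-positively)}
\end{equation*}
is a monotonicity-in-horizontal-strip-size statement for products of skew Schur polynomials on a nested pair of partitions. This is exactly the regime in which the Lam--Postnikov--Pylyavskyy Schur-positivity theorem \cite{LPP} applies. To apply it concretely, I would expand both sides via the Pieri rule into sums of products of ordinary Schur polynomials and then construct a weight-preserving injection from the Littlewood--Richardson data contributing to $A_l B_k$ into the LR data contributing to $A_k B_l$, obtained by transferring $l-k$ cells from the $\lambda(\bn)$-side horizontal strip to the $\lambda(\bm)$-side horizontal strip (which is possible precisely because of the inclusion $\lambda(\bn) \subseteq \lambda(\bm)$); LPP then certifies that each matched pair of products is Schur-positive.

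The entire difficulty lies in this last step: the first two steps are routine Pieri / Jacobi--Trudi bookkeeping, but a naive horizontal-strip swap can destroy the column-strictness needed for the target tableaux, and one really does need the LPP machinery (or an equivalent Littlewood--Richardson puzzle / jeu-de-taquin argument) both to construct the correct injection and to certify the Schur-positivity of each matched pair. The analytic monotonicity of Proposition~\ref{Pschur-ratio} then follows at once by evaluating at $\bu \in ((0,\infty)^N)^T$ and using non-negativity of Schur polynomials on the positive orthant.
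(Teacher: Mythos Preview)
Your reduction is exactly the paper's: expand via the branching rule in $u_1$, symmetrize in the two summation indices, and reduce to the Schur-positivity of $s_{\lambda(\bn)/(k)}\, s_{\lambda(\bm)/(l)} - s_{\lambda(\bn)/(l)}\, s_{\lambda(\bm)/(k)}$ for $k < l$ and $\lambda(\bn) \subseteq \lambda(\bm)$.

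The only difference is in how you invoke \cite{LPP} at the end. The paper simply cites \cite[Theorem 4]{LPP}, which asserts that $\tilde s_{\lambda \wedge \nu / \mu \wedge \rho}\, \tilde s_{\lambda \vee \nu / \mu \vee \rho} - \tilde s_{\lambda/\mu}\, \tilde s_{\nu/\rho}$ is Schur-positive for arbitrary partitions. Specializing to $\lambda = \lambda(\bn)$, $\nu = \lambda(\bm)$, $\mu = (l)$, $\rho = (k)$ with $\lambda \subseteq \nu$ and $k < l$ gives $\lambda \wedge \nu = \lambda$, $\lambda \vee \nu = \nu$, $\mu \wedge \rho = (k)$, $\mu \vee \rho = (l)$, and the desired inequality drops out in one line. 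Your proposed route---expand each skew via Pieri, build a horizontal-strip-transfer injection, then apply LPP to matched pairs---is unnecessary extra work, and the ``transfer $l-k$ cells'' injection as you describe it is not obviously well-defined (column-strictness can fail, as you yourself note). Since you ultimately defer to LPP anyway, just cite their skew-shape theorem directly and skip the injection sketch.
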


Note that Schur-positivity immediately implies monomial-positivity, and
Proposition~\ref{Pschur-ratio} is a trivial consequence of the latter.

\begin{proof}
In order to make the notation compatible with that in~\cite{LPP}, we now
index Schur polynomials $s_\bn$ by the partition
\[
\overline{\bn - \bn_{\min}} {= (n_{N-1} - (N-1), \dots, n_1 - 1, n_0)};
\]
more precisely we write
\[
s_\bn = \tilde s_{\overline{\bn - \bn_{\min}} }.
\]
We also say that a shape $\nu$ \textit{contains} a shape $\lambda$ if
$\nu_j \geqslant \lambda_j$ for $0 \leqslant j \leqslant N-1$. Now
setting $\lambda \coloneqq \overline{\bn - \bn_{\min}}$ and $\nu
\coloneqq \overline{\bm - \bn_{\min}}$, we see that $\nu$ contains
$\lambda$, and our task is to show that the coefficient of each power of
$u_j$ in
\begin{equation}\label{Eq2}
 \tilde s_\lambda \cdot \partial_{u_j}(\tilde s_\nu) -
 \tilde s_\nu \cdot \partial_{u_j}(\tilde s_\lambda)
\end{equation}
is Schur-positive.

As Schur polynomials are symmetric, we may assume without loss of
generality that $j=1$. We have the well-known expansion of Schur
polynomials \cite[Chapter I, Equation (5.12)]{Macdonald} into skew-Schur
polynomials
\[
\tilde s_\lambda(\bu) = \sum_{j \geqslant 0} \tilde
s_{\lambda/(j,0,\dots,0)}(\bu') u_1^j
\]
where $\bu' \coloneqq (u_2,\dots,u_N)$, where $\tilde s_{\lambda/\mu}$ is
the skew-Schur polynomial corresponding to the shape $\lambda/\mu$ if
$\lambda$ contains $\mu$, and we adopt the convention that $\tilde
s_{\lambda/\mu}=0$ if $\lambda$ does not contain $\mu$. Note that only
finitely many of the terms will be non-zero. Similarly we have
\[
\tilde s_\nu(\bu) = \sum_{k \geqslant 0} \tilde
s_{\nu/(k,0,\dots,0)}(\bu') u_1^k
\]
and hence we may write~\eqref{Eq2} as
\[
\sum_{j,k \geqslant 0} \tilde s_{\lambda/(j,0,\dots,0)}(\bu') \tilde
s_{\nu/(k,0,\dots,0)}(\bu') (k-j) u_1^{j+k-1}.
\]
Symmetrizing, this can be rewritten as
\begin{align*}
\sum_{k > j \geqslant 0} (\tilde s_{\lambda/(j,0,\dots,0)}(\bu') \tilde
s_{\nu/(k,0,\dots,0)}(\bu') - \tilde s_{\lambda/(k,0,\dots,0)}(\bu')
\tilde s_{\nu/(j,0,\dots,0)}(\bu')) &\ \times\\
\times (k-j) & u_1^{j+k-1}.
\end{align*}

\noindent Thus it will suffice to show that the polynomial
\[
\tilde s_{\lambda/(j,0,\dots,0)} \tilde s_{\nu/(k,0,\dots,0)} -
\tilde s_{\lambda/(k,0,\dots,0)} \tilde s_{\nu/(j,0,\dots,0)}
\]
is Schur-positive whenever $k>j$ and $\nu$ contains $\lambda$.
But this is a special case of a result \cite[Theorem 4]{LPP} of Lam,
Postnikov, and Pylyavskyy (we thank Pavlo Pylyavskyy for this reference).
These authors establish in \textit{loc.~cit.}\ the more general claim
that
\begin{equation}\label{Elpp}
\tilde s_{\lambda \wedge \nu / \mu \wedge \rho} \tilde s_{\lambda \vee
\nu / \mu \vee \rho} - \tilde s_{\lambda / \mu} \tilde s_{\nu / \rho}
\end{equation}

\noindent is Schur-positive for any partitions $\lambda,\nu, \mu, \rho$,
where we write
\begin{align*}
\lambda \wedge \nu \coloneqq &\ (\min(\lambda_{N-1}, \nu_{N-1}), \dots,
\min(\lambda_0, \nu_0)), \\
\lambda \vee \nu \coloneqq &\ (\max(\lambda_{N-1}, \nu_{N-1}), \dots,
\max(\lambda_0, \nu_0))
\end{align*}
if $\lambda = (\lambda_{N-1}, \dots,\lambda_0)$, and similarly for $\nu,
\mu, \rho$.  Note that the Littlewood--Richardson rule \cite[Chapter I,
Equations (5.2), (5.3)]{Macdonald} implies that all skew-Schur
polynomials and their products are Schur-positive, so the
Schur-positivity of~\eqref{Elpp} is only non-trivial when $\tilde
s_{\lambda/\mu}$ and $\tilde s_{\nu/\rho}$ are non-zero, thus $\lambda$
contains $\mu$ and $\nu$ contains $\rho$. This implies that $\lambda
\wedge \nu$ contains $\mu \wedge \rho$ and $\lambda \vee \nu$ contains
$\mu \vee \rho$.  This is the case that is treated in~\cite{LPP}.
\end{proof}

In addition to the proof of the above Theorems~\ref{Treal-rank1},
\ref{Treal-rank2}, we now list some other applications of
Proposition~\ref{Pschur-ratio}. The first is that it can be extended to
hold for all real powers:

\begin{corollary}\label{Creal}
Fix tuples of real powers $\bn = (n_0 < \cdots < n_{N-1})$ and
$\bm = (m_0 < \cdots < m_{N-1})$, such that $n_j \leqslant m_j\ \forall
j$. Defining $\bu^{\circ \bn} \coloneqq (u_j^{n_{k-1}})_{j,k=1}^N$ as
above, the function
\[
f : ((0,+\infty)^N_{\neq})^T \to \R, \qquad
f(\bu) \coloneqq \frac{\det \bu^{\circ \bm}}{\det \bu^{\circ \bn}}
\]
is non-decreasing in each coordinate, where $S^N_{\neq}$ for a set $S$
denotes ordered $N$-tuples of pairwise distinct elements.
\end{corollary}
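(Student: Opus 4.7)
My plan is to bootstrap Proposition \ref{Pschur-ratio} upward in two stages: from integer exponents to rational exponents by a change of variables, and from rational exponents to real exponents by a continuity/approximation argument. The Vandermonde factor appearing in the determinant formula \eqref{detfactor} will drop out in the ratio and reduce the integer case to the statement already in hand.

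In more detail, I will first handle integer tuples $\bn, \bm$. For such tuples, Proposition \ref{Pschur} gives $\det \bu^{\circ \bn} = V(\bu) s_\bn(\bu)$ and similarly for $\bm$, so on the domain of strictly ordered positive tuples the ratio $\det \bu^{\circ \bm}/\det \bu^{\circ \bn}$ equals $s_\bm(\bu)/s_\bn(\bu)$, and Proposition \ref{Pschur-ratio} directly yields coordinate-wise monotonicity. Next, for rational tuples I will fix a common denominator $L \in \N$ and substitute $\bv \coloneqq \bu^{\circ 1/L}$, so that
\[
\frac{\det \bu^{\circ \bm}}{\det \bu^{\circ \bn}} = \frac{\det \bv^{\circ L \bm}}{\det \bv^{\circ L\bn}}.
\]
Because $L\bn, L\bm$ are integer tuples still satisfying the dominance and ordering hypotheses, the integer case applies, and since each $u_j \mapsto u_j^{1/L}$ is strictly increasing on $(0,\infty)$, monotonicity in $\bv$ transfers to monotonicity in $\bu$.

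For the general case, I will approximate the real tuples $\bn, \bm$ by sequences of rational tuples $\bn^{(k)} \to \bn$, $\bm^{(k)} \to \bm$ that simultaneously respect the strict orderings and the componentwise dominance $n_j^{(k)} \leqslant m_j^{(k)}$ (setting equal rational values in the coordinates where $n_j = m_j$, and using independent approximations otherwise). For each fixed $\bu \in ((0,\infty)^N_{\neq})^T$, the exponential map $\alpha \mapsto \bu^{\circ \alpha}$ is continuous, and by \eqref{vbal} the denominator $\det \bu^{\circ \bn}$ is bounded away from zero on this open domain, so the ratios $\det \bu^{\circ \bm^{(k)}}/\det \bu^{\circ \bn^{(k)}}$ converge pointwise to $\det \bu^{\circ \bm}/\det \bu^{\circ \bn}$. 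Pointwise monotonicity in each coordinate is preserved under pointwise limits, which will give the conclusion.

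I expect no serious obstacle: the real difficulty is already absorbed into Proposition \ref{Pschur-ratio} (and Theorem \ref{Tschur-ratio} behind it), and what remains is bookkeeping. The only mild care needed is in choosing the rational approximating tuples so that both strict within-tuple orderings and the between-tuple dominance constraints are preserved simultaneously, which is routine given that coordinates with $n_j = m_j$ can be approximated by a single shared rational sequence.
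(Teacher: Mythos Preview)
Your proposal is correct and follows essentially the same two-stage approach as the paper: reduce rational exponents to the integer case via the substitution $\bv = \bu^{\circ 1/L}$ and Proposition~\ref{Pschur-ratio}, then pass to real exponents by approximating with rational tuples and taking pointwise limits. You are in fact slightly more careful than the paper in noting that the approximating sequences must also preserve the componentwise dominance $n_j^{(k)} \leqslant m_j^{(k)}$, which is indeed needed to invoke the rational case.
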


Note that we do not need to assume the $u_j$ to be strictly increasing in
$\bu \in ((0,+\infty)^N_{\neq})^T$, since the ratio of determinants used
to define $f(\bu)$ is invariant with respect to permutations of the
components of $\bu$.

We also remark that this corollary allows us to bypass
Theorem~\ref{Treal-rank1} (for integer powers) and prove
Theorem~\ref{Treal-rank2} for tuples of real powers via a slightly
different approach.

Rachid Ait-Haddou has mentioned to us (see the comments at\hfill\break
\href{https://terrytao.wordpress.com/2017/08/17}{\tt
https://terrytao.wordpress.com/2017/08/17}) that Corollary~\ref{Creal}
can be proved directly using the theory of Chebyshev blossoming in
M\"untz spaces \cite{Blossom2,Blossom1}.  We give a further proof of
Corollary~\ref{Creal}, relying primarily on determinant identities such
as Dodgson condensation, in Section~\ref{logmod}.

\begin{proof}[Proof of Corollary~\ref{Creal}]
As above, we first observe that it suffices to assume $n_j, m_j$
$\geqslant 0$, since multiplying and dividing $f(\bu)$ by $(u_1 \cdots
u_N)^{-n_0}$ shows that
\[
f(\bu) = \frac{\det \bu^{\circ \bm'}}{\det \bu^{\circ \bn'}},
\]
where
\[
\bm' \coloneqq (m_0 - n_0, \dots, m_{N-1} - n_0), \quad
\bn' \coloneqq (n_0 - n_0, \dots, n_{N-1} - n_0).
\]

Thus we may assume $n_j, m_j$ are non-negative. We now show the result
for non-negative rational powers $\bm, \bn$.
Choose $L \in \N$ such that $L n_j, L m_j \in \Z$ for all $j$, and set
${\bf y} \coloneqq \bu^{\circ 1/L}$ as above.
Notice that $\bu \in ((0,+\infty)^N_{\neq})^T$ if and only if
${\bf y} \in ((0,+\infty)^N_{\neq})^T$. Now we compute:
\[
f(\bu) = \frac{s_{L \bm}({\bf y})}{s_{L \bn}({\bf y})},
\]
and by Proposition~\ref{Pschur-ratio}, this is non-decreasing in each
coordinate $y_j$, whence in $u_j$.

Next, given non-negative real powers $n_j, m_j$, as above we choose
rational sequences $n_{j,k} \to n_j$ and $m_{j,k} \to m_k$, such that
\[
0 \leqslant n_{0,k} < \cdots < n_{N-1,k}, \quad
m_{0,k} < \cdots < m_{N-1,k}, \quad
n_{j,k} \leqslant m_{j,k} \ \forall j, \quad
\forall k \in \N.
\]
Define $\bn_k \coloneqq (n_{0,k}, \dots, n_{N-1,k})$ and
similarly $\bm_k$ for $k \in \N$; and now define
\[
f_k(\bu) \coloneqq \frac{\det \bu^{\circ \bm_k}}{\det \bu^{\circ \bn_k}}.
\]
Then the functions $f_k(\bu)$ are all non-decreasing in every coordinate,
whence so is their pointwise limit on $((0,+\infty)^N_{\neq})^T$, namely,
$f$.
\end{proof}

\begin{remark}
Another consequence of Proposition~\ref{Pschur-ratio} and the principal
specialization in Proposition~\ref{Pprincipal} is that for
any (integer or) real tuple $\bm \geqslant \bn$ coordinatewise, we have:
\begin{equation}
\sup_{\bu \in ((0,1)^N_{\neq})^T} \frac{\det \bu^{\circ \bm}}{\det
\bu^{\circ \bn}} = \frac{V(\bm)}{V(\bn)} = \lim_{\epsilon \to 1^-}
\frac{\det \bu(\epsilon)^{\circ \bm}}{\det \bu(\epsilon)^{\circ \bn}}.
\end{equation}

Notice \thinspace by \thinspace homogeneity \thinspace that \thinspace
this \thinspace identity has an obvious variant for $\bu \in$ \
$((0,\sqrt{\rho})^N_{\neq})^T$ for any $0 < \rho < +\infty$. In turn,
this variant leads to the following ``depolarization-type'' phenomenon:
for all $\bu \in ((0,+\infty)^N_{\neq})^T$, the ratio $s_\bm(\bu) /
s_\bn(\bu)$ equals the value at some diagonal point $(u,\dots,u)^T$,
where $0 < \min_j u_j \leqslant u \leqslant \max_j u_j$.
\end{remark}

\section{Application 1: Entrywise preservers of total non-negativity}

The next few sections are devoted to applications of the above results.
In this section we study entrywise preservers of total non-negativity,
with the ultimate aim of extending the above classification of sign
patterns -- and computation of exact threshold bounds -- to totally
non-negative matrices.

Recall that a real matrix is said to be \textit{totally non-negative}
(resp.~\textit{strictly totally positive}) if every minor is a
non-negative (resp.~positive) real number.
A well-known example of total non-negativity -- in fact, strict total
positivity -- is that of generalized Vandermonde
determinants~\eqref{Egantmacher}. 
Totally non-negative and strictly totally positive matrices feature in
analysis and differential equations, probability and stochastic
processes, representation theory, discrete mathematics, and particle
systems; and are closely connected to positive semidefinite matrices.

\begin{remark}
As the notion of \textit{total positivity} is taken by various authors in
the literature to mean either total non-negativity or strict total
positivity (see \cite{FJS,karlin}), we avoid using the former notion in
the paper, and use the latter two instead.
\end{remark}

In the dimension-free setting, it was recently shown
in~\cite{BGKP-hankel} that, in the spirit of Schoenberg and Rudin's
original theorems, an entrywise function $F : [0,+\infty) \to \R$
preserves total non-negativity on Hankel matrices of all sizes, if and
only if $F$ preserves positivity on the same set, if and only if $F$
agrees with an absolutely monotonic entire function
\[
\sum_{k \geqslant 0} c_k x^k, \ c_k \geqslant 0\ \forall k
\]
on $(0,+\infty)$, and $0 \leqslant F(0) \leqslant \lim_{\epsilon \to 0^+}
F(\epsilon)$. 

\begin{definition}
Given an integer $N>0$ and a domain $I \subset [0,+\infty)$, define
$\HTN_N(I)$ to be the set of Hankel totally non-negative matrices with
entries in~$I$.
\end{definition}

As the above remarks show, Hankel totally non-negative matrices
(which are automatically positive semidefinite) are a ``well-behaved''
test set for preserving total non-negativity in arbitrary dimension -- in
fact, they are closed under taking Schur products. On the other hand, if
one works with the slightly larger class of all positive semidefinite
(equivalently, symmetric) totally non-negative matrices, then any
preserver is necessarily constant or linear. This is true even for $5
\times 5$ matrices; for these and related results, we refer the reader
to~\cite{BGKP-TN}.

In light of these remarks, we now study the preservation of total
non-negativity in the fixed dimension setting, again for Hankel totally
non-negative matrices. It turns out that the above results for positivity
preservation apply to preserving total non-negativity as well; note that
in all cases, the entries of totally non-negative matrices, whence the
domains of their entrywise preservers, are contained in $[0,+\infty)$. We
begin by presenting some of the main results, which will be followed by
proofs.

\begin{theorem}\label{Thtn1}
Fix an integer $N>0$ and real powers $n_0 < \cdots < n_{N-1} < M$.
Also fix real scalars $\rho > 0$ and $c_{n_0}, \dots, c_{n_{N-1}}, c'$,
and define:
\begin{equation}
f(x) \coloneqq \sum_{j=0}^{N-1} c_{n_j} x^{n_j} + c' x^M,
\qquad c_{n_j}, c' \in \R.
\end{equation}

\noindent Then the following are equivalent:
\begin{enumerate}
\item The entrywise map $f[-]$ preserves total non-negativity on rank-one
matrices in $\HTN_N((0,\rho))$.

\item The \thinspace entrywise \thinspace map \thinspace $f[-]$
\thinspace preserves \thinspace positivity \thinspace on \thinspace
rank-one \thinspace matrices \thinspace in $\HTN_N((0,\rho))$.

\item Either all $c_{n_j}, c' \geqslant 0$; or $c_{n_j} > 0\ \forall j$
and $c' \geqslant -\mathcal{C}^{-1}$, where
\begin{equation}
\mathcal{C} = \sum_{j=0}^{N-1} \frac{V(\bn_j)^2}{V(\bn)^2}
\frac{\rho^{M - n_j}}{c_{n_j}}.
\end{equation}
\end{enumerate}
\end{theorem}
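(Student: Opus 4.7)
The plan is to prove the three-way equivalence via $(3) \Rightarrow (2) \Rightarrow (1) \Rightarrow (2) \Rightarrow (3)$, with the substantive new content being $(2) \Rightarrow (1)$. Throughout I would use the parametrization of rank-one matrices in $\HTN_N((0,\rho))$ as $A = \bu_r \bu_r^T$ with $\bu_r = c(1, r, r^2, \dots, r^{N-1})^T$ for $c, r > 0$ satisfying $c^2 \max(1, r^{2(N-1)}) < \rho$. The implication $(1) \Rightarrow (2)$ is immediate (symmetric TN matrices are positive semidefinite), and $(3) \Rightarrow (2)$ follows from Theorem \ref{Treal-rank1}, which already ensures that $f[-]$ preserves positivity on the full cone of rank-one matrices in $\bp_N((0,\rho))$ under the threshold condition (3). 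For $(2) \Rightarrow (3)$, I would observe that the sequence $\bu_\epsilon = \sqrt{\rho \epsilon}(1, \epsilon, \dots, \epsilon^{N-1})^T$ used in the proof of Theorem \ref{Treal-rank1} to extract the sharp threshold consists of rank-one Hankel vectors with $\bu_\epsilon \bu_\epsilon^T \in \HTN_N((0,\rho))$, so the same limiting computation (via Proposition \ref{ab} and \eqref{Etoprove} as $\epsilon \to 1^-$) recovers $c' \geqslant -\mathcal{C}^{-1}$; the sign constraints on the $c_{n_j}$ follow from the standard Horn-type asymptotic argument applied in the limit $c \to 0^+$ (with $r$ fixed), where successive leading-order rank-one components of $f[A]$ must be positive semidefinite.

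For the substantive implication $(2) \Rightarrow (1)$, my plan is to invoke the classical (truncated) Stieltjes moment criterion: an $N \times N$ Hankel matrix $H = (h_{i+j})_{i,j=0}^{N-1}$ is totally non-negative if and only if both $H$ and the shifted Hankel matrix $H^{(1)} \coloneqq (h_{i+j+1})_{i,j=0}^{N-2}$ are positive semidefinite. (Equivalently, the sequence $h_0, \dots, h_{2N-2}$ arises from integration against a positive Borel measure on $[0,\infty)$, in which case $H$ factors as $V D V^T$ with $V$ a generalized Vandermonde matrix and $D$ a non-negative diagonal, hence is TN by Cauchy--Binet.) Given $A = \bu_r \bu_r^T \in \HTN_N((0,\rho))$, the matrix $f[A]$ is PSD by (2); and a direct entry-by-entry comparison yields $f[A]^{(1)} = f[\tilde A]$ where $\tilde A \coloneqq \tilde\bu \tilde\bu^T$ is the $(N-1)$-dimensional rank-one Hankel matrix with $\tilde\bu \coloneqq c\sqrt{r}(1, r, \dots, r^{N-2})^T$, since both sides have $(i,j)$-entry $f(c^2 r^{i+j+1})$.

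To establish that $f[\tilde A]$ is PSD I would embed $\tilde A$ as the leading $(N-1) \times (N-1)$ principal submatrix of the $N$-dimensional rank-one Hankel matrix $B \coloneqq \bu_{\mathrm{ext}} \bu_{\mathrm{ext}}^T$ with $\bu_{\mathrm{ext}} \coloneqq c\sqrt{r}(1, r, \dots, r^{N-1})^T$, and apply (2) to $B$. The main obstacle is that this embedding places $B$ in $\HTN_N((0,\rho))$ only when $r \leqslant 1$ (in which case the maximum entry of $B$ equals $c^2 r \leqslant c^2 < \rho$); for $r > 1$ the entry $c^2 r^{2N-1}$ may exceed $\rho$ and the embedding fails. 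I would remove this obstacle via the anti-diagonal reversal $A \mapsto JAJ$, where $J$ is the permutation matrix sending $i \mapsto N-1-i$: a direct calculation gives $JAJ = \bu_{r'} \bu_{r'}^T$ with $r' = 1/r$ and $\bu_{r'} = cr^{N-1}(1, r', \dots, (r')^{N-1})^T$, so $JAJ$ has the same multiset of entries as $A$ and therefore lies in $\HTN_N((0,\rho))$. Since entrywise application of $f$ commutes with reversal, and reversal preserves both positive semidefiniteness (by orthogonal conjugation) and total non-negativity (by reindexing of minors), the case $r > 1$ reduces to the $r' < 1$ case already handled. Combining the two PSD facts through the Stieltjes criterion yields $f[A]$ totally non-negative, completing $(2) \Rightarrow (1)$.
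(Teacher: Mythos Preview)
Your proof is correct, and your cycle $(3)\Rightarrow(2)$, $(2)\Rightarrow(3)$, $(1)\Rightarrow(2)$ matches the paper's treatment essentially verbatim (same test family $\bu(\epsilon)\bu(\epsilon)^T$, same appeal to Theorem~\ref{Treal-rank1} and the Horn-type argument via Remark~\ref{Rhankel}). The genuine divergence is in how you close the loop to (1).

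The paper does not prove $(2)\Rightarrow(1)$ directly. Instead it routes through (3): once $c'\geqslant -\mathcal{C}^{-1}$ is known, Theorem~\ref{Treal-rank1} gives positivity preservation on \emph{all} of $\bp_N^1([0,\rho))$, not merely on the Hankel rank-one matrices. This stronger hypothesis feeds into Proposition~\ref{Phtn}, whose proof pads $A^{(1)}$ with a single zero row and column to obtain an $N\times N$ matrix in $\bp_N^1([0,\rho))$, applies the positivity-preservation hypothesis there, and concludes via Lemma~\ref{Lhtn}. The zero-padding is what sidesteps any domain issue.

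Your argument stays entirely inside the Hankel rank-one world and proves $(2)\Rightarrow(1)$ without passing through (3). You extend the moment sequence by one step (embedding $\tilde A$ in $B$) rather than padding by zero; this keeps all entries strictly positive but may push the top entry past $\rho$ when $r>1$, which you repair with the anti-diagonal reversal $A\mapsto JAJ$ sending $r\mapsto 1/r$. This reversal trick is the extra idea your route requires; in exchange you avoid extending $f$ to $x=0$ and never invoke the full $\bp_N^1$ hypothesis. Both arguments ultimately rest on the same Hankel TN criterion (your Stieltjes statement is exactly Lemma~\ref{Lhtn}, since for symmetric matrices ``all principal minors non-negative'' is equivalent to PSD). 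The paper's route is shorter; yours is more self-contained and makes explicit that only the Hankel rank-one test set is needed throughout.
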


In other words, preserving total non-negativity on the test set in
assertion (1) is equivalent to preserving positivity on the same set, and
provides additional equivalent conditions to the ones in
Theorem~\ref{Treal-rank1}.

As shown in~\cite{FJS}, a power $x \mapsto x^\alpha$ entrywise preserves
total non-negativity on $\HTN_N((0,\rho))$ if and only if it preserves
positivity on $\bp_N((0,\rho))$, namely, $\alpha \in \Z^{\geqslant 0}
\cup [N-2,\infty)$. With this constraint in mind, the preceding result
extends to matrices of all ranks:

\begin{theorem}\label{Thtn2}
With notation as in Theorem~\ref{Thtn1}, if we further assume that
$n_j \in \Z^{\geqslant 0} \cup [N-2,\infty)$ for all $j$, then the
conditions (1)--(3) are further equivalent to:
\begin{enumerate}
\setcounter{enumi}{3}
\item The \thinspace entrywise \thinspace map \ $f[-]$
\ preserves \ total \thinspace non-negativity
\thinspace on \thinspace the \thinspace set \ $\HTN_N([0,\rho])$.
\end{enumerate}
\end{theorem}

Moreover, the bound obtained above (and earlier in the paper) is tight
enough to enable bounding more general functions:

\begin{corollary}\label{Chtn-laplace}
Notation as in Theorem~\ref{Thtn1}; assume further that $n_j \in
\Z^{\geqslant 0} \cup [N-2,\infty)$ for all $j$.
Given $\varepsilon > 0$ and a real measure $\mu$ supported on $[n_{N-1} +
\varepsilon,\infty)$ whose Laplace transform $g_\mu$ (defined
in~\eqref{Elaplace}) is absolutely convergent at $\rho$, there exists a
finite threshold
\[
\mathcal{K} = \mathcal{K}(n_0,\dots,n_{N-1}, \rho, c_{n_0}, \dots,
c_{n_{N-1}}, g_\mu) > 0
\]
such that the entrywise function
$\displaystyle x \mapsto \mathcal{K} \sum_{j=0}^{N-1} c_{n_j} x^{n_j} -
g_\mu(x)$
preserves total non-negativity on $\HTN_N((0,\rho))$, i.e.,
\begin{equation}
{\mathcal K} \sum_{j=0}^{N-1} c_{n_j} A^{\circ n_j} - g_\mu[A] \in
\HTN_N([0,+\infty)), \qquad \forall A \in \HTN_N((0,\rho)).
\end{equation}
\end{corollary}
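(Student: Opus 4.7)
The plan is to mimic the proof of Theorem \ref{Tlaplace}, with Theorem \ref{Thtn2} playing the role of the quantitative positivity bound used there. First, I would reduce to a non-negative measure. Writing $\mu = \mu_+ - \mu_-$ via the Hahn--Jordan decomposition gives $g_\mu = g_{\mu_+} - g_{\mu_-}$, and hence
\[
\mathcal{K} \sum_{j=0}^{N-1} c_{n_j} A^{\circ n_j} - g_\mu[A] = \Bigl(\mathcal{K} \sum_{j=0}^{N-1} c_{n_j} A^{\circ n_j} - g_{\mu_+}[A]\Bigr) + g_{\mu_-}[A].
\]
Since $\mathrm{supp}(\mu_-) \subset [n_{N-1}+\varepsilon,\infty) \subset (N-2,\infty)$ (noting $n_{N-1} \geqslant N-1$), the \cite{FJS} result cited above the corollary gives $A^{\circ t} \in \HTN_N([0,\infty))$ for each such $t$ and each $A \in \HTN_N((0,\rho))$; integrating against $\mu_- \geqslant 0$ keeps us inside $\HTN_N([0,\infty))$ (the cone being closed under non-negative integration, e.g.\ via the identification of Hankel TN matrices with Hamburger moment matrices of positive measures on $[0,\infty)$), so $g_{\mu_-}[A] \in \HTN_N([0,\infty))$, and we may assume $\mu \geqslant 0$.

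Next, for each $M > n_{N-1}$, Theorem \ref{Thtn2} supplies the sharp threshold $\mathcal{C}(M) \coloneqq \sum_{j=0}^{N-1} \frac{V(\bn_j)^2}{V(\bn)^2}\frac{\rho^{M-n_j}}{c_{n_j}}$ (with the $M$-dependence carried by $\bn_j = (n_0,\ldots,n_{j-1},n_{j+1},\ldots,n_{N-1},M)$) such that $\mathcal{C}(M)\sum_{j} c_{n_j} A^{\circ n_j} - A^{\circ M} \in \HTN_N([0,\infty))$ for every $A \in \HTN_N([0,\rho])$. Setting $\mathcal{K} \coloneqq \int \mathcal{C}(M)\,d\mu(M)$ and using Fubini to write $g_\mu[A] = \int A^{\circ M}\,d\mu(M)$ entrywise gives
\[
\mathcal{K} \sum_{j=0}^{N-1} c_{n_j} A^{\circ n_j} - g_\mu[A] = \int_{n_{N-1}+\varepsilon}^\infty \Bigl(\mathcal{C}(M) \sum_{j=0}^{N-1} c_{n_j} A^{\circ n_j} - A^{\circ M}\Bigr)\,d\mu(M),
\]
which again lies in $\HTN_N([0,\infty))$ by the same integration closure of the cone.

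The key step is to verify $\mathcal{K} < \infty$; this proceeds exactly as in the final paragraph of the proof of Theorem \ref{Tlaplace}. A limiting argument in $\rho$ and $\varepsilon$ allows us to assume $\int \rho^M (1+\eta)^M\,d\mu(M) < \infty$ for some $\eta > 0$. Then $\mathcal{C}(M) \leqslant C_1 P(M)\rho^M$, where $P(M)$ is a polynomial of degree $2(N-1)$ in $M$ (coming from $V(\bn_j)^2$) while $V(\bn)^2$, $c_{n_j}$, and $\rho^{-n_j}$ are independent of $M$. Since $P(M)(1+\eta)^{-M}$ is bounded uniformly in $M$, we conclude $\mathcal{K} \leqslant C_2 \int \rho^M(1+\eta)^M\,d\mu(M) < \infty$. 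The main obstacle is precisely this limiting/adjustment argument --- ensuring the strengthened integrability is available while still recovering the conclusion for the original $\rho$ --- but it is handled verbatim as in Theorem \ref{Tlaplace}.
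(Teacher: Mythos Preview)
Your argument is correct, but it takes a longer path than the paper's. The paper simply observes that Corollary~\ref{Chtn-laplace} follows from Theorem~\ref{Tlaplace} via Proposition~\ref{Phtn}: Theorem~\ref{Tlaplace} already produces a $\mathcal{K}$ for which $x \mapsto \mathcal{K}\sum_j c_{n_j} x^{n_j} - g_\mu(x)$ preserves positivity on $\bp_N((0,\rho))$, and Proposition~\ref{Phtn} (with $k=N$) immediately transfers this to total non-negativity preservation on $\HTN_N((0,\rho))$. No Hahn--Jordan decomposition, no re-derivation of the finiteness of $\mathcal{K}$, and no separate argument about closure of the $\HTN$ cone under non-negative integration are needed.

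What you do instead is work entirely inside the $\HTN$ cone: you invoke Theorem~\ref{Thtn2} pointwise in $M$ and then integrate, arguing that $\HTN_N$ is closed under non-negative integration. This is valid (and indeed the closure follows directly from Lemma~\ref{Lhtn}, since both $A$ and $A^{(1)}$ are linear in $A$ and positive semidefiniteness is preserved under non-negative integration), but it duplicates the analytic work already packaged in Theorem~\ref{Tlaplace}. One small slip: you write $n_{N-1} \geqslant N-1$, which need not hold for non-integer exponents; the correct (and sufficient) observation is $n_{N-1} > N-2$, since at most $N-2$ of the $n_j$ can lie in $\{0,1,\ldots,N-3\}$, forcing $n_{N-2} \geqslant N-2$ and hence $n_{N-1} > N-2$. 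This still gives $\mathrm{supp}(\mu_-) \subset (N-2,\infty)$, which is what you need.
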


These theorems follow from two results. The first relates total
non-negativity and positive semidefiniteness for Hankel matrices:

\begin{lemma}[{see \cite[Corollary 3.5]{FJS}}]\label{Lhtn}
Let $A_{N \times N}$ be a Hankel matrix. Then $A$ is totally non-negative
if and only if $A$ and its truncation $A^{(1)}$ have non-negative
principal minors. Here, $A^{(1)}$ denotes the submatrix of $A$ with the
first column and last row removed.
\end{lemma}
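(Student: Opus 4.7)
The plan is to handle the forward direction trivially and tackle the reverse direction via moment theory combined with a Cauchy--Binet expansion. The forward direction is immediate: if $A$ is totally non-negative, then every minor of $A$ is non-negative, and in particular so are its principal minors; moreover each principal minor of $A^{(1)}$ coincides with a minor of $A$ obtained by shifting the column indices up by one, hence it too is non-negative.

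For the reverse direction, first observe that since both $A$ and $A^{(1)}$ are symmetric (being Hankel), the hypothesis that all their principal minors are non-negative is equivalent to each being positive semidefinite. I will then reduce to the case where $A$ and $A^{(1)}$ are strictly positive definite by the perturbation $a_k \mapsto a_k + \epsilon/(k+1)$ for $\epsilon > 0$, which amounts to adding to $A$ (respectively to $A^{(1)}$) the Hankel matrix of the moments of Lebesgue measure on $[0,1]$, a positive-definite matrix. Total non-negativity of $A$ will then follow from that of the perturbed matrix by sending $\epsilon \to 0^+$ and using continuity of determinants.

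In the positive-definite case, the key step is to invoke the Gauss--Stieltjes quadrature theorem, a classical consequence of the truncated Stieltjes moment problem: the joint positive definiteness of $A$ and $A^{(1)}$ is equivalent to the existence of distinct positive nodes $0 < t_1 < \dots < t_N$ and positive weights $w_1, \dots, w_N$ such that $a_k = \sum_{\ell=1}^N w_\ell t_\ell^k$ for $k = 0, 1, \dots, 2N-2$. Writing $\bu(t) \coloneqq (1, t, \dots, t^{N-1})^T$, this is equivalent to $A = \sum_{\ell=1}^N w_\ell \bu(t_\ell) \bu(t_\ell)^T$. Now the Cauchy--Binet formula expands any minor of $A$ indexed by row set $I$ and column set $J$ (with $|I|=|J|=k$) as
\[
\det A[I\,|\,J] = \sum_{\substack{S \subset \{1,\dots,N\}\\ |S|=k}} \Bigl(\prod_{\ell \in S} w_\ell\Bigr) \det\bigl(t_\ell^{i-1}\bigr)_{i \in I,\,\ell \in S}\, \det\bigl(t_\ell^{j-1}\bigr)_{j \in J,\,\ell \in S}.
\]
Each generalized Vandermonde determinant on the right is non-negative by \eqref{Egantmacher}, and the weights are positive, hence $\det A[I\,|\,J] \geqslant 0$, establishing total non-negativity.

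The main obstacle in this approach is the Gauss--Stieltjes quadrature representation with strictly positive nodes; once this input from classical moment theory is in hand, the rest of the argument reduces to Cauchy--Binet combined with the positivity \eqref{Egantmacher} of generalized Vandermonde determinants. An alternative route, which one might pursue instead to keep the argument self-contained, would be to use Dodgson/Desnanot--Jacobi type identities to express an arbitrary minor of a Hankel matrix as a rational combination of the ``contiguous'' ones -- namely, the principal minors of $A$ and of $A^{(1)}$ -- and then verify the positivity by induction on the size of the minor.
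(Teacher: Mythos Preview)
The paper does not supply its own proof of this lemma; it is simply quoted from \cite[Corollary~3.5]{FJS}. So there is no in-paper argument to compare against, and your proposal stands on its own.

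Your argument is correct. The forward direction is indeed trivial, and for the reverse direction the perturbation by the Lebesgue moments on $[0,1]$ cleanly reduces to the positive-definite case. Once you have the factored representation $A=\sum_{\ell=1}^N w_\ell\,\bu(t_\ell)\bu(t_\ell)^T$ with $w_\ell>0$ and $t_\ell>0$, the Cauchy--Binet expansion together with \eqref{Egantmacher} gives total non-negativity exactly as you wrote.

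One small refinement worth making explicit concerns the quadrature step. The $N\times N$ Hankel matrix $A$ determines only the $2N-1$ moments $a_0,\dots,a_{2N-2}$, which is one fewer than the $2N$ moments needed to pin down a unique $N$-point Gauss rule. The simplest fix is to first \emph{extend} by choosing $a_{2N-1}$ large enough that the $N\times N$ shifted Hankel matrix $(a_{i+j+1})_{i,j=0}^{N-1}$ is positive definite (which is possible since its leading $(N-1)\times(N-1)$ principal submatrix is your $A^{(1)}$, already positive definite after perturbation); then the standard $N$-point Gauss--Stieltjes quadrature applies verbatim and delivers the nodes in $(0,\infty)$. Alternatively, you can bypass the atomic representation entirely: the truncated Stieltjes theorem gives \emph{some} representing measure $\mu$ on $[0,\infty)$, and writing $A=\int_0^\infty \bu(t)\bu(t)^T\,d\mu(t)$ one obtains the same minor positivity either by the continuous Cauchy--Binet formula or by approximating $\mu$ with finitely supported measures.

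Your closing remark about a Desnanot--Jacobi/condensation approach is also apt, and is closer in spirit to the determinantal-identity arguments that appear elsewhere in the total-positivity literature (and indeed later in this paper, in Section~\ref{logmod}).
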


The second result uses Lemma~\ref{Lhtn} to connect entrywise preservers
of these two notions. Recall below that $\bp_N^k$ comprises all positive
semidefinite $N \times N$ matrices of rank at most $k$.

\begin{proposition}\label{Phtn}
Fix integers $1 \leqslant k \leqslant N$ and a scalar $0 < \rho
\leqslant +\infty$. Suppose $f : [0,\rho) \to \R$ is such that the
entrywise map $f[-]$ preserves positivity on $\bp_N^k([0,\rho))$. Then
$f[-]$ preserves total non-negativity on the set
$\HTN_N([0,\rho)) \cap \bp_N^k([0,\rho))$.
\end{proposition}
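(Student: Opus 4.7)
The plan is to invoke Lemma \ref{Lhtn}, which characterizes a Hankel matrix as totally non-negative precisely when both the matrix itself and its shifted truncation have non-negative principal minors. Applied to the Hankel matrix $f[A]$, this splits the goal into two positive semidefiniteness statements: that $f[A]$ is positive semidefinite, and that $(f[A])^{(1)}$ is positive semidefinite.

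The first is immediate from the hypothesis, since $A \in \bp_N^k([0,\rho))$ forces $f[A]$ to be positive semidefinite, and principal submatrices inherit positive semidefiniteness. For the second, note that entrywise application commutes with extraction of submatrices, so $(f[A])^{(1)} = f[A^{(1)}]$. The shifted truncation $A^{(1)}$ is itself a Hankel $(N-1) \times (N-1)$ matrix; any minor of $A^{(1)}$ (indexed by a row subset of $[1,N-1]$ and a column subset of $[2,N]$) coincides with a minor of $A$, so the total non-negativity of $A$ passes to $A^{(1)}$, which is therefore positive semidefinite with entries in $[0,\rho)$. Furthermore, deleting a row and a column cannot increase rank, so $A^{(1)} \in \bp_{N-1}^k([0,\rho))$.

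It then remains to extend the positivity preservation hypothesis from dimension $N$ down to dimension $N-1$. For this one applies a zero-padding argument: given any $B \in \bp_{N-1}^k([0,\rho))$, the block matrix $\tilde B = \begin{pmatrix} B & 0 \\ 0 & 0 \end{pmatrix}$ lies in $\bp_N^k([0,\rho))$, since padding by a zero row and column preserves both positive semidefiniteness and the rank bound, and $0 \in [0,\rho)$. By hypothesis $f[\tilde B]$ is positive semidefinite, and its leading $(N-1) \times (N-1)$ principal submatrix $f[B]$ is then positive semidefinite as well. Applied with $B = A^{(1)}$, this yields the required positive semidefiniteness of $f[A^{(1)}] = (f[A])^{(1)}$, and Lemma \ref{Lhtn} completes the argument.

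Overall the proof is a clean structural unwinding. No genuine obstacle is anticipated: Lemma \ref{Lhtn} decouples total non-negativity into two positive semidefiniteness conditions that both fall within the given hypothesis, the only mildly non-obvious step being the zero-padding device used to transfer positivity preservation to dimension $N-1$, which is a standard technique.
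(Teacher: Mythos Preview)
Your proof is correct and follows essentially the same approach as the paper: both use Lemma \ref{Lhtn} to reduce total non-negativity of the Hankel matrix $f[A]$ to positive semidefiniteness of $f[A]$ and of $f[A^{(1)}]$, and both handle the latter via the zero-padding trick $A^{(1)} \oplus (0)_{1\times 1} \in \bp_N^k([0,\rho))$. The only cosmetic difference is that you justify $A^{(1)}$ being positive semidefinite by noting it is a submatrix of the totally non-negative matrix $A$, whereas the paper appeals directly to the forward direction of Lemma \ref{Lhtn}.
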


\begin{proof}
Given a matrix $A \in \HTN_N([0,\rho)) \cap \bp_N^k([0,\rho))$, by
Lemma~\ref{Lhtn} the padded matrix $A^{(1)} \oplus (0)_{1 \times 1}$
belongs to $\bp_N([0,\rho))$. Moreover, it has vanishing $(k+1) \times
(k+1)$ minors, hence is of rank at most $k$. By assumption, $f[A]$ and
$f[A]^{(1)} = f[A^{(1)}]$ are thus positive semidefinite, whence $f[A]$
is totally non-negative by Lemma~\ref{Lhtn}.
\end{proof}

We now prove the above theorems.

\begin{proof}[Proof of Theorem~\ref{Thtn1}]
Clearly $(1) \implies (2)$. Now observe that the proof of the
``Horn--Loewner-type'' Lemma~\ref{Lhorn-real} goes through if we restrict
to rank-one matrices of the form $\bu(\epsilon) \bu(\epsilon)^T$ for
$\epsilon \in (0,1)$, where $\bu(\epsilon)$ is defined
in~\eqref{Evector}. Moreover, every such matrix $\bu(\epsilon)
\bu(\epsilon)^T$ is in $HTN_N((0,+\infty))$, since all $k \times k$
minors vanish for $k \geqslant 2$.
Thus if $f[-]$ preserves total non-negativity on rank-one matrices in
$HTN_N((0,\rho))$, then either all $c_j, c'$ are non-negative, or $c_j >
0\ \forall j$. In the latter case, the discussion in Remark~\ref{Rhankel}
now implies assertion $(3)$. Finally, if $(3)$ holds then $f[-]$
preserves positivity on $\bp_N^1([0,\rho))$ by Theorem~\ref{Treal-rank1}.
We are now done by Proposition~\ref{Phtn}.
\end{proof}

\begin{proof}[Proof of Theorem~\ref{Thtn2}]
Clearly $(4) \implies (1)$. Conversely, if $(3)$ holds then $f[-]$
preserves positivity on $\bp_N([0,\rho))$ by Theorem~\ref{Treal-rank2}.
We are now done by Proposition~\ref{Phtn} for $k=N$.
\end{proof}

Finally, Corollary~\ref{Chtn-laplace} follows similarly from
Theorem~\ref{Tlaplace} via Proposition~\ref{Phtn}.\medskip

We conclude this part by answering a variant of Question~\ref{Q1}, for
total non-negativity. By \thinspace the Schur product \thinspace theorem
\thinspace and \thinspace Lemma~\ref{Lhtn}, it \thinspace follows
\thinspace that \thinspace absolutely \thinspace monotonic \thinspace
maps $f : (0,\rho) \to \R$ preserve \thinspace total \thinspace
non-negativity on \ $\bigcup_{N \geqslant 1} \HTN_N((0,\rho))$. Now given
a convergent power series $f(x)$ satisfying
\[
f[-] : \HTN_N((0,\rho)) \to \HTN_N([0,+\infty))
\]
for \textit{fixed} dimension $N$, which coefficients of $f$ can be
negative? The following results show that for both bounded and unbounded
domains, the above results on positivity preservers once again extend to
give the same characterizations:

\begin{theorem}
Let $N > 0$ and $0 \leqslant n_0 < n_1 < \cdots < n_{N-1}$ be integers,
and for each $M > n_{N-1}$, let $\epsilon_M \in \{-1,0,+1\}$ be a sign.
Let $0 < \rho < +\infty$, and let $c_{n_0},\dots,c_{n_{N-1}}$ be positive
reals.  Then there exists a convergent power series
\[
f(x) = c_{n_0} x^{n_0} + c_{n_1} x^{n_1} + \dots + c_{n_{N-1}}
x^{n_{N-1}} + \sum_{M > n_{N-1}} c_M x^M
\]
on $(0,\rho)$ that preserves total non-negativity on
$\HTN_N((0,\rho))$ when applied entrywise, such that for each $M >
n_{N-1}$, $c_M$ has the sign of $\epsilon_M$.
\end{theorem}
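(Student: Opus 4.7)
The plan is to mimic the reduction of Theorem~\ref{main-1} to Theorem~\ref{Tmain}, replacing the latter with its total non-negativity counterpart, Theorem~\ref{Thtn2}. The side hypothesis $n_j \in \Z^{\geqslant 0} \cup [N-2,\infty)$ required by Theorem~\ref{Thtn2} is automatic here, since each $n_j$ is assumed to be a natural number. The first ingredient is the observation that the set of entrywise preservers of total non-negativity on $\HTN_N((0,\rho))$ forms a convex cone closed under convergent non-negative linear combinations. Indeed, by Lemma~\ref{Lhtn}, if $A, B \in \HTN_N([0,\infty))$ then $A, A^{(1)}, B, B^{(1)}$ are all positive semidefinite, hence so are $A+B$ and $(A+B)^{(1)} = A^{(1)} + B^{(1)}$, so $A+B \in \HTN_N([0,\infty))$; and total non-negativity is preserved under entrywise pointwise limits by continuity of minors. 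Since applying $f$ entrywise to a Hankel matrix yields a Hankel matrix, this closure applies to the images $f[A]$ of interest.

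Next, for each $M > n_{N-1}$ I would construct a polynomial
\[
f_M(x) := c_{n_0} x^{n_0} + \dots + c_{n_{N-1}} x^{n_{N-1}} + \epsilon_M \delta_M x^M
\]
with $0 < \delta_M \leqslant 1/M!$, such that $f_M[-]$ preserves total non-negativity on $\HTN_N((0,\rho))$. When $\epsilon_M \in \{0,+1\}$, all coefficients of $f_M$ are non-negative, so $f_M$ is a preserver by the cone property above together with the fact that each monomial $x \mapsto x^n$ preserves $\HTN_N((0,\rho))$ entrywise (an easy consequence of Lemma~\ref{Lhtn} and the Schur product theorem). When $\epsilon_M = -1$, Theorem~\ref{Thtn2} provides a positive threshold $\mathcal{C} = \mathcal{C}(n_0,\dots,n_{N-1},\rho,c_{n_0},\dots,c_{n_{N-1}},M)$ such that preservation holds whenever the coefficient of $x^M$ is at least $-\mathcal{C}^{-1}$; it then suffices to take $\delta_M := \min(1/M!,\, \mathcal{C}^{-1})$.

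Finally I would assemble
\[
f(x) := \sum_{M > n_{N-1}} 2^{n_{N-1} - M} f_M(x).
\]
The identity $\sum_{M > n_{N-1}} 2^{n_{N-1} - M} = 1$ ensures the leading coefficients in $f$ are exactly $c_{n_0},\dots,c_{n_{N-1}}$, while the coefficient of $x^M$ for $M > n_{N-1}$ is $c_M := 2^{n_{N-1} - M} \epsilon_M \delta_M$, which has the desired sign $\epsilon_M$. The bound $|c_M| \leqslant 2^{n_{N-1}-M}/M!$ yields absolute convergence of the series on all of $\R$, a fortiori on $(0,\rho)$. By the closure property of the preserver cone from the first paragraph, $f[-]$ preserves total non-negativity on $\HTN_N((0,\rho))$.

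There is no serious conceptual obstacle: the deep inputs -- the sign-pattern theorems for positivity preservers, and the equivalence between preserving positivity and preserving total non-negativity on Hankel matrices -- have already been packaged into Theorem~\ref{Thtn2} via Proposition~\ref{Phtn}. The only point requiring care is the cone structure in the first paragraph, which underlies both the finite-sum construction of each $f_M$ and the infinite sum in the assembly step; this closure is supplied cleanly by Lemma~\ref{Lhtn}.
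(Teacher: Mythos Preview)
Your proof is correct. It differs from the paper's approach only in where the transfer from positivity to total non-negativity takes place. The paper argues in one line: the power series $f$ produced by Theorem~\ref{main-1} already preserves positivity on all of $\bp_N((0,\rho))$, so by Proposition~\ref{Phtn} (with $k=N$) it preserves total non-negativity on $\HTN_N((0,\rho))$, and the sign pattern is inherited. You instead rerun the construction of Theorem~\ref{main-1} inside the $\HTN$ setting, invoking Theorem~\ref{Thtn2} for each $f_M$ and then summing; this obliges you to establish the cone and limit closure of $\HTN_N([0,\infty))$ via Lemma~\ref{Lhtn}, which the paper's route bypasses. Both arguments are sound; the paper's is shorter because Proposition~\ref{Phtn} already packages the closure properties you redevelop.
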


\begin{theorem}
Let $N > 0$ and $0 \leqslant n_0 < \dots < n_{N-1}$ be integers, and for
each $M > n_{N-1}$, let $\epsilon_M \in \{-1,0,+1\}$ be a sign. Suppose
that whenever $\epsilon_{M_0} = -1$ for some $M_0 > n_{N-1}$, one has
$\epsilon_M = +1$ for at least $N$ choices of $M > M_0$. Let
$c_{n_0},\dots,c_{n_{N-1}}$ be positive reals.  Then there exists a
convergent power series
\[
f(x) = c_{n_0} x^{n_0} + c_{n_1} x^{n_1} + \dots + c_{n_{N-1}}
x^{n_{N-1}} + \sum_{M > n_{N-1}} c_M x^M
\]
on $(0,+\infty)$ that preserves total non-negativity on
$\HTN_N((0,+\infty))$ when applied entrywise, such that for each $M >
n_{N-1}$, $c_M$ has the sign of $\epsilon_M$.
\end{theorem}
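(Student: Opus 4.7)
My plan is to deduce the final theorem directly from the positivity preservation result over the unbounded domain (Theorem \ref{main-2}), combined with the bridge between positivity and total non-negativity provided by Proposition \ref{Phtn}. This mirrors the strategy used for the preceding theorem (the bounded-domain $\HTN_N$ analog), which is essentially an immediate consequence of Theorem \ref{main-1} via the same proposition.

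First, I apply Theorem \ref{main-2} with the given data --- integers $0 \leqslant n_0 < \cdots < n_{N-1}$, positive reals $c_{n_0},\dots,c_{n_{N-1}}$, and the sign pattern $(\epsilon_M)_{M > n_{N-1}}$ satisfying the hypothesis that every occurrence of $-1$ is followed by at least $N$ occurrences of $+1$. Since the sign pattern hypothesis is verbatim the one required by Theorem \ref{main-2}, this produces a convergent power series
\[
f(x) = c_{n_0} x^{n_0} + \cdots + c_{n_{N-1}} x^{n_{N-1}} + \sum_{M > n_{N-1}} c_M x^M
\]
on $(0,+\infty)$ that entrywise preserves positivity on $\bp_N((0,+\infty))$, with $\mathrm{sign}(c_M) = \epsilon_M$ for every $M > n_{N-1}$.

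Next, I invoke Proposition \ref{Phtn} with $k = N$ and $\rho = +\infty$ to conclude that $f$ preserves total non-negativity on $\HTN_N([0,+\infty)) \cap \bp_N([0,+\infty)) = \HTN_N([0,+\infty))$, which in particular contains $\HTN_N((0,+\infty))$. To satisfy the hypothesis of that proposition, I need $f[-]$ to preserve positivity on $\bp_N([0,+\infty))$ rather than merely $\bp_N((0,+\infty))$; this follows by a standard continuity argument, perturbing any $A \in \bp_N([0,+\infty))$ to $A + \varepsilon J_{N \times N} \in \bp_N((0,+\infty))$ and letting $\varepsilon \to 0^+$, using that $f$ extends continuously to $0$ because all exponents are non-negative and the series converges on $(0,+\infty)$.

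I anticipate no substantive obstacle: the entire machinery has been assembled in earlier sections, and the present result follows by concatenating Theorem \ref{main-2} with Proposition \ref{Phtn}. The only points requiring minor attention are the domain conventions (handling the boundary point $0$ via continuity) and the observation that the coefficients $c_M$ produced by Theorem \ref{main-2} are unchanged under this passage, so the sign pattern is preserved. If desired, one could also record an explicit bound on $|f(x)|$ (as in the paragraph following Theorem \ref{Tunbdd}) to confirm convergence on all of $(0,+\infty)$, but this is already guaranteed by the application of Theorem \ref{main-2}.
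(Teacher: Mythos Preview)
Your proposal is correct and follows essentially the same approach as the paper: the paper states that this result (along with its bounded-domain and real-power analogues) ``follow[s] directly from their `positivity' counterparts, using Proposition \ref{Phtn},'' which is exactly what you do by combining Theorem \ref{main-2} with Proposition \ref{Phtn} (for $k=N$, $\rho=+\infty$). Your attention to the domain boundary at $0$ is appropriate but in fact automatic here, since a power series in non-negative integer powers converging on all of $(0,+\infty)$ has infinite radius of convergence and hence extends continuously (indeed analytically) to $0$.
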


In fact, the more general results for sums of \textit{real} powers --
namely, Theorems~\ref{Tpowerseries} and~\ref{Tpowerseries2} -- naturally
extend to preservers of total non-negativity. We emphasize that these
four results were \textit{a priori} nontrivial to formulate and prove;
but with the above analysis in this paper, they follow directly from
their ``positivity'' counterparts, using Proposition~\ref{Phtn}.

\section{Application 2: Characterization of (weak) majorization via Schur
polynomials}

Recall the notion of (weak) majorization: given a real $N$-tuple $\bu =
(u_1, \dots, u_N)$, let $u_{[1]} \geqslant \cdots \geqslant u_{[N]}$
denote the decreasing rearrangement of its coordinates. Now given $\bu,
\bv \in \R^N$, one says $\bu$ \textit{weakly majorizes} $\bv$ -- and
writes $\bu \succ_w \bv$ -- if
\begin{equation}\label{Eweak}
\sum_{j=1}^k u_{[j]} \geqslant \sum_{j=1}^k v_{[j]}, \ \forall 0 < k < N,
\qquad \sum_{j=1}^N u_{[j]} \geqslant \sum_{j=1}^N v_{[j]},
\end{equation}
while $\bu$ \textit{majorizes} $\bv$ if the final inequality above (for
$k=N$) is an equality.

In this part, we apply Proposition~\ref{Pschur-ratio} and
Corollary~\ref{Creal} to obtain a new characterization of weak
majorization that involves Schur polynomials. In particular, the result
also extends a conjecture of Cuttler, Greene, and Skandera
\cite[Conjecture 7.4]{CGS}. The conjecture says that if $\bm$ majorizes
$\bn$, where $\bm, \bn$ are $N$-tuples of (distinct) non-negative
integers, then
\begin{equation}\label{Ecgs}
\frac{s_\bm(\bu)}{s_\bn(\bu)} \geqslant
\frac{s_\bm(1,\dots,1)}{s_\bn(1,\dots,1)},
\qquad \forall \bu \in ((0,+\infty)^N)^T.
\end{equation}
(We thank Rachid Ait-Haddou and Suvrit Sra for correcting the formulation
of this conjecture in a previous version of this manuscript.)
The conjecture has been proved very recently in~\cite{Sra} and
alternately in~\cite{Blossom2} (see Remark~5.1 therein) using the
Harish-Chandra--Itzykson--Zuber formula.

In our setting, first observe that if $\bm$ dominates $\bn$
coordinatewise and $\sum_j m_j > \sum_j n_j$, then by
homogeneity,~\eqref{Ecgs} necessarily cannot hold at e.g.~points $\bu =
\rho (1,\dots,1)^T$ with $\rho \in (0,1)$. However, it does hold at all
points in $([1,+\infty)^N)^T$:
\begin{equation}\label{Ecgs-wmaj}
m_j \geqslant n_j\ \forall j \ \implies \
\frac{s_\bm(\bu)}{s_\bn(\bu)} \geqslant
\frac{s_\bm(1,\dots,1)}{s_\bn(1,\dots,1)} = \frac{V(\bm)}{V(\bn)},
\qquad \forall \bu \in ([1,+\infty)^N)^T,
\end{equation}
since this is a direct consequence of Proposition~\ref{Pschur-ratio}.

Since~\eqref{Ecgs-wmaj} also holds if $\bm$ majorizes $\bn$, it is
natural to try to characterize the pairs of tuples $\bm,\bn$ for
which~\eqref{Ecgs-wmaj} holds. In other words, we seek a unification of
both settings: $\bm$ majorizing $\bn$, and $\bm$ dominating $\bn$
coordinatewise (and restricting to $\bu \in ([1,+\infty)^N)^T$ for
homogeneity reasons).
Observe that all of the cited works \cite{Blossom2,CGS,Sra} require
$\sum_j m_j = \sum_j n_j$. Replacing this equality by an inequality as
in~\eqref{Eweak} allows us to achieve such a unification, and thereby
provide the sought-for extension of (both implications in) the
Cuttler--Greene--Skandera conjecture. In fact, the following result holds
more generally for tuples of real powers -- including negative powers --
and characterizes weak majorization.

\begin{theorem}\label{Tcgs-wmaj}
Suppose $\bm, \bn$ are $N$-tuples of pairwise distinct real powers.
Then we have
\begin{equation}\label{Ecgs-revised}
\frac{|\det \bu^{\circ \bm}|}{|V(\bm)|} \geqslant
\frac{|\det \bu^{\circ \bn}|}{|V(\bn)|},
\qquad \forall \bu \in ([1,+\infty)^N)^T
\end{equation}
if and only if $\bm \succ_w \bn$.
In fact to deduce $\bm \succ_w \bn$, it suffices to work with
countably many vectors $\bu \in ((I_\infty)^N)^T$, where $I_\infty$ is an
arbitrary non-empty deleted neighborhood of $+\infty$ in $[1,+\infty)$.
\end{theorem}

In other words, $I_\infty$ is essentially of the form $(K,+\infty)$ or
$[K,+\infty)$, for some real $K \geqslant 1$. As the proof reveals, there
is sufficient freedom in choosing the aforementioned countable set of
vectors, e.g.~they can be chosen to be rational.

\begin{remark}
As a consequence of Theorem~\ref{Tcgs-wmaj}, we obtain the classification
of the set of pairs $(\bm, \bn)$ that satisfy~\eqref{Ecgs-revised} for
all $\bu \in ((K,+\infty)^N)^T$; and this classification is independent of
$K \geqslant 1$.
\end{remark}

\begin{proof}[Proof of Theorem~\ref{Tcgs-wmaj}]
By multiplying both sides by $(u_1 \cdots u_N)^{-k}$ where $k \coloneqq
\min \{ m_j, n_j : 0 \leqslant j < N \}$, the result reduces to the case
when all powers $m_j, n_j$ are non-negative. While the proof for
non-negative $\bm, \bn$ now follows that in \cite{CGS,Sra}, we include it
for completeness since real powers are involved, leading to some
differences in argument. We may assume $\bm, \bn$ are in increasing
order, and drop all absolute value signs in~\eqref{Ecgs-revised}. Now
suppose~\eqref{Ecgs-revised} holds, and for each $j \in [1,N]$, define
the following partial sums of $\bm, \bn$:
\[
\widetilde{n}_j \coloneqq n_{N-j} + \cdots + n_{N-1}, \qquad
\widetilde{m}_j \coloneqq m_{N-j} + \cdots + m_{N-1}.
\]

Fix scalars $0 < u_1 < \cdots < u_N < +\infty$ in the neighborhood
$I_\infty$, note that $t u_j \in I_\infty$ if $t \geqslant 1$, and now
appeal to our tight bounds in Proposition~\ref{plead-3}, setting
\begin{equation}\label{Ewmaj}
\bu = \bu_j(t) \coloneqq (u_1,\dots,u_{N-j},t u_{N-j+1},\dots, t u_N),
\qquad t \in [1,\infty).
\end{equation}

\noindent More precisely, we multiply all terms in~\eqref{pup-real} by
$\bu^{\bn_{\min}} / V(\bu)$, and compute using~\eqref{Ecgs-revised}:
\begin{align*}
0 \leqslant t^{\widetilde{n}_j} \prod_{k=1}^N u_k^{n_{k-1}}
= \bu^\bn \leqslant &\ \frac{\bu^{\bn_{\min}} }{V(\bu)} \det \bu^{\circ
\bn} \leqslant \frac{\bu^{\bn_{\min}} }{V(\bu)} \frac{V(\bn)}{V(\bm)}
\det \bu^{\circ \bm}\\
\leqslant &\ \frac{V(\bn)}{V(\bn_{\min})} \bu^\bm = t^{\widetilde{m}_j}
\frac{V(\bn)}{V(\bn_{\min})} \prod_{k=1}^N u_k^{m_{k-1}},
\qquad \forall t \geqslant 1.
\end{align*}
Taking a countable sequence $t = t_l \to \infty$ (for each $j \in [1,N]$)
shows that the growth rate $\widetilde{n}_j$ of the initial expression
must be exceeded by that of the final expression, which is
$\widetilde{m}_j$. It follows that $\bm$ weakly majorizes $\bn$.

Conversely, suppose $\bm$ weakly majorizes $\bn$ (with all non-negative
and possibly non-pairwise-distinct real coordinates). We then claim that
$F_\bu(\bm) \geqslant F_\bu(\bn)$, where $F_\bu : ([0,+\infty))^N \to \R$
is the Harish-Chandra--Itzykson--Zuber integral~\eqref{hciz}:
\begin{align}\label{Eweakmaj}
F_\bu(&\bm)  \\
\coloneqq &\ \int_{U(N)} \exp \mathrm{tr} \left(
\mathrm{diag}(m_0,\dots,m_{N-1}) U
\mathrm{diag}(\log(u_1),\dots,\log(u_N)) U^* \right) \ dU.\nonumber
\end{align}

By continuity and a limiting argument, note we only need to
prove~\eqref{Ecgs-revised} for all $\bu \in (1,+\infty)^N$ with pairwise
distinct, strictly increasing coordinates. But by the aforementioned
integral formula~\eqref{hciz}, such a statement would immediately imply
$G_\bu(\bm) \geqslant G_\bu(\bn)$ whenever $\bm$ weakly majorizes $\bn$
and both tuples have pairwise distinct coordinates, where:
\begin{equation}
G_\bu(\bm) \coloneqq \frac{\det \bu^{\circ \bm}}{V(\bm)} \cdot
\frac{V(\bn_{\min})}{V(\log[\bu])}.
\end{equation}
This would end the proof of the reverse implication, and with it, the
theorem.

It thus remains to show the assertion:
\[
\bm \succ_w \bn \quad \implies \quad F_\bu(\bm) \geqslant F_\bu(\bn)\
\forall \bu \in (1,+\infty)^N_{\neq}.
\]

\noindent For this we appeal to \cite[Chapter 3, C.2.d]{MOA} (which
follows from a result of Schur). This says that if $\phi$ is symmetric,
convex, and coordinatewise non-decreasing, then $\phi(\bm) \geqslant
\phi(\bn)$ whenever $\bm \succ_w \bn$. Defining $\phi(\bm) \coloneqq
F_\bu(\bm)$ for fixed $\bu$ as above, proving the three properties for
$\phi$ would conclude the proof.

From~\eqref{Eweakmaj}, $\phi$ is symmetric (since $U(N)$ contains the
permutation matrices~$S_N$). We next claim $\phi(\bm)$ is coordinatewise
non-decreasing in $\bm$. Indeed, by a limiting argument it suffices to
consider $\bm, \bn$ with pairwise distinct coordinates, whence $\phi(\bm)
= G_\bu(\bm)$ by the Harish-Chandra--Itzykson--Zuber
formula~\eqref{hciz}. But now if $\bm \geqslant \bn$ coordinatewise, then
by Corollary~\ref{Creal} (which we remark requires the positivity part of
the result of Lam et al~\cite{LPP}),
\[
\frac{\det \bu^{\circ \bm}}{\det \bu^{\circ \bn}} \geqslant \frac{\det
\bu(\epsilon)^{\circ \bm}}{\det \bu(\epsilon)^{\circ \bn}},
\]
with $\bu(\epsilon) \coloneqq (1,\epsilon, \dots, \epsilon^{N-1})^T$ as
in~\eqref{Evector}, and $\epsilon > 1$ chosen to be so small that
$\epsilon^{N-1} < u_j\ \forall j$. In other words,
\[
\frac{\det \bu^{\circ \bm}}{\det \bu(\epsilon)^{\circ \bm}} \geqslant
\frac{\det \bu^{\circ \bn}}{\det \bu(\epsilon)^{\circ \bn}}.
\]
Now taking $\epsilon \to 1^+$ and applying~\eqref{Eprincipal} proves the
claim for $\phi = G_\bu$, whence for $F_\bu$.

It remains to show that $F_\bu$ is convex, or by continuity, mid-convex.
But this is as in~\cite{Sra}: denoting
\[
D(\bm) \coloneqq {\rm diag}(m_0, \dots, m_{N-1}), \qquad
D_\bu \coloneqq {\rm diag}(\log(u_1), \dots, \log(u_N)),
\]
we compute:
\begin{align*}
&\ F_\bu \left( \frac{\bm + \bn}{2} \right)\\
= &\ \int_{U(N)} \exp \mathrm{tr} \left( D \left( \frac{\bm+\bn}{2}
\right) U D_\bu U^* \right)\ dU\\
= &\ \int_{U(N)} \left( \exp ( \mathrm{tr} (D(\bm) U D_\bu U^*) ) \cdot
\exp ( \mathrm{tr} (D(\bn) U D_\bu U^*) ) \right)^{1/2}\ dU\\
\leqslant &\ \frac{1}{2} \int_{U(N)} \exp \mathrm{tr} (D(\bm) U D_\bu
U^*)\ dU + \frac{1}{2} \int_{U(N)} \exp \mathrm{tr} (D(\bn) U D_\bu U^*)\
dU\\
= &\ \frac{F_\bu(\bm) + F_\bu(\bn)}{2},
\end{align*}
using the AM-GM inequality. From the above discussion, the proof is
complete.
\end{proof}

\begin{remark}
The key underlying fact used in deducing $\bm \succ_w \bn$
from \eqref{Ecgs-revised} is that if $t^\alpha \geqslant c$ for some
$c > 0$, $\alpha \in \R$, and all $t \geqslant 1$ (or a countable
sequence $t_l \to \infty$), then $\alpha$ must be non-negative. (In the
proof above, $\alpha = \widetilde{m}_j - \widetilde{n}_j$ and
$c = V(\bn_{\min}) \prod_{k=1}^N u_k^{n_{k-1} - m_{k-1}} / V(\bn)$.) If
it was possible to choose suitable $u_k \in I_\infty$ -- given any powers
$\bm, \bn$ -- such that $c \geqslant 1$, then our deduced condition
\[
t^\alpha \geqslant c,
\]
for a \textit{single} value of $t > 1$, would imply $\alpha \geqslant 0$.
In the absence of a ``uniform'' recipe to choose $u_k$ such that
$c \geqslant 1$, we require countably many $t_l$ to deduce the key
underlying fact mentioned above.
\end{remark}

As a consequence of Theorem~\ref{Tcgs-wmaj}, we immediately have:

\begin{corollary}\label{Ccgs}
Suppose $\bm, \bn$ are $N$-tuples of distinct real powers. The
inequality~\eqref{Ecgs-revised} holds for all $\bu \in ((0,1)^N)^T$ if
and only if $- \bm \succ_w -\bn$. In fact it suffices to work with
countably many vectors $\bu \in ((I_0)^N)^T$, where $I_0$ is an arbitrary
non-empty deleted neighborhood of $0$ in $(0,1]$.
\end{corollary}

This immediately follows from Theorem~\ref{Tcgs-wmaj} because
\[
\bu^{\circ \bm} = ((1/u_1, \dots, 1/u_N)^{\circ (-\bm)})^T, \qquad
\forall \bu = (u_1, \dots, u_N)^T \in ((0,1)^N)^T.
\]

We conclude this section by proving the twofold strengthening to the
original result of Cuttler--Greene--Skandera, stated in
Theorems~\ref{Tcgs-maj1} and~\ref{Tcgs-maj2}.

\begin{proof}[Proof of Theorems~\ref{Tcgs-maj1} and~\ref{Tcgs-maj2}]
That $(1) \implies (3)$ is immediate. To show $(3) \implies (2)$, first
consider the inequality~\eqref{Ecgs-revised} for $\bu = \bu_j(t_l)$ as
in~\eqref{Ewmaj}, and for $\bu_j(t_l)^{\circ (-1)}$, with $j \in [1,N]$
and $l \geqslant 1$. It follows from Theorem~\ref{Tcgs-wmaj} and
Corollary~\ref{Ccgs} that
\[
\bm \succ_w \bn, \qquad -\bm \succ_w -\bn,
\]
and these conditions are together equivalent to majorization: $\bm \succ
\bn$.

Finally, to show $(2) \implies (1)$, from the equivalence for
non-negative integer powers $\bm, \bn$ (proved by
Cuttler--Greene--Skandera, Sra, and Ait-Haddou--Mazure), one deduces the
result for non-negative rational powers $\bm, \bn$ by ``taking a common
denominator'' $L \in \N$ and changing variables to $y_j \coloneqq
u_j^{1/L}$ (see e.g.~the proof of Corollary~\ref{Creal}). The result for
non-negative real powers now follows by continuity. Finally, the result
for arbitrary real powers can be reduced to the case of non-negative
$\bm, \bn$ by multiplying both sides of the inequality by $(u_1 \cdots
u_N)^{-k}$, where $k \coloneqq \min \{ m_j, n_j : 0 \leqslant j < N \}$.
\end{proof}

\section{Further applications: Rayleigh quotients and the cube
problem}\label{Srayleigh}

We now discuss further remarks and applications of the above quantitative
results. To begin, our main quantitative result Theorem~\ref{Treal-rank2}
has an optimization-theoretic formulation, as follows. First notice that
the inequality~\eqref{tal} holds for all real powers $n_0 < \cdots <
n_{N-1} < M$, with both Schur polynomials $s_\bm(\bu)$ replaced by the
corresponding generalized Vandermonde determinants $\det \bu^{\circ
\bm}$, and with the same proof. Now as mentioned in
Remark~\ref{Rrayleigh}, using the theory of generalized Rayleigh
quotients, Theorem~\ref{Treal-rank2} implies the following result for a
\textit{single} positive semidefinite matrix:

\begin{proposition}\label{Prayleigh}
Fix an integer $N \geqslant 2$ and real powers $n_0 < \cdots < n_{N-1} <
M$, where $n_j \in \Z^{\geqslant 0} \cup [N-2,\infty)$. Given scalars
$c_{n_0}, \dots, c_{n_{N-1}} > 0$, define
\[
h(x) \coloneqq \sum_{j=0}^{N-1} c_{n_j} x^{n_j}, \qquad x \in
(0,+\infty).
\]
Then for $0 < \rho < +\infty$ and $A \in \bp_N([0,\rho])$,
\begin{equation}
 t \, h[A] \succeq A^{\circ M}, \qquad \text{if and only if} \quad t
 \geqslant \varrho( h[A]^{\dagger/2} A^{\circ M} h[A]^{\dagger/2} ),
\end{equation}

\noindent where $\varrho[B], B^\dagger$ denote the spectral radius and
the Moore--Penrose (pseudo-)inverse of a square matrix $B$, respectively.
Moreover, for every nonzero matrix $A \in \bp_N( [0, \rho] )$, we have
the variational formula
\[
\varrho( h[A]^{\dagger/2} A^{\circ M} h[A]^{\dagger/2} ) =
\sup_{\bu \in (\ker h[A])^\perp \setminus \{ {\bf 0} \}}
\frac{\bu^* A^{\circ M} \bu}{\bu^* h[\bu \bu^*] \bu} \leqslant
\sum_{j=0}^{N-1} \frac{V(\bn_j)^2}{V(\bn)^2} \frac{\rho^{M -
n_j}}{c_{n_j}}.
\]
\end{proposition}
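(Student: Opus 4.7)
The plan is to combine the standard generalized Rayleigh quotient theory for pencils of positive semidefinite matrices with the sharp quantitative estimate of Theorem \ref{Treal-rank2}. The proposition splits naturally into three pieces: the equivalence in the ``if and only if'' clause, the variational identity for $\varrho$, and the upper bound on $\varrho$, and I would handle them in that order.

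First, $h[A] = \sum_j c_{n_j} A^{\circ n_j}$ lies in $\bp_N$ by the Schur product theorem (each $A^{\circ n_j}$ is PSD and the $c_{n_j}>0$), so $h[A]^{\dagger/2}$ is well-defined. The only substantive preliminary step I would carry out is the kernel inclusion $\ker h[A] \subseteq \ker A^{\circ M}$. This is where Theorem \ref{Treal-rank2} enters: setting
\[
\mathcal{C} \coloneqq \sum_{j=0}^{N-1} \frac{V(\bn_j)^2}{V(\bn)^2} \frac{\rho^{M-n_j}}{c_{n_j}},
\]
the theorem asserts $\mathcal{C}\cdot h[A] \succeq A^{\circ M}$ for every $A \in \bp_N([0,\rho])$. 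Hence any $\bu$ with $h[A]\bu = 0$ satisfies $0 \leqslant \bu^{*} A^{\circ M} \bu \leqslant \mathcal{C}\,\bu^{*} h[A]\bu = 0$, and positive semidefiniteness of $A^{\circ M}$ then forces $A^{\circ M}\bu = 0$.

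Next, I would prove both the stated equivalence and the variational formula for $\varrho$ by the standard pencil decomposition. Let $V \coloneqq (\ker h[A])^\perp$; by the kernel inclusion, both $h[A]$ and $A^{\circ M}$ are block-diagonal relative to $\R^N = V \oplus \ker h[A]$ and vanish on $\ker h[A]$. Writing $\widetilde h \coloneqq h[A]|_V$, which is positive definite on $V$, the inequality $t h[A] \succeq A^{\circ M}$ reduces to the corresponding inequality on $V$, and the change of variable $\bu = \widetilde h^{-1/2}\bv$ converts it into $t I_V \succeq \widetilde h^{-1/2}\, A^{\circ M}|_V\, \widetilde h^{-1/2}$, which by the spectral theorem holds precisely when $t \geqslant \varrho\bigl(h[A]^{\dagger/2} A^{\circ M} h[A]^{\dagger/2}\bigr)$. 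The variational formula
\[
\varrho\bigl(h[A]^{\dagger/2} A^{\circ M} h[A]^{\dagger/2}\bigr) = \sup_{\bu \in V \setminus \{\mathbf{0}\}} \frac{\bu^{*} A^{\circ M} \bu}{\bu^{*} h[A]\bu}
\]
is the usual characterisation of the top eigenvalue of the pencil $(A^{\circ M}, h[A])$, obtained by the same substitution and Courant--Fischer.

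The quantitative upper bound $\varrho \leqslant \mathcal{C}$ is then essentially tautological: since $\mathcal{C} h[A] \succeq A^{\circ M}$ by Theorem \ref{Treal-rank2}, the ``if'' direction already proved yields $\varrho\bigl(h[A]^{\dagger/2} A^{\circ M} h[A]^{\dagger/2}\bigr) \leqslant \mathcal{C}$. I do not foresee a genuine obstacle: the only substantive input beyond routine linear algebra is the kernel inclusion, which itself reduces immediately to Theorem \ref{Treal-rank2}, so the main care needed is in setting up the pencil decomposition cleanly on $V$ and tracking that $h[A]^{\dagger/2}$ acts as $\widetilde h^{-1/2}$ on $V$ and as $0$ on $\ker h[A]$.
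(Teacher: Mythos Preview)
Your proposal is correct and matches the paper's approach exactly: the paper does not write out a detailed proof but simply says the proposition follows ``using the theory of generalized Rayleigh quotients'' together with Theorem~\ref{Treal-rank2}, and your argument fills in precisely those details (kernel inclusion from the sharp bound, then the standard pencil decomposition on $(\ker h[A])^\perp$). One small note: the displayed variational formula in the statement has $\bu^* h[\bu\bu^*]\bu$ in the denominator, which appears to be a typo for $\bu^* h[A]\bu$; your version is the correct one for the Rayleigh quotient of the pencil $(A^{\circ M}, h[A])$.
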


Thus, for each  $A$ the threshold bound $\varrho( h[A]^{\dagger/2}
A^{\circ M} h[A]^{\dagger/2} )$ for the semidefinite inequality
\[
t \, h[A] \succeq A^{\circ M}
\]
is a generalized Rayleigh quotient. When the matrix $A$ is a generic
rank-one matrix, this Rayleigh quotient has a closed-form expression
using the Schur polynomials $s_{\bn_j}, s_\bn$ -- i.e., the generalized
Vandermonde determinants $\det \bu^{\circ \bn_j}, \det \bu^{\circ \bn}$:

\begin{proposition}
Notation as in Proposition~\ref{Prayleigh}; but now with $n_j$ not
necessarily in $\Z^{\geqslant 0} \cup [N-2,\infty)$. If $A = \bu \bu^T$
with $\bu \in ((0,+\infty)^N_{\neq})^T$ having distinct coordinates,
then $h[A]$ is invertible, and the threshold bound equals:
\begin{equation}\label{Erankone}
\varrho( h[A]^{\dagger/2} A^{\circ M} h[A]^{\dagger/2} )
= (\bu^{\circ M})^T h[ \bu \bu^T]^{-1} \bu^{\circ M}
= \sum_{j=0}^{N-1} \frac{(\det \bu^{\circ \bn_j})^2}{c_{n_j} (\det
\bu^{\circ \bn})^2}.
\end{equation}
\end{proposition}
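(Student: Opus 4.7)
The plan is to establish the three equalities in \eqref{Erankone} in sequence, all of which are really consequences of the factorization of $h[\bu \bu^T]$ already implicit in the proof of Lemma \ref{detform}. Write $V \coloneqq (\bu^{\circ n_0} \mid \dots \mid \bu^{\circ n_{N-1}})$ and $D \coloneqq \mathrm{diag}(c_{n_0}, \dots, c_{n_{N-1}})$, so that
\[
h[\bu \bu^T] = \sum_{j=0}^{N-1} c_{n_j} \bu^{\circ n_j} (\bu^{\circ n_j})^T = V D V^T.
\]
Since $\bu$ has pairwise distinct positive coordinates and $n_0 < \cdots < n_{N-1}$, the generalized Vandermonde determinant $\det V = \det \bu^{\circ \bn}$ is nonzero by \eqref{Egantmacher}, while $D$ is positive definite. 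Hence $h[\bu \bu^T]$ is positive definite and, in particular, invertible. This disposes of the invertibility claim and allows us to replace $h[A]^{\dagger}$ by $h[A]^{-1}$ throughout.

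Next, since $A^{\circ M} = \bu^{\circ M} (\bu^{\circ M})^T$ has rank one, the conjugated matrix $h[A]^{-1/2} A^{\circ M} h[A]^{-1/2}$ also has rank one, equal to $\bw \bw^T$ with $\bw \coloneqq h[A]^{-1/2} \bu^{\circ M}$. Its unique nonzero eigenvalue, hence its spectral radius, equals $\bw^T \bw = (\bu^{\circ M})^T h[A]^{-1} \bu^{\circ M}$, which gives the first equality in \eqref{Erankone}.

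For the second equality, I would use the factorization above to write $h[A]^{-1} = V^{-T} D^{-1} V^{-1}$, so that
\[
(\bu^{\circ M})^T h[A]^{-1} \bu^{\circ M} = \bz^T D^{-1} \bz, \qquad \bz \coloneqq V^{-1} \bu^{\circ M}.
\]
The vector $\bz = (z_0, \dots, z_{N-1})^T$ is the unique solution of $V \bz = \bu^{\circ M}$, and Cramer's rule gives
\[
z_j = \frac{\det\bigl(\bu^{\circ n_0} \mid \dots \mid \bu^{\circ n_{j-1}} \mid \bu^{\circ M} \mid \bu^{\circ n_{j+1}} \mid \dots \mid \bu^{\circ n_{N-1}}\bigr)}{\det \bu^{\circ \bn}}.
\]
Permuting $\bu^{\circ M}$ from the $j$-th column all the way to the last column costs the sign $(-1)^{N-1-j}$, so that $z_j = (-1)^{N-1-j}\, \det \bu^{\circ \bn_j} / \det \bu^{\circ \bn}$. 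Substituting into $\bz^T D^{-1} \bz = \sum_j z_j^2 / c_{n_j}$ yields precisely the sum in \eqref{Erankone}, completing the proof.

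No step poses a serious obstacle: the argument is essentially a specialization of the Cauchy--Binet / Cramer computation already used for Lemma \ref{detform} and Proposition \ref{ab}, with the only minor bookkeeping issue being the sign $(-1)^{N-1-j}$ arising from relating the Cramer numerator (with $\bu^{\circ M}$ inserted into the $j$-th slot) to $\det \bu^{\circ \bn_j}$ (where $\bu^{\circ M}$ sits in the last slot). Since we square $z_j$, this sign washes out and does not affect the stated identity.
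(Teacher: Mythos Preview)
Your proof is correct and follows essentially the same route as the paper: factorize $h[\bu\bu^T]=VDV^T$, use the rank-one structure of $A^{\circ M}$ to reduce the spectral radius to $(\bu^{\circ M})^T h[A]^{-1}\bu^{\circ M}$, and then evaluate the latter via Cramer's rule. The only cosmetic difference is that the paper argues invertibility via the kernel (if $h[\bu\bu^T]v=0$ then $v\perp\bu^{\circ n_j}$ for all $j$), whereas you invoke the factorization and $\det V\neq 0$ directly; you are also slightly more careful in tracking the sign $(-1)^{N-1-j}$ in the Cramer step, which the paper suppresses since it disappears upon squaring.
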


In particular, by going from single rank-one matrices $A = \bu \bu^T$ for
generic vectors $\bu$, to all matrices in the linear matrix inequality
\[
t \, h[A] \succeq A^{\circ M} \quad \forall A \in \bp_N^1([0,\rho]),
\]
we recover by a second proof the inequality $\displaystyle t \geqslant
\sum_{j=0}^{N-1} \frac{V(\bn_j)^2}{V(\bn)^2} \frac{\rho^{M -
n_j}}{c_{n_j}}$, using Corollary~\ref{Creal}.

\begin{proof}
First observe that if $h[\bu \bu^T] v = 0$, then $v^T (\bu \bu^T)^{\circ
n_j} v = 0$ for all $j$, whence $v$ is killed by $\bu^{\circ n_j}$ for
all $j$. But then $v=0$ by~\eqref{Egantmacher}; thus $h[\bu \bu^T]$ is
nonsingular. Now we compute: 
\begin{align*}
\varrho( h[A]^{\dagger/2} A^{\circ M} h[A]^{\dagger/2} )
= &\ \varrho( h[\bu \bu^T]^{-1/2} \bu^{\circ M} \cdot (\bu^{\circ M})^T
h[\bu \bu^T]^{-1/2} )\\
= &\ (\bu^{\circ M})^T h[ \bu \bu^T]^{-1} \bu^{\circ M},
\end{align*}

\noindent since $\varrho( \bv \cdot \bv^T ) = \bv^T \bv$ for a real
vector $\bv$. As in the proof of Lemma~\ref{detform}, we have
\begin{align*}
&\ h[\bu \bu^T]^{-1}\\
= &\ ((\bu^{\circ n_0} | \dots | \bu^{\circ n_{N-1}})^{-1})^T {\rm diag}
(c_{n_0}^{-1}, \dots, c_{n_{N-1}}^{-1}) (\bu^{\circ n_0} | \dots |
\bu^{\circ n_{N-1}})^{-1}
\end{align*}

\noindent and using Cramer's rule, the vector $(\bu^{\circ n_0} | \dots |
\bu^{\circ n_{N-1}})^{-1} \bu^{\circ M}$ has $j$th coordinate
$(-1)^{N-j-1} \frac{\det \bu^{\circ \bn_j}}{\det \bu^{\circ \bn}}$. Hence
\eqref{Erankone} follows.
\end{proof}

\begin{remark}
Akin to Remark~\ref{Ralgebra}, the above argument shows the following
more general algebraic phenomenon. Suppose $\mathbb{F}$ is a field, $\bu,
\bv \in (\mathbb{F}^N_{\neq})^T$ each have pairwise distinct coordinates,
and $h(x) \coloneqq \sum_{j=0}^{N-1} c_{n_j} x^{n_j} \in \mathbb{F}[x]$
for integers $n_0 < \cdots < n_{N-1}$ and nonzero $c_{n_j} \in
\mathbb{F}^\times$. Then $h[\bu \bv^T] \coloneqq (h(u_j v_k))_{j,k=1}^N$
is invertible (where we also require $\bu, \bv$ to have nonzero
coordinates if $n_0 < 0$); and moreover for all integers $M > n_{N-1}$,
\[
(\bv^{\circ M})^T h[ \bu \bv^T ]^{-1} \bu^{\circ M} = \sum_{j=0}^{N-1}
\frac{\det \bu^{\circ \bn_j} \cdot \det \bv^{\circ \bn_j}}{c_{n_j} \cdot
\det \bu^{\circ \bn} \cdot \det \bv^{\circ \bn}}.
\]
\end{remark}

\begin{remark}
By Lemma~\ref{horn-type}, any linear matrix inequality of the form
\[
A^{\circ M} \preceq \mathcal{K} \sum_{j=0}^{N-1} c_{n_j} A^{\circ n_j},
\qquad \forall A \in \bp_N([0,\rho])
\]
does not hold for fewer than $N$ powers. Theorem~\ref{Treal-rank2} can be
reformulated to say such an inequality does not require more than $N$
powers either, and the constant $\mathcal{K} = \sum_{j=0}^{N-1}
\frac{V(\bn_j)^2}{V(\bn)^2} \frac{\rho^{M - n_j}}{c_{n_j}}$ is sharp.
\end{remark}

We end with an application to spectrahedra and the cube
problem~\cite{Nemirovski} in optimization theory, where we produce sharp
asymptotic bounds when the matrices involved are Hadamard powers. To
remind the reader, given $N \times N$ real symmetric matrices $A'_N, A_1,
\dots, A_{M+1}$ for $M \geqslant 0$, one defines the matrix cube to be:
\begin{equation}\label{Ecube}
\mathcal{U}[A'_N, A_1 \dots, A_{M+1}; \eta] \coloneqq \Bigl\{ A'_N +
\sum_{m = 1}^{M + 1} u_m A_m : u_m \in [ -\eta, \eta ] \Bigr\} \qquad (
\eta > 0 ).
\end{equation}
The matrix cube problem asks to find the largest $\eta \geqslant 0$ such
that $\mathcal{U}[ \eta ] \subset \bp_N(\R)$. As another consequence of
the main result, Theorem~\ref{Treal-rank2}, we obtain bounds for such
$\eta$:

\begin{corollary}\label{Ccube}
Let the notation as in Proposition~\ref{Prayleigh}.
Fix scalars $0 < \alpha_1 < \cdots < \alpha_{M+1}$ for some $M \geqslant
0$. Now given a matrix $A \in \bp_N( [ 0, \rho ] )$, define
\[
A'_N \coloneqq \sum_{j=0}^{N-1} c_{n_j} A^{\circ n_j}, \qquad A_m
\coloneqq A^{\circ (n_{N-1}+\alpha_m)} \ \textrm{for } 1 \leqslant m
\leqslant M+1.
\]
Also define for $\alpha > 0$:
\begin{equation}\label{Ecube1}
\mathcal{K}_\alpha \coloneqq \sum_{j=0}^{N-1}
\frac{V(\bn_j(\alpha))^2}{V(\bn)^2} \frac{\rho^{\alpha - n_j}}{c_{n_j}},
\end{equation}
where $\bn_j(\alpha) \coloneqq
(n_0,\dots,n_{j-1}, n_{j+1},\dots, n_{N-1}, n_{N-1} + \alpha)$. Then,
\begin{align}\label{Ecube2}
\begin{aligned}
 &\ \eta \leqslant (\mathcal{K}_{\alpha_1} + \cdots +
 \mathcal{K}_{\alpha_{M+1}})^{-1}\\
 \implies &\ \mathcal{U}[A'_N, A_1,
 \dots, A_{M+1}; \eta ] \subset \bp_N( \R ), \ \forall A \in
 \bp_N([0,\rho])\\
 \implies &\ \eta \leqslant \mathcal{K}_{\alpha_{M+1}}^{-1}.
\end{aligned}
\end{align}
\end{corollary}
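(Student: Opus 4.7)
The plan is to handle the two implications in \eqref{Ecube2} separately, in each case reducing the matrix-cube question to a linear matrix inequality of the form
\[
\eta' \cdot A^{\circ (n_{N-1}+\alpha)} \preceq \sum_{j=0}^{N-1} c_{n_j} A^{\circ n_j},
\]
to which the sharp bound from Theorem \ref{Treal-rank2} can be applied directly (with $M = n_{N-1}+\alpha$, so that the corresponding sharp constant is precisely $\mathcal{K}_\alpha$ as defined in \eqref{Ecube1}).

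For the sufficiency implication, first observe that each Hadamard power $A_m = A^{\circ(n_{N-1}+\alpha_m)}$ is positive semidefinite on $\bp_N([0,\rho])$: indeed $n_{N-1}+\alpha_m > n_{N-1} \geqslant N-2$ (the latter because $n_{N-1}$ is either a non-negative integer bounded below by $N-1$ or lies in $[N-2,\infty)$ by hypothesis), so the FitzGerald--Horn result applies. Now the set of $(u_m) \in \R^{M+1}$ for which $A'_N + \sum_m u_m A_m \succeq 0$ is convex, and since each $A_m \succeq 0$ the minimum of $v^T(A'_N + \sum_m u_m A_m)v$ over $(u_m) \in [-\eta,\eta]^{M+1}$ is attained at $u_m \equiv -\eta$ (regardless of $v$). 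Thus the cube lies in $\bp_N(\R)$ if and only if $A'_N \succeq \eta \sum_m A_m$. Applying Theorem \ref{Treal-rank2} once for each $m$ (with $c' = -\mathcal{K}_{\alpha_m}^{-1}$) yields $A_m \preceq \mathcal{K}_{\alpha_m} A'_N$, and summing over $m$ gives $\sum_m A_m \preceq (\sum_m \mathcal{K}_{\alpha_m}) A'_N$. Hence $\eta \leqslant (\sum_m \mathcal{K}_{\alpha_m})^{-1}$ suffices.

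For the necessity implication, specialize to the edge of the cube where $u_1 = \cdots = u_M = 0$ and $u_{M+1} = -\eta$. Then $\mathcal{U}[\eta] \subset \bp_N(\R)$ for every $A \in \bp_N([0,\rho])$ forces $A'_N - \eta A_{M+1} \succeq 0$ for every such $A$, which is precisely the statement that the entrywise map
\[
x \mapsto \sum_{j=0}^{N-1} c_{n_j} x^{n_j} - \eta\, x^{n_{N-1}+\alpha_{M+1}}
\]
preserves positivity on $\bp_N([0,\rho])$. Invoking Theorem \ref{Treal-rank2} in the reverse direction gives $-\eta \geqslant -\mathcal{K}_{\alpha_{M+1}}^{-1}$, i.e., $\eta \leqslant \mathcal{K}_{\alpha_{M+1}}^{-1}$.

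There is no real obstacle here beyond bookkeeping: both implications are essentially an extraction from Theorem \ref{Treal-rank2}. The only subtle point is the convexity/corner argument that justifies reducing the cube constraint to the single inequality $A'_N \succeq \eta\sum_m A_m$, which relies crucially on the positive semidefiniteness of each $A_m$ (and hence on the hypothesis $n_j \in \Z^{\geqslant 0} \cup [N-2,\infty)$ that makes all relevant Hadamard powers positivity-preserving).
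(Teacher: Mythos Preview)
Your proof is correct and is precisely the argument the paper has in mind: the corollary is stated there without proof as a direct consequence of Theorem \ref{Treal-rank2}, and you have supplied exactly those details---reducing the cube inclusion (via the corner argument, valid because each $A_m \succeq 0$ by FitzGerald--Horn) to the single inequality $A'_N \succeq \eta\sum_m A_m$ for sufficiency, and specializing to the face $u_1=\cdots=u_M=0$, $u_{M+1}=-\eta$ for necessity.
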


We conclude this part by showing that these bounds are asymptotically
equal as $N \to \infty$, when the $n_j$ grow ``linearly'' at a bounded
rate. Notice that in such a setting, in light of~\cite{FitzHorn} we will
require all $n_j$ to be integers.

\begin{proposition}
Suppose $0 \leqslant n_0 < n_1 < \cdots$ are integers.
Fix scalars $0 < \alpha_1 < \cdots < \alpha_{M+1}$ for some $M \geqslant
0$, as well as $c_{n_j} > 0\ \forall j \geqslant 0$.
Given $N \in \N$ and a matrix $A \in \bp_N([0,\rho])$, define $A'_N,
A_m$, and $\mathcal{K}_\alpha = \mathcal{K}_\alpha(N)$ as in
Corollary~\ref{Ccube}.
If $\alpha_{M+1} - \alpha_M \geqslant n_{j+1} - n_j\ \forall j \geqslant
0$, then the lower and upper bounds for $\eta = \eta_N$ in~\eqref{Ecube2}
are asymptotically equal as $N \to \infty$, i.e.,
\begin{equation}\label{Easymptotic}
\lim_{N \to \infty} \mathcal{K}_{\alpha_{M+1}}(N)^{-1} \sum_{m = 1}^{M+1}
\mathcal{K}_{\alpha_m}(N) = 1.
\end{equation}
\end{proposition}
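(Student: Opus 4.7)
The plan is to prove the equivalent claim that $\mathcal{K}_{\alpha_m}(N)/\mathcal{K}_{\alpha_{M+1}}(N) \to 0$ as $N \to \infty$, for each fixed $m \leqslant M$; summing over $m$ and adding $1$ then yields \eqref{Easymptotic}. First I would expand $\mathcal{K}_\alpha(N) = \sum_{j=0}^{N-1} T_j(\alpha)$ via the cancellation of common Vandermonde factors: from the definition of $\bn_j(\alpha)$ one obtains
\[
T_j(\alpha) \;\coloneqq\; \frac{\rho^{\alpha-n_j}}{c_{n_j}}\prod_{\substack{0 \leqslant k \leqslant N-1 \\ k \neq j}}\frac{(n_{N-1}+\alpha-n_k)^2}{(n_k-n_j)^2}.
\]

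Rewriting the product over $k \neq j$ as the full product over $0 \leqslant k \leqslant N-1$ divided by the $k=j$ factor, I can separate variables:
\[
\frac{T_j(\alpha_m)}{T_j(\alpha_{M+1})} \;=\; \left(\frac{n_{N-1}+\alpha_{M+1}-n_j}{n_{N-1}+\alpha_m-n_j}\right)^2 P_N(m),
\]
where the key quantity $P_N(m) \coloneqq \rho^{\alpha_m - \alpha_{M+1}} \prod_{k=0}^{N-1}\left(\frac{n_{N-1}+\alpha_m-n_k}{n_{N-1}+\alpha_{M+1}-n_k}\right)^2$ is independent of $j$. Since $n_{N-1}-n_j \geqslant 0$, the $j$-dependent prefactor is bounded by $(\alpha_{M+1}/\alpha_m)^2$ uniformly in $j$ and $N$, so it suffices to show $P_N(m) \to 0$.

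This is where the hypothesis $n_{j+1}-n_j \leqslant \Delta \coloneqq \alpha_{M+1}-\alpha_M$ enters decisively: it yields $n_{N-1}-n_k \leqslant (N-1-k)\Delta$ for all $0 \leqslant k \leqslant N-1$. Setting $\delta \coloneqq \alpha_{M+1}-\alpha_m > 0$ and using $\log(1-x)\leqslant -x$ for $x \in [0,1)$, together with a re-indexing $l = N-1-k$, I obtain
\[
\log P_N(m) \;\leqslant\; \delta|\log\rho| \;-\; 2\delta \sum_{l=0}^{N-1}\frac{1}{l\Delta + \alpha_{M+1}},
\]
and the shifted harmonic sum on the right diverges as $N \to \infty$. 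Hence $P_N(m) \to 0$, and summing the termwise bound $T_j(\alpha_m)\leqslant (\alpha_{M+1}/\alpha_m)^2\,P_N(m)\,T_j(\alpha_{M+1})$ over $j$ gives $\mathcal{K}_{\alpha_m}(N) \leqslant (\alpha_{M+1}/\alpha_m)^2\, P_N(m)\,\mathcal{K}_{\alpha_{M+1}}(N)$, as required.

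The main obstacle is exactly the divergence needed for $P_N(m) \to 0$: for an arbitrary strictly increasing integer sequence $(n_k)$, the sum $\sum_k 1/(n_{N-1}-n_k+\alpha_{M+1})$ need not diverge (consider e.g.~$n_k = 2^k$, where $P_N(m)$ would remain bounded away from zero). This is precisely why the gap hypothesis is essential --- it linearizes the growth of $n_k$ and thereby forces the harmonic-type divergence that makes the dominant-term analysis go through. Everything else is elementary bookkeeping with Vandermonde cofactors.
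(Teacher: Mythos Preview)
Your argument is correct, and it is organized differently from the paper's. The paper bounds each term ratio $T_j(\alpha_m)/T_j(\alpha_{M+1})$ directly by a shift-by-one telescoping of the Vandermonde product: it pairs the factor $(n_{N-1}-n_k+\alpha_m)$ in the numerator with $(n_{N-1}-n_{k+1}+\alpha_{M+1})$ in the denominator, so that the gap hypothesis makes each paired ratio at most $1$, leaving only an endpoint factor of size $O(1/N)$. This requires separate handling of the cases $j=0$, $0<j<N-1$, $j=N-1$, and yields an explicit bound of order $1/(N-2)^2$ for every term ratio. Your approach instead extracts the $j$-independent full product $P_N(m)$ and bounds the residual $j$-dependent factor uniformly by $(\alpha_{M+1}/\alpha_m)^2$; the gap hypothesis then enters only in the harmonic-divergence estimate for $\log P_N(m)$. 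Your route avoids the case split and is cleaner to write down; the paper's route gives a slightly more explicit decay rate without passing through logarithms. Both proofs use the hypothesis in the same essential way, and your remark that the conclusion fails for, say, $n_k = 2^k$ correctly identifies why the bounded-gap assumption cannot be dropped.
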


\begin{proof}
We compute for $1 \leqslant m < M+1$, first in the case $0 < j <
N-1$:
\begin{align*}
\frac{V(\bn_j(\alpha_m))}{V(\bn_j(\alpha_{M+1}))}
= &\ \prod_{k < N, k \neq j} \frac{n_{N-1} - n_k + \alpha_m}{n_{N-1} -
n_k + \alpha_{M+1}}\\
= &\ \frac{\alpha_m}{n_{N-1} - n_0 + \alpha_{M+1}} \cdot\\
& \quad \cdot
\frac{n_{N-1} - n_{j-1} + \alpha_m}{n_{N-1} - n_{j+1} + \alpha_{M+1}}
\prod_{0 \leqslant k < N-1, k \neq j-1,j}
\frac{n_{N-1} - n_k + \alpha_m}{n_{N-1} - n_{k+1} + \alpha_{M+1}}.
\end{align*}

By assumption on the $\alpha_m$, each factor in the ``final'' product
above is in~$(0,1]$. We now claim that the second fraction in the
preceding expression is at most~$2$. Indeed,
\begin{align*}
&\ 2(n_{N-1} - n_{j+1} + \alpha_{M+1}) - ( n_{N-1} - n_{j-1} + \alpha_m
)\\
= &\ (n_{N-1} - n_{j+1}) + \alpha_m + \left[ 2(\alpha_{M+1} - \alpha_m) -
( n_{j+1} - n_{j-1} ) \right] \geqslant \alpha_m > 0.
\end{align*}

\noindent It follows that
\begin{align*}
0 \leqslant \frac{V(\bn_j(\alpha_m))^2}{V(\bn_j(\alpha_{M+1}))^2}
\frac{\rho^{\alpha_m - n_j}}{\rho^{\alpha_{M+1} - n_j}} \leqslant &\
\frac{4 \alpha_m^2 \rho^{\alpha_m - \alpha_{M+1}} }{(N-1 +
\alpha_{M+1})^2}\\
\leqslant &\ \frac{4 \alpha_{M+1}^2 \max(1,\rho^{\alpha_1 -
\alpha_{M+1}})}{(N-2)^2}.
\end{align*}

Similarly in the case $j=0$, we have:
\begin{align*}
0 \leqslant \frac{V(\bn_0(\alpha_m))^2}{V(\bn_0(\alpha_{M+1}))^2}
\frac{\rho^{\alpha_m - n_0}}{\rho^{\alpha_{M+1} - n_0}} \leqslant &\
\frac{\alpha_m^2 \rho^{\alpha_m - \alpha_{M+1}} }{(n_{N-1} - n_1 +
\alpha_{M+1})^2}\\
\leqslant &\ \frac{4 \alpha_{M+1}^2 \max(1,\rho^{\alpha_1 -
\alpha_{M+1}})}{(N-2)^2}.
\end{align*}

\noindent Finally, if $j=N-1$, then by similar computations,
\[
0 \leqslant \frac{V(\bn_{N-1}(\alpha_m))}{V(\bn_{N-1}(\alpha_{M+1}))}
\leqslant \frac{\alpha_m + n_{N-1} - n_{N-2}}{n_{N-1} - n_0 +
\alpha_{M+1}},
\]
and the numerator on the right is at most $\alpha_m + (\alpha_{M+1} -
\alpha_m)$ by hypothesis. Therefore,
\begin{align*}
0 \leqslant \frac{V(\bn_{N-1}(\alpha_m))^2}{V(\bn_{N-1}(\alpha_{M+1}))^2}
\frac{\rho^{\alpha_m - n_0}}{\rho^{\alpha_{M+1} - n_0}} \leqslant &\
\frac{\alpha_{M+1}^2 \rho^{\alpha_m - \alpha_{M+1}} }{(n_{N-1} - n_0 +
\alpha_{M+1})^2}\\
\leqslant &\ \frac{4 \alpha_{M+1}^2 \max(1,\rho^{\alpha_1 -
\alpha_{M+1}})}{(N-2)^2}.
\end{align*}

From these computations it follows that if $1 \leqslant m < M+1$, then
for fixed~$N \geqslant 3$,
\begin{align*}
\mathcal{K}_{\alpha_m}(N) = &\ \sum_{j=0}^{N-1}
\frac{V(\bn_j(\alpha_m))^2}{V(\bn)^2} \frac{\rho^{\alpha_m -
n_j}}{c_{n_j}}\\
\leqslant &\ \frac{4 \alpha_{M+1}^2 \max(1,\rho^{\alpha_1 -
\alpha_{M+1}})}{(N-2)^2} \sum_{j=0}^{N-1}
\frac{V(\bn_j(\alpha_{M+1}))^2}{V(\bn)^2} \frac{\rho^{\alpha_{M+1} -
n_j}}{c_{n_j}}\\
\leqslant &\ \frac{4 \alpha_{M+1}^2 \max(1,\rho^{\alpha_1 -
\alpha_{M+1}})}{(N-2)^2} \mathcal{K}_{\alpha_{M+1}}(N)
\end{align*}
Summing this over $m = 1,\dots,M$ and taking $N \to \infty$
yields~\eqref{Easymptotic}, as desired.
\end{proof}

\section{Log-supermodularity of strictly totally positive matrix
minors}\label{logmod}

Given Proposition~\ref{Pschur-ratio} and its generalization to real
powers in Corollary~\ref{Creal}, it is natural to ask if there is a more
``accessible'' proof of the positivity property in~\eqref{Elpp}, avoiding
the heavy machinery required to prove Schur or monomial positivity. In
this final section, we provide a positive answer to this question. In
fact we prove a more general log-supermodularity phenomenon which had
previously been established by Skandera~\cite{skandera} by a different
argument.

We begin by setting notation and recalling preliminaries. Suppose $1
\leqslant k \leqslant n$ are natural numbers. Recall that $\nkneq$ is the
set of all tuples $(i_1,\dots,i_k)$ with $i_1,\dots,i_k$ distinct
elements of $[n] \coloneqq \{1,\dots,n\}$. We define $[n]^k_{<}$ to be
the subset of increasing tuples:
\begin{equation}
[n]^k_{<} \coloneqq \{ (i_1,\dots,i_k) \in \nkneq : 1 \leqslant i_1 <
\dots < i_k \leqslant n \}.
\end{equation}
Clearly, for each tuple $I = (i_1,\dots,i_k)$ in $\nkneq$, we may sort
the tuple in increasing order to obtain a sorted tuple $\mathrm{sort}(I)
= (i_{\sigma(1)},\dots,i_{\sigma(k)}) \in [n]^k_<$ for a uniquely
determined permutation $\sigma: [k] \to [k]$, which we will call the
\emph{ordering} of $I$.  The sign $\mathrm{sgn}(\sigma) \in \{-1,+1\}$ of
this permutation will be referred to as the \emph{sign of the tuple} and
denoted $\mathrm{sgn}(I)$.  Observe that $\mathrm{sgn}(I)$ is equal to
$-1$ raised to the number of inversions of $I$, that is to say those
pairs of natural numbers $1 \leqslant m < m' \leqslant k$ with $i_{m'} <
i_m$.

If $A = (a_{i,j})_{1 \leqslant i,j \leqslant n}$ is an $n \times n$
matrix with real entries $a_{i,j}$, and $I = (i_1,\dots,i_k)$ and $J =
(j_1,\dots,j_k)$ are tuples in $\nkneq$, we define the \emph{submatrix}
$A_{I,J}$ to be the $k \times k$ matrix
\[
A_{I,J} \coloneqq (a_{i_l,j_m})_{1 \leqslant l,m \leqslant k}.
\]
Note that we are generalizing slightly the usual notion of a submatrix by
allowing~$I,J$ to have non-trivial ordering, so the ordering of the rows
and columns of the submatrix $A_{I,J}$ may differ from that of the
original matrix $A$.  We say that $A$ is \emph{strictly totally positive}
if one has
\[
\det(A_{I,J}) > 0
\]
for any increasing $I,J \in [n]^k_<$, i.e.~by the
alternating nature of the determinant,
\begin{equation}\label{sgnsgn}
\mathrm{sgn}(I) \mathrm{sgn}(J) \det(A_{I,J}) > 0,
\quad \forall I,J \in \nkneq.
\end{equation}

Given two tuples $I = (i_1,\dots,i_k)$, $J = (j_1,\dots,j_k)$
of real numbers, we may define their meet
\[
I \wedge J \coloneqq (\min(i_1,j_1),\dots,\min(i_k,j_k))
\]
and join
\[
I \vee J \coloneqq (\max(i_1,j_1),\dots,\max(i_k,j_k)).
\]
It is easy to see that if $I,J \in \nkneq$ have the same ordering, then
$I \wedge J$ and~$I \vee J$ are also tuples in $\nkneq$ with the same
ordering as $I$ and $J$ (this can be verified by first sorting the
elements to reduce to the case when $I,J$ are increasing.)  The purpose
of this section is to establish the following log-supermodularity
property (also known as ``the FKG lattice condition''~\cite{fkg} or
``multivariate total positivity of order two'' MTP$_2$~\cite{kr}):

\begin{theorem}[Log-supermodularity]\label{logsup}
Let $A$ be a strictly totally positive $n \times n$ matrix, let $I_1,I_2
\in \nkneq$ be tuples of the same ordering, and let $J_1,J_2 \in \nkneq$
be tuples of the same ordering.  Then
\begin{equation}\label{ij2}
\det(A_{I_1 \wedge I_2, J_1 \wedge J_2}) \det(A_{I_1 \vee I_2, J_1 \vee
J_2}) \geqslant \det(A_{I_1,J_1}) \det(A_{I_2,J_2}).
\end{equation}
\end{theorem}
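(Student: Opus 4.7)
The plan is to prove Theorem \ref{logsup} in three steps, with the Desnanot--Jacobi (Dodgson condensation) identity as the key tool and strict total positivity of $A$ providing the positivity of the auxiliary minors that appear.

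First, I would reduce to the case $I_1, I_2, J_1, J_2 \in [n]^k_<$. Since $I_1, I_2$ share a common sorting permutation $\sigma_I$, applying $\sigma_I$ uniformly to both tuples sorts them simultaneously; by \eqref{sgnsgn}, each product of determinants in \eqref{ij2} picks up a factor $\mathrm{sgn}(\sigma_I)^2\mathrm{sgn}(\sigma_J)^2 = 1$, and coordinatewise $\wedge,\vee$ commute with applying a common permutation of positions, so the reduction is consistent with the meet and join operations.

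Second, on the distributive lattice $[n]^k_< \times [n]^k_<$ (with coordinatewise order), log-supermodularity of the positive function $(I,J) \mapsto \det(A_{I,J})$ follows from its ``cover'' form: for every $(I, J)$ and any two distinct covers $(I', J),\,(I, J')$ of it in the product lattice, one has
\[
\det(A_{I', J})\det(A_{I, J'}) \leqslant \det(A_{I, J})\det(A_{I', J'}).
\]
This cover-to-global reduction is a standard inductive argument on distributive lattices (which in our case embed into products of chains via Birkhoff's representation). Here $I = L \cup \{r\}$ and $I' = L \cup \{r+1\}$ for some $(k-1)$-set $L$ with $r \notin L$, and analogously $J = M \cup \{s\}$, $J' = M \cup \{s+1\}$, assuming the increments keep all tuples in $[n]^k_<$.

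Third, to establish the cover inequality, apply the Desnanot--Jacobi identity to the $(k+1) \times (k+1)$ submatrix $B \coloneqq A_{L \cup \{r,r+1\},\, M \cup \{s,s+1\}}$, deleting the two row-positions corresponding to $r,r+1$ and the two column-positions corresponding to $s,s+1$:
\[
\det(B) \cdot \det(A_{L, M}) = \det(A_{L \cup \{r\}, M \cup \{s\}})\det(A_{L \cup \{r+1\}, M \cup \{s+1\}}) - \det(A_{L \cup \{r+1\}, M \cup \{s\}})\det(A_{L \cup \{r\}, M \cup \{s+1\}}).
\]
By strict total positivity of $A$, both $\det(B) > 0$ and $\det(A_{L,M}) > 0$, so the right-hand side is strictly positive, yielding the cover inequality.

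The main obstacle is the reduction in Step 2: joint log-supermodularity of $\det(A_{I,J})$ in both indices simultaneously is strictly stronger than the separate row- and column-wise versions (both of which already follow from the three-term Plücker relation). The cover-to-global reduction for distributive lattices is what enables the entire argument to rest on a single application of the Desnanot--Jacobi identity, and is the only place where one genuinely uses the product-lattice structure of $[n]^k_< \times [n]^k_<$ rather than the two factors separately.
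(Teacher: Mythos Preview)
Your approach is essentially the paper's. Both proofs sort the tuples, reduce the log-supermodularity inequality on $[n]^k_< \times [n]^k_<$ to elementary ``squares'' by a telescoping/lattice argument, and then verify those squares via classical determinant identities together with strict total positivity. The paper telescopes only down to single-coordinate increments $c_1 e_{m_1}, c_2 e_{m_2}$ of arbitrary magnitude rather than all the way to unit covers, and then splits explicitly into three cases: both increments among the row indices (handled by the Karlin identity, i.e.\ your three-term Pl\"ucker relation), both among the column indices (by transposition), and one of each (handled by Dodgson condensation, exactly as in your Step~3).

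One point to tighten: as written, your Step~2 states the ``cover form'' only for pairs of covers $(I',J),(I,J')$ lying in different factors. The cover-to-global reduction on a distributive lattice requires checking \emph{all} pairs of covers, which includes pairs $(I'_1,J),(I'_2,J)$ with $I'_1 \neq I'_2$ both covering $I$. You acknowledge this case in your final paragraph (and correctly attribute it to Pl\"ucker), but it belongs inside the proof, not as an aside: it is precisely the paper's case~(a), and without it Step~2 does not yield the full inequality. Once you fold those same-factor cases into Step~3 alongside the Dodgson case, the two arguments coincide.
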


A special case of this proposition (in which $J_1=J_2$) appears
implicitly in~\cite{richards}, and our arguments here are loosely based
on the ones in that paper.  As pointed out to us by Steven Karp, this
result is also a special case of \cite[Theorem 4.2]{skandera}, which was
proven by a different method (using weighted directed graphs instead of
the determinant identities in the lemma below).

We will need two classical identities concerning determinants:

\begin{lemma}[Determinant identities]\label{det-ident}
Let $n \geqslant 2$ be a natural number.
\begin{itemize}
\item[(i)] (Dodgson condensation,
see~\cite{Dodgson}, \cite{BS}.)
For any $1 \leqslant i_1 < i_2 \leqslant n$ and $1 \leqslant j_1 < j_2
\leqslant n$, and any $n \times n$ matrix $A$, one has
\begin{align*}
\det(A) & \det(A_{[n] \backslash (i_1,i_2), [n] \backslash (j_1,j_2)})\\
= &\ \det(A_{[n] \backslash (i_1), [n] \backslash (j_1)}) \det(A_{[n]
\backslash (i_2), [n] \backslash (j_2)})\\
&\ - \det(A_{[n] \backslash (i_1), [n] \backslash (j_2)}) \det(A_{[n]
\backslash (i_2), [n] \backslash (j_1)}),
\end{align*}
where $[n] \backslash (i_1,\dots,i_k)$ denotes the increasing $n-k$-tuple
formed by deleting $i_1,\dots,i_k$ from $(1,\dots,n)$.

\item[(ii)] (Karlin identity, 
see \cite[p.7]{karlin}.)
If $X_1,X_2,Y_1,Y_2 \in (\R^n)^T$ are $n$-dimensional vectors, and $A$ is
an $n \times n-2$-matrix, then one has
\begin{align*}
&\ \det( X_1, Y_1, A) \det(X_2, Y_2, A) \; - \det(X_1, Y_2, A) \det(X_2,
Y_1, A)\\
= &\ \det(X_1,X_2,A) \det(Y_1,Y_2,A)
\end{align*}
where $\det(X,Y,A)$ denotes the determinant of the $n \times n$ matrix
formed by concatenating two $n \times 1$ vectors $X,Y$ with an $n
\times n-2$ matrix $A$.
\end{itemize}
\end{lemma}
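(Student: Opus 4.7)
Both identities are classical, and the plan is to give brief self-contained derivations (in place of invoking the cited references).

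For (i), I would first reduce to the canonical case $(i_1, i_2) = (j_1, j_2) = (n-1, n)$ by symmetrically permuting the relevant rows and columns; the sign changes introduced on the two sides of the identity are identical and cancel. The resulting claim is the classical Desnanot--Jacobi formula, relating $\det(A)$ and the $(n-2) \times (n-2)$ central minor to the four ``corner'' $(n-1) \times (n-1)$ minors obtained by deleting a single row and column from $\{n-1, n\}$. The cleanest proof I know uses Jacobi's theorem on minors of the adjugate: if $B = \mathrm{adj}(A)$, so that $AB = \det(A)\, I_n$, then for any $I, J \in [n]^k_{<}$ one has
$$\det(B_{I,J}) = (-1)^{\sum I + \sum J}\, \det(A)^{k-1}\, \det(A_{[n]\setminus J,\, [n]\setminus I}).$$
Applying this with $k = 2$ and $I = J = \{n-1, n\}$ expands the $2 \times 2$ determinant of the bottom-right block of $B$ into a product of $(n-1) \times (n-1)$ minors of $A$, and the identity of (i) falls out after collecting terms.

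For (ii), the identity is a three-term Plücker relation. I would prove it by a direct multilinearity argument: fixing the $n \times (n-2)$ matrix $A$ and viewing both sides as functions of $(X_1, X_2, Y_1, Y_2) \in ((\R^n)^T)^4$, each side is alternating under $X_1 \leftrightarrow X_2$ and under $Y_1 \leftrightarrow Y_2$, multilinear in these four vectors, and invariant under adding any column of $A$ to any of them (since this adds to every involved minor a multiple of a column already present). Hence both sides descend to alternating multilinear functions on the two-dimensional quotient $\R^n / \mathrm{span}(A)$, and it suffices to verify the identity when $X_1, X_2, Y_1, Y_2$ range over a basis of this quotient. This reduces (ii) to the $2 \times 2$ case $\det\begin{pmatrix} a & b \\ c & d \end{pmatrix} = ad - bc$, which is immediate.

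The main subtlety in both parts will be sign bookkeeping: in (i), the permutation-based reduction to the canonical case and the signs in Jacobi's adjugate formula have to be reconciled with the statement for arbitrary $i_1 < i_2$ and $j_1 < j_2$; and in (ii), verifying the invariance under column shifts by $A$ and then descending to the $2$-dimensional quotient requires a careful (but elementary) check. No deeper ideas beyond standard multilinear algebra are needed.
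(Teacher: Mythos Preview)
Your proposal is correct. In fact the paper does not prove this lemma at all: it simply states the two identities with references (Dodgson and Brualdi--Schneider for (i), Karlin for (ii)) and then uses them as black boxes in the proof of Theorem~\ref{logsup}. So your self-contained derivations go beyond what the paper provides.

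Your route for (i) via Jacobi's identity for minors of the adjugate is the standard modern proof of Desnanot--Jacobi, and the reduction to the corner case by row/column permutations is routine (the sign changes on both sides do indeed match, since each term on the right is a product of two minors whose combined row and column permutations agree with those on the left). For (ii), your reduction to the two-dimensional quotient $\R^n/\mathrm{span}(A)$ is a clean way to see the identity as a $2\times 2$ Pl\"ucker relation; note that when $A$ has rank less than $n-2$ every determinant in sight vanishes, so the degenerate case is automatic. Both arguments are elementary and sound; the only caution is exactly the one you flag, namely careful sign bookkeeping in (i), but there is no genuine obstacle.
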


\begin{proof}[Proof of Theorem~\ref{logsup}]
By sorting, we may assume without loss of generality that
$I_1,I_2,J_1,J_2$ are all increasing, so that all the determinants
involved are positive.  Writing
\[
F(I,J) \coloneqq \log \det(A_{I,J})
\]
for $I,J \in [n]^k_<$, our task is now to show that
\[
F(K_1 \wedge K_2) + F(K_1 \vee K_2) \geqslant F(K_1) + F(K_2)
\]
for all $K_1,K_2 \in [n]^k_< \times [n]^k_<$, where we write
\[
(I_1,J_1) \wedge (I_2,J_2) \coloneqq (I_1 \wedge I_2, J_1 \wedge J_2)
\]
and
\[
(I_1,J_1) \vee (I_2,J_2) \coloneqq (I_1 \vee I_2, J_1 \vee J_2).
\]
Write $K_0 \coloneqq K_1 \wedge K_2$, $\delta K_1 \coloneqq K_1 - K_0$,
and $\delta K_2 \coloneqq K_2 - K_0$ (where we view the $2k$-tuples $K_1,
K_2$ as lying in the vector space $\R^{2k}$).  Then $\delta K_1, \delta
K_2$ have non-negative coordinates and disjoint supports (that is to say,
the set of indices in which $\delta K_1$ has non-zero coordinates is
disjoint from that of $\delta K_2$), and our task is now to show that
\begin{equation}\label{tar}
 F( K_0 + \delta K_1 + \delta K_2 ) + F(K_0) \geqslant F(K_0 + \delta
 K_1) + F(K_0 + \delta K_2)
\end{equation}
whenever $K_0, K_0 + \delta K_1, K_0 + \delta K_2, K_0 + \delta K_1 +
\delta K_2$ lie in $[n]^k_< \times [n]^k_<$ with $\delta K_1, \delta K_2$
having non-negative coordinates and disjoint supports.
 
We can rewrite~\eqref{tar} as the assertion the quantity $F(K_0+\delta
K_1) - F(K_0)$ does not decrease if we increase $K_0$ by $\delta K_2$.
Writing $\delta K_2 = \sum_{m=1}^{2k} c_m e_m$ for some non-negative
$c_m$, where $e_1,\dots,e_{2k}$ is the standard basis of $\R^{2k}$, it
suffices to show that for each $1 < M \leqslant 2k$, the quantity
\[
F(K_0+\sum_{m=M+1}^{2k} c_m e_m+\delta K_1) - F(K_0+\sum_{m=M+1}^{2k} c_m
e_m)
\]
does not decrease if one increases $K_0+\sum_{m=M+1}^{2k} c_m e_m$ by
$c_M e_M$.  Note that as $K_0$ and $K_0 + \sum_{m=1}^{2k} c_m e_m$ both
lie in $[n]^k_< \times [n]^k_<$, the intermediate tuples
$K_0+\sum_{m=M+1}^{2k} c_m e_m$ and $K_0+\sum_{m=M}^{2k} c_m e_m$ do also
(here it is important we are summing the $c_m e_m$ from the right end of
the range $m=1,\dots,2k$, rather than from the left).  Similarly for
$K_0+\sum_{m=M+1}^{2k} c_m e_m + \delta K_1$ and $K_0+\sum_{m=M}^{2k} c_m
e_m + \delta K_1$.  Because of this, we see that to prove~\eqref{tar} it
suffices to do so in the special case when $\delta K_2$ is supported on a
single element, thus $\delta K_2 = c_2 e_{m_2}$ for some $m_2 =
1,\dots,2k$ and some positive $c_2$.  Similarly we may also assume
without loss of generality that $\delta K_1 = c_1 e_{m_1}$ for some
integer $m_1 \in [1,2k] \setminus \{ m_2 \}$ and some positive $c_1$.

Splitting into cases depending on whether $m_1$ and $m_2$
lie in $\{1,\dots,k\}$ or in $\{k+1,\dots,2k\}$, we can thus
reduce~\eqref{tar} to the verification of three special cases:
\begin{itemize}
\item[(a)]  One has
\[
F( I_0 + c_1 e_{m_1} + c_2 e_{m_2}, J_0) + F(I_0,J_0) \geqslant F( I_0 +
c_1 e_{m_1}, J_0) + F(I_0 + c_2 e_{m_2},J_0)
\]
whenever $1 \leqslant m_1 < m_2 \leqslant k$, $c_1,c_2$ are positive, and
$I_0, I_0 + c_1 e_{m_1}, I_0 + c_2 e_{m_2}, I_0 + c_1 e_{m_1} + c_2
e_{m_2}, J_0$ all lie in $[n]^k_<$.
\item[(b)]  One has
\[
F( I_0, J_0 + c_1 e_{m_1} + c_2 e_{m_2}) + F(I_0,J_0) \geqslant F( I_0,
J_0 + c_1 e_{m_1}) + F(I_0, J_0 + c_2 e_{m_2})
\]
whenever $1 \leqslant m_1 < m_2 \leqslant k$, $c_1,c_2$ are positive, and
$I_0, J_0, J_0 + c_1 e_{m_1}, J_0 + c_2 e_{m_2}, J_0 + c_1 e_{m_1} + c_2
e_{m_2}$ all lie in $[n]^k_<$.
\item[(c)]  One has
\[
F( I_0 + c_1 e_{m_1}, J_0 + c_2 e_{m_2}) + F(I_0,J_0) \geqslant F( I_0 +
c_1 e_{m_1}, J_0 ) + F(I_0, J_0 + c_2 e_{m_2})
\]
whenever $1 \leqslant m_1, m_2 \leqslant k$, $c_1,c_2$ are positive, and
$I_0, I_0 + c_1 e_{m_1}, J_0$, $J_0 + c_2 e_{m_2}$ all lie in $[n]^k_<$.
\end{itemize}

We first prove (a).  By exponentiating, this claim is equivalent to the
assertion that
\[
\det(A_{I_0 + c_1 e_{m_1} + c_2 e_{m_2}, J_0}) \det(A_{I_0,J_0}) -
\det(A_{I_0 + c_1 e_{m_1},J_0}) \det(A_{I_0 + c_2 e_{m_2},J_0}) \geqslant
0.
\]
By permuting $I_0$, it will suffice to show that
\begin{equation}\label{detor}
 \det(A_{I_0 + c_1 e_1 + c_2 e_2, J_0}) \det(A_{I_0,J_0}) - \det(A_{I_0 +
 c_1 e_1,J_0}) \det(A_{I_0 + c_2 e_2,J_0}) \geqslant 0
\end{equation}
whenever $c_1,c_2$ are positive, $J_0 \in [n]^k_<$, and $I_0, I_0 + c_1
e_1, I_0 + c_2 e_2 \in \nkneq$ all have the same ordering.

Write $I_0 = (i_1,i_2,i_3,\dots,i_k)$; thus,
\begin{align*}
I_0 + c_1 e_1 = &\ (i_1 + c_1, i_2,i_3,\dots,i_k),\\
I_0 + c_2 e_2 = &\ (i_1, i_2+c_2,i_3,\dots,i_k),\\
I_0 + c_1 e_1 + c_2 e_2 = &\ (i_1+c_1,i_2+c_2,i_3,\dots,i_k).
\end{align*}

\noindent Applying the Karlin identity (Lemma~\ref{det-ident}(ii)), the
left-hand side of~\eqref{detor} may be factorized as
\[
\det( A_{(i_1, i_1+c_1, i_3, \dots, i_k), J_0} ) \det( A_{(i_2, i_2+c_2,
i_3, \dots, i_k), J_0} ).
\]
Since the tuples $I_0, I_0 + c_1 e_1, I_0 + c_2 e_2, I_0 + c_1 e_1 + c_2
e_2$ all have the same ordering, we see that the tuples $(i_1, i_1+c_1,
i_3, \dots, i_k)$ and $(i_2, i_2+c_2, i_3, \dots, i_k)$ have the same
parity of inversion counts, and thus the same sign.  The claim now
follows from~\eqref{sgnsgn}.

The proof of (b) is identical to (a) (and indeed follows from (a) after
replacing $A$ with $A^T$), so we turn to (c).  By exponentiating and permuting as before, it will suffice to show that
\begin{equation}\label{detor-2}
 \det(A_{I_0 + c e_1, J_0 + d e_1}) \det(A_{I_0,J_0}) - \det(A_{I_0 + c
 e_1,J_0}) \det(A_{I_0, J_0 + d e_1}) \geqslant 0
\end{equation}
whenever $c,d$ are positive, $I_0, I_0 + c e_1 \in \nkneq$ have the same
ordering, and $J_0, J_0 + d e_1 \in \nkneq$ have the same ordering.

Write $I_0 = (i_1,i_2,\dots,i_k)$ and $J_0 = (j_1,j_2,\dots,j_k)$. 
Applying Dodgson condensation (Lemma~\ref{det-ident}(i)), the left-hand
side of~\eqref{detor-2} may be written as
\[
\det(A_{(i_1,i_1+c,i_2,\dots,i_k), (j_1,j_1+d,j_2,\dots,j_k)})
\det(A_{(i_2,\dots,i_k), (j_2,\dots,j_k)}).
\]
Since $I_0, I_0+ce_1$ have the same ordering, we see that
$(i_1,i_1+c,i_2,\dots,i_k)$ and $(i_2,\dots,i_k)$ have the same parity of
inversion counts, and thus the same sign.  Similarly for
$(j_1,j_1+d,j_2,\dots,j_k)$ and $(j_2,\dots,j_k)$.  The claim now follows
from~\eqref{sgnsgn}.
\end{proof}

\begin{remark}
The above argument in fact shows that the inequality in \eqref{ij2} is
strict whenever the tuples $(I_1,J_1)$ and $(I_2,J_2)$ are incomparable
(thus $(I_1 \vee I_2, J_1 \vee J_2)$ is not equal to either $(I_1,J_1)$
or $(I_2,J_2)$).  Of course,~\eqref{ij2} is satisfied with equality if
$(I_1,J_1)$ and $(I_2,J_2)$ are comparable.
\end{remark}

\subsection{Applications}

From the positivity of generalized Vandermonde determinants, we see that
any generalized Vandermonde matrix $(u_j^{n_{k-1}})_{1
\leqslant j,k \leqslant N}$ with $0 < u_1 < \dots < u_N$ and $0 < n_0 <
\dots < n_{N-1}$ is strictly totally positive. Writing
\[
\det (u_j^{n_{k-1}})_{1 \leqslant j,k \leqslant N} =
V(\mathbf{u}) s_{\mathbf n}(\mathbf{u})
\]
we arrive at the log-supermodularity inequality
\begin{align*}
&\ V(\mathbf{u}_1 \vee \mathbf{u}_2) s_{\mathbf{n}_1 \vee
\mathbf{n}_2}(\mathbf{u}_1 \vee \mathbf{u}_2) V(\mathbf{u}_1 \wedge
\mathbf{u}_2) s_{\mathbf{n}_1 \wedge \mathbf{n}_2}(\mathbf{u}_1 \wedge
\mathbf{u}_2)\\
\geqslant &\
V(\mathbf{u}_1) s_{\mathbf{n}_1}(\mathbf{u}_1 )
V(\mathbf{u}_2) s_{\mathbf{n}_2}(\mathbf{u}_2)
\end{align*}

\noindent whenever $\mathbf{u}_1^T, \mathbf{u}_2^T, \mathbf{n}_1,
\mathbf{n}_2$ are increasing tuples with positive coordinates.  In
particular, we have
\[
s_{\mathbf{n}_1}(\mathbf{u}_1) s_{\mathbf{n}_2}(\mathbf{u}_2)
\geqslant s_{\mathbf{n}_1}(\mathbf{u}_2) s_{\mathbf{n}_2}(\mathbf{u}_1)
\]
whenever $\mathbf{u}_1^T, \mathbf{u}_2^T, \mathbf{n}_1, \mathbf{n}_2$ are
increasing tuples with positive coordinates with $\mathbf{n}_1 \geqslant
\mathbf{n}_2$ and $\mathbf{u}_1 \geqslant \mathbf{u}_2 \geqslant {\bf 0}$
in the product order.  Equivalently, the ratio
\[
s_{\mathbf{n}_1}(\mathbf{u})/s_{\mathbf{n}_2}(\mathbf{u})
\coloneqq \det \bu^{\circ \bn_1} / \det \bu^{\circ \bn_2}
\]
is non-decreasing in the vector $\mathbf{u}$ coordinatewise (where all
tuples are restricted to be positive and increasing). Note that this
argument does not require $\mathbf{n}_1, \mathbf{n}_2$ to have integer
coordinates.

If one uses the Jacobi--Trudi identity, one similarly concludes the
pointwise inequality
\begin{equation}\label{Epositive}
\tilde s_{\lambda_1 \vee \lambda_2 / \mu_1 \vee
\mu_2} \tilde s_{\lambda_1 \wedge \lambda_2 / \mu_1 \wedge \mu_2}
\geqslant \tilde s_{\lambda_1 / \mu_1} \tilde s_{\lambda_2 / \mu_2}
\end{equation}

\noindent between products of skew-Schur polynomials, whenever
$\lambda_1, \lambda_2, \mu_1, \mu_2$
are nonincreasing tuples of natural or even real numbers, with
non-negative coordinates. (We remark here that these ``continuous''
versions of Schur and skew-Schur polynomials can be defined for all
non-negative real tuples of powers, in a manner generalizing (usual)
Schur and skew-Schur polynomials for integer powers. See
\href{https://terrytao.wordpress.com/2017/09/05/}{\tt
https://terrytao.wordpress.com/2017/09/05/} \thinspace for more details.)
This is a weaker form of the Schur-positivity of~\eqref{Elpp} by Lam,
Postnikov, and Pylyavskyy~\cite{LPP} that was previously used to prove
Theorem~\ref{Tschur-ratio}.  We also remark that in the special case
of~\eqref{Epositive} when $\mu_1 = \mu_2 = {\bf 0}$, this inequality was
also established by Richards~\cite{richards}, using essentially the same
techniques as those given here.

\begin{remark}
The above discussion shows that the positivity in~\eqref{Epositive}
implies part of the ``extended'' Cuttler--Greene--Skandera
conjecture~\eqref{Ecgs-revised} as a special case, via
Proposition~\ref{Pschur-ratio} (as also the positivity but not monomial-
or Schur-positivity in the inequality~\eqref{Elpp} by
Lam--Postnikov--Pylyavskyy). It would be interesting to explore if the
result~\eqref{Epositive} -- and the techniques in this paper -- can be
used to prove other positivity inequalities involving symmetric
functions.
\end{remark}


\section*{List of notation and symbols}

For the convenience of the reader, a tabulation of the symbols
used in this paper is provided. The following symbols involve matrices
and operations on them. In what follows, $N>0$ is a fixed integer. 
\begin{itemize}
\item \underline{$\bp_N(I)$}:
Given a subset $I \subset \C$, the set $\bp_N( I )$ is the collection of
positive semidefinite $N \times N$ matrices with entries in a subset $I
\subset \C$.\smallskip

\item \underline{$\bp_N^1(I)$}
is the subset of $\bp_N(I)$ consisting of all rank-one
matrices.\smallskip

\item \underline{$\HTN_N(I)$}
is the subset of $\bp_N(I)$ comprising all Hankel totally non-negative
matrices.\smallskip

\item \underline{${\bf 1}_{N \times N}$}
is the $N \times N$ matrix with each entry equal to $1$.\smallskip

\item \underline{$f[ A ]$}
is the result of applying $f$ to each entry of the matrix $A$.\smallskip

\item \underline{$A^{\circ \alpha}$}
is the entrywise $\alpha$th power of the matrix $A$ (usually with
positive entries if $\alpha$ is not an integer).\smallskip

\item \underline{$A^\dagger$}
is the Moore--Penrose (pseudo) inverse of the matrix $A$.\smallskip

\item \underline{$\varrho( A )$}
is the spectral radius of the matrix $A$.\smallskip

\item \underline{$\succeq$}:
Given matrices $A,B$, we say $A \preceq B$, or $B \succeq A$, if $B - A$
is positive semidefinite.
\end{itemize}

\noindent The second set of symbols concerns vectors, tuples, and
operations on them.
\begin{itemize}
\item \underline{$\bu^\bn, \bu^{\circ n}, \bu^{\circ \bn}$}:
Given a vector $\bu = (u_1, \dots, u_N)^T$ and a tuple $\bn = (n_0,
\dots, n_{N-1})$, with either complex $\bu$ and integer $\bn$, or
positive $\bu$ and real $\bn$, we define
\[
\bu^\bn \coloneqq u_1^{n_0} \cdots u_N^{n_{N-1}}, \quad
\bu^{\circ n_0} \coloneqq (u_1^{n_0}, \dots, u_N^{n_0})^T, \quad
\bu^{\circ \bn} \coloneqq ( \bu^{\circ n_0} | \dots | \bu^{\circ
n_{N-1}}).
\]

\item \underline{$V(\bu)$} for a vector $\bu = (u_1, \dots, u_N)^T$ or a
tuple $(u_1, \dots, u_N)$ is the Vandermonde determinant $V(\bu) =
V(u_1,\dots,u_N) = \prod_{1 \leqslant i < j \leqslant N} (u_j -
u_i)$.\smallskip

\item \underline{$\bu(\epsilon)$} for a scalar $\epsilon$ denotes the
vector $(1, \epsilon, \dots, \epsilon^{N-1})^T$.\smallskip

\item \underline{$\bn_{\min}$} is the integer tuple
$(0,1,\dots,N-1)$.\smallskip

\item \underline{$s_\bn(\bu)$}:
Given fixed integers $N,m>0$, an $N$-tuple of integers $0 \leqslant n_0 <
n_1 < \cdots < n_{N-1}$, and a vector $\bu = (u_1, \dots, u_m)^T$ or a
tuple $(u_1, \dots, u_m)$, denote by $s_\bn(\bu) = s_\bn(u_1,\dots,u_m)$
the corresponding Schur polynomial -- see equation~\eqref{sdef}. (In this
paper, unless otherwise specified we set $m=N$.)\smallskip

\item \underline{$\overline{\bu}$}:
Given a tuple $\bu = (u_1, \dots, u_N)$, define $\overline{\bu} \coloneqq
(u_N, \dots, u_1)$.\smallskip

\item \underline{$\tilde{s}_\lambda(\bu)$}:
Given a partition, i.e.~a non-increasing tuple of integers $\lambda_{N-1}
\geqslant \cdots \geqslant \lambda_0 \geqslant 0$, the corresponding
Schur polynomial is
\[
\tilde{s}_\lambda(u_1, \dots, u_m) = s_{\overline{\lambda} +
\bn_{\min}}(u_1, \dots, u_m), \qquad \text{where } \lambda \coloneqq
(\lambda_{N-1}, \dots, \lambda_0).
\]

\item \underline{$\tilde{s}_{\lambda/\mu}(\bu)$}:
Given partitions $\lambda, \mu$, the corresponding skew-Schur polynomial
is denoted by $\tilde{s}_{\lambda/\mu}(\bu)$ if $\lambda$ contains $\mu$
(i.e., $\lambda_j \geqslant \mu_j\ \forall j$), and zero
otherwise.\smallskip

\item \underline{$\lambda \wedge \mu, \lambda \vee \mu$}:
Given $N$-tuples of real numbers $\lambda = (\lambda_0 \dots,
\lambda_{N-1})$ and $\mu = (\mu_0, \dots, \mu_{N-1})$, define their meet
and join to be, respectively,
\begin{align*}
\lambda \wedge \mu \coloneqq &\ ( \min(\lambda_0, \mu_0), \dots,
\min(\lambda_{N-1}, \mu_{N-1})),\\
\lambda \vee \mu \coloneqq &\ ( \max(\lambda_0, \mu_0), \dots,
\max(\lambda_{N-1}, \mu_{N-1})).
\end{align*}

\item \underline{$S^N_{\neq}, S^N_<$}: Given a set $S$, denote by
$S^N_{\neq}$ the collection of all ordered $N$-tuples in $S$ with
pairwise distinct entries. If moreover $S \subset \R$, then denote by
$S^N_<$ the subset of $S^N_{\neq}$ with strictly increasing
entries.\smallskip

\item {\underline{$\succ_w$}:
Given real $N$-tuples $\bu = (u_1, \dots, u_N)$ and $\bv = (v_1, \dots,
v_N)$, let $u_{[1]} \geqslant \cdots \geqslant u_{[N]}$ and $v_{[1]}
\geqslant \cdots \geqslant v_{[N]}$ denote their decreasing
rearrangements. Now $\bu$ weakly majorizes $\bv$ -- denoted $\bu \succ_w
\bv$ -- if for all $k=1,\dots,N$, one has: $u_{[1]} + \cdots + u_{[k]}
\geqslant v_{[1]} + \cdots + v_{[k]}$. See equation~\eqref{Eweak}.}
\end{itemize}



\begin{thebibliography}{88}

\bibitem{Blossom2}
R.~Ait-Haddou and M.-L.~Mazure.
\newblock The fundamental blossoming inequality in Chebyshev spaces--I:
  Applications to Schur functions.
\newblock \href{http://doi.org/10.1007/s10208-016-9334-8}{\em Found.
  Comput. Math.}, 18(1):135--158, 2018.

\bibitem{Blossom1}
R.~Ait-Haddou, S.~Yusuke, and T.~Nomura.
\newblock Chebyshev blossoming in M\"untz spaces: Toward shaping with
  Young diagrams.
\newblock \href{http://doi.org/10.1016/j.cam.2013.01.009}{\em J. Comput.
  Appl. Math.}, 247:172--208, 2013.

\bibitem{Barvinok}
A.I.~Barvinok.
\newblock Low rank approximations of symmetric polynomials and asymptotic
  counting of contingency tables.
\newblock {\em Preprint},
  \href{https://arxiv.org/abs/math/0503170}{arXiv:0503170}, 2005.

\bibitem{BGKP-fixeddim}
A.~Belton, D.~Guillot, A.~Khare, and M.~Putinar.
\newblock Matrix positivity preservers in fixed dimension. I.
\newblock \href{http://dx.doi.org/10.1016/j.aim.2016.04.016}{\em Adv.\
  Math.}, 298:325--368, 2016.

\bibitem{BGKP-survey1}
\bysame.
\newblock \textit{A panorama of positivity. I: Dimension free}.
\newblock In:
\href{http://dx.doi.org/10.1007/978-3-030-14640-5_5}{Analysis of
Operators on Function Spaces} (The Serguei Shimorin Memorial Volume;
A.~Aleman, H.~Hedenmalm, D.~Khavinson, M.~Putinar, Eds.), pp.~117--165,
Trends in Math., Birkhauser, 2019.

\bibitem{BGKP-survey2}
\bysame.
\newblock \textit{A panorama of positivity. II: Fixed dimension}.
\newblock In: {Complex Analysis and Spectral Theory}, Proceedings of the
CRM Workshop held at Laval University, QC, May 21--25, 2018 (G.~Dales,
D.~Khavinson, J.~Mashreghi, Eds.). CRM Proceedings --
\href{http://dx.doi.org/10.1090/conm/743/14958}{AMS Contemporary
Mathematics} 743, pp.~109--150, American Mathematical Society, 2020.

\bibitem{BGKP-hankel}
\bysame.
\newblock Moment-sequence transforms.
\newblock \href{http://dx.doi.org/10.4171/jems/1145}%
{\em J.\ Eur.\ Math.\ Soc.\ (JEMS)} (to appear);
  \href{http://arxiv.org/abs/1610.05740}{arXiv:1610.05740}.

\bibitem{BGKP-FPSAC}
\bysame.
\newblock Schur polynomials and positivity preservers.
\newblock Extended abstract, FPSAC 2016 Proceedings (DMTCS), Volume BC,
pages 155--166 (arXiv:1602.04777), 2016.

\bibitem{BGKP-TN}
\bysame.
\newblock Totally positive kernels, P\'olya frequency functions, and
their transforms.
\newblock {\em Preprint},
  \href{http://arxiv.org/abs/2006.16213}{arXiv:2006.16213}, 2020.

\bibitem{Bochner-pd}
S.~Bochner.
\newblock Hilbert distances and positive definite functions.
\newblock \href{http://dx.doi.org/10.2307/1969252}{\em Ann. of Math.
  (2)}, 42:647--656, 1941.

\bibitem{BS}
R.A.~Brualdi and H.~Schneider.
\newblock Determinantal identities: Gauss, Schur, Cauchy, Sylvester,
  Kronecker, Jacobi, Binet, Laplace, Muir, and Cayley.
\newblock \href{http://dx.doi.org/10.1016/0024-3795(83)80049-4}{\em
  Linear Algebra Appl.}, 52--53:769--791, 1983.

\bibitem{CGS}
A.~Cuttler, C.~Greene, and M.~Skandera.
\newblock Inequalities for symmetric means.
\newblock \href{http://dx.doi.org/10.1016/j.ejc.2011.01.020}{\em European
  J. Combin.}, 32(6):745--761, 2011.

\bibitem{DZ}
J.~Ding and A.~Zhou.
\newblock Eigenvalues of rank-one updated matrices with some
  applications.
\newblock \href{http://dx.doi.org/10.1016/j.aml.2006.11.016}{\em
  Appl.~Math.~Lett.}, 20(12):1223--1226, 2007. 

\bibitem{Dodgson}
C.L.~Dodgson.
\newblock Condensation of Determinants, Being a New and Brief Method for
  Computing their Arithmetical Values.
\newblock \href{http://www.jstor.org/stable/112607}{\em Proc. R. Soc.
  Lond.}, 15:150--155, 1866.

\bibitem{FJS}
S.~Fallat, C.R.~Johnson, and A.D.~Sokal.
\newblock Total positivity of sums, Hadamard products and Hadamard
powers: Results and counterexamples.
\newblock \href{http://dx.doi.org/10.1016/j.laa.2017.01.013}{\em Linear
Algebra Appl.}, 520:242--259, 2017;
Corrigendum, \href{http://dx.doi.org/10.1016/j.laa.2020.12.019}{\em
Linear Algebra Appl.}, in press.

\bibitem{FitzHorn}
C.H.~Fitz{G}erald and R.A.~Horn.
\newblock On fractional {H}adamard powers of positive definite matrices.
\newblock \href{http://dx.doi.org/10.1016/0022-247X(77)90167-6}{\em J.
  Math. Anal. Appl.}, 61(3):633--642, 1977.

\bibitem{fkg}
C.M.~Fortuin, P.W.~Kasteleyn, and J.~Ginibre.
\newblock Correlation inequalities on some partially ordered sets.
\newblock \href{http://dx.doi.org/10.1007/BF01651330}{\em Comm. Math.
  Phys.}, 22(2):89--103, 1971.

\bibitem{Gantmacher_Vol2}
F.R.~Gantmacher.
\newblock {\em The theory of matrices. {V}ols. 1, 2}.
\newblock Translated by K.A. Hirsch. Chelsea Publishing Co., New York,
  1959.

\bibitem{GKR-lowrank}
D.~Guillot, A.~Khare, and B.~Rajaratnam.
\newblock Preserving positivity for rank-constrained matrices.
\newblock \href{http://dx.doi.org/10.1090/tran/6826}{\em Trans. Amer.
  Math. Soc.}, 369(9):6105--6145, 2017.

\bibitem{hc}
Harish-Chandra.
\newblock Differential operators on a semisimple Lie algebra.
\newblock \href{http://dx.doi.org/10.2307/2372387}{\em Amer. J. Math.},
  79(1):87--120, 1957.

\bibitem{HKKR}
H.~Helson, J.-P.~Kahane, Y.~Katznelson, and W.~Rudin.
\newblock The functions which operate on Fourier transforms.
\newblock \href{http://dx.doi.org/10.1007/BF02559571}{\em Acta Math.},
  102(1):135--157, 1959.

\bibitem{ahorn}
A.~Horn.
\newblock Doubly stochastic matrices and the diagonal of a rotation
  matrix.
\newblock \href{http://dx.doi.org/10.2307/2372705}{\em Amer. J. Math.},
  76(3):620--630, 1954.

\bibitem{horn}
R.A.~Horn.
\newblock The theory of infinitely divisible matrices and kernels.
\newblock \href{http://dx.doi.org/10.1090/S0002-9947-1969-0264736-5}{\em
  Trans. Amer. Math. Soc.}, 136:269--286, 1969.

\bibitem{hunter}
D.B.~Hunter.
\newblock The positive-definiteness of the complete symmetric functions
  of even order.
\newblock \href{http://dx.doi.org/10.1017/S030500410005386X}{\em
  Math. Proc. Camb. Phil. Soc.}, 82(2):255--258, 1977.

\bibitem{iz}
C.~Itzykson and J.B.~Zuber.
\newblock The planar approximation. II. 
\newblock \href{http://dx.doi.org/10.1063/1.524438}{\em J. Math. Phys.},
  21(3):411--421, 1980.

\bibitem{Kahane-Rudin}
J.-P.~Kahane and W.~Rudin.
\newblock Caract\'erisation des fonctions qui op\`erent sur les
  coefficients de {F}ourier-{S}tieltjes.
\newblock {\em C. R. Acad. Sci. Paris}, 247:773--775, 1958.

\bibitem{karlin}
S.~Karlin.
\newblock {\em Total positivity. Vol. I}.
\newblock Stanford University Press, Stanford, California 1968.

\bibitem{kr}
S.~Karlin and Y.~Rinott.
\newblock Classes of orderings of measures and related correlation
  inequalities. I. Multivariate totally positive distributions.
\newblock \href{http://dx.doi.org/10.1016/0047-259X(80)90065-2}{\em J.
  Multivar. Anal.}, 10(4):467--498, 1980.

\bibitem{LPP}
T.~Lam, A.~Postnikov, and P.~Pylyavskyy.
\newblock Schur positivity and Schur log-concavity.
\newblock \href{http://www.jstor.org/stable/40068109}{\em Amer. J.
  Math.}, 129(6):1611--1622, 2007.

\bibitem{Macdonald}
I.G.~Macdonald.
\newblock {\em Symmetric functions and {H}all polynomials}.
\newblock Oxford Mathematical Monographs. The Clarendon Press, Oxford
  University Press, New York, second edition, 1995.
\newblock With contributions by A. Zelevinsky, Oxford Science
  Publications.

\bibitem{MOA}
A.W.~Marshall, I.~Olkin, and B.C.~Arnold.
\newblock {\em Inequalities: Theory of majorization and its applications}
  (2nd edition).
\newblock Springer Series in Statistics, Springer, 2011.

\bibitem{Nemirovski}
A.~Nemirovski.
\newblock Advances in convex optimization: conic programming.
\newblock In {\em International {C}ongress of {M}athematicians. {V}ol. {I}},
  pages 413--444. Eur. Math. Soc., Z\"urich, 2007.

\bibitem{polya-szego}
G.~P\'olya and G.~Szeg\"o.
\newblock {\em Aufgaben und Lehrs\"atze aus der Analysis. Band II:
  Funktionentheorie, Nullstellen, Polynome Determinanten, Zahlentheorie.}
\newblock Springer-Verlag, Berlin, 1971.

\bibitem{richards}
D. St. P. Richards.
\newblock Log-convexity properties of Schur functions and generalized
  hypergeometric functions of matrix argument.
\newblock \href{http://dx.doi.org/10.1007/s11139-010-9239-0}{\em
  Ramanujan J.} 23:397--407, 2010.

\bibitem{Rudin59}
W.~Rudin.
\newblock Positive definite sequences and absolutely monotonic functions.
\newblock \href{http://dx.doi.org/10.1215/S0012-7094-59-02659-6}{\em Duke
  Math. J.}, 26(4):617--622, 1959.

\bibitem{Schoenberg42}
I.J.~Schoenberg.
\newblock Positive definite functions on spheres.
\newblock \href{http://dx.doi.org/10.1215/S0012-7094-42-00908-6}{\em Duke
  Math. J.}, 9(1):96--108, 1942.

\bibitem{Schur1911}
I.~Schur.
\newblock {B}emerkungen zur {T}heorie der beschr{\"a}nkten
  {B}ilinearformen mit unendlich vielen {V}er{\"a}nderlichen.
\newblock \href{http://dx.doi.org/10.1515/crll.1911.140.1}
  {\em J.~Reine Angew.~Math.}, 140:1--28, 1911.

\bibitem{sh}
\bysame.
\newblock \"Uber eine Klasse von Mittelbildungen mit Anwendungen auf die
  Determinantentheorie.
\newblock {\em Sitzungsber. Berl. Math. Ges.}, 22:9--20, 1923.

\bibitem{shat}
S.L.~Shatashvili.
\newblock Correlation functions in the Itzykson--Zuber model.
\newblock \href{http://dx.doi.org/10.1007/BF02097004}{\em Comm. Math.
  Phys.}, 154(2):421--432, 1993.

\bibitem{skandera}
M.~Skandera.
\newblock Inequalities in products of minors of totally nonnegative
  matrices.
\newblock \href{http://dx.doi.org/10.1023/B:JACO.0000047282.21753.ae}{\em
  J. Algebraic Combin.}, 20(2):195--211, 2004.

\bibitem{Sra}
S.~Sra.
\newblock On inequalities for normalized Schur functions.
\newblock \href{http://doi.org/10.1016/j.ejc.2015.07.005}{\em European J.
Combin.}, 51:492--494, 2016.

\bibitem{vasudeva79}
H.L.~Vasudeva.
\newblock Positive definite matrices and absolutely monotonic functions.
\newblock {\em Indian J. Pure Appl. Math.}, 10(7):854--858, 1979.

\bibitem{vonNeumann-Schoenberg}
J.~von Neumann and I.J.~Schoenberg.
\newblock Fourier integrals and metric geometry.
\newblock \href{http://dx.doi.org/10.1090/S0002-9947-1941-0004644-8}{\em
  Trans. Amer. Math. Soc.}, 50:226--251, 1941.
\end{thebibliography}


\end{document}